\newtheorem{prop}{Proposition}
\newtheorem{thm}[prop]{Theorem}
\newtheorem{lemma}{Lemma}[section]
\newtheorem{theorem}[lemma]{Theorem}
\newtheorem{proposition}[lemma]{Proposition}
\newtheorem{corollary}[lemma]{Corollary}
\theoremstyle{definition}
\newtheorem{example}[lemma]{Example}
\newtheorem{remark}[lemma]{Remark}
\theoremstyle{definition}
\newtheorem{claim}[lemma]{Claim}
\begin{document}

\title[]{Geometric Kernels of Proper Maps Between Non-Compact Surfaces}

\author[Sumanta Das]{Sumanta Das}
\address{Department of Mathematics, Indian Institute of Technology Bombay, India}
\email{sumanta@math.iitb.ac.in}

\begin{abstract}
A map between connected \(2\)-manifolds has a \emph{geometric kernel} if it sends a non-contractible simple loop to a null-homotopic loop. While every non-\(\pi_1\)-injective map between compact surfaces admits a geometric kernel, this generally fails for compact bordered or non-compact surfaces. In this paper, we use Brown’s proper fundamental group to give a sufficient condition under which a degree-one map between non-compact surfaces admits a geometric kernel. Furthermore, we characterize conjugacy classes in the proper fundamental group and use this characterization to establish sufficient conditions for the existence of geometric kernels.
\end{abstract}
\maketitle
\section{Introduction}
Throughout, all manifolds are assumed to be orientable; they may be non-compact, have boundary, or be disconnected. A \emph{surface} (resp.\ a \emph{bordered surface}) is a connected \(2\)-dimensional manifold with empty (resp.\ non-empty) boundary. A connected \(2\)-manifold is said to be of \emph{finite type} if its fundamental group is finitely generated; otherwise, it is of \emph{infinite type}.

A map \( f\colon M \to N \) between connected $2$-manifolds is said to have a \emph{geometric kernel} if there exists a simple loop in \( M \) representing a non-trivial element of \( \ker \pi_1(f) \) \cite[Chapter 1]{MR2634047}. The existence of geometric kernels has long been a topic of interest. It traces back to a lemma of Tucker \cite[Lemma 1]{MR334218}, which asserts that every non-\(\pi_1\)-injective map between two  connected, compact \(2\)-manifolds that maps the boundary into the boundary admits a geometric kernel. However, in an unpublished correction to his original paper, Tucker later showed that this claim is not true in general, at least when the boundaries are non-empty.

Further progress on the existence of geometric kernels for maps between compact \(2\)-manifolds was made by Edmonds, who proved that a map \( f\colon M \to N \) between oriented, connected, closed \(2\)-manifolds admits a geometric kernel whenever its degree \( d \) satisfies \( \chi(M) \leq |d|(\chi(N) - 1) \) \cite[Theorem 4.5]{MR541331}.
 Skora’s thesis provided an alternative proof of this result and additionally established the existence of a geometric kernel for \( f \) when \( d = 3 \) or \( 4 \) \cite[Chapter 3]{MR2634047}. Later, Gabai proved the existence of a geometric kernel for any non-\(\pi_1\)-injective map between two connected, closed \(2\)-manifolds \cite[Theorem 2.1]{MR806708}.

However, when the domain and codomain are non-compact, a non-\( \pi_1 \)-injective map, even if proper, may not have a geometric kernel. This can be shown by modifying Tucker’s counterexample so that the role of boundaries is replaced by punctures; see \Cref{ex}. We aim to identify algebraic conditions on a proper map \( f\colon S' \to S \) between non-compact surfaces that guarantee the existence of a geometric kernel for \( f \). One strategy is to find conditions that ensure \( f \) is \( \pi_1 \)-injective outside a compact subset of \( S' \), thereby reducing the problem to the compact case. Our first theorem provides such a condition using Brown’s proper fundamental group, a tool that encodes information about all loops that lie near an end and are based at points along a given proper ray.

 \begin{thm}\label{cutoff}
Let \( f\colon S' \to S \) be a proper map between non-compact surfaces, where neither \( S' \) nor \( S \) is homeomorphic to \( \mathbb{R}^2 \). Suppose for every end $e$ of $S'$, there exists a representative $a\colon [0,\infty)\to S'$ of $e$ such that $\underline\pi_1(f)\colon \underline\pi_1(S',\underline a)\to \underline \pi_1(S,\underline{fa})$ is a monomorphism. Then there exist compact $2$-dimensional submanifolds \( D' \) of \( S' \) and \( D \) of \( S \) such that, after a proper homotopy, the restriction of \( f \) to the closure of each unbounded component of \( S' \setminus D' \) is a finite-sheeted covering map onto the closure of some unbounded component of \( S \setminus D \). 

%Moreover, under the additional assumption that $\underline\pi_1(f)$ is an isomorphism, the restriction $f\vert A'\to A$ becomes a homeomorphism.
\end{thm}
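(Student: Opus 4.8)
The plan is to use properness to reduce the statement to the ends of $S'$, and then to convert the hypothesis on Brown's proper fundamental group into an incompressibility statement that lets me straighten $f$ to a covering near infinity.

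\emph{Step 1: localisation.} Since $f$ is proper, for a large enough compact $2$-dimensional submanifold $D\subseteq S$ the preimage $f^{-1}(D)$ is compact and hence lies in the interior of a compact $2$-dimensional submanifold $D'\subseteq S'$, so that $f(S'\setminus\mathring D')\subseteq S\setminus D$. After a standard preliminary proper homotopy I may assume $f$ is transverse to $\partial D$ with $f^{-1}(\partial D)=\partial D'$; and, choosing the exhaustions of $S'$ and $S$ to be incompressible (inductively absorbing compressing discs and capping off disc components), I may assume that no complementary component is a disc and that the inclusion of every complementary region into the ambient surface is $\pi_1$-injective. Because $\partial D'$ has finitely many components, $S'\setminus\mathring D'$ has finitely many unbounded components; each such component $V'$ is connected, so $f(V')$ lies in the closure $V$ of a single unbounded component of $S\setminus D$, and transversality gives $f(\partial V')\subseteq\partial V$. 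Straightening each $V'$ rel $\partial V'$ and reassembling (with the identity on $D'$) produces a global proper homotopy, so it suffices to show that each $f|_{V'}\colon V'\to V$ is properly homotopic to a finite-sheeted covering onto $V$.

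\emph{Step 2: the covering.} First, finiteness of the sheet number is automatic: $f|_{V'}$ is proper (since $V'$, $V$ are closed in $S'$, $S$), properness is preserved under proper homotopy, a covering of the connected surface $V$ is surjective, and a proper covering has compact discrete fibres. So the only real task is to make $f|_{V'}$ properly homotopic to \emph{some} covering onto $V$. For this I would use the hypothesis to prove that $f|_{V'}$ is \emph{incompressible at infinity} --- no essential simple loop lying far enough out in $V'$ is sent by $f$ to a null-homotopic loop --- and, more precisely, that $f$ induces pro-monomorphisms with finite-index images on the pro-fundamental-group systems at the ends of $V'$; this is where the characterisation of the proper fundamental group and of its conjugacy classes developed in this paper is used. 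Granting that, one straightens $f|_{V'}$ to a covering by the kind of surgery Edmonds and Gabai use in the closed case, performed on a locally finite pair-of-pants decomposition adapted to the ends of $V$: incompressibility at infinity is exactly what keeps the surgeries from creating compressions near the ends, the straightened map then restricts to a covering on each elementary piece, and these assemble to a covering $V'\to V$. (Equivalently: after shrinking so that the relevant end-region of $S$ is a complementary component $V$, pass to the cover $q\colon\widetilde V\to V$ dual to $f_\ast\pi_1(V')$, lift $f|_{V'}$ to a $\pi_1$-isomorphism $\widetilde f\colon V'\to\widetilde V$ which also induces isomorphisms of the end pro-systems, hence a bijection of end spaces; by the Ker\'ekj\'art\'o--Richards classification $\widetilde f$ is a proper homotopy equivalence, so it is properly homotopic to a homeomorphism, and composing with $q$ finishes.) The hypothesis $S',S\not\cong\mathbb R^2$ enters only to keep $\pi_1$ nontrivial, so that these covering-space manipulations are not vacuous.

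\emph{The main obstacle.} The heart of the proof, and where I expect essentially all the difficulty to lie, is the passage in Step 2 from ``$\underline\pi_1(f)$ a monomorphism at every end'' to ``$f|_{V'}$ incompressible at infinity with finite-index pro-$\pi_1$ image''. This cannot be purely formal: Brown's group of an end-neighbourhood is in general much smaller than the ordinary fundamental group of that neighbourhood (for a pure infinite-genus end the inverse limit of the $\pi_1$'s is already trivial), so one must combine the explicit description of $\underline\pi_1$ with the fact that, for surfaces, an incompressible exhaustion forces each nontrivial class to persist arbitrarily far out. Two-dimensionality and properness are likewise essential to the straightening that follows --- in higher dimensions a $\pi_1$-injective proper map is far from being a covering --- and a further point needing care is to choose $D$, using an understanding of the induced end map $\mathcal E(f)\colon\mathcal E(S')\to\mathcal E(S)$, so that the ends of $S$ in its image really do have complementary neighbourhoods of the form required for the straightened pieces to cover them.
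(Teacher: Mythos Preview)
Your outline has the right skeleton, but the step you yourself flag as the main obstacle is not resolved, and the tool you point to is the wrong one. The conjugacy-class description of $\widehat{\underline\pi_1}$ (the material of \S5) plays no role in this theorem; what does the work is a direct contradiction argument on a ladder of \emph{ordinary} fundamental groups. The paper first fixes an incompressible exhaustion $\{C_n\}$ of $S$ and properly homotopes $f$ so that each component of $f^{-1}(\operatorname{fr} C_n)$ is a non-trivial circle on which $f$ restricts to a covering (this is the first half of the proof of Theorem~2). For a fixed end $e'$ of $S'$, with nested complementary pieces $A_{1,\tau(1)}\supset A_{2,\tau(2)}\supset\cdots$, one then argues: if for infinitely many $n$ the restriction $f\vert A_{n,\tau(n)}$ failed to be $\pi_1$-injective, pick a non-trivial kernel loop $\delta_n\subset A_{n,\tau(n)}$ for each such $n$; since every inclusion $A_{n+1,\tau(n+1)}\hookrightarrow A_{n,\tau(n)}$ is $\pi_1$-injective (essential subsurfaces), the $\delta_n$ assemble into a \emph{non-trivial} element of $\underline\pi_1(S',\underline a)$ that is killed by $\underline\pi_1(f)$, contradicting the hypothesis. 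Hence there is an $n_{e'}$ beyond which $f\vert A_{n,\tau(n)}$ is genuinely $\pi_1$-injective; compactness of $\underline\pi_0(S')$ then produces a uniform $\underline n$, and only \emph{after} this are $D\coloneqq C_{\underline n}$ and $D'\coloneqq f^{-1}(C_{\underline n})$ chosen. Your Step~1 commits to $D,D'$ before this analysis, so you have no reason to expect $f\vert V'$ to be $\pi_1$-injective on the pieces you picked.

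Once $\pi_1$-injectivity of each $f\vert A_{\underline n,j}$ is in hand, the straightening is exactly Theorem~2 (the classification of $\pi_1$-injective proper maps between non-compact bordered surfaces), applied rel boundary on each piece: no Edmonds--Gabai surgery, no pair-of-pants decomposition, and no Ker\'ekj\'art\'o--Richards argument is needed, nor is any separate ``finite-index pro-$\pi_1$ image'' statement. Your covering-space alternative already presupposes $\pi_1$-injectivity of $f\vert_{V'}$ (otherwise the lift $\widetilde f$ is not a $\pi_1$-isomorphism), so it is circular until the ladder argument above has been supplied. Finally, the worry that Brown's group might be ``too small'' because inverse limits can vanish is off target: $\underline\pi_1$ is Brown's $\wp$-functor of the tower, not the bare inverse limit, and for surfaces the bonding maps in the tower are monomorphisms anyway, so a single non-trivial loop at any stage already yields a non-trivial element of $\underline\pi_1$.
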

An analogue of \Cref{cutoff} also holds for connected, irreducible, end-irreducible open \(3\)-manifolds and can be proved using similar ideas; see \Cref{cutoff3}. In the special case where such \(3\)-manifolds are restricted to having exactly one end, the result was previously proved by Brown \cite[Theorem~2.4]{MR511426}. Note that in \Cref{cutoff}, if \(\pi_1(f)\) is a monomorphism (resp. isomorphism), then \(\underline{\pi}_1(f)\) is as well; however, the converse does not hold in general, as can be seen from the examples following \Cref{finitepinch}. The proof of \Cref{cutoff} relies on \Cref{classify} below, which classifies \(\pi_1\)-injective proper maps between non-compact bordered surfaces.

\begin{thm}\label{classify}
    Let $M$ and $N$ be non-compact bordered surfaces such that boundary components of $N$ are compact. Suppose there is a proper map $f\colon (M, \partial M) \to (N, \partial N)$ so that $\pi_1(f)$ is a monomorphism and $f$ restricted to each component of $\partial M$ is a covering map onto a component of $\partial N$. Then $f$ can be properly homotoped rel  $\partial M$ to a finite-sheeted covering map.
\end{thm}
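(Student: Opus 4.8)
The plan is to factor \(f\) through the covering of \(N\) determined by the subgroup \(f_*\pi_1(M)\), to recognize the lift of \(f\) as a proper map that is both a \(\pi_1\)-isomorphism and a homeomorphism near the boundary, and then to promote that lift to a homeomorphism by a proper homotopy rel \(\partial M\), along the way observing that the covering in question is forced to be finite-sheeted. To begin, after making \(f\) transverse to \(\partial N\) and pushing the components of \(f^{-1}(\partial N)\) other than \(\partial M\) off \(\partial N\) by a proper homotopy fixing \(\partial M\), I may assume \(f^{-1}(\partial N)=\partial M\) and that some collar of \(\partial M\) is carried to a collar of \(\partial N\) compatibly with the given boundary coverings. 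Since \(f\) is proper and the components of \(\partial N\) are compact circles, no component of \(\partial M\) can be a line, so every component of \(\partial M\) is a circle and \(f\) restricts there to a finite-sheeted covering of a circle.

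Next, let \(p\colon\widehat N\to N\) be the covering with \(p_*\pi_1(\widehat N)=f_*\pi_1(M)\), and lift \(f\) to \(\widetilde f\colon M\to\widehat N\). A short point-set argument, using that \(p\) is a covering and \(f=p\widetilde f\) is proper, shows \(\widetilde f\) is proper, and by construction \(\widetilde f_*\) is an isomorphism; as surfaces with non-empty boundary are aspherical, \(\widetilde f\) is a homotopy equivalence. Moreover, a boundary curve of a surface with boundary is a primitive element of its free fundamental group; since an isomorphism of free groups preserves primitivity while no proper power in a free group is primitive, the restriction of \(\widetilde f\) to each component of \(\partial M\) is a degree-one covering — hence a homeomorphism — onto a boundary component of \(\widehat N\).

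The substance of the argument is then to show that \(\widetilde f\colon(M,\partial M)\to(\widehat N,\partial\widehat N)\) — a proper \(\pi_1\)-isomorphism restricting to a homeomorphism on \(\partial M\) — is properly homotopic rel \(\partial M\) to a homeomorphism \(\varphi\colon M\to\widehat N\). First, a degree computation (via Lefschetz duality and the long exact sequences of the pairs, keeping track of locally finite homology since the boundaries need not be compact) shows \(\deg\widetilde f=\pm1\) and that \(\widetilde f\) maps onto \(\partial\widehat N\); consequently \(\widetilde f\) is surjective, which yields \(p^{-1}(K)=\widetilde f\!\left(f^{-1}(K)\right)\) for every compact \(K\subseteq N\), so \(p\) is proper — i.e.\ finite-sheeted of degree \(d=[\pi_1(N):f_*\pi_1(M)]\). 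Next, to produce \(\varphi\) I would invoke the classification of (possibly infinite-type) non-compact surfaces with boundary, which are determined up to homeomorphism by the homeomorphism type of their space of ends, the set of ends at which genus accumulates, the total genus, and the boundary: the \(\pi_1\)-isomorphism accounts for rank and genus, properness of \(\widetilde f\) supplies a homeomorphism of end spaces matching the remaining invariants, and the boundary is already matched, so one obtains a homeomorphism \(\varphi\colon M\to\widehat N\) agreeing with \(\widetilde f\) on \(\partial M\) and inducing the same map on \(\pi_1\). Then \(\varphi^{-1}\widetilde f\) is a proper self-homotopy-equivalence of \(M\) that is the identity on \(\partial M\) and on \(\pi_1\); working over a compact exhaustion of \(M\) and, on each compact piece, killing the now-trivial obstruction by the classical lemma for compact surfaces (in the spirit of Tucker and Waldhausen), one homotopes it, properly and rel \(\partial M\), to \(\mathrm{id}_M\). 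Composing, \(f=p\widetilde f\) is properly homotopic rel \(\partial M\) to the \(d\)-sheeted covering \(p\varphi\) (the homotopy staying proper because \(p\) is now known to be proper), as desired.

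I expect the main obstacle to be this last step in the genuinely non-compact, possibly infinite-type regime: converting the classification-theoretic matching of \(M\) with \(\widehat N\) into an honest homeomorphism compatible with the prescribed boundary data, and then carrying out the correcting proper homotopy over an exhaustion so that the adjustments made on successive pieces do not interfere. That \(\widetilde f\) is already a homeomorphism near \(\partial M\) is precisely what makes the rel-\(\partial M\) requirement tractable; the rest is careful bookkeeping with ends and with locally finite homology.
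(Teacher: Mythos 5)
Your proposal is exactly the ``lift to the cover corresponding to $\operatorname{im}\pi_1(f)$'' strategy that the paper explicitly mentions and deliberately avoids (see the discussion after Theorem~\ref{classification}: both steps of that route ``crucially rely on establishing that either $f$ or its lift has nonzero degree''). The paper instead adapts the Brown--Tucker argument: it builds an exhausting sequence of $N$ by compact subsurfaces whose frontier circles are non-trivial (Theorem~\ref{exhaustion}), pulls this back under $f$, applies the compact Nielsen theory to each piece, and disposes of the degenerate annular pieces by an induction on the number of components. That route never needs the degree of $f$ or a covering-space reduction, which is precisely where your argument has gaps.

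Concretely, three steps of your proposal are asserted rather than proved, and each is a genuine obstacle. (i) The claim $\deg\widetilde f=\pm1$: a proper map inducing a $\pi_1$-isomorphism can perfectly well have degree zero (e.g.\ $(x,y)\mapsto(x,|y|)$ on $\mathbb{R}^2$, or the paper's map $\psi$ before Theorem~\ref{classification}), because compactly supported cohomology is only a \emph{proper}-homotopy invariant; no routine Lefschetz-duality computation extracts the degree from the data you have, and you cannot even assert a priori that $\widetilde f|\partial M\to\partial\widehat N$ is a bijection on components. Everything downstream (surjectivity of $\widetilde f$, properness and finiteness of $p$) hangs on this. (ii) The homeomorphism $\varphi\colon M\to\widehat N$: the classification of non-compact bordered surfaces requires matching end spaces together with the non-planar and boundary-accumulating ends, and a proper $\pi_1$-isomorphism only gives a continuous map of end spaces, not a homeomorphism respecting these decorations -- again the paper's $\psi$ shows this fails in general. (iii) Even granting $\varphi$, promoting the $\pi_1$-trivial proper self-equivalence $\varphi^{-1}\widetilde f$ to a proper homotopy to $\mathrm{id}_M$ rel $\partial M$ over an exhaustion is not ``bookkeeping''; it is the technical heart of the theorem (it is what the seven-plus paragraphs of the paper's proof of Theorem~\ref{specialcase} accomplish, including the compression of annular pieces). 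A smaller point: your blanket statement that a boundary curve is primitive in the free fundamental group is false for compact surfaces ($\partial S_{g,1}$ lies in the commutator subgroup); it does hold for a compact boundary circle of a \emph{non-compact} surface, but that needs an argument (e.g.\ a regular neighbourhood of the circle together with a ray to infinity exhibits $\langle[\delta]\rangle$ as a free factor), and you should supply it.
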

The analogue of \Cref{classify} for compact bordered surfaces is due to Nielsen. Waldhausen proved an analogue for compact  Haken $3$-manifolds \cite[Theorem 6.1]{MR224099}, and later Brown and Tucker extended it to non-compact end-irreducible $3$-manifolds \cite[Theorem 4.2]{MR334225}. Our proof of \Cref{classify} makes essential use of ideas developed by Brown and Tucker, even though \Cref{classify} addresses a problem in one lower dimension. As an application of \Cref{cutoff}, we now state the following theorem.

 \begin{thm}\label{finitepinch}
 Let $f\colon S' \to S$ be a proper map of degree one between oriented non-compact surfaces, where \( S \) is not homeomorphic to \( \mathbb{R}^2 \). Suppose the induced map $\underline\pi_0(f)\colon \underline \pi_0(S')\to \underline\pi_0(S)$ between the spaces of ends is injective,  and for every end $e$ of $S'$, there exists a representative $a\colon [0,\infty)\to S'$ of $e$ such that $\underline\pi_1(f)\colon \underline\pi_1(S',\underline a)\to \underline \pi_1(S,\underline{fa})$ is a monomorphism. Then either $f$ has a geometric kernel or $f$ is properly homotopic to a homeomorphism. \end{thm}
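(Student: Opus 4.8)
The plan is to use \Cref{cutoff} to reduce the statement to a question about a map between compact bordered surfaces, and then to invoke the classical structure theory: Nielsen's theorem in the $\pi_1$-injective case and Gabai's theorem in the geometric-kernel case. Note that both ``having a geometric kernel'' and ``being properly homotopic to a homeomorphism'' depend only on the proper homotopy class of $f$, so we are free to replace $f$ by anything properly homotopic to it. To apply \Cref{cutoff} we need $S'\not\cong\mathbb{R}^2$: a proper degree-one map is $\pi_1$-surjective (lift $f$ to the cover of $S$ determined by $f_\ast\pi_1(S')$ and compare the sizes of generic fibres), so $\pi_1(S')$ surjects onto the non-trivial free group $\pi_1(S)$. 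Hence, after a proper homotopy, we have compact submanifolds $D'\subseteq S'$ and $D\subseteq S$ such that $f$ carries the closure of each unbounded component of $S'\setminus D'$ by a finite-sheeted covering onto the closure of an unbounded component of $S\setminus D$. Enlarging $D$ and $D'$ (absorbing bounded complementary pieces, joining components by tubes, and capping off complementary disks), I would in addition arrange that $D,D'$ are connected, that all complementary components are unbounded, and that $D'\hookrightarrow S'$ and $D\hookrightarrow S$ are $\pi_1$-injective; these modifications are compatible with the covering structure since $f$ is a covering near the ends.

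Next I would upgrade the end-coverings to homeomorphisms, using the degree and the injectivity of $\underline\pi_0(f)$. A finite-sheeted covering is surjective on spaces of ends, so if two distinct pieces of $S'\setminus D'$ covered the same unbounded piece $\overline W$ of $S\setminus D$, some end of $\overline W$ would receive two distinct ends of $S'$, contradicting injectivity of $\underline\pi_0(f)$; hence at most one piece $W'$ lies over each $\overline W$, and since a degree-one proper map is surjective, exactly one does. Choosing $p$ deep inside $\overline W$, outside the compact set $f(D')$, we get $f^{-1}(p)\subseteq W'$; as $f|_{W'}$ is an $n$-sheeted covering whose local degrees all agree and sum to $\deg f=1$, we conclude $n=1$, i.e.\ $f|_{W'}\colon W'\to\overline W$ is a homeomorphism and $W'\mapsto\overline W$ is a bijection of complementary pieces. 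After a further homotopy supported near $\partial D$, pushing back the cancelling sheets of $f(D')$ that poke past $\partial D$, I would also arrange $f^{-1}(D)=D'$ and $f(D')\subseteq D$; in particular $f|_{\partial D'}\colon\partial D'\to\partial D$ is a homeomorphism. All these homotopies have compact support, hence are proper.

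Now split according to whether $\pi_1(f)$ is injective. If it is, then composing with the $\pi_1$-injective inclusion $D\hookrightarrow S$ identifies $(f|_{D'})_\ast$ with the restriction of $f_\ast$ to the $\pi_1$-injective subgroup $\pi_1(D')\le\pi_1(S')$, so $(f|_{D'})_\ast$ is injective; thus $f|_{D'}$ is a $\pi_1$-injective map of compact oriented bordered surfaces restricting to a homeomorphism on the boundary, and by Nielsen's theorem (the compact case of \Cref{classify}) it is homotopic rel $\partial D'$ to a finite-sheeted covering $D'\to D$, necessarily of degree one, hence a homeomorphism. Gluing this homotopy, extended by the constant homotopy on the pieces $W'$, exhibits $f$ as properly homotopic to a homeomorphism $S'\to S$. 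If $\pi_1(f)$ is not injective, I first claim that $\ker\pi_1(f)$ is carried by $D'$: representing a non-trivial element by a loop $\alpha$ meeting $\partial D'$ transversely and minimally, an outermost-arc analysis of a null-homotopy of $f\circ\alpha$ --- using that $f$ restricts to a homeomorphism on each $W'$, so that essential arcs map to essential arcs --- forces every arc of $\alpha$ in a piece $W'$ to be boundary-parallel, hence, by minimality, absent, so $\alpha\subseteq D'$. Since $D'\hookrightarrow S'$ and $D\hookrightarrow S$ are $\pi_1$-injective and $f(D')\subseteq D$, it follows that $\ker\pi_1(f)$ is non-trivial if and only if $\ker\pi_1(f|_{D'})$ is, and that a simple loop in $D'$ lying in $\ker\pi_1(f|_{D'})$ and essential in $D'$ is automatically a geometric kernel for $f$.

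So it remains to produce such a loop from the fact that $f|_{D'}\colon D'\to D$ is a degree-one, non-$\pi_1$-injective map of compact oriented bordered surfaces which is a homeomorphism on the boundary. Doubling $f|_{D'}$ along the boundary yields a degree-one map $g\colon\Sigma'\to\Sigma$ between closed oriented surfaces which is again non-$\pi_1$-injective, since each copy of $D'$ (resp.\ $D$) includes $\pi_1$-injectively into $\Sigma'$ (resp.\ $\Sigma$); by Gabai's theorem \cite{MR806708}, $g$ has a geometric kernel, i.e.\ there is a simple loop $\gamma\subset\Sigma'$, essential in $\Sigma'$, with $g\circ\gamma$ null-homotopic. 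If $\gamma$ can be isotoped off the doubling locus $\partial D'$, then it lies in one copy $D'\subseteq S'$, where it is essential in $D'$, hence in $S'$, and $f\circ\gamma$ is null-homotopic in $D$, hence in $S$ --- a geometric kernel for $f$. \emph{Arranging that $\gamma$ be disjoint from $\partial D'$ is the main obstacle:} when $\gamma$ meets $\partial D'$ one cannot simply pull back an outermost bigon of a null-homotopy of $g\circ\gamma$, because $f|_{D'}$ is far from injective, so a separate argument is needed, presumably exploiting the reflection symmetry of the double (or using the degree-one hypothesis to exchange $\gamma$ for a better curve) together with an induction on $|\gamma\cap\partial D'|$. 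By comparison, the enlargement, clean-up, and gluing steps above, though fussy, are routine surface topology.
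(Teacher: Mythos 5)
Your overall architecture matches the paper's: apply \Cref{cutoff} to make $f$ a finite-sheeted covering near infinity, use $\deg(f)=1$ together with injectivity of $\underline{\pi}_0(f)$ to upgrade those coverings to homeomorphisms (one sheet over each unbounded complementary piece), reduce to a degree-one map $D'\to D$ of compact bordered surfaces that is a homeomorphism on the boundary, and then split on whether $\pi_1(f)$ is injective. Your preliminary observations (that $S'\not\cong\mathbb{R}^2$ because a proper degree-one map is $\pi_1$-surjective, and that the kernel is carried by $D'$) are correct and consistent with what the paper does via its submanifold $C_{j_0}'$.

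The genuine gap is in your non-$\pi_1$-injective case. You double $f|_{D'}$ along the boundary and invoke Gabai's theorem for closed surfaces, which hands you a simple loop $\gamma$ in the double that may cross the doubling locus $\partial D'$; you yourself flag that pushing $\gamma$ off $\partial D'$ is "the main obstacle" and leave it unresolved. As stated, the argument is incomplete there, and it is not clear that an outermost-bigon or symmetry argument closes it, precisely because $f|_{D'}$ need not be at all injective. The paper sidesteps this entirely: it applies the Nielsen--Edmonds classification of \emph{allowable degree-one maps of compact bordered surfaces} (\Cref{allow}, together with \Cref{nearboundary} to dispense with the condition $\varphi^{-1}(\partial G)=\partial F$), which says that a degree-one map restricting to a boundary homeomorphism with $\ker\pi_1\neq 0$ is homotopic rel boundary to a pinch map collapsing an essential $S_{g,1}$, $g\ge 1$, to a point. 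The boundary circle of that pinched handle is a non-trivial simple loop sent to a point, i.e., a geometric kernel, with no doubling and no curve-isotopy issue. Replacing your Gabai/doubling step with \Cref{allow} (which the paper states and which also covers your $\pi_1$-injective case, where it yields the homeomorphism) completes your argument along the paper's lines.
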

 
 An example of a non-\(\pi_1\)-injective proper map of degree one satisfying the hypotheses of \Cref{finitepinch} is given by any quotient map \( q\colon S \to S \) from an infinite-genus surface to itself that collapses an essential compact bordered subsurface of genus \( g \geq 1 \) with one boundary component to a point. In fact, the proof of \Cref{finitepinch} shows that any non-\(\pi_1\)-injective proper map \( f \colon S' \to S \) of degree one satisfying the hypotheses of \Cref{finitepinch} can be properly homotoped so that an essential compact bordered subsurface of genus \( g \geq 1 \) with one boundary component is sent to a point. This conclusion follows from \Cref{cutoff}, combined with Edmonds’ classification theorem of allowable degree-one maps (see \Cref{allow}).

Let us examine the necessity of the various hypotheses in \Cref{finitepinch} for ensuring the existence of a geometric kernel. The hypothesis that \( f \) is a proper map of degree one is necessary. For example, there exists a non-\( \pi_1 \)-injective degree-two self-map of the Loch Ness monster surface that has no geometric kernel, yet induces a monomorphism on \( \underline{\pi}_1 \); see \Cref{LNM}. Moreover, the injectivity of \( \underline{\pi}_0(f) \) is also necessary, as demonstrated by \Cref{exnext}. On the other hand, the injectivity of \( \underline{\pi}_1(f) \) is not necessary. In fact, it is too strong a condition: as observed in the previous paragraph, it restricts \( f \) to pinching only finitely many handles. To illustrate this, consider the following example: Let \( S \) be an infinite-genus surface, and let \( \{ h_i : i \in \mathbb{N} \} \) be a pairwise disjoint collection of essential subsurfaces of \( S \), where each \( h_i \) is a compact bordered surface of genus one with one boundary component. The quotient map \( q' \colon S \to S \), which collapses each \( h_{2i} \) to a point, provides an example of a proper map of degree one with a geometric kernel, but it is not \( \underline{\pi}_1 \)-injective. Note that \( q' \) is \( \underline{\pi}_0 \)-injective, however.

The \( \underline{\pi}_1 \)-injectivity hypothesis is not only stronger than necessary to ensure the existence of a geometric kernel, but it also introduces technical complications: it requires choosing a base ray for each end. To avoid this dependence on representative rays, we seek a weaker condition that depends solely on the ends themselves. This motivates us to consider conjugacy classes in \( \underline{\pi}_1 \), analogous to the classical setting where conjugacy classes in \( \pi_1 \) are used to eliminate dependence on a basepoint. To make this precise, we define a set \( \widehat{\underline{\pi}_1}(S, e) \), where \( S \) is a surface and \( e \) is an end of \( S \), consisting of germ homotopy classes of sequences of loops in $S$ converging to \( e \); see \Cref{freehomotopy} for details. We show that this set coincides with the set of conjugacy classes in \( \underline{\pi}_1(S, \underline{a}) \), where \( a \) is any representative of the end \( e \); see \Cref{conjugacy}. Using this framework, we formulate a sufficient condition under which a degree-one map between infinite-genus surfaces may pinch infinitely many handles.

\begin{thm}\label{infinitepinch}
     Let \( S' \) and \( S \) be non-planar surfaces, each with finitely many ends, and let \( f \colon S' \to S \) be a proper map that is not \( \pi_1 \)-injective. Suppose that for every end \( e \) of \( S \), there exists a unique end \( e' \) of \( S' \) such that \( \underline{\pi}_0(f)(e') = e \), and that there exists a sequence \( \alpha \) of separating circles bounding \( e \) such that the preimage of the germ homotopy class of every power of \( \alpha \) under the induced map \( \widehat{\underline{\pi}_1}(f) \colon \widehat{\underline{\pi}_1}(S', e') \to \widehat{\underline{\pi}_1}(S, e) \) is a singleton. Then there exists a pairwise disjoint collection \( \{ h_i' \colon i \in \mathscr{A} \} \) of essential handles in \( S' \), with \( 1 \leq |\mathscr{A}| \leq \aleph_0 \), and a proper map \( g \colon S' \to S \) properly homotopic to \( f \) such that \( g(h_i') \) is a point for every \( i \in \mathscr{A} \), and this collection is infinite if and only if there exists an end \( e' \) of \( S' \) such that \( \widehat{\underline{\pi}_1}(f) \colon \widehat{\underline{\pi}_1}(S', e') \to \widehat{\underline{\pi}_1}(S, e) \) is not injective, where \( e = \underline{\pi}_0(f)(e') \).
\end{thm}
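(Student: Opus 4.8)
The plan is to reduce the statement, end by end, to Edmonds' classification of allowable degree-one maps between compact bordered surfaces (\Cref{allow}), after replacing $f$ by a properly homotopic map that respects a suitable exhaustion of $S'$ and $S$. First I would read off the degree. The hypothesis — a unique $e'$ over each $e$ — says exactly that $\underline{\pi}_0(f)$ is a bijection, so I may write $S' = C'\cup E_1'\cup\dots\cup E_k'$ and $S = C\cup E_1\cup\dots\cup E_k$ as compact cores together with end neighbourhoods, arranged after a preliminary proper homotopy so that $f(E_i')\subseteq E_i$. I claim the singleton hypothesis forces the proper degree $d$ of $f$ to be $\pm1$: if $|d|\geq 2$ then, near an end $e$, the image of $\widehat{\underline{\pi}_1}(f)$ cannot contain $[\alpha^k]$ whenever $\gcd(k,d)=1$, so that preimage is empty rather than a singleton. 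Composing with an orientation-reversing homeomorphism of $S'$ if necessary, assume $d=1$.

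The heart of the argument is then to build an exhaustion adapted to $f$. For a fixed end $e$, put $f$ in general position with respect to the circles $\alpha_n$ and discard inessential components of $f^{-1}(\alpha_n)$; using $d=1$ together with the singleton hypothesis for $[\alpha]$ itself, I would show that $f^{-1}(\alpha_n)$ retains a single essential component $\alpha_n'$ — a separating circle of $S'$ bounding $e'$ — on which $f$ restricts to a homeomorphism onto $\alpha_n$. Making these nested and cofinal, which is where I would borrow the transversality and cut-down arguments of Brown and Tucker underlying \Cref{cutoff} and \Cref{classify}, yields after a further proper homotopy a compatible filtration: writing $A_n$ (resp. $A_n'$) for the compact subsurface cobounded by $\alpha_n,\alpha_{n+1}$ in $S$ (resp. $S'$), I may assume $f(A_n')\subseteq A_n$, $f(C')\subseteq C$, with $f$ a homeomorphism on each $\alpha_n'$ and on $\partial C'$. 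Thus $f$ decomposes into proper maps of compact bordered surfaces $f|_{A_n'}\colon(A_n',\partial A_n')\to(A_n,\partial A_n)$ and $f|_{C'}\colon(C',\partial C')\to(C,\partial C)$, each of degree one and a homeomorphism on the boundary.

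Now I apply Edmonds. By \Cref{allow}, each such piece is homotopic rel boundary to a pinch: a homeomorphism away from $\mathrm{genus}(A_n')-\mathrm{genus}(A_n)=:k_n\geq 0$, resp. $\mathrm{genus}(C')-\mathrm{genus}(C)=:p\geq 0$, pairwise disjoint essential handles, each collapsed to a point. Carrying out these homotopies — mutually compatible since they are rel boundary — produces $g\simeq_{\mathrm{proper}}f$ and a pairwise disjoint family $\{h_i'\colon i\in\mathscr{A}\}$ of handles, essential in $S'$ because every $A_n'$ and $C'$ is incompressible, with $g(h_i')$ a point for all $i$. Then $|\mathscr{A}| = p+\sum_i\sum_n k_{i,n}\leq\aleph_0$, and $|\mathscr{A}|\geq 1$ since, if all $k_n$ and $p$ vanished, $g$ — hence $f$ — would be properly homotopic to a homeomorphism, contradicting that $f$ is not $\pi_1$-injective.

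Finally, the dichotomy. If $\widehat{\underline{\pi}_1}(f)$ fails to be injective at some $e'$ but $\mathscr{A}$ were finite, then $k_n=0$ for all large $n$ along that end, so $g$ is a homeomorphism on the tail beyond some $\alpha_N'$ and $f$ is properly homotopic near $e'$ to a homeomorphism of end neighbourhoods; since $\widehat{\underline{\pi}_1}$ near $e'$ is a proper-homotopy invariant depending only on $f$ near $e'$, it would be a bijection — a contradiction. Conversely, if $\mathscr{A}$ is infinite then, $p$ and each $k_{i,n}$ being finite and there being finitely many ends, some $e'$ has $k_{i,n}\geq 1$ infinitely often; one then exhibits two distinct classes of $\widehat{\underline{\pi}_1}(S',e')$ with equal image — comparing $(\alpha_n')_n$ with the sequence obtained by inserting the meridian of a collapsed handle of $A_n'$ — where distinctness is verified via the conjugacy description of \Cref{conjugacy}. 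I expect the main obstacle to be precisely the adapted-exhaustion step: controlling $f^{-1}(\alpha_n)$, showing it has a unique essential component, and making these nested and cofinal, so as to translate the germ-level singleton hypothesis into the geometric decomposition of $f$ into degree-one boundary-homeomorphisms on compact pieces; once that is in hand, the remainder rests on the classical two-dimensional theory.
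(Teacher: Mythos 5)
Your overall strategy---an exhaustion adapted to $f$, reduction to Edmonds' classification (\Cref{allow}) on compact degree-one pieces that are homeomorphisms on the boundary, and the dichotomy read off from whether the tail pieces are eventually homeomorphisms---is the same as the paper's. But two of your load-bearing steps have genuine gaps. First, the degree argument is not valid: there is no a priori reason that $|\deg(f)|\geq 2$ forces the image of $\widehat{\underline{\pi}_1}(f)$ to omit $[\alpha^k]$ for $\gcd(k,\deg f)=1$. The class $[\alpha^k]$ can be hit by an arbitrary sequence of loops in $S'$, not only by components of $f^{-1}(\operatorname{im}\alpha_n)$; for instance, for a map that is eventually a double cover near the end (as in \Cref{LNM}), $f^{-1}(\operatorname{im}\alpha_n)$ typically consists of two circles each mapping homeomorphically, so $[\alpha]$ itself lies in the image. (Such maps do violate the singleton hypothesis, but because the preimage of $[\alpha]$ has two elements, not because it is empty.) The paper never establishes a global degree upfront; it obtains degree $\pm 1$ only on compact pieces, from the fact that $f$ restricts to a homeomorphism on their boundaries (\Cref{boundaryhomeo} and a homology ladder), and records $\deg(f)=\pm1$ only afterwards, as \Cref{optional}.

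Second, and more centrally, the claim that $f^{-1}(\operatorname{im}\alpha_n)$ ``retains a single essential component on which $f$ restricts to a homeomorphism'' is exactly the crux, and it does not follow from degree one plus transversality. What the singleton hypothesis actually yields---via comparing $[\alpha']$ with the powers of a preimage class $[\beta']$ of $[\alpha]$, using \Cref{conjugacy} and the fact that a nontrivial simple closed curve is not a proper power---is that \emph{every} component of $f^{-1}(\operatorname{im}\alpha_n)$ maps homeomorphically onto $\operatorname{im}\alpha_n$ and that any two such components co-bound an annulus in $S'$; several parallel essential components can persist. One must then (i) prove these components separate $S'$, which needs a separate argument when $S$ has only one end (the paper's \Cref{claim}), and (ii) compress the parallel copies and regroup the complementary pieces so that each compact block maps into a single block of the exhaustion with boundary a homeomorphism---the paper's ``compressible annulus'' homotopies and the grouping into the subsurfaces $L_r'$. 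This is the bulk of the paper's proof, and it cannot simply be borrowed from \Cref{cutoff} or \Cref{classify}, which are proved under a $\underline{\pi}_1$-injectivity hypothesis that is absent here. Your treatment of the dichotomy itself (eventual homeomorphism on the tail versus infinitely many nonzero genus defects, with non-injectivity witnessed by meridians of collapsed handles) is sound and matches the paper's final step.
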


Here is an example of such a map. Let \( S \) be the Loch Ness monster surface, that is, the unique infinite-genus surface with exactly one end. Consider a pairwise disjoint infinite collection \( \{S_{1,2}^i \colon i \in \mathbb{N}\} \) of subsurfaces of \( S \), where each \( S_{1,2}^i \) is a compact genus-one surface with two boundary components. Define a quotient map \( q \colon S \to S \) that collapses a handle lying in the interior of \( S_{1,2}^i \) to a point in the interior of \( S_{1,2}^i \) for each even \( i \). The map \( q \) satisfies all the hypotheses of \Cref{infinitepinch}, where any infinite subcollection of the components of \( \bigcup_{i} \partial S_{1,2}^i \), possibly after reindexing, can be taken as the sequence \( \alpha \). Note that the key technical condition in \Cref{infinitepinch}, expressed in terms of \( \widehat{\underline{\pi}_1} \), ensures that \( f \) can be properly homotoped so that, for all but finitely many components \( \alpha_i \) of \( \alpha \), the restriction of \( f \) to each component of \( f^{-1}(\operatorname{im}(\alpha_i)) \) is a homeomorphism onto \( \operatorname{im}(\alpha_i) \).

In analogy with Edmonds' notion of allowability, we assume in Theorems \ref{finitepinch} and \ref{infinitepinch} that \( f \) is \emph{end-allowable}, meaning that it induces a bijection between the spaces of ends. However, when studying geometric kernels of proper maps between planar surfaces, the end-allowability condition turns out to be too strong, as it restricts attention to the special case where the map has degree zero.
\begin{thm}\label{deg0}
   Let \( S' \) and \( S \) be oriented planar surfaces, each having at least three ends. Suppose 
\( f \colon S' \to S \) is a proper map such that \( \underline{\pi}_0(f) \) is injective. Then \( f \) has a geometric kernel if and only if  \( \ker \pi_1(f) \neq 0 \), which holds if and only if \( \deg(f) = 0 \).
\end{thm}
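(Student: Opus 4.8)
\emph{Proof proposal.} I would prove the three conditions equivalent via the cycle $(a)\Rightarrow(b)\Rightarrow(c)\Rightarrow(a)$, noting that $(a)\Rightarrow(b)$ is immediate since a geometric kernel is by definition a nonzero element of $\ker\pi_1(f)$. Throughout, write $E'=\underline\pi_0(S')$, $E=\underline\pi_0(S)$, and for a clopen set $V$ of ends of a planar surface let $\gamma_V$ denote the simple loop bounding an end-neighbourhood $N_V$ of $V$.

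For $(b)\Rightarrow(c)$ I would argue the contrapositive: assume $d:=\deg(f)\neq0$. Then $f$ is surjective (otherwise $f$ misses a small ball $B\subseteq S$, and the generator of $H^2_c(S)$ represented by a Thom class on $B$ pulls back to both $0$ and $d$ times a generator of $H^2_c(S')$). Surjectivity together with the injectivity of $\underline\pi_0(f)$ forces $\underline\pi_0(f)$ to be a bijection: if some end $q$ of $S$ had no preimage end, then $f^{-1}$ of an end-neighbourhood of $q$ would be relatively compact in $S'$ (no end of $S'$ maps near $q$), contradicting that $f$ maps it onto that non-compact neighbourhood. In particular $S'\cong S$ (planar surfaces are classified by their end spaces). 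If $S$ has finitely many ends, Kneser's inequality $-\chi(S')\ge|d|(-\chi(S))$ together with $-\chi(S)=\#\text{ends}-2\ge1$ gives $|d|=1$; if $S$ has infinitely many ends I would reduce to this case by choosing a three-part clopen partition of the end space and collapsing each part, on both $S$ and $S'$, by a proper degree-one map onto a punctured-disk neighbourhood of a cusp of a thrice-punctured sphere $\Sigma_3$, obtaining degree-one maps $\pi\colon S\to\Sigma_3$, $\pi'\colon S'\to\Sigma_3$; since $\underline\pi_0(f)$ respects the partition and $f$ carries peripheral loops to peripheral loops, $\pi\circ f$ factors up to proper homotopy through $\pi'$ as $\bar f\colon\Sigma_3\to\Sigma_3$ with $\deg\bar f=d$ and $\underline\pi_0(\bar f)$ bijective, and the finite case applied to $\bar f$ gives $|d|=1$. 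Once $|d|=1$, the map $f$ is end-allowable of degree $\pm1$, hence by Edmonds' classification (\Cref{allow}) properly homotopic to a pinch followed by a homeomorphism; a pinch collapses essential subsurfaces of genus $\ge1$, of which a planar $S'$ has none, so $f$ is properly homotopic to a homeomorphism and $\ker\pi_1(f)=0$.

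For $(c)\Rightarrow(a)$ — the heart of the matter — I would show that $\deg(f)=0$ forces $f_{\ast}\colon\pi_1(S')\to\pi_1(S)$ to be trivial. As $\pi_1(S')$ is generated by the classes of the simple loops $\gamma_V$ with $V\subseteq E'$ clopen, it is enough to prove $f_{\ast}[\gamma_V]=1$ for each such $V$. By injectivity of $\underline\pi_0(f)$ the compact sets $\underline\pi_0(f)(V)$ and $\underline\pi_0(f)(E'\setminus V)$ are disjoint, so there is a clopen $W\subseteq E$ with $\underline\pi_0(f)(V)\subseteq W$ and $W\cap\underline\pi_0(f)(E'\setminus V)=\varnothing$. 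After a proper isotopy arranging $f^{-1}(N_W)\subseteq N_V$, every preimage of a point deep in $N_W$ lies in $N_V$, so the local degree of $f$ over $N_W$ equals the global degree $\deg(f)=0$; standard local-degree considerations then identify $f_{\ast}[\gamma_V]$ with a conjugate of $\gamma_W^{\deg f}=1$. (For finitely many ends this is transparent on the end compactifications $\hat f\colon S^2\to S^2$: each end of $S$ has a unique preimage end by injectivity, so its local degree is $\deg\hat f=0$, and $f_{\ast}$ kills the loop around every end of $S'$; as these generate $\pi_1(S')$, $f_{\ast}\equiv1$.) Thus $f_{\ast}$ is trivial; since $S$ is aspherical, $f$ is null-homotopic, and since $S'$ has at least three ends it contains an essential simple loop $\gamma$ (one separating a nonempty proper clopen set of ends from the rest), which is then a geometric kernel. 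Finally $(c)\Rightarrow(b)$ is immediate once $f_{\ast}\equiv1$, so $(a)$, $(b)$, $(c)$ are equivalent.

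The step I expect to be the main obstacle is the rigorous treatment of the "local degree at the ends" in $(c)\Rightarrow(a)$, and symmetrically the extraction of $|\deg f|=1$ in $(b)\Rightarrow(c)$, when the surfaces have infinitely many ends — there is no ambient closed surface to compute in and no loop around a single end; both are handled by the reduction to the thrice-punctured sphere through proper degree-one "pinch-to-pants" maps sketched above. A secondary point to pin down carefully is that Edmonds' classification of allowable degree-one maps is available in the present non-compact setting and that its pinch factor is trivial whenever the domain is planar.
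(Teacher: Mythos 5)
Your overall logical scheme (a)\(\Rightarrow\)(b)\(\Rightarrow\)(c)\(\Rightarrow\)(a) is fine, and your parenthetical argument for surfaces with \emph{finitely many} ends is essentially correct: there every end is an isolated puncture, the end compactifications are spheres, and the local-degree-zero computation at the unique preimage end does kill the peripheral generators. But both of the places you yourself flag as "the main obstacle" contain genuine gaps, and the proposed fixes do not close them. In (c)\(\Rightarrow\)(a), the step "standard local-degree considerations identify \(f_{\ast}[\gamma_V]\) with a conjugate of \(\gamma_W^{\deg f}\)" breaks down as soon as \(W\) is not a single isolated end: the end-neighbourhood \(N_W\) is then a planar surface with one boundary circle and end space \(W\), so \(\pi_1(N_W)\) is free of rank \(\geq 2\) (possibly infinite), not cyclic, and knowing that \(f(\gamma_V)\) lies in \(N_W\) with total degree zero over \(N_W\) does not pin down its homotopy class — it only controls a homological winding number. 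In (b)\(\Rightarrow\)(c), the factorization \(\pi\circ f\simeq \bar f\circ\pi'\) through the pinch maps onto \(\Sigma_3\) is asserted but not justified: \(\pi'\) kills almost all of \(\pi_1(S')\), and there is no reason \(\pi\circ f\) should vanish on \(\ker\pi_1(\pi')\) (one would need, e.g., that a loop in \(N_{V}\) with zero total winding around \(V\) has image with zero total winding around \(W\), which is not automatic). Finally, \Cref{allow} is a statement about \emph{compact} surfaces; its non-compact analogue in this paper (\Cref{finitepinch}/\Cref{deg}) requires \(\underline{\pi}_1\)-injectivity hypotheses you have not verified, so "end-allowable of degree \(\pm1\) \(\Rightarrow\) properly homotopic to a homeomorphism" cannot simply be quoted.

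For comparison, the paper (\Cref{randomproposition}) avoids all global end-compactification and factorization arguments by working on a compact exhaustion: it proves that \emph{no geometric kernel} implies \(\pi_1\)-injectivity by taking an exhausting sequence \(\{C_n\}\) of \(S\) whose frontiers have at least three circle components, homotoping \(f\) so that preimages of these frontiers are nontrivial circles covered by \(f\), and then observing that injectivity of \(\underline{\pi}_0(f)\) forces distinct frontier circles of a compact piece \(D'\subset f^{-1}(C_m)\) to map to \emph{distinct} frontier circles of \(C_m\); the Kneser--Epstein--Skora inequality applied to \(f\vert D'\to C_m\) (with \(\chi(C_m)=2-k<0\)) then forces degree \(\pm1\) on the piece, whence \(\pi_1\)-surjectivity and, by the Hopfian property of finite-rank free groups, \(\pi_1\)-injectivity. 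The implication "geometric kernel \(\Rightarrow\deg(f)=0\)" is handled separately by surgery along the kernel curve, Hopf's geometric realization of degree, and an end-count contradiction. If you want to salvage your route, the compact-piece argument is the tool you are missing for the infinitely-many-ends case.
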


On the other hand, if the end-allowability assumption is dropped from the hypotheses, then—even under the strongest possible assumptions on the maps induced at the level of \(\widehat{\underline{\pi}_1}\)—one can, to the best of the author's knowledge, expect at most a kernel appearing only at the homology level.

\begin{thm}\label{withoutallowable}
    Let \( S' \) and \( S \) be oriented planar surfaces, each with at least three ends, and let \( f \colon S' \to S \) be a proper map of degree one. Suppose there exists an end \( e \) of \( S \) such that the preimage \( \underline{\pi}_0(f)^{-1}(e) \) is a finite set of cardinality at least two, and that for each \( e' \in \underline{\pi}_0(f)^{-1}(e) \), the induced map \(\widehat{\underline{\pi}_1}(f) \colon \widehat{\underline{\pi}_1}(S', e') \to \widehat{\underline{\pi}_1}(S, e)\) is an isomorphism. Then there exists a simple loop in \( S' \) representing a non-trivial element of \( \ker  H_1(f) \).
\end{thm}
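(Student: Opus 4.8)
The plan is to exploit the end $e$ of $S$ with $\underline{\pi}_0(f)^{-1}(e)=\{e_1',\dots,e_k'\}$ finite and $k\ge 2$. I expect two of the preimage ends $e_i'$ to carry opposite ``local degrees'', so that a simple loop in $S'$ encircling exactly those two ends is annihilated by $H_1(f)$ for degree reasons, while remaining non-zero in $H_1(S')$ precisely because $S'$ has at least three ends, which makes the classes of loops around two distinct ends of $S'$ linearly independent there.

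First I would set up local models at the ends. It suffices to treat the case where $e$, and hence each $e_i'$, is isolated (the general case proceeds along the same lines, using sequences of separating circles bounding the ends, in the style of the proof of \Cref{infinitepinch}). Then $e$ has a closed annular neighbourhood $\bar V$ with core circle $b$, each $e_i'$ has a closed annular neighbourhood, and after shrinking I may assume the $\bar U_i$ are pairwise disjoint with $f(\bar U_i)\subseteq\bar V$; put $c_i=\partial\bar U_i$. Each restriction $f|_{\bar U_i}\colon\bar U_i\to\bar V$ is a proper map of half-open annuli: straightening the $[0,\infty)$-coordinate exhibits it, up to proper homotopy, as $(z,t)\mapsto(z^{d_i},t)$, where $d_i$ is simultaneously the topological degree $\deg(f|_{\bar U_i})$ and the integer that $\widehat{\underline{\pi}_1}(f)$ assigns to a generator of $\widehat{\underline{\pi}_1}(S',e_i')$, which by \Cref{conjugacy} is the set of conjugacy classes of $\underline{\pi}_1(S',\underline{a}_i)\cong\mathbb{Z}$ (the ends being annular), hence is itself infinite cyclic. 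Since $\widehat{\underline{\pi}_1}(f)$ is an isomorphism, $d_i=\varepsilon_i\in\{\pm1\}$; in particular $f|_{\bar U_i}$ is properly homotopic to a homeomorphism onto $\bar V$, and $f_*[c_i]=\varepsilon_i[b]$ in $H_1(S)$ for compatible orientations.

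Next comes the degree count. Picking a smooth representative and a regular value $v$ of $f$ lying so deep in $\bar V$ that it avoids the images of all bounded components of $f^{-1}(\bar V)$ and of the complement of $\bigcup_i\bar U_i$ — possible since $f$ is proper, so those images are compact, hence bounded away from $e$ — one gets $f^{-1}(v)\subseteq\bigsqcup_i\bar U_i$ with signed contribution $\varepsilon_i$ from each $\bar U_i$, so $1=\deg f=\sum_{i=1}^k\varepsilon_i$. An odd sum of $\pm1$'s is never constant, so the $\varepsilon_i$ are not all equal (in particular $k$ is odd, and the hypothesis is vacuous when $k=2$); fix $i\neq j$ with $\varepsilon_i=-\varepsilon_j$. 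Now join $c_i$ to $c_j$ by an embedded arc meeting $\bigcup_l\bar U_l$ only in its endpoints on $c_i\cup c_j$ and take a regular neighbourhood: this is an embedded three-holed sphere $N\subset S'$ with $\partial N=c_i\sqcup c_j\sqcup\gamma$. Choosing the arc to leave $c_i$ and $c_j$ on the side away from the ends, the pair-of-pants relation becomes $[\gamma]=[c_i]+[c_j]$ in $H_1(S')$ with the orientations fixed above, so $H_1(f)[\gamma]=(\varepsilon_i+\varepsilon_j)[b]=0$. On the other hand $[c_i]$ and $[c_j]$ are linearly independent in $H_1(S')$: because $S'$ is planar with at least three ends, intersection number with a properly embedded arc from $e_i'$ to a third end (chosen disjoint from $\bar U_j$) gives a homomorphism $H_1(S')\to\mathbb{Z}$ taking $[c_i]$ to $\pm1$ and $[c_j]$ to $0$. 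Hence $[\gamma]\neq 0$, and $\gamma$ is the desired simple loop.

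The step I expect to be the crux is the passage from ``$\widehat{\underline{\pi}_1}(f)$ is an isomorphism'' to ``each preimage end of $e$ contributes exactly $\pm 1$ to $\deg f$ and nothing else contributes'': one must identify the $\widehat{\underline{\pi}_1}$-degree of $f$ at $e_i'$ with the topological degree of the proper map of half-open annuli $f|_{\bar U_i}\colon\bar U_i\to\bar V$, and then use properness to confine the deep preimages of a regular value to $\bigsqcup_i\bar U_i$; handling a non-isolated end $e$ (where ``the class of a loop around $e_i'$'' must be replaced by the class of a circle drawn from a bounding sequence) is a further technical layer. The pigeonhole step and the pair-of-pants construction that follow are routine apart from orientation bookkeeping, and the hypothesis on the number of ends of $S'$ enters only at the very end, to ensure $[\gamma]\neq 0$.
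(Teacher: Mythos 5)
Your overall architecture is the same as the paper's: show that each preimage end of $e$ contributes a local degree $\varepsilon_i=\pm 1$, deduce $1=\sum_i\varepsilon_i$ so that two of the signs must differ, and cap the corresponding pair of boundary circles off with a pair of pants whose third boundary component is the desired simple loop. The endgame (the pair-of-pants relation in $H_1$ and the intersection-number argument for $[\gamma]\neq 0$) is correct and matches the paper's, which instead notes that both complementary components of the loop are unbounded.

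There are, however, two genuine problems in the middle. First, your justification of $\varepsilon_i=\pm1$ rests on the claim that $\underline{\pi}_1(S',\underline{a}_i)\cong\mathbb{Z}$ for an annular end; this is false. For $\mathbb{S}^1\times[0,\infty)$ the proper fundamental group is the group of integer sequences (the winding numbers of the loops attached at the integer points) modulo eventual equality, i.e.\ $\prod_k\mathbb{Z}\big/\bigoplus_k\mathbb{Z}$, which is uncountable, so ``an isomorphism of infinite cyclic groups sends a generator to a generator'' is not available. The conclusion is still true, but the correct route (the one the paper takes) is: writing $f\alpha_i'=\alpha_i^{n_i}$ for a circle component $\alpha_i'$ of $f^{-1}(\operatorname{im}(\alpha_i))$, choose $\beta'$ with $\widehat{\underline{\pi}_1}(f)([\beta'])=[\alpha]$; then $[\alpha']$ and the $(n_i)$-th power of $[\beta']$ have the same image under $\widehat{\underline{\pi}_1}(f)$, so injectivity makes the simple closed curve $\alpha_i'$ freely homotopic to an $n_i$-th power, forcing $n_i=\pm1$ because a non-trivial simple closed curve on a surface is never a proper power. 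Second, ``it suffices to treat the case where $e$ is isolated'' is not a reduction: the hypothesis does not make $e$ isolated, and nothing in your argument converts the general case to the isolated one. The annular local model must be replaced throughout by circles taken from an exhausting sequence (the paper's $\partial U_j'\to\partial U$), and since your identification of the local degree lives entirely inside that model, this is more than ``a further technical layer.'' A smaller slip: $f(S'\setminus\bigcup_i\bar U_i)$ is not compact (that set is unbounded and $f$ is proper, not bounded); what you need, and what is true, is that $f^{-1}(\bar V)\setminus\bigcup_i\operatorname{int}(\bar U_i)$ is compact, so that its image misses a sufficiently deep regular value. The paper avoids your regular-value count altogether by compactifying to $\mathbb{S}^2$ minus finitely many points and using Alexander's trick to compute $\deg(\widetilde f)=\sum_j\varepsilon_j$.
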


\subsection*{Outline of the paper} \Cref{S2} introduces general notation and definitions, and provides background on proper fundamental groups. In \Cref{S3}, we present examples demonstrating that a non-\(\pi_1\)-injective proper map between non-compact surfaces need not admit a geometric kernel. \Cref{S4} proves \Cref{cutoff} (\Cref{cut}) using \Cref{classify} (\Cref{specialcase}), and applies it to establish \Cref{finitepinch} (\Cref{deg}). Additionally, a three-dimensional analogue of \Cref{cutoff} is proved in this section (see \Cref{cutoff3}). Finally, \Cref{freehomotopy} characterizes conjugacy classes in the proper fundamental group (\Cref{conjugacy}) and uses this to prove \Cref{infinitepinch} (\Cref{infinitepinchproof}) and \Cref{withoutallowable} (\Cref{geokerplanar}). The proof of \Cref{deg0}, although it does not require this characterization, can also be found in this section (see \Cref{randomproposition}).

\section{Preliminaries}\label{S2}
\subsection{Notation and definitions}
Let \( X \) be a space, and let \( Y \) be a subspace of \( X \). The \emph{closure} (resp. \emph{interior}) of \( Y \) in \( X \) is denoted by \( \operatorname{cl}_X(Y) \) (resp., \( \operatorname{int}_X(Y) \)). The \emph{frontier} of \( Y \) in \( X \), denoted \( \operatorname{fr}_X(Y) \), is defined by \( \operatorname{fr}(Y) \coloneqq \operatorname{cl}_X(Y) \setminus \operatorname{int}_X(Y) \). We say that \( Y \) is \emph{unbounded} in \( X \) if \( \operatorname{cl}_X(Y) \) is not compact. When the ambient space \( X \) is clear from context, we omit the subscript in the notation for closure, interior, and frontier.

An \emph{exhausting sequence} for \( X \) is a sequence \( \{C_n\} \) of compact subsets of \( X \) such that \( C_n \subset \operatorname{int}(C_{n+1}) \) and \( \bigcup_n C_n = X \).

Let \( N \) be a manifold. We denote its \emph{boundary} by \( \partial N \) and its \emph{interior} by \( \iota N \), where \( \iota N \coloneqq N \setminus \partial N \). For integers \( g \geq 0 \), \( b \geq 0 \), and \( p \geq 0 \), let \( S_{g,b} \) denote the connected \( 2 \)-manifold of genus \( g \) with \( b \) boundary components, and let \( S_{g,b,p} \) denote the surface obtained by removing \( p \) points from \( \iota S_{g,b} \). For convenience, we will occasionally refer to \( S_{0,1} \), \( S_{0,2} \), \( S_{0,3} \), \( S_{1,1} \), \( S_{1,2} \), and \( S_{0,1,1} \) as a \emph{disk}, an \emph{annulus}, a \emph{pair of pants}, a \emph{handle}, a \emph{two-holed torus}, and a \emph{punctured disk}, respectively.

Let \( M \) be a connected two-dimensional manifold, possibly with boundary. A \emph{circle} in \( M \) is the image of an embedding of \( \mathbb{S}^1 \) into \( M \). A circle in \( M \) is said to be \emph{trivial} if it bounds an embedded disk in \( M \); otherwise, it is called \emph{non-trivial}. 
Note that if the image of an embedding \( \gamma \colon \mathbb{S}^1 \hookrightarrow M \) is a non-trivial circle in \( M \), then there does not exist \( g \in \pi_1(M) \) such that \( [\gamma] = g^k \) with \( |k| > 1 \); see \cite[Theorems 1.7 and 4.2]{MR214087}. A \emph{subsurface} of \( M \) is a connected two-dimensional submanifold—possibly with non-empty boundary—of \( M \). A subsurface \( S \subseteq M \) is said to be \emph{essential} (resp. \emph{properly embedded}) if the inclusion map \( S \hookrightarrow M \) is \( \pi_1 \)-injective (resp. proper).

A \emph{standard circle} \(c\) in \(\mathbb{R}^2\) is a set of the form \(\{z \in \mathbb{R}^2 : |z - a| = r\}\), where \(a \in \mathbb{R}^2\) is the \emph{center} of \(c\), and \(r \in (0, \infty)\) is its \emph{radius}. The \emph{interior} of a standard circle \(c\) in \(\mathbb{R}^2\), denoted by \(\operatorname{interior}(c)\), is defined as the bounded component of \(\mathbb{R}^2 \setminus c\).

Let \(P\) be a \(3\)-manifold, and let \(F\) be a smoothly embedded \(2\)-dimensional submanifold of \(P\) such that the inclusion \(F \hookrightarrow P\) is a proper map, and either \(F \cap \partial P = \partial F\) or \(F \subseteq \partial P\). We say that \(F\) is \emph{compressible} in \(P\) if one of the following holds:  (1) there exists a smoothly embedded copy \(E\) of \(\{x \in \mathbb{R}^3 \colon |x| \leq 1\}\) in \(P\) such that \(E \cap F = \partial E\); or  (2) there exists a smoothly embedded non-trivial circle \(c\) in \(\iota F\), and a smoothly embedded disk \(D\) in \(P\), with \(\iota D \subset \iota P\), such that \(D \cap F = \partial D = c\). We say that \(F\) is \emph{incompressible} in \(P\) if it is not compressible in \(P\). The manifold \(P\) is called \emph{irreducible} if every smoothly embedded copy of \(\{x \in \mathbb{R}^3 \colon |x| = 1\}\) in \(P\) is compressible, and \emph{boundary irreducible} if \(\partial P\) is incompressible.

It can be shown that \(F\) is incompressible in \(P\) if and only if every component of \(F\) is incompressible in \(P\)~\cite[Proposition 1.4]{MR2619608}. Moreover, if \(F\) is connected and not homeomorphic to \(S_{0,0}\), then by the loop theorem, \(F\) is incompressible if and only if the inclusion \(F \hookrightarrow P\) is \(\pi_1\)-injective~\cite[Proposition 1.5]{MR2619608}.

\subsection{Brown's proper fundamental group of an end}
For compact manifolds of the same dimension, the homotopy type can sometimes determine the homeomorphism type, though not always. In contrast, for non-compact manifolds of the same dimension, the homotopy type is often too weak to determine the homeomorphism type. To address this limitation, one can consider the \emph{proper homotopy type}, which captures the behavior of a space outside increasingly larger compact subsets, or equivalently, its behavior ``at infinity.''

The proper homotopy type is the analogue of the usual homotopy type, but it is formulated in the \emph{proper category}, whose objects are topological spaces and whose morphisms are proper maps. In classical homotopy theory, one seeks algebraic conditions---such as those in Whitehead's theorem---under which a map between suitably nice spaces is a homotopy equivalence. Similarly, in the proper category, identifying conditions that ensure a proper map between such spaces is a proper homotopy equivalence is one of the central themes of proper homotopy theory.

In 1974, Brown \cite{MR356041} introduced the notion of \emph{proper homotopy groups}, which complement the classical homotopy groups and, taken together, provide a version of Whitehead's theorem in the proper category. We focus on the one-dimensional case, the proper fundamental group, which suffices to classify proper maps between non-compact surfaces. This is consistent with the fact that the fundamental group alone determines the homotopy types of maps between aspherical manifolds. Moreover, there are instances in the $3$-manifold setting where the proper fundamental group alone can be used to distinguish homeomorphism types. For example, it distinguishes \(\mathbb{R}^3\) among irreducible contractible open \(3\)-manifolds with one end \cite[Corollary 3.3]{MR334225}. More generally, it can be used to show that, among irreducible contractible open \(3\)-manifolds of finite genus at infinity, the proper homotopy type determines the homeomorphism type \cite[Corollary 2.6]{MR511426}.

Let \( X \) and \( Y \) be spaces. We define an equivalence relation on the set of proper maps \( X\to Y \) as follows: Two proper maps \( f, g \colon X \to Y \) are said to be equivalent if there exists a compact subset \( C \) of \( X \) such that \( f(x) = g(x) \) for all \( x \in X \setminus C \). The equivalence class of a proper map \( f\colon X\to Y \) under this relation is denoted by \( \underline{f} \) and is called the \emph{germ} of \( f \).  

Two proper maps \( f_0, f_1 \colon X \to Y \) are \emph{germ homotopic} if there exists a proper map \( H\colon X\times [0,1]\to Y \) such that \( \underline{H_0} = \underline{f_0} \) and \( \underline{H_1} = \underline{f_1} \).

From now on, and throughout this section, unless otherwise stated, we assume that \( X \) and \( Y \) denote connected manifolds. While most of the theory can be developed for more general spaces, we restrict ourselves to connected manifolds for simplicity.
 
First, we recall the definition of the space of ends, a concept introduced by Freudenthal. To define this space, we consider an equivalence relation on the set of all proper maps from \( \underline * \coloneqq [0,\infty) \) to \( X \). Two proper maps \( a, b \colon \underline * \to X \) are considered equivalent if, for every compact subset \( C \subseteq X \), there exists \( t_C \geq 0 \) such that \( a(t) \) and \( b(t) \) lie in the same component of \( X \setminus C \) for all \( t \geq t_C \). The equivalence class of a proper map \( a \colon \underline * \to X \) is called an \emph{end} of \( X \), and is denoted by \( [a] \).

The set of all ends of \( X \), denoted \( \underline{\pi}_0(X) \), carries a natural topology whose basis consists of open sets of the form  
\(
\underline{A} \coloneqq \{e\in \underline \pi_0(X) \mid \exists a\in e \text{ such that }a([t,\infty)]) \subseteq A \text{ for some } t\geq 0\},
\)  
where \( A \) is an unbounded component of the complement of some compact subset of \( X \). Equipped with this topology, \( \underline{\pi}_0(X) \) is homeomorphic to a closed subset of the Cantor set.  

The definition of convergence to an end is now straightforward: we say that a sequence \( \{Z_n\} \) of subsets of \( X \) \emph{converges to the end} \( e \) of $X$ if, for every basic open neighborhood \( \underline A \) of \( e \) (as defined above), we have \( Z_n \subseteq A \) for all but finitely many \( n \). 

If \( f \colon X \to Y \) is a proper map, then it induces a continuous map $\underline{\pi}_0(f) \colon$ $\underline{\pi}_0(X) \to \underline{\pi}_0(Y)$, given by $[a] \longmapsto [fa] $. Moreover, this induced map depends only on the germ homotopy class of \( f \).

 We next associate to each end of $X$ a group analogous to the fundamental group $\pi_1(X)$. Let $\underline{\mathbb  S^1}$ be the space $\underline *$ together with a distinct circles attached at each integer point. Let $e$ be an end of $X$, and select as base point the germ $\underline a$ of a representative $a \in e$. A \emph{proper map of pairs} $\alpha\colon (\underline{\mathbb  S^1}, \underline *)\to (X, \underline a)$ means a proper map $\alpha\colon \underline{\mathbb  S^1}\to X$ so that the germ of $\alpha \vert \underline *$ is the germ of $a$. If $\beta\colon (\underline{\mathbb  S^1}, \underline *)\to (X, \underline a)$ is another proper map of pairs, we say that $\alpha$ and $\beta$ are \emph{germ homotopic rel $\underline *$} if there is a proper homotopy $H\colon\underline{\mathbb  S^1} \times [0,1] \to X$ so that the germ of $H_0$ is the germ of  $\alpha$, the germ of $H_1$ is the germ of $\beta$, and the germ of $H\vert\underline * \times [0,1]$ agrees with the germ of the composition $\underline * \times [0,1] \xrightarrow{p} \underline * \xrightarrow{a} X$, where $p$ is the natural projection. The equivalence class of $\alpha$ is denoted $[\alpha]$, and the set of all equivalence classes is denoted $\underline \pi_1(X, \underline a)$.

%, that is, $$\underline{\mathbb S^1}\coloneqq\frac{[0,\infty)\cup \bigcup_{k=0}^\infty \mathbb S^1\times \{k\}}{k\sim (1,k)}.$$ 

We now define a group structure on \( \underline{\pi}_1(X, \underline{a}) \). Let \( g, g' \in \underline{\pi}_1(X, \underline{a}) \), and choose representatives \( \alpha \in g \) and \( \alpha' \in g' \) such that \( \alpha|\underline{*} = \alpha'|\underline{*} \). Define a proper map of pairs \( \alpha \cdot \alpha' \colon  (\underline{\mathbb{S}^1}, \underline{*}) \to (X, \underline{a}) \) as follows: On \( \underline{*} \), define \( \alpha \cdot \alpha' \) by \( \alpha|\underline{*} \), and on \( \mathbb{S}_k^1 \), define \( \alpha \cdot \alpha' \) to be the concatenation of \( \alpha|\mathbb{S}_k^1 \) and \( \alpha'|\mathbb{S}_k^1 \), taken with respect to the basepoint \( \alpha(k) \), where \( \mathbb{S}_k^1 \) denotes the circle attached at the integer point \( k \). The product \( g \cdot g' \) is then defined as the equivalence class \( [ \alpha \cdot \alpha' ] \). This product is independent of the choice of representatives \( \alpha \in g \) and \( \alpha' \in g' \), and hence defines a group operation on \( \underline{\pi}_1(X, \underline{a}) \). We call \( \underline{\pi}_1(X, \underline{a}) \) the \emph{proper fundamental group of the end $e$ of \( X \) based at \( \underline{a} \)}.

The dependence of the proper fundamental group on the base germ can be seen as follows. Suppose \(b \colon [0, \infty) \to X\) is another representative of \(e\), so that \([a] = e = [b]\). Then there exists a sequence \(p = \{p_k \colon [0,1] \to X : k \geq 0\}\) of maps, called a \emph{path in \(X\) from \(\underline{a}\) to \(\underline{b}\)}, such that \(\{\operatorname{im}(p_k) : k \geq 0\}\) converges to \(e\), and for all but finitely many \(k\), we have \(p_k(0) = a(k)\) and \(p_k(1) = b(k)\). Now, if \(\alpha \colon (\underline{\mathbb{S}^1}, \underline{*}) \to (X, \underline{a})\) is a proper map of pairs, then there exists a proper map of pairs \(\alpha_p \colon (\underline{\mathbb{S}^1}, \underline{*}) \to (X, \underline{b})\) such that \(\alpha_p\vert\underline{*} = b\) and \(\alpha_p\vert\mathbb{S}^1_k = \overline{p_k} * (\alpha\vert\mathbb{S}^1_k) * p_k\) for all but finitely many \(k\), where \(\overline{p_k}\) denotes the inverse of the path \(p_k\). This defines a well-defined isomorphism \(p_* \colon \underline{\pi}_1(X, \underline{a}) \to \underline{\pi}_1(X, \underline{b})\), given by \(p_*([\alpha]) \coloneqq [\alpha_p]\).

The functoriality of \(\underline{\pi}_1\) follows from the way it assigns homomorphisms to proper maps: given a proper map \(f \colon X \to Y\), the induced homomorphism \(\underline{\pi}_1(f) \colon \underline{\pi}_1(X, \underline{a}) \to \underline{\pi}_1(Y, \underline{f a})\) is defined by \([\alpha] \mapsto [f \alpha]\). Moreover, for two proper maps \( f_0, f_1 \colon X \to Y \), if there exists a proper homotopy \( H \colon X \times [0,1] \to Y \) such that \( \underline{H_0} = \underline{f_0} \), \( \underline{H_1} = \underline{f_1} \), and the germ of \( H|\operatorname{im}(a) \times [0,1] \) agrees with the germ of the composition $\operatorname{im}(a) \times [0,1] \xrightarrow{p} \operatorname{im}(a) \xrightarrow{f} Y,$ where $p$ is the natural projection, then \( \underline{\pi}_1(f_0) = \underline{\pi}_1(f_1) \).

We conclude this section by introducing a few notations specific to \(2\)-manifolds. Let \(M\) be a \(2\)-manifold, and let \(e\) be an end of \(M\). The end \(e\) is said to be \emph{planar} if there exists a basic open neighborhood \(\underline{A}\) of \(e\) such that \(A\) embeds in \(\mathbb{R}^2\); otherwise, \(e\) is called \emph{non-planar}. We denote the set of all non-planar ends of \(M\) by \(\underline{\pi}_0^{\mathrm{np}}(M)\). The end \(e\) is said to be \emph{isolated} if it is an isolated point of the space of ends \(\underline{\pi}_0(M)\).

\section{Examples of Proper Maps Without Geometric Kernels}\label{S3}We present three types of examples illustrating that a non-\(\pi_1\)-injective proper map between two non-compact surfaces—possibly of infinite type—need not, in general, admit a geometric kernel. Each proper map constructed in these examples has degree either one or two. Here, \emph{degree} refers to the integral cohomological degree; that is, if \(f\colon (M_1, \partial M_1) \to (M_2, \partial M_2)\) is a proper map between connected, oriented, topological \(n\)-manifolds, then the \emph{(integral cohomological) degree} of \(f\) is the unique integer \(\deg(f)\) satisfying \(H^n_c(f)\big([M_2]\big) = \deg(f) \cdot [M_1]\), where \([M_j]\) denotes the preferred generator of the \(n\)th singular cohomology with compact support \(H^n_c(M_j, \partial M_j; \mathbb{Z}) \cong \mathbb{Z}\), compatible with the orientation of \(M_j\), for each \(j = 1, 2\). Several properties of the degree—such as proper homotopy invariance, multiplicativity, and geometric realization—are discussed in \cite{MR192475}.

Our first example is a modification of a counterexample due to Tucker.

\begin{example}\label{ex}
   There exist non-compact planar surfaces \( S' \) and \( S \), and a proper map \( f\colon S' \to S \) of degree \( 2 \), such that \( \ker \pi_1(f) \neq 0 \), but \( f \) has no geometric kernel. To construct such an example, we first recall Tucker's counterexample. Let \( c \) be a standard circle in \( \mathbb R^2 \), centered at the origin. Choose standard circles \( c_+ \) and \( c_- \) in \( \operatorname{interior}(c) \) such that neither \( c_+ \) nor \( c_- \) intersects the \( \mathrm{Y} \)-axis, and \( c_- \) is the reflection of \( c_+ \) across the \( \mathrm{Y} \)-axis. Let \( P \) be the compact bordered subsurface of $\mathbb R^2$ with three boundary components \( c \), \( c_+ \), and \( c_- \). Consider the quotient space $A$ obtained from \( P \) by identifying \( (x, y) \in P \) with \( (-x, y) \in P \) for \( x \neq 0 \), and identifying \( (0, y) \in P \) with \( (0, -y) \in P \). Notice that $P$ is a pair of pants and $A$ is an annulus. Moreover, the quotient map \( q\colon P \to A \) is a two-fold branched cover, with the branch point at \( q(0,0) \). In particular, $q\vert c_+\sqcup c_-\to q(c_+)=q(c_-)\subset\partial A$ and $q\vert c\to q(c)\subset\partial A$ are two-fold coverings. Since $\pi_1(A)$ is abelian and $\pi_1(P)$ is non-abelian, $\ker \pi_1(q)\neq 0$. But \( q \) has no geometric kernel, because in a complete hyperbolic pair of pants \( Y \) with three closed geodesic boundaries, any non-trivial simple loop in \( Y \) is freely homotopic to a simple loop whose image is a component of \( \partial Y \) \cite[Theorem 1.6.6]{MR2742784}.

  Let \( M \) be a non-compact bordered planar surface with one boundary component, possibly of infinite type, and let \( D_* \) be a punctured disk. Attach two copies of \( M \) and one copy of \( D_* \) to \( P \) by identifying a copy of \( \partial M \) with each of \( c_+ \) and \( c_- \), and identifying a copy of \( \partial D_* \) with \( c \). Let \( S' \) denote the resulting non-compact surface. Similarly, define \( S \) to be the non-compact surface obtained by attaching one copy of \( M \) and one copy of \( D_* \) to \( A \), by identifying a copy of \( \partial M \) with \( q(c_+) \) and a copy of \( \partial D_* \) with \( q(c) \). Then the map \( q\colon P \to A \) extends to a two-fold branched covering \( f\colon S' \to S \) such that \( f \) restricts to a homeomorphism from each copy of \( M \subset S' \) onto \( M \subset S \), and restricts to a two-fold covering map from \( D_* \subset S' \) onto \( D_* \subset S \). Moreover, \( \ker \pi_1(f)\neq0 \), since \( P \) is an essential subsurface of \( S' \) and \( \ker \pi_1(q)\neq 0 \).

We claim that \( q \) has no geometric kernel. Suppose, for contradiction, that there exists a non-trivial simple loop \( \gamma' \subset S' \) such that \( f(\gamma') \) is null-homotopic. Since \( \gamma' \) is simple, we may assume that \( \gamma' \cap D_* = \varnothing \). Choose an essential compact bordered subsurface \( N \) of \(M \subset S \) such that \( \partial M \) is a component of \( \partial N \) and \( \gamma' \) is contained in the interior of the essential compact bordered subsurface \( P \cup f^{-1}(N) \subset S' \). Thus, \( f \) restricts to a homeomorphism from each component of \( f^{-1}(N) \) onto \( N \).

Pick a tubular neighborhood \( \gamma' \times [-1,1] \) with \( \gamma' \times \{0\} \equiv \gamma' \), lying in the interior of \( P \cup f^{-1}(N) \). Let \( X' \) denote the $2$-manifold obtained by removing \( \gamma' \times (-1,1) \) from \( P \cup f^{-1}(N) \) and gluing a disk along each of \( \gamma' \times \{i\} \) for \( i = \pm 1 \). Since  \( \gamma' \) separates the planar surface \( S' \), it follows that \( X' \) has exactly two components. Denote the component of \( X' \) containing \( c \) by \( X'_c \). Notice that the map \( f\vert (P \cup f^{-1}(N)) \setminus \gamma' \times (-1,1) \) extends to a map \( \overline{f} \colon X'_c \to A \cup N \) because \( f(\gamma') \) is null-homotopic.

Since \( \overline f\vert f^{-1}(q(c)) = c \to q(c) \) is a two-fold covering, and \( \overline f \) restricts to a homeomorphism on each component of \( \partial X'_c \setminus c \) onto a component of \( \partial N \setminus q(c_+) \), the following commutative diagram
\[
\begin{tikzcd}
H_2(X'_c,\partial X'_c) \arrow[d, "H_2(\overline f)"'] \arrow[r] & H_1(\partial X'_c) \arrow[d, "H_1(\overline f\vert \partial X'_c)"] \\
H_2(A \cup N, \partial(A \cup N)) \arrow[r] & H_1(\partial(A \cup N))
\end{tikzcd}
\]where the horizontal maps are of the form \( \mathbb{Z} \ni 1 \mapsto \oplus_{i=1}^n 1 \in \oplus_{i=1}^n \mathbb{Z} \), implies that the left vertical map is multiplication by \( \pm 2 \). Hence, the preimage under \( \overline f \) of each component of \( \partial N \setminus q(c_+) \) has exactly two components. It follows that \( \partial X'_c = \partial (P \cup f^{-1}(N)) \), which is possible if and only if \( \gamma' \) bounds a disk in \( P \cup f^{-1}(N) \)—a contradiction, since \( \gamma' \) is non-trivial. \hfill$\square$
\end{example}

Our next example demonstrates the necessity of \( \underline{\pi}_0 \)-injectivity in the hypothesis of \Cref{finitepinch}. For this, we require \Cref{tool} stated below. In this result, the case of compact surfaces is due to Kneser, while the case of compact bordered surfaces follows from a combination of the following three facts: (1) the absolute degree and the integral cohomological degree coincide up to sign~\cite[Theorem 3.1]{MR192475}; (2) the absolute degree equals the geometric degree (Hopf’s Theorem)~\cite[Theorem 4.1]{MR192475},~\cite[Theorem 2.4]{MR875337}; and (3) Skora’s generalization of the Kneser theorem, which asserts that the Euler characteristic of the domain is at most the geometric degree times that of the codomain~\cite[Theorem 4.1]{MR875337}.

\begin{theorem}[Kneser-Epstein-Skora]\label{tool}
    Let \( F \) and \( G \) be connected, oriented, compact \( 2 \)-manifolds, and let \( f\colon (F,\partial F) \to (G,\partial G) \) be a map. If \( \deg(f)\neq0 \), then \(\chi(F) \leq |\deg(f)| \cdot \chi(G)\).
\end{theorem}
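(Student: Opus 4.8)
The plan is to treat the closed and bordered cases separately and, in both, reduce to an Euler‑characteristic inequality of Skora phrased in terms of the \emph{geometric degree} of a map. For the closed case ($\partial F=\partial G=\varnothing$): when $\chi(G)<0$ the inequality is precisely Kneser's classical theorem, so nothing more is needed. When $\chi(G)=0$, i.e.\ $G$ is a torus, I would observe that no map $S^2\to G$ has nonzero degree (it lifts to the universal cover $\mathbb{R}^2$ and is null‑homotopic), so $F$ is not a sphere and $\chi(F)\le 0=|\deg(f)|\cdot\chi(G)$. When $\chi(G)=2$, i.e.\ $G=S^2$, one simply notes $\chi(F)\le 2\le|\deg(f)|\cdot 2$ since $|\deg(f)|\ge 1$.

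For the bordered case I would argue as follows. Write $d=\deg(f)\neq 0$, let $A(f)$ denote Epstein's absolute degree, and let $\operatorname{gdeg}(f)$ denote the geometric degree, i.e.\ the least $k$ such that $f$ is homotopic through maps of pairs $(F,\partial F)\to(G,\partial G)$ to a map $g$ with $|g^{-1}(y)|\le k$ for every $y\in G$. Since $F$ and $G$ are oriented, $A(f)=|d|$ by \cite[Theorem 3.1]{MR192475}. By Hopf's theorem for surfaces, due to Epstein and to Skora \cite[Theorem 4.1]{MR192475}, \cite[Theorem 2.4]{MR875337}, one has $\operatorname{gdeg}(f)=A(f)$, hence $\operatorname{gdeg}(f)=|d|\ge 1$ (equivalently, $\deg(f)\neq 0$ forces every homotopic map of pairs to be surjective). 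Finally, Skora's generalization of Kneser's theorem \cite[Theorem 4.1]{MR875337} asserts $\chi(F)\le\operatorname{gdeg}(f)\cdot\chi(G)$, and substituting $\operatorname{gdeg}(f)=|d|$ gives $\chi(F)\le|d|\cdot\chi(G)$, as claimed; note this argument is uniform and in particular already covers the case $\chi(G)=1$ ($G$ a disk).

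Since every ingredient is a previously established theorem, the work here is bookkeeping rather than a new geometric argument, and that is where I expect the only real care to be required: one must check that the three notions of degree — integral cohomological, absolute (Epstein), and geometric — are being compared under mutually compatible hypotheses (orientability of \emph{both} $F$ and $G$, and maps of pairs carrying $\partial F$ into $\partial G$), and that Skora's inequality is invoked precisely in the category of compact surfaces, possibly with boundary, and maps of pairs in which it is stated. Reconciling these conventions, together with dispatching the low‑complexity base cases ($G$ a sphere, a torus, or a disk), is the whole content of the proof.
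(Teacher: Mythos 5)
Your proposal is correct and follows essentially the same route as the paper, which likewise attributes the closed case to Kneser and derives the bordered case from the same three cited facts: absolute degree equals $|\deg(f)|$ for oriented surfaces, absolute degree equals geometric degree (Hopf/Epstein/Skora), and Skora's inequality $\chi(F)\le\operatorname{gdeg}(f)\cdot\chi(G)$. Your explicit dispatch of the low-complexity closed targets ($G$ a sphere or torus) is a harmless refinement the paper leaves implicit.
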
Most of \Cref{tool} also follows as a special case of the degree estimate for Gromov's simplicial volume ~\cite[p.~8]{MR686042}. Indeed, if \( \varphi\colon M \to N \) is a proper map between finite-type surfaces with \( \chi(M), \chi(N) \leq 0 \), then \( |\deg(\varphi)| \cdot (-2\chi(N)) = |\deg(\varphi)| \cdot \|N\| \leq \|M\| = -2\chi(M) \). 

We now return to the setup of the previous example, where \( P \) denotes the same pair of pants with boundary components \( c \), \( c_+ \), and \( c_- \).\begin{example}\label{exnext}
    There exist non-compact surfaces \( S' \) and \( S \), and a proper map \( f\colon S' \to S \) of degree one, such that \( \ker \pi_1(f) \neq 0 \), but \( f \) has no geometric kernel. We first construct a non-\(\pi_1\)-injective map \(\varphi\) from \(P\) to the compact bordered surface \(A' \coloneqq P \cup \operatorname{interior}(c_-)\), satisfying \(\varphi(\partial P) \subseteq \partial A'\), such that \(\varphi\) has degree one but no geometric kernel. The construction of \(\varphi\) is based on a cell-by-cell extension process.

    Join \(c\) to \(c_+\) by a simple arc \(\lambda_1\) in \(P\), and \(c_-\) to \(c_+\) by a simple arc \(\lambda_2\) in \(P\), such that \(\lambda_i \cap \partial P\) is a two-point set for each \(i\). Moreover, we assume that \(\lambda_1 \cap \lambda_2 = \varnothing\). Thus, \(P^{(1)} \coloneqq \partial P \cup \lambda_1 \cup \lambda_2\) forms the 1-skeleton of a CW-structure of \(P\).  \begin{figure}[h]
        \centering
         \def\svgwidth{0.5\linewidth} 
         %% Creator: Inkscape 1.1.2 (b8e25be833, 2022-02-05), www.inkscape.org
%% PDF/EPS/PS + LaTeX output extension by Johan Engelen, 2010
%% Accompanies image file '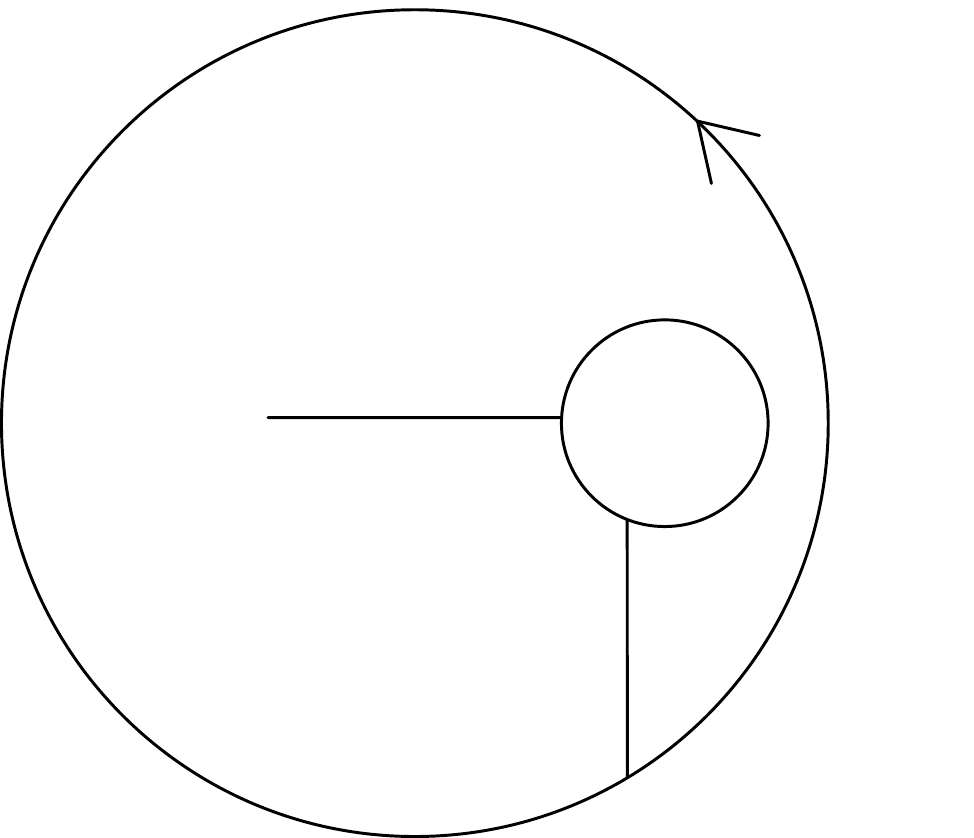' (pdf, eps, ps)
%%
%% To include the image in your LaTeX document, write
%%   \input{<filename>.pdf_tex}
%%  instead of
%%   \includegraphics{<filename>.pdf}
%% To scale the image, write
%%   \def\svgwidth{<desired width>}
%%   \input{<filename>.pdf_tex}
%%  instead of
%%   \includegraphics[width=<desired width>]{<filename>.pdf}
%%
%% Images with a different path to the parent latex file can
%% be accessed with the `import' package (which may need to be
%% installed) using
%%   \usepackage{import}
%% in the preamble, and then including the image with
%%   \import{<path to file>}{<filename>.pdf_tex}
%% Alternatively, one can specify
%%   \graphicspath{{<path to file>/}}
%% 
%% For more information, please see info/svg-inkscape on CTAN:
%%   http://tug.ctan.org/tex-archive/info/svg-inkscape
%%
\begingroup%
  \makeatletter%
  \providecommand\color[2][]{%
    \errmessage{(Inkscape) Color is used for the text in Inkscape, but the package 'color.sty' is not loaded}%
    \renewcommand\color[2][]{}%
  }%
  \providecommand\transparent[1]{%
    \errmessage{(Inkscape) Transparency is used (non-zero) for the text in Inkscape, but the package 'transparent.sty' is not loaded}%
    \renewcommand\transparent[1]{}%
  }%
  \providecommand\rotatebox[2]{#2}%
  \newcommand*\fsize{\dimexpr\f@size pt\relax}%
  \newcommand*\lineheight[1]{\fontsize{\fsize}{#1\fsize}\selectfont}%
  \ifx\svgwidth\undefined%
    \setlength{\unitlength}{467.26104973bp}%
    \ifx\svgscale\undefined%
      \relax%
    \else%
      \setlength{\unitlength}{\unitlength * \real{\svgscale}}%
    \fi%
  \else%
    \setlength{\unitlength}{\svgwidth}%
  \fi%
  \global\let\svgwidth\undefined%
  \global\let\svgscale\undefined%
  \makeatother%
  \begin{picture}(1,0.86084194)%
    \lineheight{1}%
    \setlength\tabcolsep{0pt}%
    \put(0,0){\includegraphics[width=\unitlength,page=1]{paper.pdf}}%
    \put(0.58668659,0.40590075){\makebox(0,0)[lt]{\lineheight{0}\smash{\begin{tabular}[t]{l}$c_+$\end{tabular}}}}%
    \put(0.40075769,0.81143516){\makebox(0,0)[lt]{\lineheight{0}\smash{\begin{tabular}[t]{l}$c$\end{tabular}}}}%
    \put(0.37470415,0.45259465){\makebox(0,0)[lt]{\lineheight{0}\smash{\begin{tabular}[t]{l}$\lambda_2$\end{tabular}}}}%
    \put(0.65910587,0.17982406){\makebox(0,0)[lt]{\lineheight{0}\smash{\begin{tabular}[t]{l}$\lambda_1$\end{tabular}}}}%
    \put(0,0){\includegraphics[width=\unitlength,page=2]{paper.pdf}}%
    \put(0.07458745,0.40600239){\makebox(0,0)[lt]{\lineheight{0}\smash{\begin{tabular}[t]{l}$c_-$\end{tabular}}}}%
    \put(0,0){\includegraphics[width=\unitlength,page=3]{paper.pdf}}%
    \put(0.55695247,0.32778531){\makebox(0,0)[lt]{\lineheight{0}\smash{\begin{tabular}[t]{l}$\mu$\end{tabular}}}}%
    \put(0.74760611,0.52755367){\makebox(0,0)[lt]{\lineheight{0}\smash{\begin{tabular}[t]{l}$\nu$\end{tabular}}}}%
    \put(0,0){\includegraphics[width=\unitlength,page=4]{paper.pdf}}%
  \end{picture}%
\endgroup%

         \caption{$P^{(1)}$}
         \label{fig1}
    \end{figure}Orient the circle \(c\) counter-clockwise, and the circles \(c_+\) and \(c_-\) clockwise. Also, orient \(\lambda_1\) and \(\lambda_2\) so that their starting points lie on \(c_+\) (see \Cref{fig1}). Now, define \(\varphi\) on \(P^{(1)}\) as follows: map \(c \cup \lambda_1\) onto \(c \cup \lambda_1\) by the identity map. Next, map \(c_+\) onto \(c_+\) by an orientation-preserving two-fold covering. Then, map \(c_-\) onto \(c_+\) by an orientation-reversing three-fold covering. We further assume that \(\varphi(\lambda_1 \cap c_+) = \varphi(\lambda_2 \cap c_+) = \varphi(\lambda_2 \cap c_-) = \lambda_1 \cap c_+\), and that \(\varphi\) restricted to either component of \(c_+ \setminus (\lambda_1 \cup \lambda_2)\) is a homeomorphism onto \(c_+ \setminus \lambda_1\). Finally, map the entire \(\lambda_2\) to \(\lambda_1 \cap c_+\). Let \(\gamma \colon \mathbb{S}^1 \to P^{(1)}\) be the loop described by \(\lambda_1 *  c * \overline{\lambda_1} * \mu * \lambda_2 * c_- * \overline{\lambda_2} * \nu\), where \(\mu\) and \(\nu\) are the arcs of \(c_+\) determined by the points \(\lambda_1 \cap c_+\) and \(\lambda_2 \cap c_+\). Since \(\varphi \circ \gamma\) is null-homotopic, \(\varphi\) can be extended to a map from \(P \cong P^{(1)} \cup_\gamma \mathbb{D}^2\) to \(A'\). Thus, we obtain a map \( \varphi \colon P \to A' \) such that \( \varphi\vert c \to c \) is a homeomorphism, \( \varphi\vert c_+ \to c_+ \) is a two-fold covering, and \( \varphi\vert c_- \to c_+ \) is a three-fold covering. Using an external collar, we may further assume that \( \varphi^{-1}(\partial A') = \partial P \). An argument similar to that in \Cref{ex} then shows that \(\ker \pi_1(\varphi) \neq 0\), but \( \varphi \) has no geometric kernel.

    Let \( M \) be a non-compact bordered surface with a single boundary component, which may be of infinite type, non-planar, or both, and let \( D_* \) denote a punctured disk. Attach two copies of \( D_* \) and one copy of \( M \) to \( P \) by identifying a copy of \( \partial D_* \) with each of \( c_+ \) and \( c_- \), and identifying a copy of \( \partial M \) with \( c \). Let \( S' \) denote the resulting non-compact surface. Similarly, define \( S \) to be the non-compact surface obtained by attaching one copy of \( M \) and one copy of \( D_* \) to \( A' \), by identifying a copy of \( \partial M \) with \( c_+ \) and a copy of \( \partial D_* \) with \( c \). Then the map \( \varphi\colon P \to A' \) extends to a degree one map \( f\colon S' \to S \) such that \( f \) restricts to a homeomorphism from \( M = f^{-1}(M) \subset S' \) onto \( M \subset S \), and to a two- or three-sheeted covering map from each copy of \( D_* \subset S' \) onto the unique copy of \( D_* \subset S \). As usual, \( \ker \pi_1(f)\neq 0 \). 

   We claim that \( f \) has no geometric kernel. On the contrary, assume that \( \gamma' \) is a non-trivial simple loop in \( S' \) such that \( f(\gamma') \) is null-homotopic. Depending on whether \( \gamma' \) separates \( S' \) or not, we consider two cases. 

If \( \gamma' \) separates \( S' \), then an argument using the naturality of the homology long exact sequence, similar to that given in \Cref{ex}, shows that \( \gamma' \) bounds an essential compact bordered subsurface \( S_{\gamma'}' \subset S' \) with \( \partial S_{\gamma'}' = \gamma' \). If the genus of \( S_{\gamma'}' \) is zero, then \( \gamma' \) must be contractible—a contradiction. On the other hand, if the genus of \( S_{\gamma'}' \) is positive, then \( \gamma' \) can be freely homotoped into \( M \), and hence remains non-trivial when projected to \( S \) by \( f \), since \( f\vert f^{-1}(M) \to M \) is a homeomorphism—again a contradiction.

Now consider the remaining case, namely that \(\gamma'\) does not separate \(S'\). Since \(\gamma'\) is simple, we may assume it is disjoint from both copies of \(D_*\) in \(S'\). Choose an essential compact bordered subsurface \(N \) of \(M \subset S\) such that \(\partial M\) is a component of \(\partial N\), and \(\gamma'\) lies in the interior of the essential compact bordered subsurface \(P \cup f^{-1}(N) \subset S'\). Recall that \(f\) restricts to a homeomorphism from \(f^{-1}(N)\) onto \(N\). Let \(\gamma' \times [-1,1]\) be a tubular neighborhood of \(\gamma'\) in the interior of \(P \cup f^{-1}(N)\), with \(\gamma' \times \{0\} \equiv \gamma'\). Define \(X'\) to be the surface obtained from \(P \cup f^{-1}(N)\) by removing \(\gamma' \times (-1,1)\) and gluing a disk along each boundary component \(\gamma' \times \{i\}\) for \(i = \pm 1\). Since \(\gamma'\) does not separate \(S'\), the resulting surface \(X'\) is connected. The restriction of \(f\) to \((P \cup f^{-1}(N)) \setminus \gamma' \times (-1,1)\) extends to a map \(\overline{f} \colon X' \to A' \cup N\), since \(f(\gamma')\) is null-homotopic. Moreover, \(\deg(\overline{f}) = \pm 1\), as \(\overline{f}\) maps \(f^{-1}(\partial N \setminus \partial A') = \partial X' \cap \overline{f}^{-1}(\partial N \setminus \partial A')\) homeomorphically onto \(\partial N \setminus \partial A'\).

Applying the inclusion--exclusion formula for Euler characteristic, we have $\chi(X') = \chi((P \cup f^{-1}(N)) \setminus \gamma' \times (-1,1)) + 2 = \chi(P \cup f^{-1}(N)) + 2.$ Since \(\chi(P \cup f^{-1}(N)) = \chi(P) + \chi(f^{-1}(N)) - \chi(P \cap f^{-1}(N)) = -1 + \chi(N)\), it follows that \(\chi(X') = 1 + \chi(N)\). By \Cref{tool}, $\chi(X') \leq |\deg(\overline{f})| \cdot \chi(A' \cup N).$ This gives \(1 + \chi(N) \leq\chi(A'\cup N)= \chi(A') + \chi(N) - \chi(A' \cap N)=\chi(N)\), a contradiction.

Thus, in both cases, we conclude that \(f\) has no geometric kernel. Moreover, observe that \( \underline{\pi}_0(f) \) is not injective, since \( f^{-1}(D_*) \) is the union of two disjoint properly embedded punctured disks in \( S' \). On the other hand, \( \underline{\pi}_1(f) \) is injective, as for each component \( V' \) of \( S' \setminus P \), the restriction \( f| V' \to f(V') \) is \( \pi_1 \)-injective.
 \hfill$\square$
\end{example}

The final example of this section demonstrates the necessity of the hypothesis in \Cref{finitepinch} that the proper map must have degree one. We continue to follow the notation established in \Cref{ex}.

\begin{example}\label{LNM}Let \( S \) denote the Loch Ness monster surface, i.e., the unique infinite genus surface with exactly one end. There exists a proper self-map \( f\colon S \to S \) of degree $2$ such that \( \ker \pi_1(f) \neq 0 \), yet \( f \) has no geometric kernel. To construct such an example, we modify Tucker’s counterexample once more. This time, we attach handles \( h_+ \) and \( h_- \) to \( P \) along \( c_+ \) and \( c_- \), respectively, and a handle \( h \) to \( A \) along \( q(c_+) = q(c_-) \). We then extend the map \( q \) by sending each of \( h_+ \) and \( h_- \) homeomorphically onto \( h \). This yields a two-fold branched covering \( q'\colon S_{2,1} \to S_{1,1} \), with branch point \( q'(0,0) = q(0,0) \). In particular, the restriction \( q'\vert\partial S_{2,1} \to \partial S_{1,1} \) is a two-fold covering map. 

Let \( \gamma_- \) (resp.\ \( \gamma_+ \)) be a circle in \( P \), based at \( (0,0) \), such that \( \gamma_- \) (resp.\ \( \gamma_+ \)) bounds an annulus in \( P \) with \( c_- \) (resp.\ \( c_+ \)). Assume that \( \gamma_+ \cap \gamma_- = \{(0,0)\} \), and that \( \gamma_- \) is the reflection of \( \gamma_+ \) across the \( \mathrm Y \)-axis. Orient both \( \gamma_- \) and \( \gamma_+ \) counterclockwise. Note that \( \gamma \coloneqq \gamma_- * \gamma_+ \) is a non-trivial loop in \( P \subset S_{2,1} \), while its image \( q'(\gamma) = q(\gamma) \) is null-homotopic. Thus, \( \ker \pi_1(q') \neq 0 \). 

We now extend \(q'\) to a map \(f\colon S \to S\). Consider the surfaces obtained by attaching a punctured disk along \(\partial S_{2,1}\) and another along \(\partial S_{1,1}\). This gives a map \(Q'\colon S_{2,0,1} \to S_{1,0,1}\) such that \(Q'\vert S_{2,1} = q'\), and the restriction of \(Q'\) to \(\operatorname{cl}(S_{2,0,1} \setminus S_{2,1})\) is a two-fold covering map onto \(\operatorname{cl}(S_{1,0,1} \setminus S_{1,1})\). Choose a sequence \(\{D_n\}\) of disks in \(S_{1,0,1} \setminus S_{1,1}\) converging to the end of \(S_{1,0,1}\), such that \(Q'\vert Q'^{-1}(D_n) \to D_n\) is a two-fold covering for each \(n\). Then \(Q'^{-1}(D_n)\) is a disjoint union of two disks in \(S_{2,0,1} \setminus S_{2,1}\), and \(Q'\) maps each component of \(Q'^{-1}(D_n)\) homeomorphically onto \(D_n\). Attach handles along each boundary component of \(S_{2,0,1} \setminus \bigcup_n \operatorname{int}(Q'^{-1}(D_n))\), and similarly along each boundary component of \(S_{1,0,1} \setminus \bigcup_n \operatorname{int}(D_n)\), and extend \(Q'\) homeomorphically over these handles to obtain a map \(f\colon S \to S\). Note that \(f\vert f^{-1}(S_{1,1}) = q'\colon S_{2,1} \to S_{1,1}\), and the restriction of \(f\) to \(\operatorname{cl}(S \setminus S_{2,1})\) is a two-fold covering onto \(\operatorname{cl}(S \setminus S_{1,1})\). Thus, \(\deg(f) = \pm 2\) and \(\ker \pi_1(f) \neq 0\). 

Finally, an argument similar to that in \Cref{exnext} shows that \(f\) has no geometric kernel. To prove this by contradiction, we consider two cases: one where a simple closed curve representing a geometric kernel separates \(S\), and one where it does not separate \(S\). In both cases, we apply \Cref{tool} to reach a contradiction, unlike in \Cref{exnext}, where \Cref{tool} are applied only in the non-separating case.

Observe that \( \underline{\pi}_0(f) \) is injective, since \( S \) has exactly one end. Moreover, \( \underline{\pi}_1(f) \) is injective because, for any component \( V' \) of \( S' \setminus P \), the restriction \( f| V' \to f(V') \) is \( \pi_1 \)-injective. \hfill$\square$
\end{example}

\begin{remark}
The author is not aware of any example of a non-$\pi_1$-injective self-map of the Loch Ness monster surface $S$ of degree one that lacks a geometric kernel. However, a related hypothesis may be proposed: there exists a non-$\pi_1$-injective, non-$\pi_1$-surjective self-map $f$ of $S$ of prime degree that also lacks a geometric kernel. If this hypothesis holds, such an example could then be constructed, since, in that case, any lift of $f$ with respect to the covering corresponding to the subgroup $\operatorname{im} \pi_1(f) \subseteq \pi_1(S)$ would be a non-$\pi_1$-injective self-map of $S$ of degree one that lacks a geometric kernel, by \cite[Corollary 3.4 and Proof of Theorem 3.1]{MR192475}. The truth of this hypothesis remains unknown to the author.
\end{remark}

\section{Cut-off Technique for Detecting Geometric Kernel}\label{S4}The main goal of this section is to analyze the behavior of a \(\underline{\pi}_1\)-injective proper map between non-compact surfaces by using the classification of \(\pi_1\)-injective proper maps. As an application, we then give a sufficient condition under which a degree-one map between non-compact surfaces has a geometric kernel.

\subsection{\texorpdfstring{$\pi_1$-injective proper maps}{π₁-injective proper maps}}Let \( M \) and \( N \) be two connected, non-compact $2$-manifolds. The aim of this section is to provide a proper homotopy classification of all \(\pi_1\)-injective proper maps from \(M\) to \(N\), subject to certain restrictions on such maps or on the spaces \(M\) and \(N\). This classification will be used extensively in the next section, in particular to prove \Cref{cut}. 

When \( \partial M = \varnothing = \partial N \), every \(\pi_1\)-injective proper map from \( M \) to \( N \) is properly homotopic to a finite-sheeted covering map, provided \( M \) is neither the plane nor the punctured plane; see \cite{Das2024pi1injective}. However, when \( \partial N \neq \varnothing \), an analogous classification requires the additional assumption that every component of \( \partial N \) is compact. For instance, let \( E \) be any compact, totally disconnected subset of \( (-\infty, 2] \times (0, \infty) \), and consider the map \( \varphi \) from \( S' \coloneqq \{(x,y) \in \mathbb{R}^2 \colon y \geq 0\} \setminus (E \cup \{(-1/2, 0), (1/2, 0), (3/2, 0)\}) \) to \( S \coloneqq \{(x,y) \in \mathbb{R}^2 \colon y \geq 0\} \setminus (E \cup \{(-1/2, 0)\}) \), defined by \( \varphi(x,y) \coloneqq (x,y) \) if \( x \leq 0 \), \( \varphi(x,y) \coloneqq (-x,y) \) if \( 0 \leq x < 1 \), and \( \varphi(x,y) \coloneqq (x - 2, y) \) if \( x \geq 1 \). Then \( \varphi \) is a \(\pi_1\)-bijective proper map between two non-homeomorphic bordered surfaces, with \( \varphi(\partial S') \subseteq \partial S \). On the other hand, the restriction \( \psi \coloneqq \varphi\vert S' \setminus (\{(-3,0)\}\cup\{(-3 + \frac 1n, 0) \colon n \in \mathbb{N}\}) \to S \setminus (\{(-3,0)\}\cup\{(-3 + \frac 1n, 0) \colon n \in \mathbb{N}\}) \) is a \(\pi_1\)-bijective proper map between two homeomorphic\footnote{We may apply \cite[Theorem~2.2]{MR542887} to show that the domain and codomain of \(\psi\) are homeomorphic. To this end, observe that the map \( -\infty \mapsto -\infty \), \( -3 \mapsto -3 \), \( -3 + \tfrac{1}{n} \mapsto -3 + \tfrac{1}{n+2} \) for all \( n \in \mathbb{N} \), \( -\tfrac{1}{2} \mapsto -3 + \tfrac{1}{2} \), \( \tfrac{1}{2} \mapsto -3 + 1 \), \( \tfrac{3}{2} \mapsto -\tfrac{1}{2} \), and \( +\infty \mapsto +\infty \) induces an isomorphism from the diagram of the domain of \(\psi\) onto the diagram of the codomain of \(\psi\).
} bordered surfaces that sends the boundary into the boundary, but still \( \psi \) is not properly homotopic to a homeomorphism, since \( \underline{\pi}_0(\psi) \) sends each of the ends \( (-1/2, 0) \), \( (1/2, 0) \), and \( (3/2, 0) \) to the same end \( (-1/2, 0) \). Accordingly, we henceforth assume that every component of \(\partial N\) is compact.

Moreover, following the theory of compact bordered surfaces \cite[Theorem 2.1]{MR2619608}, we restrict our attention to those \(\pi_1\)-injective proper maps \(f \colon M \to N\) that send \(\partial M\) into \(\partial N\). To illustrate the importance of controlling the behavior of a \(\pi_1\)-injective proper map on the boundary, consider the following situation. Suppose \(M'\) is a non-compact bordered surface such that every component of \(\partial M'\) is compact and \(M'\) has no planar end, and let \(f' \colon M' \to M'\) be a \(\pi_1\)-bijective proper self-map. If we want \(f'\) to be properly homotopic to a homeomorphism, then it is necessary to assume that for each loop \(\gamma' \subset \partial M'\), the image \(f'(\gamma')\) is freely homotopic to a loop in \(\partial M'\). This is because of the following two facts (where \(i\) denotes the geometric intersection number and \([\cdot]\) denotes free homotopy class): (1) if \(\alpha'\) is a loop in \(M'\) such that \(i([\alpha'], [\beta']) = 0\) for every loop \(\beta'\) in \(M'\), then \(\alpha'\) is freely homotopic to a loop in \(\partial M'\); (2) if \(h' \colon M' \to M'\) is homotopic to a homeomorphism, then for any two closed curves \(\alpha'\) and \(\beta'\) in \(M'\), we have \(i([h'(\alpha')], [h'(\beta')]) = i([\alpha'], [\beta'])\). Thus, combining the above observations, we propose the following theorem.

%This additional assumption suffices for our purposes—namely, proving \Cref{classify}, which in turn plays a key role in the proof of \Cref{cutoff}.

\begin{theorem}\label{classification}
    Let $M$ and $N$ be non-compact bordered surfaces such that boundary components of $N$ are compact. Suppose there is a proper map $f\colon (M, \partial M) \to (N, \partial N)$ so that $\pi_1(f)$ is a monomorphism. Then, there is a proper homotopy $H\colon (M\times[0,1],\partial M\times [0,1])\to (N,\partial N)$ from $f$ to a finite-sheeted covering map. 
\end{theorem}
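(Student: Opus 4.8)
The plan is to reduce \Cref{classification} to \Cref{classify} by first repairing the behaviour of $f$ along the boundary and then invoking the rel-boundary classification already available.

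First I would record two facts about the boundary. Every component of $\partial N$ is a circle, by hypothesis; and every component of $\partial M$ is a circle as well, since a line component $L$ would give, via properness of $f$ and closedness of $\partial M$ in $M$, a proper map $L\cong\mathbb R\to\mathbb S^1$ onto (a subset of) the boundary circle of $N$ containing $f(L)$ --- impossible. So each boundary circle $C$ of $M$ maps into a well-defined boundary circle $C'$ of $N$. Next, because $M$ is noncompact, no boundary circle of $M$ bounds a disk in $M$ --- such a disk, or an innermost subdisk of it, would be clopen in $M$ --- so $C$ is non-trivial and $[C]\ne 1$ in $\pi_1(M)$. Since $\pi_1(f)$ is a monomorphism, $f_*[C]\ne 1$ in $\pi_1(N)$; but $f_*[C]$ is the image under $\pi_1(C'\hookrightarrow N)$ of $\deg(f|_C)$ times a generator of $\pi_1(C')\cong\mathbb Z$, so $d_C:=\deg(f|_C)\ne 0$ for every boundary circle $C$ of $M$.

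Now I would fix a collar $c\colon\partial M\times[0,1]\hookrightarrow M$ and, working inside it, properly homotope $f$ to a map $f'$ whose restriction to each boundary circle $C$ is the standard $|d_C|$-fold covering $C\to C'$, suitably oriented. On a single circle this is just the fact that a map $\mathbb S^1\to\mathbb S^1$ of nonzero degree is homotopic, inside the target circle, to the covering of that degree; extending each such homotopy over its own collar slab $c(C\times[0,1])$ while leaving $f$ untouched outside a thinner sub-collar gives a homotopy $H'\colon(M\times[0,1],\partial M\times[0,1])\to(N,\partial N)$ from $f$ to $f'$. That $H'$ is proper rests on the observation that any compact subset of $M$ meets only finitely many boundary circles (a limit point of infinitely many would violate either closedness of $\partial M$ or the fact that $\partial M$ is a $1$-manifold), so only finitely many collar slabs are disturbed within any prescribed compact part of $M$, and none of the modifications moves images off the boundary circles or changes $f$ near the ends; this bookkeeping, in the presence of infinitely many boundary circles, is the one point I expect to require genuine care. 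Observe that $\pi_1(f')=\pi_1(f)$ is still a monomorphism.

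Finally, $f'$ satisfies all hypotheses of \Cref{classify}: it is a proper, $\pi_1$-injective map of pairs $(M,\partial M)\to(N,\partial N)$ restricting to a covering on each boundary component. \Cref{classify} then yields a proper homotopy rel $\partial M$ from $f'$ to a finite-sheeted covering map $g\colon M\to N$; concatenating with $H'$ produces the desired proper homotopy $H\colon(M\times[0,1],\partial M\times[0,1])\to(N,\partial N)$ from $f$ to $g$. The deep input --- Brown and Tucker's cut-and-rebuild machinery --- is entirely hidden inside \Cref{classify}; what a full write-up of \Cref{classification} must still supply is the boundary-degree argument above and the proper-homotopy-extension argument in the collar.
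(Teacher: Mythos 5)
Your proposal is correct and follows essentially the same route as the paper: show each component of $\partial M$ is a circle on which $f$ restricts to a map of nonzero degree (hence homotopic to a covering), extend that boundary homotopy to a proper homotopy of $f$, and then invoke \Cref{classify}. The only cosmetic difference is that the paper performs the extension by citing a proper homotopy extension theorem rather than building it by hand in a collar, and it leaves the nonzero-degree observation implicit, which you spell out.
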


Now, there are two potential approaches to proving \Cref{classification}. The first is to modify the proof of the analogous result in the boundaryless case \cite{Das2024pi1injective}, and the second is to adapt the proof of the corresponding statement for irreducible, boundary-irreducible, end-irreducible \(3\)-manifolds \cite[Theorem 4.2]{MR334225}. The former approach proceeds in two steps, described as follows. Let \(f\colon S' \to S\) be a \(\pi_1\)-injective proper map between two surfaces, with \(S'\) neither the plane nor the punctured plane. The first step is to assume that \(\pi_1(f)\) is an isomorphism and aim to show that \(f\) is properly homotopic to a homeomorphism. The second step handles the general case—where \(\pi_1(f)\) is merely a monomorphism—by lifting \(f\) to the covering space corresponding to the subgroup \(\operatorname{im}(\pi_1(f)) \subseteq \pi_1(S)\). Both steps crucially rely on establishing that either \(f\) or its lift has nonzero degree.

The latter approach, due to Brown and Tucker, is more direct in that it avoids reducing to the \(\pi_1\)-bijective case before addressing the general \(\pi_1\)-injective case, and it does not require showing that the given \(\pi_1\)-injective proper map has nonzero degree. To prove \Cref{classification}, we follow this approach and adapt it to dimension two. However, this method relies heavily on the end-irreducibility of both the domain and the codomain. A connected \(3\)-manifold \(P\) is said to be \emph{end-irreducible} if \(P \ne \mathbb{R}^3\) and, for each end \(e\) of \(P\), there exists a representative \(a \in e\) such that the inclusion-induced homomorphism \(\underline{\pi}_1(P, \underline{a}) \to \pi_1(P, \underline{a})\) is injective, where \(\pi_1(P, \underline{a})\) denotes the \emph{repeated fundamental group} of the end \(e\) based at \(\underline{a}\). The repeated fundamental group is defined in the same way as the proper fundamental group, except that the term “proper” is omitted from all relevant notions—namely, germ of a proper map, proper maps of pairs, and germ homotopy rel \(\underline{*}\) between proper maps of pairs; see \cite[p.~109]{MR334225}. To clarify these technical terms involved in the definition of end-irreducibility, consider the following equivalent statements about a connected manifold \(X\) (not necessarily $3$-dimensional); see \cite[Proposition~1.5]{MR2621405}: (1) for each end \(e\) of \(X\), there exists a representative \(a \in e\) such that the inclusion-induced homomorphism \(\underline{\pi}_1(X, \underline{a}) \to \pi_1(X, \underline{a})\) is injective; and (2) for every compact subset \(K \subseteq X\), there exists a compact subset \(K' \subseteq X\) with \(K \subset \operatorname{int}(K')\) such that any loop \(\ell \colon \mathbb{S}^1 \to X \setminus K'\) that is null-homotopic in \(X\) is also null-homotopic in \(X \setminus K\).

Now, the role of end-irreducibility is as follows. Suppose \(P\) and \(Q\) are connected, irreducible, boundary-irreducible, end-irreducible \(3\)-manifolds such that each component of \(\partial Q \sqcup \partial P\) is compact. Brown and Tucker showed that the end-irreducibility of \(P\) yields an exhausting sequence \(\{C_n\}\) for \(P\), satisfying the following conditions for each \(n\): \(C_n\) is a compact \(3\)-dimensional submanifold of \(P\); \(\operatorname{fr}(C_n) \subseteq P \setminus \partial P\); each component of \(\operatorname{fr}(C_n)\) is an incompressible surface in \(P\); \(C_n\) is connected; and each component of \(P \setminus C_n\) is unbounded \cite[Lemma~3.1]{MR334225}. Therefore, if \( f \colon (Q, \partial Q) \to (P, \partial P) \) is a proper map (not necessarily \(\pi_1\)-injective), then \( f \) can be properly homotoped through maps \((Q, \partial Q) \to (P, \partial P)\) so that, possibly after passing to a subsequence of \(\{C_n\}\), the sequence \(\{D_n \coloneqq f^{-1}(C_n)\}\) forms an exhausting sequence for \(Q\), satisfying all the properties of \(\{C_n\}\) except possibly the last two; see \cite[Proof of Theorem~4.2]{MR334225}. This conclusion follows from applying a theorem of Heil \cite[Corollary~3.2]{MR2619608} at each stage \(n\) to ``simplify'' the preimage of \(\operatorname{fr}(C_n)\), noting that each component of \(\operatorname{fr}(C_n)\) is incompressible. Thus, if we further assume that \(f\) is \(\pi_1\)-injective, Waldhausen’s theory \cite[Theorem 6.1]{MR224099} can be applied to each restriction \(f|(D_n, \partial D_n) \to (C_n, \partial C_n)\).
Now, to adapt the Brown--Tucker theory to dimension two—specifically to construct such an exhausting sequence—end-irreducibility is unnecessary.
\begin{theorem}\label{exhaustion}
    Let \( M \) be a connected, non-compact \( 2 \)-manifold such that each component of \(\partial M\), if any, is compact. Suppose \( M \neq \mathbb{R}^2 \). Then there exists an exhausting sequence \(\{C_n: n\geq 1\}\) for \( M \) such that, for each \(n\), 
\begin{enumerate}[(i)]
    \item\label{(i)} $C_n$ is a compact $2$-dimensional submanifold of $M$,
    \item\label{(ii)} each component of $\operatorname{fr}(C_n)$ is a circle in $M\setminus \partial M$,
    \item\label{(iii)} each component of $\operatorname{fr}(C_n)$ does not bound a disk in $M$,
    \item\label{(iv)} $C_n$ is connected,
    \item\label{(v)} all components of $M \setminus C_n$ are unbounded.
\end{enumerate}
\end{theorem}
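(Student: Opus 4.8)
The plan is to start from any exhaustion of $M$, upgrade it to an exhaustion by tame compact subsurfaces, and then remove the two features that can obstruct conditions \ref{(iii)}--\ref{(v)} — relatively compact complementary regions and disk-bounding frontier circles — in an order that makes each clean-up preserve what has already been arranged. First I would invoke the standard fact that every compact subset of $M$ is contained in a compact $2$-dimensional submanifold $A$ for which $A\cap\partial M$ is a union of whole boundary components of $M$ and $\operatorname{fr}_M(A)$ is a finite disjoint union of circles lying in $M\setminus\partial M$ (obtained, e.g., from a regular neighborhood in a triangulation, or from a generic sublevel set of a proper smooth function that is locally constant near $\partial M$). Applying this to an exhausting sequence of compact sets and passing to a subsequence yields an exhaustion $\{A'_n\}$ satisfying \ref{(i)} and \ref{(ii)} with $A'_n\subset\operatorname{int}(A'_{n+1})$. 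Since $M$ is connected, I can then join, inductively and inside the next term, the finitely many components of each $A'_n$ by disjoint embedded arcs thickened to $1$-handles; attaching a band along two boundary arcs keeps the frontier a disjoint union of circles, so this produces an exhaustion $\{A_n\}$ by \emph{connected} compact subsurfaces still satisfying \ref{(i)}, \ref{(ii)} and now \ref{(iv)}.

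Next I would eliminate bounded complementary regions. Because $\operatorname{fr}(A_n)$ is compact it has finitely many circle components, so $M\setminus A_n$ has only finitely many components; put
\[
C_n \;\coloneqq\; A_n \;\cup\; \bigcup\{\, U : U \text{ is a relatively compact component of } M\setminus A_n \,\}.
\]
Then $C_n$ is again a connected compact $2$-submanifold with $\operatorname{fr}(C_n)\subseteq\operatorname{fr}(A_n)\subseteq M\setminus\partial M$ a disjoint union of circles, and every component of $M\setminus C_n$ is one of the unbounded components of $M\setminus A_n$, so \ref{(v)} holds. A routine point-set check — using $A_n\subset\operatorname{int}(A_{n+1})$ together with the observation that any connected subset of $M\setminus A_n$ that meets a component $U$ of $M\setminus A_n$ must lie inside $U$ — shows that every relatively compact component of $M\setminus A_n$ is contained in $\operatorname{int}(C_{n+1})$, so that $C_n\subset\operatorname{int}(C_{n+1})$ and $\{C_n\}$ is an exhaustion satisfying \ref{(i)}, \ref{(ii)}, \ref{(iv)}, \ref{(v)}.

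It remains to secure \ref{(iii)}, and this is where the hypothesis $M\neq\mathbb{R}^2$ is needed (consistently, the conclusion fails for $M=\mathbb{R}^2$, where every circle bounds a disk). A connected, non-compact $2$-manifold all of whose boundary components are compact and which is not $\mathbb{R}^2$ is not simply connected, hence contains a circle $c_0$ that bounds no disk in $M$; discarding finitely many terms I may assume $c_0\subseteq C_n$ for every $n$, so in particular no $C_n$ is a disk. Suppose some component of $\operatorname{fr}(C_n)$ bounded a disk in $M$. Then, by an innermost-disk argument, there is a component $c$ of $\operatorname{fr}(C_n)$ bounding an embedded disk $\Delta$ in $M$ with $\operatorname{int}(\Delta)\cap\operatorname{fr}(C_n)=\varnothing$, so that $\operatorname{int}(\Delta)$ lies entirely on one side of $c$; that is, either $\operatorname{int}(\Delta)\subseteq\operatorname{int}(C_n)$ or $\operatorname{int}(\Delta)\subseteq M\setminus C_n$. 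In each case $\Delta$ is a non-empty clopen subset of the connected surface in which it sits — a component of $C_n$ in the first case, the closure of a component of $M\setminus C_n$ in the second — and therefore equals that whole surface. The first alternative makes $C_n=\Delta$ a disk, contradicting $c_0\subseteq C_n$; the second makes the closure of a component of $M\setminus C_n$ equal to the disk $\Delta$, hence relatively compact, contradicting \ref{(v)}. So no component of $\operatorname{fr}(C_n)$ bounds a disk, and $\{C_n\}$ satisfies all of \ref{(i)}--\ref{(v)}. I expect the main obstacle to be precisely this last ``clopen'' argument: verifying carefully that an innermost frontier disk must be an \emph{entire} complementary region or an \emph{entire} component of $C_n$, which is exactly what lets the already-arranged conditions \ref{(iv)} and \ref{(v)} rule out disk-bounding frontier circles; a minor but fiddly secondary point is checking that the absorption step does not disturb the nesting $C_n\subset\operatorname{int}(C_{n+1})$.
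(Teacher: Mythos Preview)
Your argument is correct and takes a genuinely different route from the paper. The paper splits into cases according to whether \(\partial M\) is empty. In the boundaryless case it invokes Goldman's inductive procedure to build an exhaustion in which \(C_0\) is a disk and each \(\operatorname{cl}(C_{k+1}\setminus C_k)\) is an annulus, pair of pants, or two-holed torus; since \(M\neq\mathbb{R}^2\) the sequence eventually leaves the ``annulus'' regime, and truncating there gives \ref{(i)}--\ref{(v)}. In the bounded case the paper caps off \(\partial M\) to get \(\widehat{M}\), builds the exhaustion of \(\widehat{M}\), places a carefully chosen family of disks in the complementary pieces to model the boundary circles, and then appeals to the classification of non-compact bordered surfaces to transport everything back to \(M\). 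Your proof is more elementary and uniform: you never distinguish the two cases, you avoid both Goldman's construction and the classification theorem, and you obtain \ref{(iii)} by the innermost-disk/clopen argument rather than by arranging it structurally. The trade-off is that the paper's construction yields extra information---the ``elementary step'' structure of \(\operatorname{cl}(C_{k+1}\setminus C_k)\)---which the author exploits later (for instance, to assume each \(\partial C_n\) has at least three components in the planar setting); your exhaustion satisfies exactly \ref{(i)}--\ref{(v)} and no more. One small point worth tightening in your write-up is the passage from ``\(M\) not simply connected'' to ``there exists an embedded circle bounding no disk'': this is standard, but you might note that if \(\partial M\neq\varnothing\) any boundary component works, while if \(\partial M=\varnothing\) and every embedded circle bounded a disk then your own exhaustion \(\{C_n\}\) (already satisfying \ref{(iv)} and \ref{(v)}) would force each \(C_n\) to be a disk by the same clopen argument, making \(M=\mathbb{R}^2\).
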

\begin{proof}
  First, assume that \(\partial M = \varnothing\). Then Goldman’s inductive procedure \cite[Chapter 8]{MR2616858} yields an exhausting sequence \(C_0 \subset C_1 \subset C_2 \subset \cdots\) of compact bordered subsurfaces of \(M\) such that \(C_0\) is a disk; \(\operatorname{cl}(C_{k+1} \setminus C_k)\) is, for each \(k \geq 0\), either an annulus, a pair of pants, or a two-holed torus; and \(\operatorname{cl}(C_{k+1} \setminus C_k) \cap C_k\) is a single circle for each \(k \geq 0\). Since \(M \neq \mathbb{R}^2\), not all \(\operatorname{cl}(C_{k+1} \setminus C_k)\) can be annuli. Therefore, after discarding the first few terms and re-indexing the sequence, we may assume that \(\operatorname{cl}(C_1 \setminus C_0)\) is either a pair of pants or a two-holed torus. The sequence \(\{C_n : n \geq 1\}\) now satisfies all five conditions \ref{(i)}--\ref{(v)}.

  From now on, we consider the other case; namely, we assume that \(\partial M \neq \varnothing\). By hypothesis, every component of \(\partial M\) is a circle. An end \(e \in \underline{\pi}_0(M)\) is said to be a \emph{rim point of \(M\)} if, for every compact subset \(K \subset M\), the unbounded component \(A\) of \(M \setminus K\), corresponding to \(e\) (i.e., \(e \in \underline{A}\)), contains infinitely many components of \(\partial M\). Denote the set of all rim points of \(M\) by \(\operatorname{rim}(M)\). Let \(\widehat{M}\) be the surface obtained by gluing a disk along each component of \(\partial M\). Then \(M\) is a properly embedded subsurface of \(\widehat M\), and each component of \(\partial M\) bounds a disk in \(\widehat M\). Moreover, if \(i \colon M \hookrightarrow \widehat M\) denotes the inclusion, then the induced map \(\underline\pi_0(i) \colon (\underline\pi_0(M), \underline\pi_0^\mathrm{np}(M)) \to (\underline\pi_0(\widehat M), \underline\pi_0^\mathrm{np}(\widehat M))\) is a homeomorphism. Let \(\{\widehat{C}_n: n\geq 1\}\) be an exhausting sequence for the surface \(\widehat{M}\) such that if \(\widehat{M} \neq \mathbb{R}^2\), then \(\{\widehat{C}_n: n\geq 1\}\) is constructed similarly to the sequence described in the previous paragraph, and if \(\widehat{M} = \mathbb{R}^2\), then each \(\widehat{C}_n\) is a disk. In particular, if \(\widehat{M} \neq \mathbb{R}^2\), then for each \(n \geq 1\), \(\operatorname{cl}(\widehat{C}_{n+1} \setminus \widehat{C}_n)\) is either an annulus, a pair of pants, or a two-holed torus, and \(\widehat{C}_1\) is either an annulus or a handle. It may happen that \(\partial M \cap \bigcup_n \operatorname{fr}(\widehat{C}_n) \neq \varnothing\). For this reason, we aim to construct a model \(M'\) for \(M\) inside \(\widehat{M}\) such that \(\partial M'\) does not intersect \(\bigcup_n \operatorname{fr}(\widehat{C}_n)\). Let \(\{h_l : l \in \mathscr{A}\}\) be a pairwise disjoint collection of essentially embedded handles in \(\widehat{M}\) such that \(\widehat{M} \setminus \bigcup_{l \in \mathscr{A}} h_l\) is embeddable in \(\mathbb{R}^2\). Without loss of generality, we may assume that each \(h_l\) is contained either in \(\operatorname{int}(\operatorname{cl}(\widehat{C}_{n+1} \setminus \widehat{C}_n))\) for some \(n\), or in \(\operatorname{int}(\widehat{C}_1)\). Pick a pairwise disjoint, locally finite collection \(\{\widehat{D}_j : j \in \mathscr{B}\}\) of embedded disks in \(\widehat{M}\) such that the following hold: the cardinality of \(\mathscr{B}\) equals the number of components of \(\partial M\); \(\bigcup_{j \in \mathscr{B}} \widehat{D}_j\) is disjoint from \(\bigcup_{l \in \mathscr{A}} h_l\); and a subsequence of \(\{\widehat{D}_j : j \in \mathscr{B}\}\) converges to an end \(e \in \underline{\pi}_0(\widehat{M})\) if and only if \(e \in \underline{\pi}_0(i)(\operatorname{rim}(M))\). For a similar construction, see \cite[Corollary A.8]{MR561583}. Moreover, we may assume that each \(\widehat{D}_j\) is contained in \(\operatorname{int}(\operatorname{cl}(\widehat{C}_{n+1} \setminus \widehat{C}_n))\) for some \(n\). In particular, the sets \(\bigcup_n \operatorname{fr}(\widehat{C}_n)\), \(\bigcup_{l \in \mathscr{A}} h_l\), and \(\bigcup_{j \in \mathscr{B}} \widehat{D}_j\) are three pairwise disjoint subsets of \(\widehat{M}\). Define \(M' \coloneqq \widehat{M} \setminus \operatorname{int}(\bigcup_{j \in \mathscr{B}} \widehat{D}_j)\). Thus, \(M'\) is a properly embedded subsurface of \(M\). Furthermore, if \(i' \colon M' \hookrightarrow M\) denotes the inclusion, then the induced map \(\underline{\pi}_0(i') \colon (\underline{\pi}_0(M'), \underline{\pi}_0^\mathrm{np}(M')) \to (\underline{\pi}_0(\widehat{M}), \underline{\pi}_0^\mathrm{np}(\widehat{M}))\) is a homeomorphism such that \(\underline{\pi}_0(i)(\operatorname{rim}(M)) = \underline{\pi}_0(i')(\operatorname{rim}(M'))\). In particular, we have a homeomorphism from \(\underline{\pi}_0(M)\) onto \(\underline{\pi}_0(M')\) that sends \(\underline{\pi}_0^\mathrm{np}(M)\) and \(\operatorname{rim}(M)\) onto \(\underline{\pi}_0^\mathrm{np}(M')\) and \(\operatorname{rim}(M')\), respectively. By \cite[Theorem A.7]{MR561583}, there exists a homeomorphism \(\varphi\) from \(M'\) onto \(M\). The required exhausting sequence for \(M\) is then given by \(\{\varphi(\widehat{C}_n \cap M') : n \geq 1\}\).
\end{proof}

As an application, we show that every connected \(2\)-manifold \(M\) satisfying the hypotheses of \Cref{exhaustion} is end-irreducible. %Note, however, that none of our subsequent results in dimension two require end-irreducibility.

\begin{proposition}\label{endirr}
     Let \( M \) be a connected, non-compact \( 2 \)-manifold such that each component of \(\partial M\), if any, is compact. Suppose \( M \neq \mathbb{R}^2 \). Then for each end \(e\) of \(M\), there exists a representative \(a \in e\) such that the inclusion-induced homomorphism \(\underline{\pi}_1(M, \underline{a}) \to \pi_1(M, \underline{a})\) is injective.
\end{proposition}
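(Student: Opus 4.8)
The plan is to verify for $M$ the second of the two equivalent conditions for end-irreducibility recorded just before \Cref{exhaustion}: for every compact $K\subseteq M$ there exists a compact $K'\subseteq M$ with $K\subset\operatorname{int}(K')$ such that every loop in $M\setminus K'$ that is null-homotopic in $M$ is already null-homotopic in $M\setminus K$. Since condition $(1)$ in that list — for each end $e$ there is a representative $a\in e$ with $\underline{\pi}_1(M,\underline a)\to\pi_1(M,\underline a)$ injective — is exactly the conclusion of \Cref{endirr}, proving condition $(2)$ and invoking the equivalence $(1)\Leftrightarrow(2)$ (see \cite[Proposition~1.5]{MR2621405}) will finish the proof.

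So fix a compact $K\subseteq M$ and apply \Cref{exhaustion} to obtain an exhausting sequence $\{C_n:n\geq1\}$ satisfying \ref{(i)}--\ref{(v)}. Choose $n$ with $K\subseteq\operatorname{int}(C_n)$ and put $K'\coloneqq C_n$, so $K\subset\operatorname{int}(K')$. Let $\ell\colon\mathbb{S}^1\to M\setminus K'$ be null-homotopic in $M$. Its image lies in a single component $W$ of $M\setminus C_n$, which is unbounded by \ref{(v)}; set $V\coloneqq\operatorname{cl}_M(W)$. Then: (a) $V$ is a connected non-compact subsurface of $M$; (b) $\operatorname{fr}_M(W)\subseteq\operatorname{fr}_M(C_n)\subseteq C_n$ is disjoint from $\operatorname{int}(C_n)$, whence $V=W\cup\operatorname{fr}_M(W)\subseteq M\setminus\operatorname{int}(C_n)\subseteq M\setminus K$; and (c) no component of $\partial V$ bounds a disk in $M$ — those contained in $\operatorname{fr}_M(C_n)$ are circles in $\operatorname{int}(M)$ bounding no disk in $M$ by \ref{(ii)}--\ref{(iii)}, and a component of $\partial M$ lying in $\partial V$ bounds no disk in $M$ because $M$ is connected and non-compact (if it did, capping it off with an exterior disk would embed a $2$-sphere in a connected surface, forcing $M$ to be a disk). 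Granting that the inclusion $V\hookrightarrow M$ is $\pi_1$-injective, the loop $\ell\subseteq V$, which is null-homotopic in $M$, is null-homotopic in $V\subseteq M\setminus K$; this is precisely condition $(2)$.

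The crux is therefore the following assertion: \emph{a properly embedded connected subsurface $V$ of an orientable surface $M$, no component of whose boundary bounds a disk in $M$, is $\pi_1$-injective.} I would prove it by the standard innermost-disk argument, using a null-homotopy $g\colon\mathbb{D}^2\to M$ of a loop in $V$ whose image avoids $\operatorname{fr}_M(V)$ — which is automatic in our setting since $\operatorname{fr}_M(V)\subseteq\operatorname{fr}_M(C_n)=\operatorname{fr}_M(K')\subseteq K'$ while $\operatorname{im}(\ell)\subseteq M\setminus K'$. Homotope $g$ rel $\partial\mathbb{D}^2$ to be transverse to the $1$-manifold $\operatorname{fr}_M(V)$, so that $g^{-1}(\operatorname{fr}_M(V))$ is a finite disjoint union of circles in $\operatorname{int}(\mathbb{D}^2)$. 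If it is empty, $g(\mathbb{D}^2)$ — connected and meeting $\operatorname{im}(\ell)$ — lies in the component of $M\setminus\operatorname{fr}_M(V)$ containing $\operatorname{im}(\ell)$, which is the topological interior of $V$ in $M$, so $g(\mathbb{D}^2)\subseteq V$ and we are done. Otherwise pick an innermost component $\sigma$, bounding a subdisk $\Delta$ with $\operatorname{int}(\Delta)\cap g^{-1}(\operatorname{fr}_M(V))=\varnothing$; then $g(\sigma)$ lies in a single boundary circle $c$ of $V$, and $g|_{\Delta}$ exhibits $[g|_{\sigma}]$ as trivial in $\pi_1(M)$. Since $c$ bounds no disk in $M$ and $\pi_1(M)$ is torsion-free (it is free, $M$ being non-compact), $[c]$ has infinite order, so $g|_{\sigma}\colon\sigma\to c$ has degree $0$ and is therefore null-homotopic inside $c$; this lets me homotope $g$ rel $\partial\mathbb{D}^2$ to push the image of a slightly enlarged subdisk across a collar of $c$, strictly decreasing the number of components of $g^{-1}(\operatorname{fr}_M(V))$ while creating none (enlarge $\Delta$ little enough to miss the remaining preimage circles). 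Iterating reduces to the empty case.

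I expect this lemma to be the only genuine obstacle — specifically, treating an innermost circle that maps to a boundary circle of $V$ with degree zero, where the collar push-off must be performed rel $\partial\mathbb{D}^2$ and without spawning new intersection circles; everything else is bookkeeping with \Cref{exhaustion} and the recorded equivalence. Orientability, assumed throughout the paper, enters exactly in guaranteeing that $\pi_1(M)$ is torsion-free, which is what forces the degree to vanish. If preferred, this subsurface lemma may instead be cited from the surface-topology literature — it belongs to the circle of ideas around \cite{MR214087} — and the proof abbreviated accordingly.
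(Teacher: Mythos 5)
Your proposal is correct and takes essentially the same route as the paper: reduce to condition (2) via \cite[Proposition~1.5]{MR2621405}, choose $K'=C_n$ from the exhausting sequence of \Cref{exhaustion}, and push the null-homotopy off the non-trivial frontier circles so that its image stays in a single unbounded complementary component. The only difference is one of packaging: you carry out the transversality/innermost-disk argument by hand (your auxiliary $\pi_1$-injectivity claim for $V$ is the paper's \Cref{rhofunctor}), whereas the paper delegates exactly this step to the two-dimensional transversality homotopy theorem \cite[Lemma~2.2]{MR2619608}.
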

   
\begin{proof}
    By \cite[Proposition~1.5]{MR2621405}, it is enough to show that for every compact subset \(K \subseteq M\), there exists a compact subset \(K' \subseteq M\) with \(K \subset \operatorname{int}(K')\) such that any loop \(\ell \colon \mathbb{S}^1 \to M \setminus K'\) that is null-homotopic in \(M\) is also null-homotopic in \(M \setminus K\). So, let \(K\) be a compact subset of \(M\), and pick an exhausting sequence \(\{C_n\}\) for \(M\) satisfying conditions \ref{(i)}--\ref{(v)} of \Cref{exhaustion}. Choose \(n\) large enough so that \(K \subset \operatorname{int}(C_n)\). Suppose \(\gamma \colon \mathbb{S}^1 \to M \setminus C_n\) is a loop that extends to a map \(\widetilde{\gamma} \colon \mathbb{D}^2 \to M\), where \(\mathbb{D}^2 \coloneqq \{z \in \mathbb{C} : |z| \leq 1\}\). Choose some large \(m > n\) such that \(\widetilde{\gamma}(\mathbb{D}^2) \subseteq C_m\). Since each component of \(\operatorname{fr}(C_n)\) is a non-trivial circle in \(\operatorname{int}(C_m)\), and \(\widetilde{\gamma}(\partial \mathbb{D}^2) \cap \operatorname{fr}(C_n) = \varnothing\), the transversality homotopy theorem in dimension two \cite[Lemma~2.2]{MR2619608} implies that \(\widetilde{\gamma} \colon \mathbb{D}^2 \to C_m\) can be homotoped rel\ \(\partial \mathbb{D}^2\) to a map \(\Gamma \colon \mathbb{D}^2 \to C_m\) such that \(\Gamma^{-1}(\operatorname{fr}(C_n)) = \varnothing\). Thus, the connected set \(\Gamma(\mathbb{D}^2)\) must be contained entirely in exactly one of the following: \(\operatorname{int}(C_n)\) or \(C_m \setminus C_n\). The former is not possible, since \(\Gamma(\partial \mathbb{D}^2) = \gamma(\mathbb{S}^1) \subseteq M \setminus C_n\). It follows that \(\gamma \colon \mathbb{S}^1 \to M \setminus C_n\) extends to a map \(\mathbb{D}^2 \to C_m \setminus C_n\). Since \(C_m \setminus C_n \subset M \setminus K\), we are done.
\end{proof}
By applying the transversality homotopy theorem in dimension two \cite[Lemma~2.2]{MR2619608} in a similar manner, one can prove the following result.
\begin{lemma}\label{rhofunctor}
    Assume \( M \) satisfies the conditions of \Cref{endirr}. Let \(B\) be a properly embedded subsurface of \(M\) such that \(\operatorname{fr}(B)\) is a finite, pairwise disjoint collection of non-trivial circles in \(M\setminus \partial M\). Then the inclusion \(B \hookrightarrow M\) is \(\pi_1\)-injective.
\end{lemma}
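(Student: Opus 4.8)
The claim is that if $B$ is a properly embedded subsurface of $M$ (where $M$ satisfies the hypotheses of \Cref{endirr}) whose frontier $\operatorname{fr}(B)$ is a finite pairwise disjoint union of non-trivial circles in $\iota M$, then $\pi_1(B) \to \pi_1(M)$ is injective. The plan is to mimic the proof of \Cref{endirr}: take a loop $\gamma$ in $B$ that bounds a disk $\widetilde\gamma\colon \mathbb{D}^2 \to M$, and use two-dimensional transversality to homotope $\widetilde\gamma$ rel $\partial \mathbb{D}^2$ so that the preimage of $\operatorname{fr}(B)$ is well-controlled; then an innermost-disk argument lets me push the disk entirely into $B$.

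**Key steps.** First I would base $\gamma$ at a point and assume $\gamma(\mathbb{S}^1) \subset \iota B$ (after a small homotopy, since $\operatorname{fr}(B)$ is a collection of circles in $\iota M$ and $\gamma$ lies in $B$, I may assume $\gamma$ misses $\operatorname{fr}(B)$). Since $\widetilde\gamma(\mathbb{D}^2)$ is compact, apply the two-dimensional transversality homotopy theorem \cite[Lemma~2.2]{MR2619608} to homotope $\widetilde\gamma$ rel $\partial\mathbb{D}^2$ to a map $\Gamma\colon \mathbb{D}^2 \to M$ such that $\Gamma^{-1}(\operatorname{fr}(B))$ is a finite disjoint union of circles in the open disk $\iota\mathbb{D}^2$ (disjoint from $\partial\mathbb{D}^2$, which is legitimate since $\Gamma(\partial\mathbb{D}^2) = \gamma(\mathbb{S}^1)$ misses $\operatorname{fr}(B)$). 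If $\Gamma^{-1}(\operatorname{fr}(B)) = \varnothing$, then $\Gamma(\mathbb{D}^2)$ is connected and misses $\operatorname{fr}(B)$, hence lies in a single component of $M \setminus \operatorname{fr}(B)$; since $\Gamma(\partial\mathbb{D}^2) \subset \iota B$, that component is contained in $B$, and we are done. Otherwise, choose a circle $c$ in $\Gamma^{-1}(\operatorname{fr}(B))$ that is innermost in $\mathbb{D}^2$, bounding a sub-disk $\Delta \subset \iota\mathbb{D}^2$ with $\Gamma^{-1}(\operatorname{fr}(B)) \cap \Delta = \partial\Delta = c$. Then $\Gamma|_{\partial\Delta}$ maps into $\operatorname{fr}(B)$, which by hypothesis is a disjoint union of \emph{non-trivial} circles in $M$; but $\Gamma|_\Delta$ shows $\Gamma|_{\partial\Delta}$ is null-homotopic in $M$. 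I must be careful here: $\Gamma|_{\partial\Delta}$ maps into a single component $c_0$ of $\operatorname{fr}(B)$ (as $\Delta$ is connected), and a loop in a circle $c_0$ that is null-homotopic in $M$ represents $0 \in \pi_1(c_0) \cong \mathbb{Z}$, because the inclusion $c_0 \hookrightarrow M$ is $\pi_1$-injective (a non-trivial circle in a surface is $\pi_1$-injective, by the discussion following the definition of non-trivial circle in \Cref{S2}). Hence $\Gamma|_{\partial\Delta}$ is null-homotopic \emph{within} $c_0$, so I may redefine $\Gamma$ on $\Delta$ by a null-homotopy in a regular neighborhood of $c_0$, and then push $\Gamma(\Delta)$ off $c_0$ to one side. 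After a further small homotopy this strictly decreases the number of components of $\Gamma^{-1}(\operatorname{fr}(B))$ (or at least the number of innermost ones), and I iterate. Since $\Gamma^{-1}(\operatorname{fr}(B))$ is finite, after finitely many steps it is empty, reducing to the base case.

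**Main obstacle.** The delicate point is the innermost-disk surgery: I need to modify $\Gamma$ on $\Delta$ so that the new map still extends the original $\gamma$ on $\partial\mathbb{D}^2$ and has strictly fewer frontier-preimage circles, without creating new ones. The issue is that $\Gamma|_{\partial \Delta}$ need not be constant, only null-homotopic in the circle $c_0$; replacing $\Gamma|_\Delta$ by such a null-homotopy keeps its image in $c_0 \subset \operatorname{fr}(B)$, so one additionally needs to push the image of $\Delta$ slightly to one side of $c_0$ using a bicollar $c_0 \times (-1,1)$ of $c_0$ in $M$ — this is where $\operatorname{fr}(B) \subset \iota M$ and the embeddedness of $B$ are used, guaranteeing such a bicollar exists with $c_0 \times (0,1)$ on one side. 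One must verify that this local modification does not introduce intersections with the \emph{other} components of $\operatorname{fr}(B)$ (true, as the bicollar can be taken small and $\operatorname{fr}(B)$ is a finite disjoint collection) and that the count genuinely drops. An alternative, cleaner bookkeeping is to choose among all valid homotoped representatives $\Gamma$ one that \emph{minimizes} the number of components of $\Gamma^{-1}(\operatorname{fr}(B))$, and then derive a contradiction from the existence of an innermost circle via exactly the surgery above — this is the standard packaging and is likely what the paper intends. Everything else is routine two-dimensional transversality and the elementary fact that a non-trivial circle is $\pi_1$-injective.
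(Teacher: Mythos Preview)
Your proof is correct and follows essentially the same approach as the paper. The paper does not spell out a proof but simply says to apply the two-dimensional transversality homotopy theorem \cite[Lemma~2.2]{MR2619608} ``in a similar manner'' to the proof of \Cref{endirr}; that lemma already packages the innermost-disk surgery you describe, so the paper's one-line proof and your explicit argument are the same proof at different levels of detail.
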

\begin{remark} 
     One may use Brown’s \( \wp \)-functor \cite[§2]{MR356041} to give an alternative proof of \Cref{endirr}. Here is an outline of this alternative argument. Let \(\{C_n \colon n \ge 0\}\) be an exhausting sequence for \(M\) satisfying properties \ref{(i)}--\ref{(v)} of \Cref{exhaustion}. For each \(n \ge 0\), let \(B_n\) denote the component of \(M \setminus C_n\) such that \(e \in \underline{B_n}\). Choose a proper map \(b \colon [0, \infty) \to M\) representing \(e\) such that \(b([n, \infty)) \subseteq B_n\) for each \(n \ge 0\). Then, by \Cref{rhofunctor}, the inclusion-induced homomorphism \(\pi_1(B_n, b(n)) \to \pi_1(M, b(n))\) is injective for every \(n \ge 0\). Consider the inverse sequence of groups \(\{\pi_1(B_n, b(n)) \colon n \ge 0\}\) determined by the inclusion maps followed by change of base point along the path \(b\vert[n, n+1]\). Similarly, we have the inverse sequence of groups \(\{\pi_1(M, b(n)) \colon n \ge 0\}\). Applying \(\wp\)-functor, we obtain a monomorphism \(\wp(\{\pi_1(B_n, b(n))\}) \to \wp(\{\pi_1(M, b(n))\})\). There are natural isomorphisms \(\wp(\{\pi_1(B_n, b(n))\}) \cong \underline{\pi}_1(M, \underline{b})\) \cite[p.~45]{MR356041} and \(\wp(\{\pi_1(M, b(n))\}) \cong \pi_1(M, \underline{b})\) \cite[Proof of Lemma~2.1]{MR511426}. It follows that the inclusion-induced homomorphism \(\underline{\pi}_1(M, \underline{b}) \to \pi_1(M, \underline{b})\) is injective. Hence, the same holds for \(\underline{\pi}_1(M, \underline{a}) \to \pi_1(M, \underline{a})\) for any representative \(a\) of \(e\).
\end{remark}
We are now ready to prove \Cref{classification}. To this end, we first reduce \Cref{classification} to a special case (see \Cref{specialcase}), namely when \(f\vert\partial M \to \partial N\) is a local homeomorphism. We then handle the general case, where no assumption is made on the map \(f\vert\partial M \to \partial N\). The following lemmas will be used to establish the special case.

\begin{lemma}\label{aux}
    Let \( \varphi \) and \( \psi \) be two homotopic self-maps of \( \mathbb{S}^1 \). Then \( \varphi \times \operatorname{Id}_{[-1,1]} \) is homotopic rel \( \mathbb{S}^1 \times \{\pm 1\} \) to a self-map \( \Phi \) of \( \mathbb{S}^1 \times [-1,1] \) such that \( \Phi^{-1}(\mathbb{S}^1 \times [-1/2,1/2]) = \mathbb{S}^1 \times [-1/2,1/2] \) and \( \Phi(z,t) = (\psi(z), t) \) for all \( (z,t) \in \mathbb{S}^1 \times [-1/2,1/2] \).
\end{lemma}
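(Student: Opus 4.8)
The plan is to do everything through maps that preserve the \([-1,1]\)-coordinate, so that the preimage condition on the middle subcylinder holds on the nose. Since \(\varphi\) and \(\psi\) are homotopic, fix a homotopy \(h\colon \mathbb{S}^1 \times [0,1] \to \mathbb{S}^1\) with \(h_0 = \varphi\) and \(h_1 = \psi\). Fix also a continuous function \(\rho\colon [-1,1] \to [0,1]\) with \(\rho(\pm 1) = 0\) and \(\rho \equiv 1\) on \([-1/2, 1/2]\) (for instance, affine on \([-1,-1/2]\) and on \([1/2,1]\)). Define \(\Phi\colon \mathbb{S}^1 \times [-1,1] \to \mathbb{S}^1 \times [-1,1]\) by \(\Phi(z,t) \coloneqq (h_{\rho(t)}(z),\, t)\).

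First I would verify that \(\Phi\) has the asserted properties. It is continuous, being assembled from \(h\) and \(\rho\). It preserves the second coordinate, so \(\Phi^{-1}(\mathbb{S}^1 \times [-1/2,1/2]) = \mathbb{S}^1 \times [-1/2,1/2]\) exactly. On \(\mathbb{S}^1 \times [-1/2,1/2]\) we have \(\rho(t) = 1\), hence \(\Phi(z,t) = (h_1(z),t) = (\psi(z),t)\). On \(\mathbb{S}^1 \times \{\pm 1\}\) we have \(\rho(\pm 1) = 0\), hence \(\Phi(z,\pm1) = (h_0(z),\pm1) = (\varphi(z),\pm1)\), so \(\Phi\) agrees there with \(\varphi \times \operatorname{Id}_{[-1,1]}\).

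Next I would exhibit the homotopy. Set \(H\colon \mathbb{S}^1 \times [-1,1] \times [0,1] \to \mathbb{S}^1 \times [-1,1]\) by \(H(z,t,s) \coloneqq (h_{s\rho(t)}(z),\, t)\). This is continuous; \(H(\cdot,\cdot,0) = (h_0(z),t) = (\varphi \times \operatorname{Id}_{[-1,1]})(z,t)\); \(H(\cdot,\cdot,1) = (h_{\rho(t)}(z),t) = \Phi(z,t)\); and for every \(s \in [0,1]\) and every \(z\), \(H(z,\pm1,s) = (h_{s\rho(\pm1)}(z),\pm1) = (h_0(z),\pm1) = (\varphi(z),\pm1)\), so \(H\) is a homotopy rel \(\mathbb{S}^1 \times \{\pm 1\}\) from \(\varphi \times \operatorname{Id}_{[-1,1]}\) to \(\Phi\), as required.

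As for the main obstacle: there is essentially none of substance — the only nontrivial input is the existence of the homotopy \(h\), which is precisely the hypothesis. The only care needed is the bookkeeping device \(\rho\): routing the homotopy parameter through \(\rho\) makes the deformation stationary on the two boundary circles, and keeping every map level-preserving secures the equality \(\Phi^{-1}(\mathbb{S}^1 \times [-1/2,1/2]) = \mathbb{S}^1 \times [-1/2,1/2]\) rather than merely an isotopy-level statement.
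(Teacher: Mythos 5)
Your proof is correct and takes essentially the same approach as the paper: where the paper invokes the homotopy extension theorem on the two outer collars \(\mathbb{S}^1\times[-1,-a]\) and \(\mathbb{S}^1\times[a,1]\) and then pastes with \(\psi\times\operatorname{Id}\) in the middle, you simply write down that extension explicitly via the cutoff function \(\rho\). Your level-preserving formula makes the preimage condition immediate and is, if anything, a cleaner presentation of the same idea.
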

\begin{proof}
    Let \(a\in (1/2,1)\). By the homotopy extension theorem, there exist a self-map \(\Phi_l\) of \(\mathbb{S}^1 \times [-1, -a]\) homotopic rel \(\mathbb{S}^1\times\{-1\}\) to \(\varphi\times\operatorname{Id}_{[-1,-a]}\) such that \(\Phi_l(z, -a) = (\psi(z), -a)\) for all \(z \in \mathbb{S}^1\). Similarly, there exists a self-map \(\Phi_r\) of \(\mathbb{S}^1 \times [a, 1]\) homotopic rel \(\mathbb{S}^1\times\{1\}\) to \(\varphi\times\operatorname{Id}_{[a,1]}\) such that \(\Phi_r(z, a) = (\psi(z), a)\) for all \(z \in \mathbb{S}^1\). Pasting \(\Phi_l\), \(\psi \times \operatorname{Id}_{[-a, a]}\), and \(\Phi_r\), we obtain the desired \(\Phi\).
\end{proof}

\begin{lemma}\label{auxnext}
    Let \( g \colon \mathbb{S}^1 \times [0,3] \to \mathbb{S}^1 \times [0,2] \) be a map such that  \( g^{-1}(\mathbb{S}^1 \times \{0\}) = \mathbb{S}^1 \times [1,2] \).  Then \( g \) can be homotoped, rel \( \mathbb{S}^1 \times \{0,3\} \), to a map that sends  \( \mathbb{S}^1 \times [0,3] \) into \( \mathbb{S}^1 \times (0,2] \).
\end{lemma}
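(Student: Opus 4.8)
The plan is to alter only the $[0,2]$-coordinate of $g$, leaving the $\mathbb{S}^1$-coordinate untouched. I would write $g=(g_1,g_2)$ with $g_1\colon\mathbb{S}^1\times[0,3]\to\mathbb{S}^1$ and $g_2\colon\mathbb{S}^1\times[0,3]\to[0,2]$. Since $g^{-1}(\mathbb{S}^1\times\{0\})=g_2^{-1}(0)$, the hypothesis reads $g_2^{-1}(0)=\mathbb{S}^1\times[1,2]$; in particular $g_2>0$ on $\mathbb{S}^1\times\{0\}$ and on $\mathbb{S}^1\times\{3\}$, which is precisely what makes a homotopy rel $\mathbb{S}^1\times\{0,3\}$ into $\mathbb{S}^1\times(0,2]$ possible at all.

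Next I would fix a continuous bump function $\rho\colon[0,3]\to[0,1]$ with $\rho(0)=\rho(3)=0$ and $\rho\equiv1$ on $[1,2]$, and define the homotopy
\[
  G\colon\mathbb{S}^1\times[0,3]\times[0,1]\to\mathbb{S}^1\times[0,2],\qquad G(z,t,s)=\bigl(g_1(z,t),\ \max\{g_2(z,t),\,s\,\rho(t)\}\bigr).
\]
The use of $\max$ here, rather than a sum or a straight-line interpolation toward a positive function, is the small trick that keeps the second coordinate automatically inside $[0,2]$: one has $0\le\max\{g_2(z,t),s\,\rho(t)\}\le\max\{2,1\}=2$, so $G$ is a well-defined continuous homotopy through maps $\mathbb{S}^1\times[0,3]\to\mathbb{S}^1\times[0,2]$.

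It then remains to check three things, each immediate. Since $g_2\ge0$, we get $G(\cdot,\cdot,0)=g$. Since $\rho(0)=\rho(3)=0$, we get $G(z,0,s)=g(z,0)$ and $G(z,3,s)=g(z,3)$ for all $s$, so the homotopy is rel $\mathbb{S}^1\times\{0,3\}$. Finally, the terminal map $g'\coloneqq G(\cdot,\cdot,1)$ has second coordinate $\max\{g_2(z,t),\rho(t)\}$, which is strictly positive: if $t\in[1,2]$ then $\rho(t)=1$, while if $t\notin[1,2]$ then $(z,t)\notin g_2^{-1}(0)$ and hence $g_2(z,t)>0$. Thus $g'$ maps $\mathbb{S}^1\times[0,3]$ into $\mathbb{S}^1\times(0,2]$, as desired. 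The argument is elementary and I do not anticipate a genuine obstacle; the only points requiring care are that the homotopy stays inside the codomain $\mathbb{S}^1\times[0,2]$ and is constant on the two end-slices $\mathbb{S}^1\times\{0,3\}$, and both are arranged directly by the definition of $G$ together with the choice of $\rho$.
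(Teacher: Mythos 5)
Your proof is correct and follows essentially the same route as the paper's: both split $g$ into its components and homotope only the second coordinate via a $\max$ construction that is forced to be constant on $\mathbb{S}^1\times\{0,3\}$. The only cosmetic difference is that the paper uses a constant threshold $r>0$ (chosen, via compactness, below $\min g_2$ on the end slices) and interpolates linearly to $\max\{r,g_2\}$, whereas you use a bump function $\rho$ vanishing at $t=0,3$; both devices achieve the same end.
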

\begin{proof}
   Let \( r \in (0,2) \) be such that \( g(\mathbb{S}^1 \times \{0,3\}) \) is disjoint from \( \mathbb{S}^1 \times [0,r] \). If  \( g_1 \colon \mathbb{S}^1 \times [0,3] \to \mathbb{S}^1 \) and \( g_2 \colon \mathbb{S}^1 \times [0,3] \to [0,2] \)  are the component functions of \( g \), then consider the homotopy  \( H \colon \mathbb{S}^1 \times [0,3] \times [0,1] \to \mathbb{S}^1 \times [0,2] \)  defined by  \( H(z,s,t) \coloneqq \big(g_1(z,s),\, (1 - t)g_2(z,s) + t \max\{r,\, g_2(z,s)\}\big) \).
\end{proof}
%As discussed, we now prove the following special case of \Cref{classification}.
\begin{theorem}\label{specialcase}
Let $M$ and $N$ be non-compact bordered surfaces such that boundary components of $N$ are compact. Suppose there is a proper map $f\colon (M, \partial M) \to (N, \partial N)$ so that $\pi_1(f)$ is a monomorphism and $f$ restricted to each component of $\partial M$ is a covering map onto a component of $\partial N$. Then $f$ can be properly homotoped rel  $\partial M$ to a finite-sheeted covering map.
\end{theorem}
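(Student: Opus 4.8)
The plan is to follow the strategy of Brown and Tucker \cite{MR334225}, adapted to dimension two, which reduces the non-compact classification to the compact one settled by Nielsen (cf.\ \cite[Theorem~2.1]{MR2619608}) by means of a carefully chosen exhausting sequence. Observe first that properness of \( f \) forces every component of \( \partial M \) to be a compact circle: if a component \( L \) of \( \partial M \) were a line, then \( f|L \), being a covering onto a component \( c \) of \( \partial N \), would be onto, so \( L \subseteq f^{-1}(c) \); but \( c \) is compact and \( f \) is proper, hence \( f^{-1}(c) \) is compact and so is its closed subset \( L \), a contradiction. The goal, then, is to produce --- after a proper homotopy of \( f \) rel \( \partial M \) --- exhausting sequences \( \{C_n\} \) for \( N \) and \( \{D_n\} \) for \( M \) with \( D_n = f^{-1}(C_n) \), each \( C_n \), \( D_n \) a compact \( 2 \)-submanifold whose frontier is a disjoint union of non-trivial circles lying in the respective interiors, and such that \( f \) restricted to every component of \( \operatorname{fr}(D_n) \) is a covering onto the component of \( \operatorname{fr}(C_n) \) containing its image; the compact theory then applies shell by shell, and the resulting homotopies are concatenated.

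To set up the sequences, apply \Cref{exhaustion} to \( N \) --- legitimate because \( \partial N \neq \varnothing \) gives \( N \neq \mathbb{R}^2 \) --- to obtain \( \{C_n\} \) with properties \ref{(i)}--\ref{(v)}; in particular each \( \operatorname{fr}(C_n) \) is a union of non-trivial circles in \( N \setminus \partial N \), so by \Cref{rhofunctor} each inclusion \( C_n \hookrightarrow N \), and more generally each \( \operatorname{cl}(C_n \setminus C_{n-1}) \hookrightarrow N \), is \( \pi_1 \)-injective. Next, by induction on \( n \) --- at each stage fixing what was already arranged together with a neighbourhood of \( \partial M \) --- use the two-dimensional transversality homotopy theorem \cite[Lemma~2.2]{MR2619608} and Heil's simplification theorem \cite[Corollary~3.2]{MR2619608}, applied to the incompressible \( 1 \)-submanifold \( \operatorname{fr}(C_n) \), to properly homotope \( f \) rel \( \partial M \) so that \( f \) is transverse to \( \operatorname{fr}(C_n) \) and \( f^{-1}(\operatorname{fr}(C_n)) \) is a disjoint union of circles in \( M \setminus \partial M \); any such circle bounding a disk in \( M \) is removed by an innermost-disk argument, valid since \( \operatorname{fr}(C_n) \) is essential in \( N \) and \( \pi_1(f) \) is injective. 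Then \( D_n := f^{-1}(C_n) \) is a compact \( 2 \)-submanifold with \( \operatorname{fr}(D_n) = f^{-1}(\operatorname{fr}(C_n)) \) disjoint from \( \partial M \); by properness and after passing to a subsequence, \( D_n \subset \operatorname{int}(D_{n+1}) \) and \( \bigcup_n D_n = M \), and \Cref{rhofunctor} again makes each \( \operatorname{cl}(D_n \setminus D_{n-1}) \hookrightarrow M \) \( \pi_1 \)-injective. Finally, for each \( n \) and each component \( \delta \) of \( \operatorname{fr}(D_n) \), the map \( f|\delta \) into the circle of \( \operatorname{fr}(C_n) \) meeting its image is \( \pi_1 \)-injective, hence homotopic to a covering; using \Cref{aux} and \Cref{auxnext} inside a bicollar of \( \operatorname{fr}(D_n) \), and only there, properly homotope \( f \) rel the complement of that bicollar (and rel \( \partial M \)) so that \( f \) carries every component of \( \operatorname{fr}(D_n) \) by a covering onto a component of \( \operatorname{fr}(C_n) \).

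Now set \( D_0 = C_0 = \varnothing \). For each \( n \geq 1 \), every component of \( \operatorname{cl}(D_n \setminus D_{n-1}) \) maps under \( f \) into a single component of \( \operatorname{cl}(C_n \setminus C_{n-1}) \) by a map between compact bordered surfaces that is \( \pi_1 \)-injective (by \( \pi_1 \)-injectivity of \( f \) together with the two \( \pi_1 \)-injective inclusions just noted), sends boundary to boundary, and restricts to a covering on every boundary component --- on \( \operatorname{fr}(D_{n-1}) \cup \operatorname{fr}(D_n) \) by the previous step and on \( \partial M \cap \operatorname{cl}(D_n \setminus D_{n-1}) \) by hypothesis. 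By Nielsen's theorem (cf.\ \cite[Theorem~2.1]{MR2619608}) each such restriction is homotopic rel boundary, through maps into \( \operatorname{cl}(C_n \setminus C_{n-1}) \), to a finite-sheeted covering. Perform these homotopies in order \( n = 1, 2, \dots \), extending the \( n \)-th one to a homotopy of \( f \) on \( M \) that is constant outside \( \operatorname{cl}(D_n \setminus D_{n-1}) \) via the homotopy extension property; since the \( n \)-th homotopy moves only points of \( \operatorname{cl}(D_n \setminus D_{n-1}) \), keeps \( f^{-1}(C_m) = D_m \) for all \( m \), and is supported in a compact set, the concatenation is a proper homotopy rel \( \partial M \) from \( f \) to a map \( \widetilde f \) whose restriction to each shell \( \operatorname{cl}(D_n \setminus D_{n-1}) \) is a finite-sheeted covering onto \( \operatorname{cl}(C_n \setminus C_{n-1}) \). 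These restrictions agree on the common frontier circles \( \operatorname{fr}(D_n) \), where \( f \) was never altered after the setup step, so \( \widetilde f \colon M \to N \) is a covering map; and since \( \widetilde f^{-1}(C_n) = D_n \) with \( C_n \) connected, the number of sheets of \( \widetilde f \) over \( C_n \) is independent of \( n \) and finite. Hence \( \widetilde f \) is a finite-sheeted covering, as required.

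The crux is the inductive construction of the preimage exhaustion in the second step: one must achieve, by a single proper homotopy rel \( \partial M \), that \( f^{-1}(C_n) \) is tame simultaneously for all \( n \), so that straightening \( f \) along \( \operatorname{fr}(C_n) \) does not spoil the arrangement already achieved along \( \operatorname{fr}(C_m) \) for \( m < n \), while controlling the behaviour near \( \partial M \) so the whole homotopy remains rel \( \partial M \) and proper. This is the two-dimensional incarnation of the Brown--Tucker machinery and is precisely where Heil's simplification theorem, transversality, and the collar lemmas \Cref{aux} and \Cref{auxnext} are all used together. A secondary technical point is verifying that the piecewise homotopies of the third step concatenate to a genuinely proper homotopy and that the sheet numbers over the \( C_n \) remain bounded, which is why one insists on preserving \( \widetilde f^{-1}(C_n) = D_n \) at every stage.
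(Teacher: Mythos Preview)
Your overall strategy --- exhaust $N$ via \Cref{exhaustion}, pull back to exhaust $M$, use transversality to make the preimages of the frontiers essential, straighten $f$ on those frontier circles to coverings via \Cref{aux}, then invoke the Nielsen classification on each compact piece and concatenate --- is exactly the two-dimensional Brown--Tucker argument the paper carries out, and your setup in the first two paragraphs is essentially correct.

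The gap is in your third paragraph. The Nielsen--Waldhausen theorem you cite (\cite[Theorem~2.1]{MR2619608}; equivalently \cite[Lemma~1.4.3]{MR224099}) for a $\pi_1$-injective map $\varphi\colon (F,\partial F)\to(G,\partial G)$ between compact bordered surfaces with $\varphi|\partial F$ a local homeomorphism has \emph{two} alternatives: (a) $\varphi$ is homotopic rel $\partial F$ to a covering, or (b) $F$ is an annulus and $\varphi$ is homotopic rel $\partial F$ into $\partial G$. You invoke only (a). Case (b) genuinely occurs here: after transversality, $f^{-1}(\operatorname{fr}(C_n))$ may contain a pair of parallel essential circles cobounding an annulus $A\subset M$ with $f(\operatorname{int}(A))$ disjoint from $\operatorname{fr}(C_n)$; then $A$ is a component of some $D_m$ or of some shell $\operatorname{cl}(D_m\setminus D_{m-1})$ falling under (b), and it is \emph{not} homotopic rel boundary to a covering of its target. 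Your innermost-disk step removes only preimage circles that bound disks, not such parallel pairs. Eliminating these annuli is the substantive remaining work and occupies roughly half of the paper's proof: one uses \Cref{auxnext} to push each case-(b) annulus across $\operatorname{fr}(C_n)$ into an adjacent level, checks that the resulting cascade cannot continue indefinitely (ultimately because $\partial M\neq\varnothing$ rules out $M\cong\mathbb S^1\times\mathbb R$), and shows that the number of shell components at each affected level strictly decreases, so the process stabilizes. Only then does every piece fall under (a) and your concatenation go through.
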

\begin{proof}
     Note that the restriction of a proper map to a closed subset is proper. Thus, the restriction of \(f\) to each component of \(\partial M\) is a proper map into a component of \(\partial N\). Since each component of \(\partial N\) is compact, it follows that each component of \(\partial M\) is also compact. Choose exhausting sequences \(\{C_n\}\) for \(N\) and \(\{C_n'\}\) for \(M\), both satisfying conditions \ref{(i)}--\ref{(v)} of \Cref{exhaustion}. Let \(f(C_1') \subset C_{i_1} \setminus \operatorname{fr}(C_{i_1})\) for some \(i_1\). Choose \(i_2 > i_1\) such that \(f^{-1}(C_{i_1}) \subset C_{i_2}' \setminus \operatorname{fr}(C_{i_2}')\). Next, choose \(i_3 > i_2\) such that \(f(C_{i_2}') \subset C_{i_3} \setminus \operatorname{fr}(C_{i_3})\). Then choose \(i_4 > i_3\) such that \(f^{-1}(C_{i_3}) \subset C_{i_4}' \setminus \operatorname{fr}(C_{i_4}')\), and so on.

Let $C_0\coloneqq\varnothing\eqqcolon C'_0$. Thus, after passing to subsequences, we henceforth assume that our original exhausting sequences satisfy $C_n' \subset f^{-1}(C_n \setminus \operatorname{fr}(C_n))$ and $f^{-1}(C_n) \subset C_{n+1}' \setminus \operatorname{fr}(C_{n+1}')$ for all \(n\). As a consequence, we have $f(C_{n+1}'\setminus C_n')\subset C_{n+1}\setminus C_{n-1}$ since $f(C_{n+1}')\subset C_{n+1}$ and $f^{-1}(C_{n-1})\subset C_n'$ for all $n$. Without loss of generality, we may also assume that $C_1\cap f(\partial M)\neq \varnothing$.

Let $n$ be a positive integer. Define \(X_n \coloneqq \operatorname{cl}(C_{n+1}' \setminus C_n')\) and \(Y_n \coloneqq \operatorname{cl}(C_{n+1} \setminus C_{n-1})\). Since \(k_n \coloneqq \operatorname{fr}(C_n)\) is a pairwise disjoint finite collection of non-trivial circles in \(Y_n \setminus (\partial Y_n \cup f(\partial X_n))\), there exists a homotopy $H_n\colon X_n\times [0,1]\to Y_n$ rel \(\partial X_n\) from \(f\vert X_n \to Y_n\) to a map \(f_n \colon X_n \to Y_n\) such that $f_n$ is transverse with respect to $k_n$ and \(k_n' \coloneqq f_n^{-1}(k_n)\) is a pairwise disjoint finite collection of non-trivial circles in \(X_n \setminus \partial X_n\). We may further assume that there exists a tubular neighborhood \(U_n \coloneqq k_n \times [-1,1] \subset Y_n \setminus \partial Y_n\) of $k_n$, where \(k_n \times \{0\} \equiv k_n\), such that \(f_n^{-1}(U_n)\) can be identified with a tubular neighborhood \(U_n' \coloneqq k_n' \times [-1,1] \subset X_n \setminus \partial X_n\) of \(k_n'\), where \(k_n' \times \{0\} \equiv k_n'\), and \(f_n(z, t) = (f_n(z), t)\) for all \((z, t) \in U_n'\) \cite[Lemma 2.2]{MR2619608}. Notice that \( k_n' \neq \varnothing \), since \( f \) is proper and \( C_n \cap f(\partial M) \neq \varnothing \). Moreover, since \( f(X_n) \cap \operatorname{fr}(C_{n-1}) = \varnothing \), the homotopy \( H_n \) can be thought of as being performed in \( Y_n \setminus U_n \), where \( U_n \) is an open collar neighborhood of \( \operatorname{fr}(C_{n-1}) \) in \( Y_n \).

Therefore, after a proper homotopy rel \( \partial M \cup C_1' \), we may assume that \( f \) agrees with \( f_n \) on \( \operatorname{cl}(C_{n+1}' \setminus C_n') \) for all \( n  \). In particular, $f^{-1}(k_n)=k_n'$, since $\operatorname{im}(H_n)\cap k_{n-1}=\varnothing$ for all $n$.

Let \( n \) be a positive integer. Denote by \( D_n' \) the union of the closures of all components of \( M \setminus k_n' = M \setminus f^{-1}(k_n) \) that are mapped into \( C_n \) by \( f \). Thus, \( D_n' \) is a compact, possibly disconnected, \(2\)-dimensional submanifold of \( M \). Notice that \( D_n' \setminus \operatorname{fr}(D_n') = f^{-1}(C_n \setminus \operatorname{fr}(C_n)) \), and each component of \( \operatorname{fr}(D_n') \) is a component of \( k_n' \). In fact, \( \operatorname{fr}(D_n') = k_n' \), since \( f \) sends a small (two-sided) tubular neighborhood of each component of \( k_n' \) into a small (two-sided) tubular neighborhood of a component of \( \operatorname{fr}(C_n) \), preserving the fibers. Therefore, \( D_n' = f^{-1}(C_n) \). For future use, note also that each component of \( \partial D_n' \) is a non-trivial circle in \( M \), and \( \operatorname{fr}(D_n') \cap \partial M = \varnothing \).

Now \( D_1' = f^{-1}(C_1) \supseteq C_1' \), and \( f(X_n) \subseteq Y_n \) for all \( n \). Thus, an inductive argument shows that \( C_n' \subseteq D_n' \) for all \( n \). Hence, \( \bigcup_n D_n' = M \). Since \( \operatorname{int}(C_n) \subset \operatorname{int}(C_{n+1}) \), it follows that \( \operatorname{int}(D_n') \subset \operatorname{int}(D_{n+1}') \) for all \( n \). Therefore, \( \{D_n'\} \) is an exhausting sequence for \( M \).

Now, \Cref{aux} allows us to properly homotope \(f\) rel \(\partial M\) to a map \(f'\) such that, for all \(n\), the following hold: \(f'^{-1}(C_n) = D_n'\), and if \(F'\) is a component of \(k_n'=\operatorname{fr}(D_n')\), then \(f'|F'\) is a covering map onto a component of \(k_n=\operatorname{fr}(C_n)\). Moreover, \Cref{aux} says that there exist tubular neighborhoods \(k_n' \times [-1/2,1/2] \subset M \setminus \partial M\) and \(k_n \times [-1/2,1/2] \subset N \setminus \partial N\), where \(k_n' \times \{0\} \equiv k_n'\) and \(k_n \times \{0\} \equiv k_n\), such that $f'^{-1}(k_n\times[-1/2,1/2])=k_n'\times[-1/2,1/2]$ and  \(f'(z,t) = (f'(z), t)\) for all \((z,t) \in k_n' \times [-1/2,1/2]\). In particular, for all \( n \), we have \( f'^{-1}(k_n) = k_n' \), and hence \( D_n' \setminus \operatorname{fr}(D_n') = f'^{-1}(C_n \setminus \operatorname{fr}(C_n)) \).

Let \(D'\) be a component of some \(D_n'\). Then \(D'\) is an essential compact bordered subsurface of \(M\), since each component of $\partial D'$ is a non-trivial circle in $M$ (see \Cref{rhofunctor}). Hence the restriction \(f'|D' \to C_n\) is \(\pi_1\)-injective. Moreover, since \(f'|\partial D'  \to \partial C_n\) is a local homeomorphism, it follows from \cite[Lemma 1.4.3]{MR224099} that exactly one of the following holds:
\begin{enumerate}[(a)]
    \item\label{(a)} \(f'|D'\) is homotopic rel \(\partial D'\) to a covering map onto \(C_n\), or
    \item\label{(b)} \(D'\) is an annulus and \(f'|D'\) is homotopic rel \(\partial D'\) to a map into \(\partial C_n\).
\end{enumerate}
Suppose in \ref{(a)} that \(D'\) is an annulus. Then, by the multiplicativity of Euler characteristic under finite-sheeted coverings, \(C_n\) must also be an annulus. Moreover, one component of \(\partial C_n\) must be a component of \(\partial N\), and the other component must be \(\operatorname{fr}(C_n)\), since \(C_n \setminus \operatorname{fr}(C_n)\) intersects the subset \(f'(\partial M) = f(\partial M)\) of \(\partial N\). Consequently, one component of \(\partial D'\) must be a component of \(\partial M\), and the other component must be \(\operatorname{fr}(D')\), because \(f'\) maps \(\operatorname{fr}(D')\) into \(\operatorname{fr}(C_n)\), and a covering map from \(D'\) to \(C_n\) sends \(D' \setminus \partial D'\) onto \(C_n \setminus \partial C_n\).

Now, consider the case \ref{(b)}. By the connectedness of continuous image, $f'(\partial D')$ must contained in a single component of $\partial C_n$.  Notice that at least one of the components $\partial D'$ must be a component of $k_n'\subset M\setminus \partial M$, i.e., $\varnothing\neq\operatorname{fr}(D')\subseteq \partial D'$. In fact, $\partial D'=\operatorname{fr}(D')$ because \(f'(\operatorname{fr}(D'))\subseteq\operatorname{fr}(C_n)\) does not  intersect with $f'(\partial M)\subseteq\partial N$. Thus, \ref{(b)} can be rewritten as follows:
\begin{enumerate}[(b.1)]
    \item\label{(b')} \(D'\) is an annulus and \(f'|D'\) is homotopic rel $\partial D'=\operatorname{fr}(D')$ to a map into \(\operatorname{fr}(C_n)\).
\end{enumerate}
Now, recall that \( f' \) maps a tubular neighborhood of each component of \( \partial D' = \operatorname{fr}(D') \) into a tubular neighborhood of a component of \( \operatorname{fr}(C_n) \) in a fiber-preserving manner. Therefore, by \Cref{auxnext}, there exists a homotopy of \( f' \), constant outside an arbitrarily small neighborhood of \( D' \), to a map \( f_{D'}' \) such that \( f_{D'}'^{-1}(C_n) = D_n' \setminus D' \) and \( f_{D'}'^{-1}(C_{n+1}) = D_{n+1}' \).

Observe that it may happen that for some \( n \), \( D_n' \) has a component \( A_n' \) that falls under case~\ref{(b')}. Then the component \( A_{n+1}' \) of \( D_{n+1}' \), which contains \( A_n' \) in its interior, also falls under case~\ref{(b')}, and similarly, the component \( A_{n+2}' \) of \( D_{n+2}' \), which contains \( A_{n+1}' \) in its interior, again falls under case~\ref{(b')}, and so on. However, this process cannot continue indefinitely; otherwise, by the connectedness of \( M \), the \(2\)-manifold \( M \) with non-empty boundary would be homeomorphic to \( \mathbb{S}^1 \times \mathbb{R} \). Thus, we have a pairwise disjoint collection \( \{A_\alpha'\} \) of annuli with the following properties: each \( A_\alpha' \) is a component of some \( D_{n_\alpha}' \); each \( A_\alpha' \) falls under case~\ref{(b')}; and if \( A' \) is a component of some \( D_n' \) such that \( A' \) falls under case~\ref{(b')}, then \( A' \) is contained in the interior of some \( A_\alpha' \). In particular, if \( D' \) is the component of \( D_{n_\alpha+1}' \) for which \( A_\alpha' \subset \operatorname{int}(D') \), then \( D' \) must fall under case~\ref{(a)}. Similar to the previous paragraph, for each \( \alpha \), applying \Cref{auxnext}, we can remove \( A_\alpha' \) from \( D_{n_\alpha}' \). More precisely, for each \( \alpha \), there exists a small neighborhood \( U_\alpha' \) of \( A_\alpha' \) in \( M \setminus \partial M \), and a homotopy \( H_\alpha \), rel \( M \setminus U_\alpha' \), from \( f' \) to a map \( f_{A_\alpha'}' \) such that \( f_{A_\alpha'}'^{-1}(C_{n_\alpha}) = D_{n_\alpha}' \setminus A_\alpha' \) and \( f_{A_\alpha'}'^{-1}(C_{n_\alpha+1}) = D_{n_\alpha+1}' \). We may also assume that \( U_\alpha' \cap U_\beta' = \varnothing \) if \( \alpha \neq \beta \).

%By choosing a subsequence of $\{C_n\}$ if necessary, the above leads to two cases:
%\begin{enumerate}[(a.{1})]
    %\item There is an exhausting sequence for $M$ whose $n$th term is a component of $D_n'$, which falls under case \ref{(a)} above, or
    %\item Given any component of $D_n'$, there exists $m \geq n$ so that the component of $D_m'$ containing it falls under case \ref{(b)}.
%\end{enumerate}

%Applying the homotopy mentioned after case \ref{(b)} above and choosing a subsequence if necessary, we see there is a proper homotopy rel $\partial M$ of $f'$ to a map $f_1'$ so that if $D_n = f_1'^{-1}(C_n)$, then $D_n$ consists of some components of $D_n'$ and either
%\begin{enumerate}[(1)]
    %\item\label{a2} For any $n$ and any component $D$ of $D_n$, $f_1'|D$ is homotopic rel $\partial D$ to a covering map onto $C_n$, or
    %\item\label{b2} $D_n$ is an annulus for every $n$, and $f_1'|D_n$ is homotopic rel $\partial D_n$ to a map into $\partial C_n$.
%\end{enumerate}

Thus, choosing a subsequence if necessary, and pasting all $H_\alpha$, we see there is a proper homotopy rel $\partial M$ of $f'$ to a map $f_1'$ so that if $D_n = f_1'^{-1}(C_n)$, then $D_n$ consists of some components of $D_n'$, and 
\begin{enumerate}[(a.1)]
    \item \label{a2}for any $n$ and any component $D$ of $D_n$, $f_1'|D$ is homotopic rel $\partial D$ to a covering map onto $C_n$.
\end{enumerate}

%Note that since $\partial M\neq \varnothing$, \ref{b2} will not appear. Now suppose case \ref{a2} above holds.

Notice that $D_n\setminus\operatorname{fr}(D_n)=f_1'^{-1}(C_n\setminus\operatorname{fr}(C_n))$ and $\operatorname{fr}(D_n)= f_1'^{-1}(\operatorname{fr}(C_n))$, and hence, $f_1'(\operatorname{cl}(D_{n+1} \setminus D_n))\subseteq\operatorname{cl}(C_{n+1} \setminus C_n)$ for all $n$.  If, for every \( n \) and every component \( S \) of \( \operatorname{cl}(D_{n+1} \setminus D_n) \), the restriction \( f_1'\vert S \) is homotopic rel \( \partial S \) to a covering map, then these homotopies fit together to give a proper homotopy rel \( \partial M \) from \( f_1' \) to a covering map \( M \to N \), thereby proving the theorem. The aim of the next couple of paragraphs is to properly homotope \( f_1' \) rel \( \partial M \) so that this condition is satisfied.

%From now on, we use \( S \) to denote a component of \( \operatorname{cl}(D_{n+1} \setminus D_n) \), where \( n \) is arbitrary. 

Suppose \( n_0 > 1 \) is the smallest integer such that \( \operatorname{cl}(D_{n_0+1} \setminus D_{n_0}) \) has an annular component \( S_0 \) for which there exists a homotopy $H_0\colon S_0\times [0,1]\to N$ rel \( \partial S_0 \) from \( f_1'\vert S_0 \) to a map \( S_0 \to f_1'(\partial S_0) \). Observe that \( \partial S_0 \subseteq \partial M \cup \operatorname{fr}(D_{n_0}) \cup \operatorname{fr}(D_{n_0+1}) \). However, \( \partial S_0 \) cannot be contained in \( \partial M \) since \( M \) is connected. Now, the connectedness of \( H_0(S_0,1) = f_1'(\partial S_0) \) implies that \( \partial S_0 \) is entirely contained in either \( \operatorname{fr}(D_{n_0}) \) or \( \operatorname{fr}(D_{n_0+1}) \), because the images under \( f_1' \) of \( \partial M \), \( \operatorname{fr}(D_{n_0}) \), and \( \operatorname{fr}(D_{n_0+1}) \) lie in three pairwise disjoint sets: \( \partial N \), \( \operatorname{fr}(C_{n_0}) \), and \( \operatorname{fr}(C_{n_0+1}) \), respectively. In particular, we have \( \partial S_0 = \operatorname{fr}(S_0) \). Moreover, we can conclude that \( \partial S_0 \subset \operatorname{fr}(D_{n_0}) \), since if \( \partial S_0 \subset \operatorname{fr}(D_{n_0+1}) \), then \( S_0 \) would be a component of \( D_{n_0+1} \), which is excluded by \ref{a2}. Thus, by \Cref{auxnext}, there exists a homotopy of \( f_1' \), relative to the complement (taken in  \(M\)) of a small neighborhood of \( S_0 \), to a map \( g_1 \) such that \(g_1^{-1}(C_{n_0+1}) = D_{n_0+1}\)  and \(g_1^{-1}(C_{n_0}) = D_{n_0} \cup S_0.\) Note that \(\operatorname{cl}(D_{n_0+1} \setminus (D_{n_0} \cup S_0)) = \operatorname{cl}(D_{n_0+1} \setminus D_{n_0}) \setminus \operatorname{int}(S_0).\)
In particular, the number of components of \( \operatorname{cl}(g_1^{-1}(C_{n_0+1}) \setminus g_1^{-1}(C_{n_0})) \) is strictly less than the number of components of \( \operatorname{cl}(f_1'^{-1}(C_{n_0+1}) \setminus f_1'^{-1}(C_{n_0})) \).

Now, we may need to properly homotope \( g_1 \) rel \( \partial M \) if either condition~\ref{a2} is not satisfied or if there exists an integer smaller than \( n \) for which \( g \) satisfies the property analogous to that satisfied by \( f \), as described in the first line of the previous paragraph. Based on these two cases, we will consider two types of proper homotopies in the following two paragraphs.

It may now happen that the restriction \( g_1 \vert D_0 \) of \( g_1 \) to the component \( D_0 \) of \( g_1^{-1}(C_{n_0}) = D_{n_0} \cup S_0 \) containing \( S_0 \) is no longer homotopic rel \( \partial D_0 \) to a covering map. That is, \( D_0 \) is an annulus with \( \operatorname{fr}(D_0) = \partial D_0 \) such that \( S_0 \subset \operatorname{int}(D_0) \), and \( g_1 \vert D_0 \) is homotopic rel \( \partial D_0 \) to a map \( D_0 \to g_1(\partial D_0) \subseteq \operatorname{fr}(C_{n_0}) \). By \Cref{auxnext}, we may now perform a homotopy of \( g_1 \), relative to the complement (taken in \( M \)) of a small neighborhood of \( D_0 \), to a map \( g_1' \) such that \( g_1'(D_0) \subset C_{n_0+1} \setminus C_{n_0} \). We are thus back in case~\ref{a2}, since \( g_1'^{-1}(C_{n_0+1}) = D_{n_0+1} \). Moreover, the number of components of \( \operatorname{cl}(g_1'^{-1}(C_{n_0}) \setminus g_1'^{-1}(C_{n_0-1})) \) is strictly less than the number of components of \( \operatorname{cl}(g_1^{-1}(C_{n_0}) \setminus g_1^{-1}(C_{n_0-1})) \). Note that \( \operatorname{cl}(g_1^{-1}(C_{n_0}) \setminus g_1^{-1}(C_{n_0-1})) \) has at most as many components as \( \operatorname{cl}(D_{n_0} \setminus D_{n_0-1}) \).

On the other hand, it may happen that $g_1$ already satisfies \ref{a2}, but the component $S_0'$ of $\operatorname{cl}(g_1^{-1}(C_{n_0}) \setminus D_{n_0-1})=\operatorname{cl}(g_1^{-1}(C_{n_0}) \setminus g_1^{-1}(C_{n_0-1}))$ for which $S_0\subset \operatorname{int}(S_0')$, may be an annulus with $\partial S_0'\subseteq \operatorname{fr}(D_{n_0-1})=\operatorname{fr}(g_1^{-1}(C_{n_0-1}))$ such that  $g_1\vert S_0'$ is homotopic rel  $\partial S_0'$ to a map $S_0'\to\operatorname{fr}(C_{n_0-1})$ (that is, $n_0$ is no longer the smallest for $g_1$). We perform a homotopy as we did on $f_1'$ this time decreasing the number of components in $\operatorname{cl}(g_1^{-1}(C_{n_0}) \setminus g_1^{-1}(C_{n_0-1}))$. More precisely, by \Cref{auxnext}, there exists a homotopy of \( g_1\), relative to the complement (taken in  \(M\)) of a small neighborhood of \( S_0' \), to a map \( g_1' \) such that \(g_1'^{-1}(C_{n_0}) = g_1^{-1}(C_{n_0})\) and \(g_1'^{-1}(C_{n_0-1}) = D_{n_0-1} \cup S_0'.\) 
In particular, the number of components of \( \operatorname{cl}(g_1'^{-1}(C_{n_0}) \setminus g_1'^{-1}(C_{n_0-1})) \) is strictly less than the number of components of \( \operatorname{cl}(g_1^{-1}(C_{n_0}) \setminus g_1^{-1}(C_{n_0-1})) \).

After a finite number of steps, we have constructed a map $g_{2}$, properly homotopic rel $\partial M\cup \operatorname{cl}(M\setminus D_{n_0+1})$ to $g_1$ so that $g_{2}$ is again in case \ref{a2}. Moreover, if $g_2^{-1}(C_k) \neq D_k$ for some $k \leq n_0$, then $g_2^{-1}(C_k)\setminus g_2^{-1}(C_{k-1})$ has fewer components than $D_k\setminus D_{k-1}$, while if $g_{2}^{-1}(C_k) = D_k$ for some $k \leq n_0$, then the homotopy is constant on $D_k$. Finally, $g_{2}|S$ is homotopic rel $\partial S$ to a covering map if $S$ is
a component of $\operatorname{cl}(g_{2}^{-1}(C_k) \setminus g_{2}^{-1}(C_{k-1}))$ and $k \leq n_0$.

We now proceed by induction to improve $g_{2}$. Notice that the number of components of $\operatorname{cl}(D_k \setminus D_{k-1})$ cannot decrease indefinitely. Thus, the process stabilizes on the inverse image of $C_k$ after a finite number of steps. Therefore, we can construct a proper homotopy of $f_1'$ rel $\partial M$ to a covering map.\end{proof}

\begin{remark}\label{specialcaseremark}
    The proof of \Cref{specialcase} can easily be modified—ignoring the boundaries of the domain and codomain—to give a corresponding result for all surfaces, except when the surface is either the plane or the punctured plane. The exclusion of the plane is due to the heavy dependence on part \ref{(iii)} of \Cref{exhaustion}. The exclusion of the punctured plane arises from the fact that, in adapting the Brown--Tucker argument to dimension two, we do not consider the case where the domain is a (boundaryless) $3$-manifold that appears as the total space of a fiber bundle over a compact surface with fiber~\(\mathbb{R}\). More specifically, the proof of \Cref{specialcase} does not adapt case ($\text{b}_1$), case ($\text{b}_2$), or the three paragraphs following ($\text{b}_2$) from the proof of \cite[Theorem~4.2]{MR334225}, because their two-dimensional adaptation concerns boundaryless $2$-manifolds.  
\end{remark}

 %We conclude this section by proving \Cref{classification} using \Cref{specialcase}.

\begin{proof}[Proof of \Cref{classification}]
    By the first paragraph of the proof of \Cref{specialcase}, every component of $\partial M$ is compact. Since a \(\pi_1\)-injective map \(\mathbb{S}^1 \to \mathbb{S}^1\) is (properly) homotopic to a covering map, there exists a proper homotopy \(h \colon \partial M \times [0,1] \to \partial N\) from \(f\vert\partial M \to \partial N\), such that \(h(-,1)\) restricts to a covering map on each component of \(\partial M\) onto a component of \(\partial N\). By \cite[Theorem~10.6]{MR198479}, the boundary of a smooth manifold is a subcomplex of some CW-structure on the manifold. Therefore, applying the proper homotopy extension theorem \cite[Theorem~1.6]{MR334226}, we obtain a proper homotopy \( \widetilde{h} \colon M \times [0,1] \to N \) from \( f \), such that \( \widetilde{h}\vert\partial M \times [0,1] = h \). In particular, \(\widetilde h(-,0) = f\), \(\widetilde h(\partial M \times [0,1]) \subseteq \partial N\), and \(\widetilde h(-,1)\) restricts to a covering map on each component of \(\partial M\) onto a component of \(\partial N\). Applying \Cref{specialcase}, the result follows.
\end{proof}

\subsection{Cut-off technique}The goal of this section is to apply the theory developed earlier—specifically \Cref{specialcase} and its proof—to show that a \(\underline{\pi}_1\)-injective proper map between surfaces behaves well outside a compact subset of the domain. We also establish an analogous result in the context of \(3\)-manifolds. As an application, we provide a sufficient condition under which a degree-one map between surfaces admits a geometric kernel.

To proceed, we fix some notation for the remainder of the section. Let \(M\) and \(N\) be connected, non-compact \(2\)-manifolds such that each component of \(\partial M \sqcup \partial N\), if any, is compact, and neither \(M\) nor \(N\) is homeomorphic to \(\mathbb{R}^2\). For \Cref{deg}, we also fix orientations on \( M \) and \( N \). Let \(f \colon M \to N\) be a proper map such that the restriction of \(f\) to each component of \(\partial M\) is a covering map onto a component of \(\partial N\). We begin with two lemmas that will be used in the proof of \Cref{cut}. 

\begin{lemma}\label{cutlemma1}
    Let $e$ be an end of $M$, and let $a$ and $a'$ be two representatives of $e$. If $\underline\pi_1(f)\colon \underline \pi_1(M,\underline a)\to \underline \pi_1(N,\underline{fa})$ is a monomorphism, then  $\underline\pi_1(f)\colon \underline \pi_1(M,\underline a')\to \underline \pi_1(N,\underline{fa'})$ is also a monomorphism. 
\end{lemma}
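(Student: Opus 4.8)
The plan is to exploit the change-of-basepoint isomorphism for the proper fundamental group, which was spelled out in the preliminaries, together with its naturality under the proper map $f$. Concretely, since $a$ and $a'$ both represent the same end $e$ of $M$, there is a path $p = \{p_k\}$ in $M$ from $\underline a$ to $\underline{a'}$, inducing an isomorphism $p_* \colon \underline\pi_1(M,\underline a) \to \underline\pi_1(M,\underline{a'})$. Applying $f$ to each $p_k$ yields a path $fp = \{fp_k\}$ in $N$ from $\underline{fa}$ to $\underline{fa'}$ — one must first check this is a legitimate path: the collection $\{\operatorname{im}(fp_k)\}$ converges to $\underline\pi_0(f)(e)$ because $f$ is proper and $\{\operatorname{im}(p_k)\}$ converges to $e$, and $fp_k(0) = fa(k)$, $fp_k(1) = fa'(k)$ for all but finitely many $k$. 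This gives an isomorphism $(fp)_* \colon \underline\pi_1(N,\underline{fa}) \to \underline\pi_1(N,\underline{fa'})$.

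The key step is then the commutativity of the square
\[
\begin{tikzcd}
\underline\pi_1(M,\underline a) \arrow[r, "\underline\pi_1(f)"] \arrow[d, "p_*"'] & \underline\pi_1(N,\underline{fa}) \arrow[d, "(fp)_*"] \\
\underline\pi_1(M,\underline{a'}) \arrow[r, "\underline\pi_1(f)"'] & \underline\pi_1(N,\underline{fa'}).
\end{tikzcd}
\]
This is a direct computation from the definitions: given a proper map of pairs $\alpha\colon(\underline{\mathbb S^1},\underline *)\to(M,\underline a)$, the class $p_*([\alpha])$ is represented by the map $\alpha_p$ with $\alpha_p\vert\mathbb S^1_k = \overline{p_k}*(\alpha\vert\mathbb S^1_k)*p_k$ for all but finitely many $k$. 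Post-composing with $f$ gives a map whose restriction to $\mathbb S^1_k$ is $\overline{fp_k}*(f\alpha\vert\mathbb S^1_k)*fp_k$, which is exactly a representative of $(fp)_*([f\alpha]) = (fp)_*(\underline\pi_1(f)([\alpha]))$. Hence the square commutes. Since $p_*$ and $(fp)_*$ are isomorphisms and, by hypothesis, the top horizontal map $\underline\pi_1(f)\colon\underline\pi_1(M,\underline a)\to\underline\pi_1(N,\underline{fa})$ is injective, the bottom horizontal map $\underline\pi_1(f)\colon\underline\pi_1(M,\underline{a'})\to\underline\pi_1(N,\underline{fa'})$ is the composite $(fp)_*\circ\underline\pi_1(f)\circ p_*^{-1}$ of injections, hence injective.

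I expect the only real care to be in the bookkeeping of the ``for all but finitely many $k$'' clauses — verifying that $fp$ is genuinely a path from $\underline{fa}$ to $\underline{fa'}$ in the sense defined in the preliminaries, and that the representatives of the germ homotopy classes can be chosen compatibly so that the diagram-chase identity holds on the nose (rather than merely up to germ homotopy rel $\underline *$). None of this is deep; it is the standard naturality-of-change-of-basepoint argument transported into the germ/proper setting, and the definitions in Section~\ref{S2} have been set up precisely so that it goes through verbatim.
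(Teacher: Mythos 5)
Your argument is correct and is essentially identical to the paper's proof: both take a path $p=\{p_k\}$ from $\underline a$ to $\underline{a'}$, observe that $q=\{fp_k\}$ is a path from $\underline{fa}$ to $\underline{fa'}$, and use the commutativity $\underline\pi_1(f)\circ p_* = q_*\circ\underline\pi_1(f)$ together with the fact that $p_*$ and $q_*$ are isomorphisms. No gaps.
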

\begin{proof}
    Let \(p = \{p_k\}\) be a path in \(M\) from \(\underline{a}\) to \(\underline{a'}\). Then the sequence \(q \coloneqq \{f p_k\}\) defines a path in \(N\) from \(\underline{f a}\) to \(\underline{f a'}\). Furthermore, the composition \(\underline{\pi}_1(M, \underline{a}) \xrightarrow{p_*} \underline{\pi}_1(M, \underline{a'}) \xrightarrow{\underline{\pi}_1(f)} \underline{\pi}_1(N, \underline{f a'})\) coincides with \(\underline{\pi}_1(M, \underline{a}) \xrightarrow{\underline{\pi}_1(f)} \underline{\pi}_1(N, \underline{f a}) \xrightarrow{q_*} \underline{\pi}_1(N, \underline{f a'})\). Since both \(p_*\) and \(q_*\) are isomorphisms, the claim follows.
\end{proof}

\begin{lemma}\label{cutlemma2}
    Let \(f' \colon M \to N\) be a proper map that is properly homotopic to \(f\), and let \(e\) be an end of \(M\) with a representative \(a \in e\). Suppose \(\underline{\pi}_1(f) \colon \underline{\pi}_1(M, \underline{a}) \to \underline{\pi}_1(N, \underline{f a})\) is a monomorphism. Then \(\underline{\pi}_1(f') \colon \underline{\pi}_1(M, \underline{a}) \to \underline{\pi}_1(N, \underline{f' a})\) is also a monomorphism.
\end{lemma}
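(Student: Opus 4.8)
The plan is to transport the monomorphism hypothesis along the proper homotopy $G\colon M\times[0,1]\to N$ from $f$ to $f'$, using the fact (recorded in the preliminaries) that $\underline{\pi}_1$ is functorial and is invariant under proper homotopies that move basepoints along a prescribed path. The subtlety is that the induced map $\underline{\pi}_1(f')$ has codomain $\underline{\pi}_1(N,\underline{f'a})$, not $\underline{\pi}_1(N,\underline{fa})$, so we first need to identify these two groups in a way compatible with the two induced maps.

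First I would let $a\in e$ be the given representative, and consider the sequence of paths $q=\{q_k\}$ in $N$ defined by $q_k(t)\coloneqq G(a(k),t)$, i.e.\ the tracks of the homotopy along $a$. Since $G$ is proper and $a$ is proper, $\{\operatorname{im}(q_k):k\ge 0\}$ converges to the end $[fa]=[f'a]$ of $N$ (these ends agree because $\underline{\pi}_0(f)=\underline{\pi}_0(f')$, the induced map on ends depending only on the germ homotopy class), and $q_k(0)=fa(k)$, $q_k(1)=f'a(k)$ for all but finitely many $k$. Hence $q$ is a path in $N$ from $\underline{fa}$ to $\underline{f'a}$ in the sense of the preliminaries, and it induces an isomorphism $q_*\colon\underline{\pi}_1(N,\underline{fa})\to\underline{\pi}_1(N,\underline{f'a})$.

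The key step is then to verify the commutativity of the square
\[
\begin{tikzcd}
\underline{\pi}_1(M,\underline{a}) \arrow[r,"\underline{\pi}_1(f)"] \arrow[rd,"\underline{\pi}_1(f')"'] & \underline{\pi}_1(N,\underline{fa}) \arrow[d,"q_*"] \\
 & \underline{\pi}_1(N,\underline{f'a}),
\end{tikzcd}
\]
that is, $\underline{\pi}_1(f')=q_*\circ\underline{\pi}_1(f)$. To see this, take a proper map of pairs $\alpha\colon(\underline{\mathbb S^1},\underline *)\to(M,\underline a)$ with $\alpha|\underline * = a$. Applying the homotopy $G$ componentwise to the circles of $\underline{\mathbb S^1}$ gives a proper homotopy $G\circ(\alpha\times\operatorname{Id})\colon\underline{\mathbb S^1}\times[0,1]\to N$ whose restriction to $\underline *\times[0,1]$ has germ agreeing with the track of $G$ along $a$; this exhibits $f'\alpha$ as germ homotopic (in the appropriate basepoint-moving sense) to the map obtained from $f\alpha$ by pre- and post-concatenating with $\overline{q_k}$ and $q_k$ on each circle $\mathbb S^1_k$. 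By the very definition of $q_*$ (namely $q_*([\beta])=[\beta_q]$ with $\beta_q|\mathbb S^1_k=\overline{q_k}*(\beta|\mathbb S^1_k)*q_k$), this says precisely $[f'\alpha]=q_*([f\alpha])$, i.e.\ $\underline{\pi}_1(f')=q_*\circ\underline{\pi}_1(f)$. Since $q_*$ is an isomorphism and $\underline{\pi}_1(f)$ is a monomorphism by hypothesis, the composite $\underline{\pi}_1(f')$ is a monomorphism, as desired.

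The main obstacle is bookkeeping the basepoint-moving conventions: one must be careful that the path $q$ built from the homotopy tracks is exactly the one whose induced isomorphism $q_*$ appears when comparing $[f\alpha_p]$-type formulas, and that the ``for all but finitely many $k$'' clauses in the definitions of a path between germs and of germ homotopy rel $\underline *$ are all simultaneously satisfiable — which they are, since $\alpha|\underline *$ equals $a$ outside a compact set and $G$ is proper. Once the square above is established, the conclusion is immediate; no dimension-specific or surface-specific input is needed, so this lemma holds verbatim for proper maps between arbitrary connected manifolds.
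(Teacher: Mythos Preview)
Your proof is correct and rests on exactly the same construction as the paper's: the tracks $q_k(t)=G(a(k),t)$ of the proper homotopy along the base ray. The difference is only in packaging. The paper argues element-by-element: it takes $[\alpha]\in\ker\underline{\pi}_1(f')$, builds explicit based homotopies $h_k'$ from $f\ell_k$ to $p_k*f'\ell_k*\overline{p_k}$ and $h_k''$ from the latter to the constant loop, and concatenates them to show $[\alpha]\in\ker\underline{\pi}_1(f)$. You instead establish the commutative triangle $\underline{\pi}_1(f')=q_*\circ\underline{\pi}_1(f)$ once and for all and read off injectivity from the fact that $q_*$ is an isomorphism. Your formulation is slightly more economical, since the same triangle immediately yields the ``iso'' variant the paper records separately (Corollary~\ref{iso} uses the analogous statement with ``mono'' replaced by ``iso''); the paper's element-chase would need to be rerun, or at least symmetrized, to get that.
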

\begin{proof}
    Let \(\alpha \colon (\underline{\mathbb{S}^1}, \underline{*}) \to (M, \underline{a})\) be an arbitrary proper map of pairs such that \(\underline{\pi}_1(f')([\alpha]) = 0\). It suffices to show that \(\underline{\pi}_1(f)([\alpha]) = 0\). Without loss of generality, we may assume that \(\alpha\vert_{\underline{*}} = a\). For each \(k \geq 0\), let \(\ell_k\) denote the loop given by restricting \(\alpha\) to the \(k\)th circle of \(\underline{\mathbb{S}^1}\). Since \(\underline{\pi}_1(f')([\alpha]) = 0\), there exists a sequence \(\{h_k \colon \mathbb{S}^1 \times [0,1] \to N : k \geq 0\}\) of homotopies such that \(\{\operatorname{im}(h_k) : k \geq 0\}\) converges to \(e^\dagger \coloneqq \underline{\pi}_0(f)(e) = \underline{\pi}_0(f')(e)\), and for all sufficiently large \(k\), the following holds: \(h_k\) is a homotopy of loops based at \(f'a(k)\), from \(f'\ell_k\) to the constant loop.

    Consider a proper homotopy \(H \colon M \times [0,1] \to N\) from \(f'\) to \(f\). Then, for each \(k \geq 0\), we can construct a homotopy \(h_k' \colon \mathbb{S}^1 \times [0,1] \to N\) of loops based at \(f a(k)\), from \(f \ell_k\) to \(p_k * f' \ell_k * \overline{p_k}\), where \(p_k\) denotes the path \(H(a(k), -)\), such that \(\operatorname{im}(h_k') \subseteq H(\operatorname{im}(\ell_k), [0,1])\). In particular, \(\{\operatorname{im}(h_k') : k \geq 0\}\) converges to \(e^\dagger\). Also, define \(h_k'' \coloneqq \{p_k * h_k(-,t) * \overline{p_k}\}_{t \in [0,1]}\) for each $k\geq0$. Then \(\{h_k'' : k \geq 0\}\) is a sequence of homotopies with \(\{\operatorname{im}(h_k'') : k \geq 0\}\) converging to \(e^\dagger\), and for all sufficiently large \(k\), each \(h_k''\) is a homotopy of loops based at \(f a(k)\), from \(p_k * f' \ell_k * \overline{p_k}\) to the constant loop. Finally, for all sufficiently large \(k\), the homotopy \(G_k \coloneqq \{h_k'(-,t) * h_k''(-,t)\}_{t \in [0,1]}\) is a homotopy of loops based at \(f a(k)\), from \(f \ell_k\) to the constant loop. Moreover, \(\{\operatorname{im}(G_k) : k \geq 0\}\) converges to \(e^\dagger\). Therefore, \(\underline{\pi}_1(f)([\alpha]) = 0\).
\end{proof}

\begin{theorem}\label{cut}
 Suppose for every end $e$ of $M$, there exists a representative $a$ of $e$ such that $\underline\pi_1(f)\colon \underline\pi_1(M,\underline a)\to \underline \pi_1(N,\underline{fa})$ is a monomorphism. Then there exist compact $2$-dimensional submanifolds \( D' \) of \( M \) and \( D \) of \( N \) such that, after a proper homotopy rel $\partial M$, the restriction of \( f \) to the closure of each unbounded component of \( M \setminus D' \) is a finite-sheeted covering map onto the closure of some unbounded component of \( N \setminus D \). 
 \end{theorem}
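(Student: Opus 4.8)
The plan is to borrow the initial, $\pi_1$-free portion of the proof of \Cref{specialcase} to put $f$ into standard form near a system of separating circles, then to show that $f$ is $\pi_1$-injective on each unbounded complementary piece lying far enough out, and finally to feed these pieces back into \Cref{specialcase}. Concretely, I would first choose exhausting sequences $\{C_n\}$ for $N$ and $\{C_n'\}$ for $M$ as in \Cref{exhaustion}, interleaved so that $C_n'\subset f^{-1}(C_n\setminus\operatorname{fr}(C_n))$ and $f^{-1}(C_n)\subset C_{n+1}'\setminus\operatorname{fr}(C_{n+1}')$, and then run the transversality argument from the proof of \Cref{specialcase} up through the use of \Cref{aux}; that part uses only the properness of $f$ and the behaviour of $f$ on $\partial M$, not $\pi_1$-injectivity. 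After a proper homotopy rel $\partial M$ we may thus assume that $D_n':=f^{-1}(C_n)$ forms an exhausting sequence of $M$, that $\operatorname{fr}(D_n')=f^{-1}(\operatorname{fr}(C_n))$ is a finite pairwise disjoint collection of non-trivial circles in $\iota M$, and that $f$ is fibre-preserving on suitable tubular neighbourhoods of these circles and restricts on each of them to a covering map onto a component of $\operatorname{fr}(C_n)$. By \Cref{cutlemma2} the hypothesis on $\underline{\pi}_1(f)$ is unaffected by this proper homotopy, and by \Cref{cutlemma1} it then holds at every representative of every end of $M$.

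The crux is the following claim: for all large $n$ and every unbounded component $B$ of $M\setminus D_n'$, letting $W$ be the component of $N\setminus C_n$ with $f(B)\subseteq W$ (it exists since $f(B)$ is connected, and it is unbounded since $f$ is proper), the restriction $f|\operatorname{cl}(B)\colon\operatorname{cl}(B)\to\operatorname{cl}(W)$ is $\pi_1$-injective. Suppose not. Since each $M\setminus D_n'$ has only finitely many unbounded components and $\underline{\pi}_0(M)$ is compact, a pigeonhole argument lets us, after passing to a subsequence of $\{C_n\}$, fix an end $e$ of $M$ such that for every $n$ the $e$-component $\operatorname{cl}(B_n)$ of $M\setminus D_n'$ carries a loop $\gamma_n$ non-trivial in $\pi_1(\operatorname{cl}(B_n))$ — hence, by \Cref{rhofunctor}, non-trivial in $\pi_1(M)$ — with $f\gamma_n$ null-homotopic in $N$. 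As $N\neq\mathbb{R}^2$, it is end-irreducible (\Cref{endirr}); since each $f\gamma_n$ lies arbitrarily far out in $N$, near the end $e^\dagger:=\underline{\pi}_0(f)(e)$, end-irreducibility of $N$ together with a diagonal argument lets us assume in addition that $f\gamma_n$ bounds a disk $\Delta_n$ lying in the $e^\dagger$-component of $N\setminus C_n$, so that $\{\Delta_n\}$ converges to $e^\dagger$. Now fix a representative $a$ of $e$ with $a(n)\in B_n$, conjugate $\gamma_n$ inside $B_n$ to a loop $\ell_n$ based at $a(n)$, and let $\alpha\colon(\underline{\mathbb{S}^1},\underline{*})\to(M,\underline{a})$ be the proper map of pairs with $\alpha|\underline{*}=a$ and $\alpha|\mathbb{S}_n^1=\ell_n$. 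On one hand $\underline{\pi}_1(f)([\alpha])=[f\alpha]=0$, because the disks $\Delta_n$, together with the images of the conjugating paths (which lie near $e^\dagger$), assemble into a germ homotopy rel $\underline{*}$ converging to $e^\dagger$ from $f\alpha$ to the trivial map. On the other hand $[\alpha]\neq0$: if $\alpha$ were germ homotopic rel $\underline{*}$ to the trivial map, then for each $m$ the homotopy would, for all large $n$, contract $\ell_n$ inside $\operatorname{cl}(B_m)$, whereas $\ell_n$ is non-trivial in $\pi_1(\operatorname{cl}(B_n))$ and the inclusion $\operatorname{cl}(B_n)\hookrightarrow\operatorname{cl}(B_m)$ (for $n\geq m$) is $\pi_1$-injective, since both are $\pi_1$-injectively embedded in $M$ by \Cref{rhofunctor}. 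Thus $[\alpha]$ is a non-trivial element of $\ker\underline{\pi}_1(f)$ at $\underline a$, contradicting the hypothesis via \Cref{cutlemma1}, and the claim follows.

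Finally, choose $n$ as in the claim and set $D':=D_n'$ and $D:=C_n$. For each unbounded component $B$ of $M\setminus D'$, the surfaces $\operatorname{cl}(B)$ and $\operatorname{cl}(W)$ are non-compact bordered surfaces whose boundary components are compact circles, and $f|\operatorname{cl}(B)\colon(\operatorname{cl}(B),\partial\operatorname{cl}(B))\to(\operatorname{cl}(W),\partial\operatorname{cl}(W))$ is a proper map that is $\pi_1$-injective (by the claim) and restricts to a covering map on each boundary component — on the frontier circles by the first paragraph, and on the components of $\partial M$ inside $B$ by the standing hypothesis. Hence \Cref{specialcase} produces a proper homotopy rel $\partial\operatorname{cl}(B)$ of $f|\operatorname{cl}(B)$ to a finite-sheeted covering map onto $\operatorname{cl}(W)$. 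As there are only finitely many such $B$ and these homotopies are relative to the frontier circles and to $\partial M$, they patch together with the constant homotopy on $D'$ and on the bounded components of $M\setminus D'$ into a proper homotopy of $f$ rel $\partial M$ with the asserted property. I expect the only real difficulty to be the claim in the second paragraph: one must verify that the class $[\alpha]$ assembled from the local failures of injectivity is simultaneously non-trivial — which is where the $\pi_1$-injectivity of the nested subsurface inclusions $\operatorname{cl}(B_n)\hookrightarrow\operatorname{cl}(B_m)$ enters — and is killed by $\underline{\pi}_1(f)$ — which is where end-irreducibility of $N$ is used, to drag the null-homotopies of the loops $f\gamma_n$ out to the end $e^\dagger$.
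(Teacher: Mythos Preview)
Your proposal is correct and follows the same route as the paper: normalise $f$ via the transversality portion of the proof of \Cref{specialcase}, show that $f$ is eventually $\pi_1$-injective on the unbounded complementary pieces by assembling a nontrivial element of $\ker\underline{\pi}_1(f)$ from a hypothetical sequence of failures, and then feed each such piece into \Cref{specialcase}. Your detour through end-irreducibility of $N$ is unnecessary: the negation of your claim already says that $f\gamma_n$ is null-homotopic in the target piece $\operatorname{cl}(W_n)$ itself (not merely in $N$), so the bounding disks automatically converge to $e^\dagger$; the paper exploits this directly and organises the contradiction end-by-end (for each end $e$ produce $n_e$, then use compactness of $\underline{\pi}_0(M)$ to pass to a single $\underline{n}$), thereby sidestepping both your end-irreducibility appeal and the pigeonhole/diagonal step.
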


\begin{proof} Let \(\{C_n\}\) be an exhausting sequence for \(N\) satisfying the five properties \ref{(i)}--\ref{(v)} of \Cref{exhaustion}. Using the techniques developed in the first seven paragraphs of the proof of \Cref{specialcase}, $f$ can properly homotoped rel $\partial M$ to a map $f'$ such that each component of \(f'^{-1}(\bigcup_n \operatorname{fr}(C_n))\) is a non-trivial circle in \(M\setminus \partial M\), and that the restriction of \(f'\) to each component of \(f'^{-1}(\bigcup_n \operatorname{fr}(C_n))\) is either a constant map or a covering map. For each \(n\), let \(A_{n,1}, \ldots, A_{n,k_n}\) denote the closures (taken in $M$) of the unbounded components of \(M \setminus f'^{-1}(C_n)\), and let \(B_{n,1}, \ldots, B_{n,l_n}\) denote the closures (taken in $N$) of the components of \(N \setminus C_n\). Then, for each \(n\), there exists a map \(\sigma_n \colon \{1, \ldots, k_n\} \to \{1, \ldots, l_n\}\) such that \(f'(A_{n,i}) \subseteq B_{n,\sigma_n(i)}\). We denote the restriction \(f'\vert A_{n,i} \to B_{n,\sigma_n(i)}\) by \(f'_{n,i}\). Note that, for each \(n\) and each \(i \in \{1, \ldots, k_n\}\), the restriction of \(f'_{n,i}\) to each component of \(\partial A_{n,i}\) is a covering map onto a component of \(\partial B_{n,\sigma_n(i)}\).

Now, consider an arbitrary end \(e\) of \(M\), and let \(a\) be a representative of \(e\) such that \(\underline{\pi}_1(f) \colon \underline{\pi}_1(M, \underline{a}) \to \underline{\pi}_1(N, \underline{f a})\) is a monomorphism. Since \(\{f'^{-1}(C_n)\}\) forms an exhausting sequence for \(M\), for each \(j \in \{1, \ldots, k_{n+1}\}\), there exists \(i \in \{1, \ldots, k_n\}\) such that \(A_{n,i} \supset A_{n+1,j}\). Thus, we obtain a decreasing sequence \(A_{1,\tau_e(1)} \supset A_{2,\tau_e(2)} \supset \cdots\) such that \(\bigcap_n A_{n,\tau_e(n)} = \varnothing\), and each \(\underline{\operatorname{int}(A_{n,\tau_e(n)})}\) is a basic open neighborhood of \(e\). Choose a proper map \(a_e \colon [0, \infty) \to M\) such that \(a_e([n, \infty)) \subseteq A_{n,\tau_e(n)}\) and \(a_e(n) \in \operatorname{fr}(A_{n,\tau_e(n)})\) for each \(n\). Then \(a_e\) represents \(e\), i.e., \(a_e \in e\). For each \(n\), let \(\pi_1(A_{n+1,\tau_e(n+1)}, a_e(n+1)) \to \pi_1(A_{n,\tau_e(n)}, a_e(n))\) denote the monomorphism induced by the \(\pi_1\)-injective inclusion \(A_{n+1,\tau_e(n+1)} \hookrightarrow A_{n,\tau_e(n)}\) (see \Cref{rhofunctor}), together with the change of basepoint along the path \(a_e\vert [n,n+1]\). Similarly, for each \(n\), let \(\pi_1(B_{n+1,\sigma_n(\tau_e(n+1))}, f'a_e(n+1)) \to \pi_1(B_{n,\sigma_n(\tau_e(n))}, f'a_e(n))\) denote the monomorphism induced by the \(\pi_1\)-injective inclusion \(B_{n+1,\sigma_n(\tau_e(n+1))} \hookrightarrow B_{n,\sigma_n(\tau_e(n))}\), together with the change of basepoint along the path \(f'a_e\vert[n,n+1]\). Thus, we obtain the following ladder of commutative diagrams.
 $$\adjustbox{scale=1}{\begin{tikzcd}
\cdots  \arrow[r] & {\displaystyle\pi_1(A_{n+1, \tau_e(n+1)}, a_e(n+1))} \arrow[r] \arrow[d, "\displaystyle\pi_1(f'_{n+1,\tau_e(n+1)})"'] & {\displaystyle\pi_1(A_{n,\tau_e(n)}, a_e(n))} \arrow[r] \arrow[d, "\displaystyle\pi_1(f'_{n,\tau_e(n)})"] & \cdots \\
\cdots \arrow[r]  & {\displaystyle\pi_1(B_{n+1,\sigma_n(\tau_e(n+1))}, f'a_e(n+1))} \arrow[r]                                       & {\displaystyle\pi_1(B_{n,\sigma_n(\tau_e(n))}, f'a_e(n))} \arrow[r]                                  & \cdots
\end{tikzcd}}$$

We claim that there exists an integer \(n_e\) such that \(\pi_1(f'_{n,\tau_e(n)})\) is injective for all \(n \geq n_e\). Suppose, to the contrary, that for infinitely many \(n\), there exists a loop \(\gamma_n\) in \(A_{n,\tau_e(n)}\) based at \(a_e(n)\) such that \([\gamma_n]\) is a non-trivial element of \(\pi_1(A_{n,\tau_e(n)}, a_e(n))\), but \([f'_{n,\tau_e(n)} \gamma_n]\) is trivial in \(\pi_1(B_{n,\sigma_n(\tau_e(n))}, f'a_e(n))\). Chasing the above ladder of commutative diagrams then shows that for all \(n \in \mathbb{N}\), there exists a loop \(\delta_n\) in \(A_{n,\tau_e(n)}\) based at \(a_e(n)\) such that \([\delta_n]\) is a non-trivial element of \(\pi_1(A_{n,\tau_e(n)}, a_e(n))\), but \([f'_{n,\tau_e(n)} \delta_n]\) is trivial in \(\pi_1(B_{n,\sigma_n(\tau_e(n))}, f'a_e(n))\). Let \(\delta_0\) be the constant loop based at \(a_e(0)\). Now define a proper map of pairs \(\alpha \colon (\underline{\mathbb{S}^1}, \underline{*}) \to (M, \underline{a_e})\) by setting \(\alpha|\underline{*} \coloneqq a_e\) and \(\alpha|\mathbb{S}^1_k \coloneqq \delta_k\) for each \(k \geq 0\). Then \([\alpha]\) is a non-trivial element of \(\underline{\pi}_1(M, \underline{a_e})\), but \(\underline{\pi}_1(f')\) sends \([\alpha]\) to the trivial element of \(\underline{\pi}_1(N, \underline{f'a_e})\). By \Cref{cutlemma1} and \Cref{cutlemma2}, this contradicts our hypothesis. Therefore, there exists an integer \(n_e\) such that \(\pi_1(f'_{n,\tau_e(n)})\) is injective for all \(n \geq n_e\).

We now repeat the above argument for every \( e \in \underline{\pi}_0(M) \). Thus, for each \( e \in \underline{\pi}_0(M) \), there exists an integer \( n_e \) such that the following hold for all \( n \geq n_e \): (1) \( \underline{\operatorname{int}(A_{n,\tau_e(n)})} \) is a neighborhood of \( e \); (2) the map \( f'\vert A_{n,\tau_e(n)} \to B_{n,\sigma_n(\tau_e(n))} \) is \( \pi_1 \)-injective; and (3) the restriction of \( f' \) to each component of \( \partial A_{n,\tau_e(n)} \) is a covering map onto a component of \( \partial B_{n,\sigma_n(\tau_e(n))} \). By the compactness of \(\underline{\pi}_0(M)\), there exist finitely many points \(e_1, \dots, e_m \in \underline{\pi}_0(M)\) such that \(\bigcup_{i=1}^m \underline{\operatorname{int}(A_{n_{e_i}, \tau_{e_i}(n_{e_i})})} = \underline{\pi}_0(M)\). Let \(\underline{n} \coloneqq \max \{ n_{e_1}, \dots, n_{e_m} \}\). Since \(\operatorname{int}(A_{\underline{n}, 1}), \dots, \operatorname{int}(A_{\underline{n}, k_{\underline{n}}})\) are all the unbounded components of \(M \setminus f'^{-1}(C_{\underline{n}})\), we have \(\bigcup_{j=1}^{k_{\underline{n}}} \underline{\operatorname{int}(A_{\underline{n}, j})} = \underline{\pi}_0(M)\). Thus, for every \(j \in \{1, \dots, k_{\underline{n}}\}\), there exists \(i \in \{1, \dots, m\}\) such that \(\underline{\operatorname{int}(A_{\underline{n}, j})}\) intersects \(\underline{\operatorname{int}(A_{n_{e_i}, \tau_{e_i}(n_{e_i})})}\), and hence \(A_{n_{e_i}, \tau_{e_i}(n_{e_i})}\) must contain \(A_{\underline{n}, j}\). This follows because, for integers \(t \geq s\), each (unbounded) component of \(M \setminus f'^{-1}(C_t)\) is contained in an (unbounded) component of \(M \setminus f'^{-1}(C_s)\). Therefore, since \( f'\vert A_{n_{e_i}, \tau_{e_i}(n_{e_i})} \) is \(\pi_1\)-injective for all \( i \in \{1, \dots, m\} \), and the components of \( M \setminus f'^{-1}(C_{\underline{n}}) \) are essential (see \Cref{rhofunctor}), it follows that \( f'\vert A_{\underline{n}, j} \) is also \(\pi_1\)-injective for all \( j \in \{1, \dots, k_{\underline{n}}\} \). Recall that, for each \( j \in \{1, \ldots, k_{\underline{n}}\} \), the restriction of \( f' \) to each component of \(\partial A_{\underline{n}, j}\) is a covering map onto a component of \(\partial B_{\underline{n}, \sigma_{\underline{n}}(j)}\). By \Cref{classification}, for each \( j = 1, \dots, k_{\underline{n}} \), there exists a proper homotopy \( H_{\underline{n}, j} \colon A_{\underline{n}, j} \times [0,1] \to B_{\underline{n}, \sigma_{\underline{n}}(j)} \) from \( f' \vert A_{\underline{n}, j} \), relative to \(\partial A_{\underline{n}, j}\), to a finite-sheeted covering map. In particular, each \( H_{\underline{n}, j} \) is a proper homotopy rel \( \operatorname{fr}(A_{\underline{n}, j}) \) from \( f' \). Therefore, we obtain a proper homotopy \( H \colon M \times [0,1] \to N \) rel \( \operatorname{cl}(M \setminus \bigcup_{j=1}^{k_{\underline{n}}} A_{\underline{n}, j} ) \cup \partial M \) from \( f' \), such that \( H \) extends \( H_{\underline{n}, j} \) for each \( j = 1, \dots, k_{\underline{n}} \). Define \( g \coloneqq H(-,1) \), \( D' \coloneqq f'^{-1}(C_{\underline{n}}) \), and \( D \coloneqq C_{\underline{n}} \). Then \( f' \) is properly homotopic to \( g \) relative to \( \partial M \), and the restriction of \( g \) to the closure of each unbounded component of \( M \setminus D' \) is a finite-sheeted covering map onto the closure of some unbounded component of \( N \setminus D \). To conclude the proof, recall that \( f \) is properly homotopic to \( f' \) relative to \( \partial M \).
\end{proof}
\begin{remark}\label{unbounded}
    By definition, \( D \) is connected and essential, whereas \( D' \) may be disconnected; however, each component of \( D' \) is essential (see \Cref{exhaustion} and \Cref{rhofunctor}). Moreover, by \ref{(v)} of \Cref{exhaustion}, every component of \( N \setminus D \) is unbounded, although \( M \setminus D' \) may have bounded components.
\end{remark}

If we replace “mono” with “iso” in the statements of \Cref{cutlemma1}, \Cref{cutlemma2}, and \Cref{cut}, then the corresponding versions also hold.

\begin{corollary}\label{iso}
     Suppose that for every end \( e \) of \( M \), there exists a representative \( a \) of \( e \) such that \( \underline{\pi}_1(f) \colon \underline{\pi}_1(M, \underline{a}) \to \underline{\pi}_1(N, \underline{f a}) \) is an isomorphism. Then there exist compact, $2$-dimensional submanifolds \( D' \subset M \) and \( D \subset N \) such that \( f \) can be properly homotoped rel \( \partial M \) to send the closure of each unbounded component of \( M \setminus D' \) homeomorphically onto the closure of some unbounded component of \( N \setminus D \). 
\end{corollary}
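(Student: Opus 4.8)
\emph{The plan} is to derive \Cref{iso} from \Cref{cut} — which applies since an isomorphism is a fortiori a monomorphism — in two further steps: first, observe that \Cref{cutlemma1}, \Cref{cutlemma2}, and hence the argument of \Cref{cut}, remain valid with ``mono'' replaced by ``iso''; then cut away a larger compact set so that each finite-sheeted covering produced by \Cref{cut} becomes a homeomorphism on each piece. The ``iso'' form of \Cref{cutlemma1} is immediate, as the change-of-basepoint maps $p_\ast,q_\ast$ in its proof are isomorphisms. For \Cref{cutlemma2} the homotopy-chasing argument already transports injectivity; it transports surjectivity by the symmetric argument, or more directly because $\underline{\pi}_1(f') = q_\ast\circ\underline{\pi}_1(f)$, where $q=\{H(a(k),-)\}$ is the path in $N$ traced out by a proper homotopy $H$ from $f$ to $f'$ and $q_\ast$ is an isomorphism. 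Thus, if $\underline{\pi}_1(f)$ is an isomorphism at a representative of each end of $M$, the same holds for every map properly homotopic to $f$, at the corresponding representatives.

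Applying \Cref{cut}, we obtain compact $2$-submanifolds $D_0'\subset M$ and $D_0\subset N$ and a proper homotopy rel $\partial M$ from $f$ to a map $g$ such that, for each of the finitely many unbounded components $A$ of $M\setminus D_0'$, the restriction $c:=g|\operatorname{cl}(A)$ is a finite-sheeted covering of $\operatorname{cl}(A)$ onto the closure of some unbounded component of $N\setminus D_0$. Fix such an $A$. For each end $e\in\underline{\operatorname{int}(A)}$ choose a representative $a\in e$ (disjoint from $\partial M$) at which $\underline{\pi}_1(g)$ is an isomorphism. The end-neighborhoods of $e$ lying in $A$ are cofinal among all end-neighborhoods of $e$ in $M$ — and likewise $g(\operatorname{cl}(A))$ is a neighborhood of $\underline{\pi}_0(g)(e)$ in $N$ with cofinal end-neighborhood system there — so the inclusion-induced maps $\underline{\pi}_1(\operatorname{cl}(A),\underline a)\to\underline{\pi}_1(M,\underline a)$ and $\underline{\pi}_1(g(\operatorname{cl}(A)),\underline{ca})\to\underline{\pi}_1(N,\underline{ca})$ are isomorphisms. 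By naturality, $\underline{\pi}_1(c)$ is then an isomorphism at every end of $\operatorname{cl}(A)$.

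\emph{The main obstacle} is the following local fact: a finite-sheeted covering $c\colon\widetilde U\to U$ of surfaces with compact boundary components, at whose end $e$ the map $\underline{\pi}_1(c)$ is an isomorphism, has local degree $1$ at $e$, i.e.\ maps some end-neighborhood of $e$ homeomorphically onto an end-neighborhood of $\underline{\pi}_0(c)(e)$. To prove this, choose nested connected end-neighborhoods $W_1\supset W_2\supset\cdots$ of $\bar e:=\underline{\pi}_0(c)(e)$ in $U$, each with frontier a finite union of non-trivial circles and with $\bigcap_m W_m=\varnothing$ (extract these from an exhausting sequence as in \Cref{exhaustion}), and let $V_m$ be the component of $c^{-1}(W_m)$ containing a tail of the chosen representative $a$. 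Then $c|V_m\colon V_m\to W_m$ is a $\kappa_m$-sheeted covering, and since $V_{m+1}$ is one component of $(c|V_m)^{-1}(W_{m+1})$ we get $\kappa_{m+1}\le\kappa_m$, so $\kappa_m$ stabilizes to some $\kappa_\infty\ge1$. Suppose $\kappa_\infty\ge2$. For $m$ large, $\kappa_{m+1}=\kappa_m$ forces $(c|V_m)^{-1}(W_{m+1})=V_{m+1}$, whence the index-$\kappa_\infty$ subgroups $H_m:=\operatorname{im}\bigl(\pi_1(c|V_m)\bigr)\le\pi_1(W_m)$ satisfy $H_{m+1}=H_m\cap\pi_1(W_{m+1})$; hence the finite coset sets $\pi_1(W_m)/H_m$ form an inverse system whose bonding maps (induced by the inclusions $\pi_1(W_{m+1})\hookrightarrow\pi_1(W_m)$) are bijective, so there is a compatible sequence of \emph{non-trivial} cosets. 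Through the identifications $\underline{\pi}_1(U,\underline{ca})\cong\wp(\{\pi_1(W_m)\})$ and $\underline{\pi}_1(\widetilde U,\underline a)\cong\wp(\{\pi_1(V_m)\})$ furnished by Brown's $\wp$-functor (cf.\ the Remark following \Cref{endirr}, together with \Cref{rhofunctor}), such a sequence represents an element of $\underline{\pi}_1(U,\underline{ca})$ not in the image of $\underline{\pi}_1(c)$, contradicting surjectivity. Therefore $\kappa_\infty=1$, and $c|V_m$ is a homeomorphism for $m$ large. The delicate point — and where I expect the real work to lie — is handling the inner-modification (conjugacy) relation in the definition of $\wp$ when promoting the coset sequence to an honest element of the proper fundamental group. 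Applied with $\widetilde U=\operatorname{cl}(A)$, $U=g(\operatorname{cl}(A))$, $c=g|\operatorname{cl}(A)$, this shows $c$ has local degree $1$ at every end of $\operatorname{cl}(A)$.

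\emph{To conclude}, for every end $e$ of $M$ fix an end-neighborhood $V_e$, with non-trivial-circle frontier and lying deep inside the relevant piece $A$, on which $g$ restricts to a homeomorphism onto an end-neighborhood of $\underline{\pi}_0(g)(e)$. By compactness of $\underline{\pi}_0(M)$, finitely many of the sets $\underline{\operatorname{int}(V_e)}$ cover $\underline{\pi}_0(M)$; shrinking, we may take the corresponding $V_{e_1},\dots,V_{e_r}$ pairwise disjoint and, for ends mapping to a common end of $N$, with nested images under $g$. Set $D':=M\setminus\bigcup_i V_{e_i}$ and $D:=N\setminus\bigcup_i g(V_{e_i})$; these are compact $2$-submanifolds (with frontiers built from the frontier circles of the $V_{e_i}$ and their $g$-images), $D'\supseteq D_0'$ and $D\supseteq D_0$, and $g$ carries the closure of each unbounded component of $M\setminus D'$ homeomorphically onto the closure of some unbounded component of $N\setminus D$ — where, as no end-allowability is assumed, distinct pieces of $M\setminus D'$ may share a target. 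Since $f$ is properly homotopic rel $\partial M$ to $g$, this proves \Cref{iso}.
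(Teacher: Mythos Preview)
The paper's own proof is the single sentence preceding the corollary: replace ``mono'' by ``iso'' in \Cref{cutlemma1}, \Cref{cutlemma2}, and \Cref{cut}. You handle the two lemmas correctly, but then diverge in organization: rather than rerunning the ladder argument inside the proof of \Cref{cut} to conclude that each $\pi_1(f'_{n,\tau_e(n)})$ is eventually an \emph{isomorphism} --- whence \Cref{specialcase} already yields a one-sheeted covering on each $A_{\underline n,j}$ and no further cutting is needed --- you apply \Cref{cut} as a black box, prove a local-degree-$1$ lemma for the resulting finite covers, and then attempt a second cut. Your local-degree-$1$ step is essentially correct, and the ``delicate point'' you flag is in fact a non-issue: the germ-homotopy relation defining $\underline\pi_1$ is rel the base ray, so no conjugacy enters. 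One need not even produce a \emph{compatible} coset sequence --- any choice of $g_m\in\pi_1(W_m,ca(m))\setminus H_m$ suffices, since if $[\beta]=[c\alpha]$ then for large $m$ the class $[c\alpha\vert\mathbb S^1_m]$ lies in $H_{l(m)}$ (it has the lift $\alpha\vert\mathbb S^1_m$) for some $l(m)\le m$ with $l(m)\to\infty$, and your relation $H_m=H_{l(m)}\cap\pi_1(W_m)$ then forces $g_m\in H_m$, a contradiction.

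The genuine gap is in your concluding paragraph. With the images $g(V_{e_i})$ merely \emph{nested} for ends over a common target, the component of $N\setminus D$ containing $g(V_{e_i})$ is the \emph{largest} such image, so $g\vert\operatorname{cl}(V_{e_i})$ is a homeomorphism onto a proper closed subset of that component --- not onto its closure --- and the corollary's conclusion fails for that piece. The clean fix, which the paper's organization delivers for free, is to work from the $N$ side: once every local degree is $1$, the $d$-sheeted cover $g\vert\operatorname{cl}(A)$ has exactly $d$ preimage ends over each end of its image, so for every end $\bar e$ of $g(\operatorname{cl}(A))$ there is a neighbourhood $W$ with $(g\vert A)^{-1}(W)$ a disjoint union of $d$ homeomorphic copies of $W$; by compactness of $\underline\pi_0(N)$ one may then choose a single $C_n$ (from the exhausting sequence in the proof of \Cref{cut}) so that every component of $N\setminus C_n$ has this property for every piece $A$, and take $D=C_n$, $D'=g^{-1}(C_n)$.
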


As an application of \Cref{cut}, we now prove the following theorem. 
\begin{theorem}\label{deg}
 Suppose $\deg(f)=1$ and that the restriction $f\vert \partial M\to \partial N$ is a homeomorphism. If $\underline\pi_0(f)$ is injective,  and for every end $e$ of $M$, there exists a representative $a$ of $e$ such that $\underline\pi_1(f)\colon \underline\pi_1(M,\underline a)\to \underline \pi_1(N,\underline{fa})$ is a monomorphism, then either $f$ has a geometric kernel or $f$ is properly homotopic to a homeomorphism. 
\end{theorem}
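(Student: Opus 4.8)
The plan is to use \Cref{cut} to strip off a compact core, to upgrade the coverings it produces to homeomorphisms using $\deg f=1$ together with the injectivity of $\underline{\pi}_0(f)$, and then to invoke Edmonds' classification of allowable degree-one maps (\Cref{allow}) on the core. First I would observe that the hypotheses of \Cref{cut} are met: $f\vert\partial M\to\partial N$ is a homeomorphism, hence a covering onto each component of $\partial N$, and $\underline{\pi}_1(f)$ is a monomorphism near every end. So, after a proper homotopy rel $\partial M$, there are compact $2$-submanifolds $D'\subseteq M$ and $D\subseteq N$ with $f^{-1}(D)=D'$ such that $f$ restricted to the closure $\bar V$ of each unbounded component of $M\setminus D'$ is a finite-sheeted covering onto the closure $\bar W$ of some unbounded component of $N\setminus D$; by \Cref{unbounded} and \Cref{rhofunctor}, $D$ is connected and each component of $D'$ and each such $\bar V$, $\bar W$ is essential. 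Since a proper map of nonzero degree is surjective, every unbounded component of $N\setminus D$ arises as such a $\bar W$. Then, by a further proper homotopy supported near the (compact, locally finitely many) bounded components of $M\setminus D'$ — each of which, being connected, is mapped into a single $\bar W$ with its frontier circles sent into $\operatorname{fr}(D)$ — I would push these components across $\operatorname{fr}(D)$ into $D$, enlarging $D'$, so that $M\setminus D'$ has only unbounded components.

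Next I would show $f$ is a homeomorphism outside $D'$. A finite covering is surjective on ends, and each $\bar W$, being unbounded, has nonempty space of ends; so if two distinct $\bar V$'s both covered the same $\bar W$, then $\underline{\pi}_0(f)$ would identify an end of one with an end of the other, contradicting injectivity. Hence $\bar V\mapsto\bar W$ is a bijection, with a unique $\bar V_W$ over each $\bar W$. Now fix $W$ and a regular value $y\in W$ not on $\partial N$. Then $f^{-1}(y)\subseteq\iota\bar V_W$, it has exactly $d_W$ points where $d_W$ is the number of sheets of $f\vert\bar V_W$, and all of them carry the same local sign (a covering of connected oriented surfaces is orientation-preserving or orientation-reversing); therefore $1=\deg f=\pm d_W$, so $d_W=1$ and $f\vert\bar V_W\colon\bar V_W\to\bar W$ is an orientation-preserving homeomorphism. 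Consequently $f$ maps $\operatorname{fr}(D')$ homeomorphically onto $\operatorname{fr}(D)$, and, since $\deg f=1$ also forces $f\vert\partial M$ to carry the fundamental class of each boundary circle of $M$ to that of its image (naturality of the boundary sequence), $f\vert\partial D'\to\partial D$ is an orientation-matching homeomorphism onto $\partial D$. A diagram chase with the sequences $H_2(D_i',\partial D_i')\to H_1(\partial D_i')$ and $H_2(D,\partial D)\to H_1(\partial D)$ — in which $[\partial D_i']$ is sent to a non-negative combination of the classes of the boundary circles of $D$ equal to $e_i[\partial D]$ with $e_i=\deg(f\vert D_i')$, whence $e_i\geq 1$ since $\partial D_i'\neq\varnothing$ — combined with $\sum_i e_i=\deg f=1$, forces $D'$ to be connected and $f\vert D'\colon(D',\partial D')\to(D,\partial D)$ to have degree one.

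Finally I would apply the compact theory to $g_0:=f\vert D'$, a degree-one map of connected compact oriented bordered surfaces which restricts to a homeomorphism $\partial D'\to\partial D$. A degree-one map is $\pi_1$-surjective, so $g_0$ is allowable; by \Cref{allow} it is properly homotopic rel $\partial D'$ to a map $\rho\circ p$, where $p$ collapses an essential compact subsurface $\Sigma\subseteq\iota D'$ of genus $\gamma\geq 0$ with one boundary component to a point and $\rho$ is a homeomorphism onto $D$. Extending this homotopy by the constant homotopy on $M\setminus\iota D'$ (valid, being rel $\partial D'$ and $D'$ compact) produces a proper homotopy of $f$ to a map that equals $\rho\circ p$ on $D'$ and equals the homeomorphisms $f\vert\bar V_W$ on the unbounded components, these matching on $\operatorname{fr}(D')$. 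If $\gamma\geq 1$, pick a simple loop in $\Sigma$ that is homologically non-trivial there; it is non-trivial in $M$ (as $\Sigma$, and hence $D'$, is essential) and is sent to a point, hence to a null-homotopic loop, so $f$ has a geometric kernel. If $\gamma=0$, then $\Sigma$ is a disk, $p$ is a proper homotopy equivalence rel $\partial D'$, so $f\vert D'$ is properly homotopic rel $\partial D'$ to a homeomorphism onto $D$; gluing with the homeomorphisms $f\vert\bar V_W$ exhibits $f$ as properly homotopic to a homeomorphism $M\to N$.

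The conceptual core — that $\deg f=1$ and the injectivity of $\underline{\pi}_0(f)$ force $f$ to be a homeomorphism off a compact set, after which Edmonds' theorem classifies what remains — is short once \Cref{cut} is in hand. I expect the main obstacle to be the routine but fiddly bookkeeping: making $f^{-1}(D)=D'$ and eliminating the bounded components of $M\setminus D'$ by explicit collar pushes, and nailing down the orientation conventions that make ``$f\vert\partial D'$ is an orientation-matching homeomorphism onto $\partial D$'' precise, so that the pieces glue correctly in the last step.
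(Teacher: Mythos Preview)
Your overall strategy matches the paper's: apply \Cref{cut}, use $\deg f=1$ together with injectivity of $\underline{\pi}_0(f)$ to upgrade the finite coverings on the unbounded pieces to homeomorphisms, then invoke Edmonds' \Cref{allow} on the compact core. The homology diagram chase for connectedness and degree one of the core, and the final dichotomy, are essentially as in the paper.

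There is, however, one genuine gap: your treatment of the bounded components of $M\setminus D'$. You propose to ``push these components across $\operatorname{fr}(D)$ into $D$'' by a homotopy, but this need not be possible. A bounded component $B'$ maps into some unbounded $\bar W$, and for the push to work one would need $f\vert B'\colon(B',\partial B')\to(\bar W,\operatorname{fr}(D)\cap\partial\bar W)$ to be homotopic rel $\partial B'$ to a map into $\operatorname{fr}(D)$. Since $\bar W$ is aspherical, this amounts to asking that the image of $\pi_1(B')$ under $f$ lie in the image of $\pi_1(\operatorname{fr}(D)\cap\partial\bar W)$ inside $\pi_1(\bar W)$ --- and there is no reason this should hold: $f(B')$ may well wrap essentially around a handle or a puncture of $\bar W$. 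This is not routine bookkeeping; the collar push can genuinely fail. Note also that your degree computation $1=\deg f=\pm d_W$ uses $f^{-1}(y)\subseteq\bar V_W$, which presupposes the bounded components have already been dealt with.

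The paper sidesteps both issues at once. It first sets $D_0':=D'\cup(\text{bounded components})$ and runs your local--degree argument with a small disk $\mathfrak D_i\subset U_i$ chosen additionally to avoid the compact set $g(D_0')$; since $U_i$ is unbounded such a disk exists, and now $g^{-1}(\mathfrak D_i)\subset\bar V_{U_i}$, so each covering has one sheet. With the homeomorphisms on the unbounded pieces in hand, it then enlarges the \emph{target} rather than trying to shrink the source: pick $C_{j_0}$ from an exhausting sequence for $N$ with $D\cup g(D_0')\subset\operatorname{int}(C_{j_0})$, and set $C_{j_0}':=\operatorname{cl}\big(M\setminus g^{-1}(\operatorname{cl}(N\setminus C_{j_0}))\big)$. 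Then $g$ is already a homeomorphism outside $C_{j_0}'$ (being a restriction of the $\bar V_W\to\bar W$), and $g\vert\partial C_{j_0}'\to\partial C_{j_0}$ is a homeomorphism; your diagram chase and the application of \Cref{allow} now go through for $(C_{j_0}',C_{j_0})$.
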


The proof of this theorem relies on the following result of Edmonds.

\begin{theorem}[Nielsen--Edmonds classification of allowable maps of degree one\ \textnormal{\cite[Theorem~4.1]{MR541331}}]\label{allow}
    Let \(F\) and \(G\) be connected, oriented, compact \(2\)-manifolds, and let \(\varphi \colon (F, \partial F) \to (G, \partial G)\) be a degree-one map such that \(\varphi^{-1}(\partial G) = \partial F\) and \(\varphi\vert \partial F \to \partial G\) is a homeomorphism. If $\ker \pi_{1}(\varphi) = 0$, then $\varphi$ is homotopic rel $\partial F$ to a homeomorphism; otherwise, there exists an essential subsurface $S_{g,1} \subseteq F$, with $g \geq 1$, such that $\varphi$ is homotopic rel $\partial F$ to the quotient map $F \to F/S_{g,1}$ that collapses $S_{g,1}$ to a point.

\end{theorem}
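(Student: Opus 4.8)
I would follow Edmonds' approach, reducing first to the case $\chi(G)<0$ (when $G$ is one of $S_{0,0}$, $S_{0,1}$, $S_{0,2}$ the boundary hypothesis pins down the number of boundary circles of $F$ and the statement is elementary; only $G\cong S_{0,0}$ requires reading ``essential'' with the obvious latitude, the relevant map being the collapse of a once-punctured closed surface), and then splitting on whether $\ker\pi_1(\varphi)$ vanishes.

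\textbf{Degree one forces $\pi_1$-surjectivity; the injective case.} Lifting $\varphi$ to the covering of $(G,\partial G)$ corresponding to the subgroup $\operatorname{im}\pi_1(\varphi)$ and using multiplicativity of the degree under coverings, the index $[\pi_1(G):\operatorname{im}\pi_1(\varphi)]$ is finite and divides $\deg(\varphi)=1$, hence equals $1$; see \cite[Corollary~3.4 and proof of Theorem~3.1]{MR192475}. Thus $\pi_1(\varphi)$ is onto. If moreover $\ker\pi_1(\varphi)=0$, then $\pi_1(\varphi)$ is an isomorphism, so $\varphi$ is $\pi_1$-injective and, since $\varphi|\partial F$ is a homeomorphism onto $\partial G$, it restricts to a covering map on each component of $\partial F$. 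The analogue of \Cref{classify} for compact surfaces, due to Nielsen, then shows $\varphi$ is homotopic rel $\partial F$ to a finite-sheeted covering map; being $\pi_1$-bijective, that covering is a homeomorphism. This gives the first alternative.

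\textbf{The pinch case.} Assume $K\coloneqq\ker\pi_1(\varphi)\neq0$. First I would show $\chi(F)<\chi(G)$: by \Cref{tool} we have $\chi(F)\le|\deg\varphi|\cdot\chi(G)=\chi(G)$, and equality would force $F$ and $G$ to have equal genus---they already have equally many boundary circles because $\varphi|\partial F$ is a homeomorphism---hence $\pi_1(F)\cong\pi_1(G)$; but surface groups are Hopfian, so the surjection $\pi_1(\varphi)$ would then be an isomorphism, contradicting $K\neq0$. Put $h\coloneqq g(F)-g(G)\ge1$. The goal is to construct an essential, non-boundary-parallel subsurface $R\cong S_{h,1}$ of $F$ such that $\varphi$ is homotopic rel $\partial F$ to the composite of the collapse $F\to F/R$ with a homeomorphism $F/R\to G$; this is consistent with the Euler-characteristic count, since $F/R$ has genus $g(F)-h=g(G)$ and boundary $\partial F$, so $\chi(F/R)=\chi(F)+2h=\chi(G)$. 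The construction rests on the claim that $K\neq0$ implies $\varphi$ has a geometric kernel realised by a simple closed curve $c\subset\iota F$ which is essential, not boundary-parallel, and may be taken either non-separating or bounding a subsurface contained in $\iota F$. I would establish this claim by the classical Nielsen-type procedure: make $\varphi$ transverse to a properly embedded essential arc (or circle) in $G$, delete the trivial circles of the preimage by innermost-disk moves, and induct on complexity. This procedure fails for general maps of bordered surfaces (cf.\ \Cref{ex} and Tucker's unpublished correction), but it goes through here precisely because $\varphi|\partial F$ is a homeomorphism, so the preimage arcs are controlled along $\partial F$. Granting the claim, the null-homotopy of $\varphi(c)$ lets me homotope $\varphi$ rel $\partial F$ so that a bicollar of $c$ maps into an embedded disk $\Delta\subset\iota G$; cutting $F$ along $c$ and capping the two new circles with disks mapped into $\Delta$ exhibits $\varphi$, up to homotopy rel $\partial F$, as $F\xrightarrow{\,p\,}F_1\xrightarrow{\,\varphi_1\,}G$, where $p$ collapses a neighborhood of $c$ and $\varphi_1$ again satisfies the hypotheses of the theorem, now with $\chi(F_1)=\chi(F)+2$. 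Iterating $h$ times---pulling the later surgery curves back to pairwise disjoint simple closed curves in $F$ by general position---the process ends at a $\pi_1$-isomorphism, which by the injective case is homotopic rel $\partial F$ to a homeomorphism. Joining the $h$ surgery curves by $h-1$ disjoint arcs and taking a regular neighborhood of the resulting graph produces $R\cong S_{h,1}$ (each non-separating surgery contributes one handle), and a van Kampen argument shows $R$ is essential, since $\partial R$ is essential in $S_{h,1}$ and the complement $\operatorname{cl}(F\setminus R)\cong\operatorname{cl}(G\setminus\Delta)$ is not a disk as $\chi(G)<0$. Unwinding the identifications then shows $\varphi$ is homotopic rel $\partial F$ to $(F\to F/R)$ followed by the homeomorphism $F/R\to G$.

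\textbf{Main obstacle.} The crux is the geometric-kernel claim: producing an \emph{embedded} representative of a nontrivial element of $K$ of the right form is exactly the phenomenon that can fail for proper maps of non-compact or bordered surfaces, so the argument must genuinely exploit both $\deg\varphi=1$ and the hypotheses $\varphi^{-1}(\partial G)=\partial F$ and $\varphi|\partial F$ a homeomorphism. A secondary point requiring care is performing every homotopy relative to $\partial F$---legitimate because $\partial F$ is a subcomplex of $F$, so the homotopy extension theorem applies, exactly as in the proof of \Cref{classification}---together with verifying that the $h$-fold surgery-and-collapse assembles into a single $S_{h,1}$ rather than a disconnected collapse locus, which the arc-joining step above takes care of.
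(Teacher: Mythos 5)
The paper does not prove \Cref{allow}; it is imported verbatim from Edmonds \cite[Theorem~4.1]{MR541331}, so there is no internal proof to compare against. Judged on its own terms, your outline has the right skeleton for the easy half: the covering-space argument for $\pi_1$-surjectivity via \cite[Corollary~3.4]{MR192475}, the reduction of the injective case to Nielsen's theorem, and the Euler-characteristic/Hopficity argument showing $\chi(F)<\chi(G)$ when $\ker\pi_1(\varphi)\neq 0$ are all sound.

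The gap is exactly where you flag it, and flagging it is not the same as closing it. Your entire pinch case rests on the claim that $\ker\pi_1(\varphi)\neq 0$ forces a \emph{simple} closed curve in $\iota F$, essential, not boundary-parallel, and either non-separating or bounding a subsurface in $\iota F$, whose image is null-homotopic. The justification offered---``make $\varphi$ transverse to an essential arc or circle, delete trivial preimage circles by innermost disks, induct on complexity,'' with the assertion that the boundary hypothesis controls the preimage arcs---is precisely the step that Tucker's original lemma got wrong and that this paper's \Cref{ex}, \Cref{exnext}, and \Cref{LNM} show can fail: the preimage of an essential curve can consist entirely of essential circles mapping by nontrivial coverings, and no amount of innermost-disk surgery produces a kernel curve. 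The hypothesis that must be exploited is not merely that $\varphi|\partial F$ is a homeomorphism but that $\deg\varphi=1$ \emph{globally} (note that the map $\varphi\colon P\to A'$ of \Cref{exnext} has degree one and restricts to a homeomorphism on one boundary circle, yet has no geometric kernel), and your sketch never says where degree one enters the transversality induction. Edmonds' actual argument is organized differently so as to avoid this claim as an input: he homotopes $\varphi$ to be transverse to a spine (one-complex) $W$ of $G$ with disk complement, uses degree one to show that exactly one complementary region of $\varphi^{-1}(W)$ carries the degree, and identifies the pinched $S_{g,1}$ as a regular neighborhood of the ``inessential'' part of the preimage; the existence of a geometric kernel is a corollary of the classification, not a lemma toward it. A secondary, smaller issue: your final assembly of the collapse locus into a single $S_{h,1}$ only accounts for non-separating surgery curves (``each contributes one handle''); separating kernel curves bounding positive-genus subsurfaces in $\iota F$ contribute more than one handle each, so the bookkeeping $h=g(F)-g(G)$ versus the number of surgeries needs to be redone.
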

Note that, without loss of generality, the condition \( \varphi^{-1}(\partial G) = \partial F \) can be omitted from \Cref{allow} by the following lemma, whose proof is given after that of \Cref{deg}.
\begin{lemma}\label{nearboundary}
    Let \( M \) and \( N \) be manifolds with collars \( [-2,0] \times \partial M \subseteq M \) and \( [-2,0] \times \partial N \subseteq N \), where \( \{0\} \times \partial M \equiv \partial M \) and \( \{0\} \times \partial N \equiv \partial N \). Suppose \( f \colon (M, \partial M) \to (N, \partial N) \) is a map. Then there exists a homotopy \( H\colon M\times [0,1]\to N \) rel \( \partial M \) from \( f \) to a map \( g \) such that \( g^{-1}((-1,0] \times \partial N) = (-1,0] \times \partial M \), and \( g(x,t) = (f(x), t) \quad \text{for all } (x,t) \in (-1,0] \times \partial M. \)
\end{lemma}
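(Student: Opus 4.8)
The plan is to first deform $f$ into the collar-product form near $\partial M$, and then use a deformation retraction of $N$ away from a thin collar of $\partial N$ to evict all the remaining preimages. Write points of the two collars as pairs $(t,y)$ with normal coordinate $t\in[-2,0]$, so that $\{0\}\times\partial M\equiv\partial M$ and $\{0\}\times\partial N\equiv\partial N$, and set $\bar f\coloneqq f|_{\partial M}\colon\partial M\to\partial N$.

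First I would arrange, by a homotopy rel $\partial M$, that $f\big([-3/2,0]\times\partial M\big)\subseteq[-2,0]\times\partial N$. Since $f(\partial M)\subseteq\partial N$, the set $f^{-1}\big((-2,0]\times\partial N\big)$ is an open neighbourhood of $\partial M$, and paracompactness of $\partial M$ provides a continuous $\nu\colon\partial M\to(-2,0)$ with $\{(t,y):\nu(y)\le t\le0\}\subseteq f^{-1}\big((-2,0]\times\partial N\big)$; the fibre-preserving self-homeomorphism $\Psi$ of $M$ that, on each fibre of the collar, rescales $[-3/2,0]$ onto $[\nu(y),0]$ while fixing the endpoints $0$ and $-2$ (and is the identity off the collar) is isotopic to $\mathrm{id}_M$ rel $\partial M$, so $f$ may be replaced by $f\circ\Psi$. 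On $[-3/2,0]\times\partial M$ now write $f(t,y)=(\lambda(t,y),\mu(t,y))$ with $\lambda(0,y)=0$, $\mu(0,y)=\bar f(y)$. Next I would straighten $f$: fixing a continuous $\gamma\colon[-3/2,0]\to[0,1]$ with $\gamma\equiv1$ on $[-1,0]$ and $\gamma(-3/2)=0$, I define a homotopy that equals $f$ off $[-3/2,0]\times\partial M$ and on that region sends $(t,y)$, at time $s$, to
\[
\Big(\,(1-s\gamma(t))\,\lambda(t,y)+s\gamma(t)\,t,\ \ \mu\big((1-s\gamma(t))\,t,\ y\big)\,\Big).
\]
This is a homotopy rel $\partial M$ (both coordinates are fixed at $t=0$), it agrees with $f$ near $\{-3/2\}\times\partial M$, its coordinates stay in $[-2,0]$ and $[-3/2,0]$ respectively, and at time $1$ it equals the product map $(t,y)\mapsto(t,\bar f(y))$ on $[-1,0]\times\partial M$; replace $f$ by this time-$1$ map.

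It remains to remove the ``spurious'' part of $f^{-1}\big((-1,0]\times\partial N\big)$. Put $\widehat N\coloneqq N\setminus\big((-1,0]\times\partial N\big)$ and $M^{\flat}\coloneqq M\setminus\big((-1,0]\times\partial M\big)$, both closed, with $\partial M^{\flat}=\{-1\}\times\partial M$. Let $r\colon N\times[0,1]\to N$ be the strong deformation retraction of $N$ onto $\widehat N$ given by $r_s(t,y)=\big((1-s)t-s,\,y\big)$ on $[-1,0]\times\partial N$ and $r_s=\mathrm{id}$ on $\widehat N$; then $r_0=\mathrm{id}_N$, $r_1(N)=\widehat N$, and $r_s|_{\widehat N}=\mathrm{id}$. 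Since $f$ is now the product map on $[-1,0]\times\partial M$, it sends $\partial M^{\flat}$ into $\{-1\}\times\partial N\subseteq\widehat N$, so $s\mapsto r_s\circ f|_{M^{\flat}}$ is a homotopy rel $\partial M^{\flat}$ from $f|_{M^{\flat}}$ to a map into $\widehat N$; extending it by the constant homotopy on $(-1,0]\times\partial M$ yields a homotopy rel $\partial M$ from $f$ to a map $g$ with $g(M^{\flat})\subseteq\widehat N$ and $g|_{(-1,0]\times\partial M}=(t,y)\mapsto(t,\bar f(y))$. Hence $M^{\flat}$ is disjoint from $g^{-1}\big((-1,0]\times\partial N\big)$ while all of $(-1,0]\times\partial M$ lies in it, so $g^{-1}\big((-1,0]\times\partial N\big)=(-1,0]\times\partial M$ and $g(x,t)=(f(x),t)$ there. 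Concatenating the three homotopies gives the required $H$.

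The main obstacle is the mutual compatibility of the three local modifications, which is why the construction carries margins between the region $[-3/2,0]\times\partial M$ (on which $f$ is pushed into the collar of $N$), the region $[-1,0]\times\partial M$ (on which $f$ is straightened), and the region $(-1,0]\times\partial M$ (controlled by the conclusion): each homotopy must be relative to $\partial M$ and to the frontier of the region where the next one operates. In particular, the last step must collapse the whole of $M^{\flat}$ into $\widehat N$ by a deformation retraction of $N$ (which necessarily moves $\partial N$), rather than damping that retraction by a cutoff over $M$ — such a cutoff is forced to be small near $\{-1\}\times\partial M$ and then fails to evict points whose images lie deep inside the thin collar of $\partial N$. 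A secondary, routine point is the passage to a continuous collar-thickness function $\nu$ in the first step when $\partial M$ is non-compact.
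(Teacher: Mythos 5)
Your proof is correct. The essential mechanism is the same as the paper's: push $N$ off the thin collar $(-1,0]\times\partial N$ by the straight-line strong deformation retraction (your $r$, the paper's $G$) while installing the product map $(t,y)\mapsto(t,f|_{\partial M}(y))$ on $(-1,0]\times\partial M$. The difference is in execution. The paper does everything in a single explicit formula: at time $s$ it declares $H$ to be the product map on the collar $[-s,0]\times\partial M$, reparametrizes $[-2,-s]$ onto $[-2,0]$ to compress the original $f$ into the remaining collar, and post-composes with $G_s$ elsewhere; continuity at the seam $t=-s$ holds automatically because $G_s$ carries $\partial N$ to $\{-s\}\times\partial N$. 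This avoids your first two steps entirely — in particular there is no need to first homotope $f$ into collar-respecting form, so no appeal to the tube-lemma/partition-of-unity construction of the thickness function $\nu$ (harmless here since manifolds are paracompact, but an extra moving part). Your version is longer but more modular, and the separation into ``make $f$ a product near $\partial M$'' followed by ``evict spurious preimages by retracting $N$'' makes it transparent why each hypothesis in the conclusion holds; your closing remark about why the eviction must be done by deforming $N$ rather than by a cutoff over $M$ is a correct and worthwhile observation. Both arguments are complete.
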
We are now in a position to prove \Cref{deg}.
\begin{proof}[Proof of \Cref{deg}]
    By \Cref{cut}, there exist compact \(2\)-dimensional submanifolds \( D' \subset M \) and \( D \subset N \), along with a proper map \( g \), such that \( f \) is properly homotopic to \( g \) relative to \( \partial M \), and the restriction of \( g \) to the closure of each unbounded component of \( M \setminus D' \) is a finite-sheeted covering map onto the closure of some unbounded component of \( N \setminus D \). In particular, \( \deg(g) = 1 \), \( \underline{\pi}_0(g) \) is injective, and the map \( g\vert\partial M \to \partial N \) is a homeomorphism. Since a proper map between manifolds is closed~\cite{MR254818}, \( g \) must be surjective; otherwise, there would exist a disk \( \mathfrak{D} \subseteq N \setminus \partial N \) such that \( g^{-1}(\mathfrak{D}) = \varnothing \), which, by~\cite[Lemma~2.1(b)]{MR192475}, would imply \( \deg(g) = 0 \). Hence, \( \underline{\pi}_0(g) \) is a bijection.
    
    Let \( D_0' \) be the \(2\)-dimensional submanifold obtained by taking the union of \( D' \) and all bounded components of \( M \setminus D' \). Thus, every component of \( D_0' \) is essential, and every component of \( M \setminus D_0' \) is unbounded. On the other hand, \( D \) is connected and essential, and every component of \( N \setminus D \) is unbounded (see \Cref{unbounded}). Let \( U_1, \dots, U_n \) be all the components of \( N \setminus D \). Since \( \underline{\pi}_0(g) \) is injective, \( g^{-1}(U_i) \) must contain exactly one component, say \( U_i' \), of \( M \setminus D_0' \) for each \( i \). Thus, \( U_1', \dots, U_n' \) are all the components of \( M \setminus D_0' \).

    Consider any \( i \in \{1, \dots, n\} \). By our hypothesis, the map \( g\vert \operatorname{cl}(U_i') \to \operatorname{cl}(U_i) \) is a finite-sheeted covering map, say \( k_i \)-sheeted. Thus, there exists a small disk \( \mathfrak{D}_i \subseteq U_i \setminus (\partial N \cup g(D_0')) \) such that \( g^{-1}(\mathfrak{D}_i) \) is a pairwise-disjoint union of \( k_i \) disks in \( M \setminus (D_0' \cup \partial M) \), and \( g \) restricted to each component of \( g^{-1}(\mathfrak{D}_i) \) is an orientation-preserving homeomorphism onto \( \mathfrak{D}_i \), which means \( \deg(g) = k_i \) by \cite[Lemma~2.1(b)]{MR192475}. But \( \deg(g) = 1 \). Thus, \( k_i = 1 \), and hence \( g\vert \operatorname{cl}(U_i') \to \operatorname{cl}(U_i) \) is a homeomorphism.
    
    Let \(\{C_j\}\) be an exhausting sequence for \(N\) satisfying the five properties \ref{(i)}--\ref{(v)} of \Cref{exhaustion}. Choose \(j_0\) sufficiently large so that \(\operatorname{int}(C_{j_0})\) contains \(D \cup g(D_0')\). Then the map \(g\vert g^{-1}(\operatorname{cl}(N \setminus C_{j_0})) \to \operatorname{cl}(N \setminus C_{j_0})\) is a homeomorphism. Define \(C_{j_0}' \coloneqq \operatorname{cl}(M \setminus g^{-1}(\operatorname{cl}(N \setminus C_{j_0})))\). Then \(C_{j_0}'\) is a compact 2-dimensional submanifold of \(M\) such that \(g(C_{j_0}') \subseteq C_{j_0}\), and the map \(g\vert \operatorname{fr}(C_{j_0}') \to \operatorname{fr}(C_{j_0})\) is a homeomorphism. Hence, \(\operatorname{fr}(C_{j_0}') \cap \partial M = \varnothing\), and \(g\vert \partial C_{j_0}' \to \partial C_{j_0}\) is a homeomorphism.

    We claim that \(C_{j_0}'\) is connected and that \(g\vert C_{j_0}' \to C_{j_0}\) is a map of degree one. To prove this claim, let \(X'\) be a component of \(C_{j_0}'\). Then \(g\) maps each component of \(\partial X'\) homeomorphically onto a component of \(\partial C_{j_0}\). Now, the following commutative diagram \[
        \begin{tikzcd}
            H_2(X',\partial X') \arrow[d, "H_2(g\vert X')"'] \arrow[r] & H_1(\partial X') \arrow[d, "H_1(g\vert \partial X')"] \\
            H_2(C_{j_0}, \partial C_{j_0}) \arrow[r] & H_1(\partial C_{j_0})
        \end{tikzcd}
    \]where the horizontal maps are of the form \( \mathbb{Z} \ni 1 \mapsto \oplus_{i=1}^m 1 \in \oplus_{i=1}^m \mathbb{Z} \), implies that both \(H_2(g\vert X')\) and \(H_1(g\vert \partial X')\) are isomorphisms. In particular, \(g\vert \partial X' \to \partial C_{j_0}\) is a homeomorphism, and hence \(X' = C_{j_0}'\). Moreover, since \(H_2(g\vert X')\) is an isomorphism, the map \(g\vert C_{j_0}' \to C_{j_0}\) is of degree one.

    Suppose \( \ker \pi_1(f) = 0 \). Then \( g\vert C_{j_0}' \to C_{j_0} \) can be homotoped rel \( \partial C_{j_0}' \) to a homeomorphism \( h\colon C_{j_0}' \to C_{j_0} \). By pasting \( h \) with the homeomorphism \( g\vert \operatorname{cl}(M \setminus C_{j_0}') \to \operatorname{cl}(N \setminus C_{j_0}) \), we obtain a homeomorphism \( \overline{h} \colon M \to N \) such that \( \overline{h} \) is properly homotopic to \( f \) rel \( \partial M \). 

    On the other hand, suppose \( \ker \pi_1(f) \ne 0 \). Then, by \Cref{allow} and \Cref{nearboundary}, the map \( g\vert \partial C_{j_0}' \to \partial C_{j_0} \) can be homotoped rel \( \partial C_{j_0}' \) to send an essential handle to a point; that is, \( g \) has a geometric kernel, and hence \( f \) also has a geometric kernel.
\end{proof}
We now prove \Cref{nearboundary}, closely following the argument of \cite[Homotopic-to-Product-Near-Boundary Lemma]{friedlbook}.
\begin{proof}[Proof of \Cref{nearboundary}] Define a homotopy \( G \colon N \times [0,1] \to N \) by $$
G(q, s) \coloneqq \begin{cases}
\left(-s, \tilde{q}\right) & \text{if } q = \left(t, \tilde{q}\right) \in \left[-s, 0\right] \times \partial N, \\
q & \text{if } q \in N \setminus \left(\left(-s, 0\right] \times \partial N\right).\end{cases}$$ Using \( G \), we now define a map \( H \colon M \times [0,1] \to N \) as follows:

$$H(p, s) \coloneqq 
\begin{cases}
\left(t, f(0, \tilde{p})\right) & \text{if } p = (t, \tilde{p}) \text{ with } t \in \left[-s, 0\right], \\
G\left(f\left(\frac{2(t+s)}{2-s}, \tilde{p}\right),s\right) & \text{if } p = (t, \tilde{p}) \text{ with } t \in \left[-2, -s\right], \\
G\left(f(p),s\right) & \text{if } p \notin \left(-2, 0\right] \times \partial M.
\end{cases}$$ 

Let \( g \coloneqq H(-,1) \). Then \( H \) is a homotopy rel \( \partial M \) from \( f \) to \( g \), and \( g(t,p) = (t, f(p)) \) for all \( (t,p) \in [-1,0] \times \partial M \). Since \( (-1,0] \times \partial N \) is disjoint from \( G(N \times \{1\}) \), it follows that \( g^{-1}((-1, 0] \times \partial N) = (-1, 0] \times \partial M \).
\end{proof}

Combining \Cref{iso} with the proof of \Cref{deg}, we obtain the following.
\begin{corollary}
    Suppose \( \partial M = \varnothing = \partial N \). If \( \underline\pi_0(f) \) is injective and, for every end \( e \) of \( M \), there exists a representative \( a \) of \( e \) such that \( \underline\pi_1(f) \colon \underline\pi_1(M, \underline{a}) \to \underline\pi_1(N, \underline{f a}) \) is an isomorphism, then \( f \) is properly homotopic to a map that either collapses an essential subsurface \( S_{g,1} \subset M \), with \( g \geq 1 \), to a point, or is a homeomorphism.
\end{corollary}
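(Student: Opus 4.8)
\emph{Plan.} The idea is to reduce to \Cref{deg} via \Cref{iso}. Since $\partial M = \varnothing = \partial N$, every ``rel $\partial M$'' clause below is vacuous. An isomorphism is in particular a monomorphism, so the hypothesis lets us apply \Cref{iso}: after replacing $f$ by a proper homotope, there are compact $2$-dimensional submanifolds $D' \subset M$ and $D \subset N$ such that $f$ carries the closure of each unbounded component of $M \setminus D'$ homeomorphically onto the closure of some unbounded component of $N \setminus D$. First I would record the standing structure: by \Cref{unbounded}, $D$ is connected and essential and every component of $N \setminus D$ is unbounded; absorbing into $D'$ the finitely many bounded components of $M \setminus D'$ produces a compact essential submanifold $D_0' \subset M$ whose complementary components $U_1', \dots, U_r'$ are all unbounded, with $f|\operatorname{cl}(U_i') \colon \operatorname{cl}(U_i') \to \operatorname{cl}(U_{\sigma(i)})$ a homeomorphism for some unbounded component $U_{\sigma(i)}$ of $N \setminus D$.

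Next I would show $\sigma$ is a bijection onto the set of unbounded components of $N \setminus D$, which forces $\deg f = 1$. Under the canonical identifications $\underline{\pi}_0(M) = \bigsqcup_i \underline{U_i'}$ and $\underline{\pi}_0(N) = \bigsqcup_j \underline{U_j}$, the homeomorphism $\operatorname{cl}(U_i') \to \operatorname{cl}(U_{\sigma(i)})$ identifies $\underline{\pi}_0(f)|_{\underline{U_i'}}$ with a bijection $\underline{U_i'} \to \underline{U_{\sigma(i)}}$; since each $\underline{U_j} \ne \varnothing$, the injectivity of $\underline{\pi}_0(f)$ forces $\sigma$ to be injective. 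Hence $f$ maps $M \setminus D_0'$ homeomorphically onto $\bigsqcup_i U_{\sigma(i)}$, so there is a small disk in some $U_{\sigma(i)} \setminus f(D_0')$ whose preimage under $f$ is a single disk mapped homeomorphically; by \cite[Lemma~2.1(b)]{MR192475} this gives $\deg f = \pm 1$ after orienting $M$ and $N$, and we may take $\deg f = 1$. Then $f$ is onto (a degree-one proper map between manifolds is surjective, as in the proof of \Cref{deg}), so $\sigma$ is also surjective; in particular $\underline{\pi}_0(f)$ is a homeomorphism.

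From here the proof of \Cref{deg} applies almost verbatim. Taking an exhausting sequence $\{C_j\}$ for $N$ as in \Cref{exhaustion} and $j_0$ large enough that $\operatorname{int}(C_{j_0}) \supseteq D \cup f(D_0')$, one puts $C_{j_0}' \coloneqq \operatorname{cl}(M \setminus f^{-1}(\operatorname{cl}(N \setminus C_{j_0})))$; then $f$ maps $\operatorname{cl}(M \setminus C_{j_0}')$ homeomorphically onto $\operatorname{cl}(N \setminus C_{j_0})$ and $\partial C_{j_0}'$ homeomorphically onto $\partial C_{j_0}$, and chasing the long exact homology sequences of $(C_{j_0}', \partial C_{j_0}')$ and $(C_{j_0}, \partial C_{j_0})$, exactly as in the proof of \Cref{deg}, shows $C_{j_0}'$ is connected and $f|_{C_{j_0}'} \colon C_{j_0}' \to C_{j_0}$ is a degree-one map restricting to a homeomorphism on the boundary. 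Applying \Cref{nearboundary} to arrange $f^{-1}(\partial C_{j_0}) = \partial C_{j_0}'$ and then \Cref{allow}: either $f|_{C_{j_0}'}$ is homotopic rel $\partial C_{j_0}'$ to a homeomorphism, or it is homotopic rel $\partial C_{j_0}'$ to the quotient collapsing an essential $S_{g,1} \subseteq C_{j_0}'$, $g \geq 1$, to a point. Pasting this homotopy with the homeomorphism $f|\operatorname{cl}(M \setminus C_{j_0}')$ yields a proper map properly homotopic to $f$ that is either a homeomorphism or collapses $S_{g,1}$ to a point, and $S_{g,1} \subseteq C_{j_0}' \subseteq M$ stays essential in $M$ by \Cref{rhofunctor}. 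The one step that is not a mechanical transcription of \Cref{deg} — and hence the main thing to get right — is the passage ``$\underline{\pi}_0(f)$ injective $\Rightarrow$ $\sigma$ bijective $\Rightarrow$ $\deg f = 1$'' in the second paragraph; this is the only place where the hypothesis on $\underline{\pi}_0(f)$ is used, and everything downstream merely quotes the corresponding portion of the proof of \Cref{deg}.
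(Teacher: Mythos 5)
Your proof is correct and follows the paper's intended route: the paper's entire argument for this corollary is the one-line remark that it follows by combining \Cref{iso} with the proof of \Cref{deg}, which is precisely what you carry out. Your explicit derivation of $\deg(f) = \pm 1$ from the injectivity of $\underline{\pi}_0(f)$ together with the homeomorphisms at infinity supplied by \Cref{iso} is a worthwhile addition, since the corollary's statement—unlike \Cref{deg}—does not assume the map has degree one, and this is indeed the only point where the argument is not a verbatim transcription of the proof of \Cref{deg}.
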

The remainder of this section is devoted to proving the analogue of \Cref{cut} in the $3$-dimensional setting, where the assumption of end-irreducibility becomes essential. Unlike in dimension two—where all boundaryless connected $2$-manifolds are end-irreducible—this property does not hold in dimension three. In fact, there exists a contractible open subset of $\mathbb{R}^3$ that is not end-irreducible—not even eventually end-irreducible \cite[Figure~1]{MR511426}. A connected $3$-manifold $P$ is said to be \emph{eventually end-irreducible} \cite[p.~504]{MR511426} if there exists a connected compact $3$-dimensional submanifold $C \subset P$ such that each component of $\operatorname{cl}(P \setminus C)$ is end-irreducible. On the other hand, for example, the interior of any connected, compact, boundary-irreducible $3$-manifold is end-irreducible.

The analogue of \Cref{cut} is \Cref{cutoff3}. A special case of \Cref{cutoff3} was proved by Brown \cite[Theorem~2.4]{MR511426}. In fact, Brown’s result concerns $3$-manifolds that are boundaryless, have exactly one end, and are only eventually end-irreducible. For simplicity, we omit the word “eventually” from our hypotheses.

\begin{theorem}\label{cutoff3}
Let \(P\) and \(Q\) be connected, irreducible, boundary-irreducible, end-irreducible, non-compact $3$-manifolds such that every component of \(\partial P\) is compact. Suppose there exists a proper map \(f \colon (Q, \partial Q) \to (P, \partial P)\) with \(f\vert\partial Q\) a local homeomorphism, and for every end \(e\) of \(Q\), there exists a representative \(a\) of \(e\) such that \(\underline{\pi}_1(f) \colon \underline{\pi}_1(Q, \underline{a}) \to \underline{\pi}_1(P, \underline{f a})\) is a monomorphism (resp. an isomorphism). Then there exist compact $3$-dimensional submanifolds \(D' \subset Q\) and \(D \subset P\) such that, after a proper homotopy of \(f\) rel \(\partial Q\), for each unbounded component \(U'\) of \(Q \setminus D'\), there exists an unbounded component \(U\) of \(P \setminus D\) for which \(f\vert\operatorname{cl}(U') \to \operatorname{cl}(U)\) is a finite-sheeted covering map (resp. a homeomorphism). 
\end{theorem}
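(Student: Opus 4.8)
The plan is to follow the proof of \Cref{cut} almost verbatim, replacing each two-dimensional ingredient by its three-dimensional counterpart from the Brown--Tucker theory. Since $P$ is a non-compact, irreducible, boundary-irreducible, end-irreducible $3$-manifold with compact boundary components, Brown--Tucker's Lemma 3.1 \cite{MR334225} supplies an exhausting sequence $\{C_n\}$ of compact $3$-dimensional submanifolds of $P$ with $\operatorname{fr}(C_n) \subset \iota P$, each component of $\operatorname{fr}(C_n)$ incompressible in $P$, each $C_n$ connected, and every component of $P \setminus C_n$ unbounded; this plays the role of \Cref{exhaustion}. Next, following the part of Brown--Tucker's proof of \cite[Theorem~4.2]{MR334225} that only simplifies the preimages of the walls $\operatorname{fr}(C_n)$ --- the part resting on Heil's theorem \cite[Corollary~3.2]{MR2619608}, which does not use $\pi_1$-injectivity of $f$ --- and using that $f\vert\partial Q$ is a local homeomorphism (hence, since $f$ is proper and each component of $\partial P$ is compact, a covering map of each component of $\partial Q$ onto a component of $\partial P$), we properly homotope $f$ rel $\partial Q$ to a map $f'$ such that $\{f'^{-1}(C_n)\}$ exhausts $Q$, each component of $f'^{-1}(\bigcup_n \operatorname{fr}(C_n))$ is an incompressible surface in $\iota Q$, and $f'$ restricts to a covering map from each such component onto a component of some $\operatorname{fr}(C_n)$. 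This replaces the appeal to the first seven paragraphs of the proof of \Cref{specialcase}.

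Now the ladder argument of \Cref{cut} goes through. Let $A_{n,1}, \dots, A_{n,k_n}$ be the closures in $Q$ of the unbounded components of $Q \setminus f'^{-1}(C_n)$, and $B_{n,1}, \dots, B_{n,l_n}$ the closures in $P$ of the components of $P \setminus C_n$; by construction $f'$ carries each component of $\partial A_{n,i}$ by a covering map onto a component of $\partial B_{n,\sigma_n(i)}$, where $\sigma_n$ records which $B_{n,\cdot}$ contains the image of each $A_{n,\cdot}$. The three-dimensional analogue of \Cref{rhofunctor} holds --- a codimension-zero submanifold of the end-irreducible $Q$ whose frontier is a finite disjoint union of incompressible surfaces in $\iota Q$ is $\pi_1$-injectively embedded; this follows, as in the remark after \Cref{rhofunctor}, from the end-irreducibility of $Q$ and the loop theorem, or directly from Heil's theorem --- so every nesting inclusion among the $A_{n,i}$'s, and among the $B_{n,i}$'s, is $\pi_1$-injective. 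Fixing an end $e$ of $Q$ and a representative $a \in e$ realizing the monomorphism (resp.\ isomorphism) $\underline\pi_1(f)\colon \underline\pi_1(Q,\underline a)\to\underline\pi_1(P,\underline{fa})$, one extracts, exactly as in \Cref{cut}, a nested sequence $A_{1,\tau_e(1)} \supset A_{2,\tau_e(2)} \supset \cdots$ shrinking to $e$, a representative $a_e \in e$ threading through it, and the ladder of commutative squares relating the fundamental groups of the $A_{n,\tau_e(n)}$'s to those of the corresponding components of $P \setminus C_n$, with vertical arrows the homomorphisms $\pi_1(f'\vert A_{n,\tau_e(n)})$. The analogues of \Cref{cutlemma1} and \Cref{cutlemma2} hold with identical, dimension-free proofs, so a diagram chase against the $\underline\pi_1$-hypothesis produces an integer $n_e$ beyond which $\pi_1(f'\vert A_{n,\tau_e(n)})$ is injective (resp.\ an isomorphism). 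By compactness of $\underline\pi_0(Q)$, finitely many of the neighborhoods $\underline{\operatorname{int}(A_{n_e,\tau_e(n_e)})}$ cover $\underline\pi_0(Q)$; taking $\underline n$ to be the maximum of the corresponding $n_e$'s and using that each unbounded component of $Q \setminus f'^{-1}(C_{\underline n})$ lies inside one of these $\pi_1$-injective pieces and is itself essential in $Q$, we conclude that $f'$ restricts to a $\pi_1$-injective map (resp.\ a map inducing an isomorphism on $\pi_1$) from the closure of each unbounded component $U'$ of $Q \setminus f'^{-1}(C_{\underline n})$ onto the closure of the component $U$ of $P \setminus C_{\underline n}$ with $f'(U') \subseteq U$.

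It remains to improve $f'$ on each of these pieces. Since $\operatorname{cl}(U')$ and $\operatorname{cl}(U)$ are obtained by cutting the irreducible, boundary-irreducible $Q$ and $P$ along incompressible surfaces, they are again irreducible and boundary-irreducible; they are end-irreducible because a complementary component of a compact submanifold with incompressible frontier inside an end-irreducible $3$-manifold is again end-irreducible; and all their boundary components are compact --- the new ones being components of $\operatorname{fr}(C_{\underline n})$ or of $f'^{-1}(\operatorname{fr}(C_{\underline n}))$, which are compact as frontiers of compact submanifolds. Moreover, $f'$ restricts to a covering map on each component of $\partial\operatorname{cl}(U')$. Hence Brown--Tucker's theorem \cite[Theorem~4.2]{MR334225} --- the three-dimensional analogue of \Cref{classification}, in its monomorphism form (resp.\ in its bijective specialization) --- properly homotopes each $f'\vert\operatorname{cl}(U')$, rel $\partial\operatorname{cl}(U')$, to a finite-sheeted covering map (resp.\ a homeomorphism) onto $\operatorname{cl}(U)$. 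These homotopies are constant on the frontiers, so they patch to a proper homotopy of $f'$ rel $\partial Q$; setting $D' \coloneqq f'^{-1}(C_{\underline n})$ and $D \coloneqq C_{\underline n}$ gives the conclusion, since $f$ is properly homotopic to $f'$ rel $\partial Q$.

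The main obstacle lies in the two genuinely three-dimensional steps. First, the simplification step: one must run the Heil-based argument rel $\partial Q$ and compatibly at all levels, producing incompressible preimages of the walls on which $f'$ is a covering map --- substantially more delicate than the transversality-for-circles used in \Cref{cut}. Second, one must verify that the complementary pieces $\operatorname{cl}(U')$ and $\operatorname{cl}(U)$ inherit all three properties --- irreducible, boundary-irreducible, end-irreducible --- needed to invoke \cite[Theorem~4.2]{MR334225}; the inheritance of end-irreducibility is precisely where the global end-irreducibility hypotheses on $Q$ and $P$ are indispensable. The ladder argument and the compactness patching, which form the bulk of the proof of \Cref{cut}, transfer without essential change.
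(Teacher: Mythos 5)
Your proposal is correct and follows essentially the same route as the paper: take the Brown--Tucker exhausting sequence for $P$ from \cite[Lemma~3.1]{MR334225}, simplify the wall preimages rel $\partial Q$ via the first part of the proof of \cite[Theorem~4.2]{MR334225}, run the ladder/compactness argument of \Cref{cut} to obtain $\pi_1$-injectivity on the unbounded complementary pieces, and finish by applying \cite[Theorem~4.2]{MR334225} to each such piece and patching. Your explicit verification that the pieces inherit irreducibility, boundary-irreducibility, and end-irreducibility is a welcome detail the paper leaves implicit.
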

\begin{proof}
    As in the first paragraph of the proof of \Cref{specialcase}, one sees that each component of \(\partial Q\) is compact. Let \(\{C_n\}\) be an exhausting sequence for \(P\) satisfying the properties listed just before \Cref{exhaustion}, as guaranteed by \cite[Lemma~3.1]{MR334225}. Using the techniques developed in the first five paragraphs of the proof of \cite[Theorem~4.2]{MR334225}, we may properly homotope \(f\) rel \(\partial Q\) to a map \(f'\) such that each component of \(f'^{-1}(\bigcup_n \operatorname{fr}(C_n))\) is an incompressible surface in \(Q \setminus \partial Q\), and the restriction of \(f'\) to each component of \(f'^{-1}(\bigcup_n \operatorname{fr}(C_n))\) is either constant or a covering map. For each \(n\), define \(A_{n,1}, \dots, A_{n,k_n}\) as in the proof of \Cref{cut}. Then, by a similar argument, there exists a positive integer \(\underline{n}\) such that, for each \(j = 1, \dots, k_{\underline{n}}\), the map \(f'\vert A_{\underline{n},j}\) is \(\pi_1\)-injective and \(f\vert \partial A_{\underline{n},j}\) is a local homeomorphism. Since \(\partial A_{\underline{n},j} \neq \varnothing\), it follows from \cite[Theorem~4.2(a)]{MR334225} that \(f'\vert A_{\underline{n},j}\) is properly homotopic rel \(\partial A_{\underline{n},j}\) to a finite-sheeted covering map, for each \(j = 1, \dots, k_{\underline{n}}\). Finally, an argument similar to the one given at the end of the proof of \Cref{cut} completes the proof.
\end{proof}

\section{Free Homotopy Classes of Sequences of Loops Near an End}\label{freehomotopy}
This section aims to characterize the conjugacy classes in Brown’s proper fundamental group and to apply this characterization in the study of geometric kernels. A key advantage of this approach is that the characterization depends solely on the end of the surface, and not on the choice of a representing ray—unlike the proper fundamental group. For simplicity, we assume throughout that all \( 2 \)-manifolds under consideration are boundaryless. However, most of the results extend to the case where all boundary components are compact and the maps restrict to local homeomorphisms on the boundary.

Let \( S \) be a non-compact surface, and let \( e \) be an end of \( S \). For each \( k \in \mathbb{N}\cup\{0\} \), define \(\mathbb{S}_k^1 \coloneqq \{ (x,y) \in \mathbb R^2 \mid (x-k)^2+(y-1/3)^2=1/9 \}\). Suppose \(\alpha\) is a proper map from \(\mathbb S_\infty^1\coloneqq\bigcup_{k\geq 0} \mathbb{S}_k^1\) to \(S\), and let \(\alpha_k\) be the restriction of \(\alpha\) to \(\mathbb{S}_k^1\). Thus, each \( \alpha_k \) is a loop based at \( \alpha_k(k,0) \). We write \(\alpha = (\alpha_0,\alpha_1, \alpha_2, \dots)\), and call \(\alpha\) a \emph{sequence of loops}.  Given a sequence \(\{n_0, n_1, n_2, \dots\}\) of integers, the tuple \((\alpha_0^{n_0},\alpha_1^{n_1}, \alpha_2^{n_2}, \dots)\) is called the \emph{\((n_0, n_1, n_2, \dots)\)-th power of \(\alpha\)}. Here, for each \( k \), \( \alpha_k^{n_k} \) denotes the loop based at \( \alpha_k(k,0) \) obtained by concatenating \( |n_k| \) copies of \( \alpha_k \) (if \( n_k > 0 \)) or its inverse loop \( \overline{\alpha_k} \) (if \( n_k < 0 \)), with \( \alpha_k^0 \) defined to be the constant loop at \( \alpha_k(k,0) \). We say \(\alpha\) \emph{converges to \(e\)} if \(\{\operatorname{im} \alpha_n\}\) converges to \(e\). We say that \(\alpha\) \emph{bounds \(e\)} if \(\alpha\) converges to \(e\) and, for all \(k\), \(\operatorname{im}(\alpha_k)\) is a non-trivial separating circle in \(S\) with \(A_k\) the component of \(S \setminus \operatorname{im}(\alpha_k)\) satisfying \(e \in \underline{A_k}\), and \(\operatorname{cl}(A_{k+1}) \subset A_k\).

%one component of \(S \setminus \alpha_k\) contains \(\alpha_{k+1} \cup \alpha_{k+2} \cup \cdots\).

If \(\alpha\) and \(\beta\) are two sequences of loops converging to \(e\), we say they are \emph{freely homotopic near \(e\)} if there exists a proper map \(H\colon \mathbb S_\infty^1 \times [0,1] \to S\) such that \(\underline{H_0} = \underline{\alpha}\) and \(\underline{H_1} = \underline{\beta}\). This defines an equivalence relation on the set of all sequences of loops converging to \(e\), and the set of equivalence classes is denoted by \(\widehat{\underline{\pi}_1}(S,e)\), called the \emph{free homotopy classes of sequences of loops near the end \(e\)}. The phrase ``free homotopy near $e$'' instead of just ``free homotopy'' is justified by the following proposition.

\begin{proposition}\label{unnecessary}
    Let \(\alpha\) and \(\beta\) be two sequences of loops converging to \(e\). If there exists a proper map \(H\colon \mathbb S_\infty^1 \times [0,1] \to S\) such that \(\underline{H_0} = \underline{\alpha}\) and \(\underline{H_1} = \underline{\beta}\), then \(\{ H(\mathbb{S}_k^1 \times [0,1]) \}\) converges to \(e\). 
\end{proposition}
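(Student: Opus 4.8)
The plan is to derive the conclusion directly from three facts: that $H$ is proper, that $\underline{H_0}=\underline{\alpha}$, and that $\alpha$ converges to $e$. In particular, this argument will use neither $\beta$ nor $H_1$. Fix an arbitrary basic open neighborhood $\underline{A}$ of $e$, so that $A$ is an unbounded component of $S\setminus C$ for some compact set $C\subseteq S$. It suffices to produce an integer $N$ with $H(\mathbb{S}_k^1\times[0,1])\subseteq A$ for all $k\ge N$, since $\underline{A}$ is arbitrary and this is exactly what it means for the family $\{H(\mathbb{S}_k^1\times[0,1])\}$ to converge to $e$.

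First I would invoke properness. The space $\mathbb{S}_\infty^1\times[0,1]$ is the pairwise disjoint union of the annuli $\mathbb{S}_k^1\times[0,1]$, each of which is clopen in $\mathbb{S}_\infty^1\times[0,1]$, so every compact subset is contained in $\bigcup_{k\le N_1}(\mathbb{S}_k^1\times[0,1])$ for some $N_1$. Applying this to the compact set $H^{-1}(C)$ gives: for every $k>N_1$, the set $H(\mathbb{S}_k^1\times[0,1])$ is disjoint from $C$ and, being the continuous image of a connected set, lies in a single component of $S\setminus C$. Next, $\underline{H_0}=\underline{\alpha}$ means that $H_0$ and $\alpha$ agree off a compact subset of $\mathbb{S}_\infty^1$, hence off $\bigcup_{k\le N_2}\mathbb{S}_k^1$ for some $N_2$; thus $H(\mathbb{S}_k^1\times\{0\})=\operatorname{im}(\alpha_k)$ for $k>N_2$. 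Finally, since $\alpha$ converges to $e$, there is an integer $N_3$ with $\operatorname{im}(\alpha_k)\subseteq A$ for all $k>N_3$. Setting $N\coloneqq 1+\max\{N_1,N_2,N_3\}$, for $k\ge N$ the connected set $H(\mathbb{S}_k^1\times[0,1])$ lies in one component of $S\setminus C$ and meets $A$ through the point set $H(\mathbb{S}_k^1\times\{0\})=\operatorname{im}(\alpha_k)\subseteq A$, which forces $H(\mathbb{S}_k^1\times[0,1])\subseteq A$.

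I do not expect any genuine obstacle here; the argument is a routine unwinding of the definitions of proper map, germ, and convergence to an end. The one point worth stating carefully is the elementary topological observation that a compact subset of a locally finite disjoint union of compact pieces meets only finitely many of those pieces, since this is precisely the mechanism that converts properness of $H$ into the uniform index bound $N_1$ and thereby upgrades the pointwise convergence of $\{\operatorname{im}(\alpha_k)\}$ to convergence of the family of annulus images $\{H(\mathbb{S}_k^1\times[0,1])\}$.
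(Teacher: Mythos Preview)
Your proof is correct and follows essentially the same route as the paper: use properness of $H$ to get $H(\mathbb{S}_k^1\times[0,1])\subseteq S\setminus C$ for large $k$, use connectedness to place each such image in a single component, and then anchor that component to $A$ via a boundary slice. The only difference is that the paper uses both $\alpha$ and $\beta$ to perform the anchoring, whereas you correctly observe that one of them (here $\alpha$, via $\underline{H_0}=\underline{\alpha}$) already suffices; this is a minor streamlining, not a different argument.
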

\begin{proof}
    Choose a compact subset $C$ of $S$. Let $A$ be the unbounded component of $S\setminus C$ such that $e\in \underline A$. Since $H$ is proper, there exists $k_0\in \mathbb N$ such that $H$ sends $\bigcup_{k\geq k_0}\mathbb S^1_k\times [0,1]$ into $S\setminus C$. Without loss of generality, we may assume that $H_0\vert\bigcup_{k\geq k_0}\mathbb S^1_n=\alpha\vert\bigcup_{k\geq k_0}\mathbb S^1_n$ and $H_1\vert\bigcup_{k\geq k_0}\mathbb S^1_k=\beta\vert\bigcup_{k\geq k_0}\mathbb S^1_n$.  Since $\alpha$ and $\beta$ both converge to $e$, we may further assume that $\alpha(\mathbb S^1_k)\cup \beta(\mathbb S^1_k)\subseteq A$ for all $k\geq k_0$. By the continuity of $H$, the image $H(\mathbb S^1_k\times [0,1])$  must lie within a component $U_k$ of $S\setminus C$ for each $k\geq k_0$. Since $H_0(\mathbb S^1_k)\cup H_1(\mathbb S^1_k)=\alpha(\mathbb S^1_k)\cup \beta(\mathbb S^1_k)\subseteq A$ for all $k\geq k_0$, we conclude that $U_k=A$ for all $k\geq k_0$.
\end{proof}

Notice that, without loss of generality, we may assume \( \underline{\mathbb{S}^1} = ([0,\infty) \times \mathbb{R}) \cup \mathbb{S}^1_\infty \). Let \( a\colon [0,\infty) \to S \) be a proper map representing the end \( e \). Then, every proper map of pairs \( (\underline{\mathbb{S}^1}, \underline{*}) \to (S, \underline{a}) \) when restricted to \( \mathbb{S}^1_\infty \) is a sequence of loops converging to \( e \). This determines a well-defined map
\( \Phi \colon \underline{\pi}_1(S, \underline{a}) \to \widehat{\underline{\pi}_1}(S,e) \). We show that \( \Phi \) induces a bijection between the set of conjugacy classes in \( \underline{\pi}_1(S, \underline{a}) \) and \( \widehat{\underline{\pi}_1}(S,e) \) by applying the following straightforward lemmas.

\begin{lemma}\label{lemmapre}
Let \( X \) be a path-connected space, and let \( x_0 \) and \( x_1 \) be points of \( X \). Then, for every loop \( \ell \colon (\mathbb{S}^1, 1) \to (X, x_1) \), there exists a loop \( \delta \colon (\mathbb{S}^1, 1) \to (X, x_0) \) such that \( \ell^n \) is freely homotopic to \( \delta^n \) for every integer \( n \).
\end{lemma}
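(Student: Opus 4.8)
The plan is to take $\delta$ to be a conjugate of $\ell$ by a path joining the two basepoints. Concretely, using path-connectedness of $X$, I would fix a path $\gamma\colon[0,1]\to X$ with $\gamma(0)=x_0$ and $\gamma(1)=x_1$, and set $\delta \coloneqq \gamma * \ell * \overline{\gamma}$, a loop based at $x_0$ (here $\overline{\gamma}$ denotes the reverse path and $*$ concatenation). The claim will be that this $\delta$ works for every $n\in\mathbb{Z}$ simultaneously, and the verification breaks into two elementary steps, each carried out by an explicit homotopy, so that no local niceness of $X$ is required.

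The first step is to show that $\delta^n$ is homotopic rel basepoint to $\gamma * \ell^n * \overline{\gamma}$ for every $n\in\mathbb{Z}$. For $n\geq 1$ this follows by induction on $n$ from the groupoid identity $\overline{\gamma}*\gamma \simeq c_{x_1}$ (homotopy rel endpoints), which allows each interior cancelling block $\overline{\gamma}*\gamma$ occurring in $(\gamma*\ell*\overline{\gamma})^{n}$ to be erased; for $n=0$ one instead uses $\gamma*\overline{\gamma}\simeq c_{x_0}$, so that $\delta^{0}=c_{x_0}\simeq\gamma*\ell^{0}*\overline{\gamma}$; and for $n<0$ one first observes $\overline{\delta}\simeq \gamma*\overline{\ell}*\overline{\gamma}$ rel basepoint and then applies the positive case to $(\overline{\delta})^{|n|}$. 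The second step is to produce a free homotopy from $\gamma*\ell^{n}*\overline{\gamma}$ to $\ell^{n}$: reparametrizing so that the conjugating loop reads $\gamma|_{[s,1]} * \ell^{n} * \overline{\gamma|_{[s,1]}}$ and letting $s$ increase from $0$ to $1$ gives a map $\mathbb{S}^1\times[0,1]\to X$ whose value at $s=0$ is $\gamma*\ell^{n}*\overline{\gamma}$ (based at $x_0$) and whose value at $s=1$ is $\ell^{n}$ (based at $x_1$), with basepoint track $s\mapsto\gamma(s)$.

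Chaining the two steps through transitivity of the free-homotopy relation, $\delta^n$ is freely homotopic to $\ell^n$ for every integer $n$, which is the assertion of the lemma. I do not expect a genuine obstacle here: everything reduces to the standard fact that conjugation of a loop by a path yields a freely homotopic loop, together with the compatibility of this operation with taking powers. The only points demanding a little care are the bookkeeping of reparametrizations in the iterated concatenations and making sure the degenerate cases $n=0$ and $n<0$ are treated explicitly rather than tacitly assumed away; both will matter later, since the application to $\widehat{\underline{\pi}_1}$ uses all powers of a sequence of loops at once.
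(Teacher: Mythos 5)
Your proposal is correct, and it is the standard argument the paper has in mind: the paper states this lemma without proof (labelling it ``straightforward''), and the intended justification is exactly your construction $\delta = \gamma * \ell * \overline{\gamma}$, the cancellation $\delta^n \simeq \gamma * \ell^n * \overline{\gamma}$ rel basepoint, and the basepoint-sliding free homotopy from $\gamma * \ell^n * \overline{\gamma}$ to $\ell^n$. Your explicit treatment of the cases $n=0$ and $n<0$ is appropriate, since the applications in the paper do use arbitrary (including negative) powers.
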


\begin{lemma}\label{lemma}
Let \((X, x_0)\) be a based space, and let \(\alpha, \beta \colon (\mathbb{S}^1, 1) \to (X, x_0)\) be loops. Then \(\alpha\) and \(\beta\) are freely homotopic if and only if there exists a loop \(\gamma \colon (\mathbb{S}^1, 1) \to (X, x_0)\) such that \(\beta\) is homotopic to \(\gamma * \alpha * \overline{\gamma}\) relative to the basepoint.
\end{lemma}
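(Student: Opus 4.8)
The plan is to prove the two implications of the biconditional separately, using only elementary homotopy theory; nothing beyond the classical change-of-basepoint (``trace of the basepoint'') argument is needed.

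For the reverse implication, I would first note that a homotopy rel the basepoint is in particular a free homotopy, so it suffices to show that \(\gamma * \alpha * \overline\gamma\) is always freely homotopic to \(\alpha\). I would build this free homotopy explicitly by sliding the basepoint along \(\gamma\): setting \(\gamma_u(r) \coloneqq \gamma\big(u + (1-u)r\big)\) for \(u \in [0,1]\), each \(\gamma_u\) is a path from \(\gamma(u)\) to \(x_0\), and the loops \(\gamma_u * \alpha * \overline{\gamma_u}\) (reparametrized to have domain \(\mathbb{S}^1\), based at \(\gamma(u)\)) interpolate between \(\gamma * \alpha * \overline\gamma\) at \(u = 0\) and \(\alpha\) at \(u = 1\). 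Composing this with the given based homotopy from \(\beta\) and using transitivity of free homotopy then shows \(\alpha\) and \(\beta\) are freely homotopic.

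For the forward implication, I would start from a free homotopy \(H \colon \mathbb{S}^1 \times [0,1] \to X\) with \(H_0 = \alpha\) and \(H_1 = \beta\). Viewing \(\mathbb{S}^1\) as \([0,1]/(0 \sim 1)\) and pulling \(H\) back to the square \([0,1]^2\) with coordinates \((s,t)\), the four edges become: \(\alpha\) along the bottom, \(\beta\) along the top, and---because \(H(0,t) = H(1,t)\)---the common path \(\gamma(t) \coloneqq H(0,t)\) along both vertical sides; moreover \(\gamma\) is a loop at \(x_0\) since \(\alpha\) and \(\beta\) are based there. Reading the boundary of the square in the two standard ways and using that \(H\) restricted to the square is a homotopy rel the corners then yields a homotopy rel endpoints \(\gamma * \beta \simeq \alpha * \gamma\), which rearranges (via associativity of concatenation and cancellation of \(\gamma * \overline\gamma\)) to \(\beta \simeq \overline\gamma * \alpha * \gamma\) rel the basepoint. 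Taking the loop in the statement to be \(\overline\gamma\) completes the argument.

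I do not anticipate any genuine obstacle: the only point requiring care is the routine bookkeeping of reparametrizations and associativity when translating between ``homotopy of paths extracted from a square'' and ``homotopy of based loops,'' which is handled by the usual lemmas on homotopy of paths rel endpoints.
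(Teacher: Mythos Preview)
Your proof is correct. The paper itself does not supply a proof of this lemma: it is stated (together with the preceding \Cref{lemmapre}) as one of two ``straightforward lemmas'' and then immediately applied, so there is nothing to compare against. Your argument via the basepoint-trace on the square for the forward direction, and the explicit shrinking $\gamma_u(r) = \gamma(u + (1-u)r)$ for the reverse direction, is exactly the standard elementary proof the paper is implicitly invoking.
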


We now use the preceding lemmas to establish the intended result.

\begin{theorem}\label{conjugacy}
    The map $\Phi\colon\underline\pi_1(S,\underline a)\to \widehat {\underline{\pi}_1}(S,e)$ is a surjection. Moreover, \(\Phi(x) = \Phi(y)\) for \(x, y \in \underline{\pi}_1(S, \underline{a})\) if and only if there exists \(z \in \underline{\pi}_1(S, \underline{a})\) such that \(y = z x z^{-1}\).
\end{theorem}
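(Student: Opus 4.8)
The plan is to prove surjectivity first, then the two directions of the conjugacy characterization. For surjectivity, take an arbitrary sequence of loops $\beta = (\beta_0, \beta_1, \beta_2, \dots)$ converging to $e$. The issue is that the basepoints $\beta_k(k,0)$ need not lie along the ray $a$, and need not be connected to $a(k)$ in any controlled way. So first I would choose, for all but finitely many $k$, a path $\sigma_k$ in $S$ from $a(k)$ to $\beta_k(k,0)$ such that $\{\operatorname{im}(\sigma_k)\}$ converges to $e$; this is possible because $a$ represents $e$ and $\{\operatorname{im}(\beta_k)\}$ converges to $e$, so eventually $a(k)$ and $\beta_k(k,0)$ lie in the same (small) neighborhood of $e$, and one can pick $\sigma_k$ inside that neighborhood. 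Then define $\alpha_k \coloneqq \sigma_k * \beta_k * \overline{\sigma_k}$, a loop based at $a(k)$, and set $\alpha|\underline{*} \coloneqq a$, $\alpha|\mathbb{S}^1_k \coloneqq \alpha_k$. One checks $\alpha$ is a proper map of pairs $(\underline{\mathbb{S}^1}, \underline{*}) \to (S, \underline{a})$ (properness follows from the convergence of $\{\operatorname{im}(\sigma_k)\}$ and $\{\operatorname{im}(\beta_k)\}$ to $e$), and that $\Phi([\alpha]) = [\beta]$: the sequence of conjugating paths $\sigma_k$ assembles into a proper free homotopy near $e$ between $\alpha|\mathbb{S}^1_\infty$ and $\beta$, using \Cref{lemma} levelwise and noting the homotopies stay near $e$ by the convergence of the relevant images.

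For the ``if'' direction, suppose $y = z x z^{-1}$ with $x = [\alpha]$, $z = [\zeta]$, where I may assume (after adjusting representatives as in the definition of the product in $\underline{\pi}_1$) that $\alpha|\underline{*} = \zeta|\underline{*} = a$. Then $y$ is represented by the proper map of pairs whose $k$th loop is $\zeta_k * \alpha_k * \overline{\zeta_k}$ for all large $k$. By \Cref{lemma}, for each such $k$ the loop $\zeta_k * \alpha_k * \overline{\zeta_k}$ is freely homotopic to $\alpha_k$, and the free homotopy can be taken with image inside $\operatorname{im}(\zeta_k) \cup (\text{image of the based homotopy})$, which is near $e$ for large $k$; assembling these gives a proper free homotopy near $e$, so $\Phi(y) = \Phi(x)$.

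The ``only if'' direction is the main obstacle and needs the most care. Suppose $\Phi(x) = \Phi(y)$, with $x = [\alpha]$, $y = [\alpha']$, and WLOG $\alpha|\underline{*} = \alpha' |\underline{*} = a$. By hypothesis there is a proper map $H\colon \mathbb{S}^1_\infty \times [0,1] \to S$ with $\underline{H_0} = \underline{\alpha|\mathbb{S}^1_\infty}$, $\underline{H_1} = \underline{\alpha'|\mathbb{S}^1_\infty}$; by \Cref{unnecessary}, $\{H(\mathbb{S}^1_k \times [0,1])\}$ converges to $e$. For all large $k$, $H$ restricted to $\mathbb{S}^1_k \times [0,1]$ is a free homotopy from $\alpha_k$ to $\alpha'_k$, so by \Cref{lemma} there is a based loop $\gamma_k$ at $a(k)$ with $\alpha'_k \simeq \gamma_k * \alpha_k * \overline{\gamma_k}$ rel basepoint, and one can extract $\gamma_k$ from $H$ as the track $t \mapsto H((k,0),t)$ (after a preliminary homotopy of $H$, rel the two ends, so that it restricts to the given based loops at each level and so that the basepoint track is a loop at $a(k)$ — this uses that $\alpha_k, \alpha'_k$ are both based at $a(k)$, though $H$ a priori moves the basepoint; absorb the discrepancy using paths along $a$, which are null near $e$ in the relevant sense). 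Crucially, $\{\operatorname{im}(\gamma_k)\}$ converges to $e$ since $\operatorname{im}(\gamma_k) \subseteq H(\mathbb{S}^1_k \times [0,1])$. Now define $\zeta\colon (\underline{\mathbb{S}^1}, \underline{*}) \to (S, \underline{a})$ by $\zeta|\underline{*} \coloneqq a$ and $\zeta|\mathbb{S}^1_k \coloneqq \gamma_k$ for all large $k$ (constant loop for the finitely many small $k$); properness of $\zeta$ follows from $\{\operatorname{im}(\gamma_k)\}$ converging to $e$. Setting $z \coloneqq [\zeta]$, one reads off $y = z x z^{-1}$ directly from the product formula in $\underline{\pi}_1(S,\underline{a})$, since levelwise $\alpha'_k \simeq \gamma_k * \alpha_k * \overline{\gamma_k}$ rel $a(k)$. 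The delicate points are: (a) upgrading the given $H$ to one that is ``based'' at each level with a basepoint track that is a genuine loop — handled by a standard correction using the contractibility-near-$e$ behavior of $a$ itself; and (b) verifying that all the auxiliary sequences of paths and homotopies produced along the way have images converging to $e$, so that the assembled maps are genuinely proper — handled uniformly by invoking \Cref{unnecessary} and the convergence hypotheses. Finally, \Cref{lemmapre} is used where needed to move basepoints of individual loops without disturbing powers, should one wish to phrase the argument in terms of powers of a fixed loop.
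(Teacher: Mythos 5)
Your proposal is correct and follows essentially the same route as the paper: surjectivity by conjugating each loop with a connecting path to the base ray inside neighborhoods shrinking to $e$, the ``if'' direction by applying \Cref{lemma} levelwise inside such neighborhoods, and the ``only if'' direction by extracting the basepoint tracks $\gamma_k$ from the proper free homotopy $H$ (with properness controlled via \Cref{unnecessary}) and assembling them into the conjugating element $z$. The only cosmetic difference is that your worry about the basepoint track not being a loop is unnecessary, since for all large $k$ the germs force $H$ to restrict to the given based loops at times $0$ and $1$.
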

\begin{proof}
    Let \( a \) be an element of \( \widehat{\underline{\pi}_1}(S, e) \). Suppose \( \alpha = (\alpha_0, \alpha_1, \alpha_2, \ldots) \) is a sequence of loops converging to \( e \) such that \( a = [\alpha] \). Choose a decreasing sequence \( A_0 \supseteq A_1 \supseteq A_2 \supseteq \cdots \) of path-connected, unbounded open subsets of \( S \) such that \( \bigcap_k A_k = \varnothing \), \( e \in \underline{A_k} \) for all \( k \), and \( \{a(k)\} \cup \operatorname{im}(\alpha_k) \subseteq A_k \) for all sufficiently large \( k \). This is possible because the sequence \( \{\{a(k)\} \cup \operatorname{im}(\alpha_k)\} \) converge to \( e \). By \Cref{lemmapre}, for all sufficiently large \( k \), there exists a homotopy \( f_k \colon \mathbb{S}^1_k \times [0,1] \to A_k \) from \( \alpha_k \) to a loop \( \delta_k \colon (\mathbb{S}^1_k, (k,0)) \to (A_k, a(k)) \). Let \( \Delta \colon (\underline{\mathbb{S}^1}, \underline{*}) \to (S, \underline{a}) \) be a proper map of pairs such that \( \Delta\vert\mathbb{S}^1_k = \delta_k \) for all sufficiently large \( k \). Then, the homotopies \( f_k \) collectively give a free homotopy near \( e \) from \( \alpha \) to \( \Delta\vert\mathbb{S}^1_\infty \). Hence, \( \Phi \) sends \( [\Delta] \in \underline{\pi}_1(S, \underline{a}) \) to \( a = [\alpha] \in \widehat{\underline{\pi}_1}(S, e) \). Since \( a \) was arbitrary, \( \Phi \) is surjective.

    Now, let \(x\) and \(y\) be two elements of \(\underline{\pi}_1(S, \underline{a})\). Choose representatives \(\alpha_x\colon (\underline{\mathbb{S}^1}, \underline{*}) \to (S, \underline{a})\) and \(\alpha_y\colon (\underline{\mathbb{S}^1}, \underline{*}) \to (S, \underline{a})\) for \(x\) and \(y\), respectively. We will show that \(\Phi(x) = \Phi(y)\) if and only if \(y = z x z^{-1}\) for some \(z \in \underline{\pi}_1(S, \underline{a})\).

    First, suppose \( y = z x z^{-1} \) for some \( z \in \underline{\pi}_1(S, \underline{a}) \). Choose a representative \( \alpha_z \colon (\underline{\mathbb{S}^1}, \underline{*}) \to (S, \underline{a}) \) of \( z \). Then the relation \( y = z x z^{-1} \) implies that for all sufficiently large \( k \), there exists a homotopy \( h_k \colon \mathbb{S}^1_k \times [0,1] \to U_k \) rel \( (k,0) \) from \( \alpha_y \vert \mathbb{S}^1_k \) to \( (\alpha_z \vert \mathbb{S}^1_k) * (\alpha_x \vert \mathbb{S}^1_k) * \overline{(\alpha_z \vert \mathbb{S}^1_k)} \), where \( U_k \) is a path-connected, unbounded open subset of \( S \) with \( e \in \underline{U_k} \). Moreover, we may assume \( U_k \supseteq U_{k+1} \) for all sufficiently large \( k \) and \( \bigcap_k U_k = \varnothing \). Thus, for all sufficiently large \( k \), by \Cref{lemma}, there exists a free homotopy \( \widehat{h_k} \colon \mathbb{S}^1_k \times [0,1] \to U_k \) from \( \alpha_x \vert \mathbb{S}^1_k \) to \( \alpha_y \vert \mathbb{S}^1_k \). Now, these homotopies \( \widehat{h_k} \) collectively provide a free homotopy near \( e \) from \( \alpha_x \vert \mathbb{S}^1_\infty \) to \( \alpha_y \vert \mathbb{S}^1_\infty \). Therefore, \( \Phi(x) = \Phi(y) \). This completes the proof of the ``if" direction.

    We now prove the ``only if'' direction. So, assume \(\Phi(x) = \Phi(y)\). Then there exists a proper map \(H \colon \mathbb{S}^1_\infty \times [0,1] \to S\) such that the germ of \(H(-,0)\) coincides with the germ of \(\alpha_x \vert \mathbb{S}^1_\infty\), and the germ of \(H(-,1)\) coincides with the germ of \(\alpha_y \vert \mathbb{S}^1_\infty\). Choose a decreasing sequence \(V_0 \supseteq V_1 \supseteq V_2 \supseteq \cdots\) of path-connected, unbounded open subsets of \(S\) such that \(\bigcap_k V_k = \varnothing\), \(e \in \underline{V_k}\) for all \(k\), and \(H(\mathbb{S}^1_k \times [0,1]) \subseteq V_k\) for all sufficiently large \(k\). This is possible because the sequence \(\{H(\mathbb{S}^1_k \times [0,1])\}\) converges to \(e\); see \Cref{unnecessary}. Hence, by \Cref{lemma}, for all sufficiently large \(k\), there exists a homotopy \(g_k \colon \mathbb{S}^1_k \times [0,1] \to V_k\) rel \((k,0)\) from \(\gamma_k * (\alpha_x \vert \mathbb{S}^1_k) * \overline{\gamma_k}\) to \(\alpha_y \vert \mathbb{S}^1_k\), where \(\gamma_k \colon \mathbb{S}^1_k \to V_k\) is a loop with \(\gamma_k(k,0) = a(k,0)\). Let \(\Gamma\) and \(\overline{\Gamma}\) be proper maps of pairs \((\underline{\mathbb{S}^1}, \underline{a}) \to (S, \underline{a})\) such that \(\Gamma \vert \mathbb{S}^1_k = \gamma_k\) and \(\overline{\Gamma} \vert \mathbb{S}^1_k = \overline{\gamma_k}\) for all sufficiently large \(k\). Then the homotopies \(g_k\) collectively give a germ homotopy rel \(\underline{*}\) from \(\Gamma \cdot \alpha_x \cdot \overline{\Gamma}\) to \(\alpha_y\). Thus, \([\Gamma] x [\Gamma]^{-1} = y\) in the group \(\underline{\pi}_1(S, \underline{a})\). This completes the proof of the ``only if'' direction.
\end{proof}

In the remainder of this section, we use \( \widehat{\underline{\pi}_1} \) to study geometric kernels of proper maps. The first theorem provides a sufficient condition under which a degree one map between non-planar surfaces with the same finite number of ends can be properly homotoped to pinch at least one handle—possibly infinitely many—and, in particular, can have a geometric kernel. An example of a map satisfying this sufficient condition is provided in the introduction, immediately following \Cref{infinitepinch}.

\begin{theorem}\label{infinitepinchproof}
     Let \( S' \) and \( S \) be non-compact surfaces, each with finitely many ends, such that \( S' \) is non-planar and \( S \neq \mathbb{R}^2 \). Suppose \( f \colon S' \to S \) is a proper map satisfying the following: 
\begin{enumerate}[\((\mathrel{\star})\)]
    \item\label{tc} For every end \( e \) of \( S \), there exists a unique end \( e' \) of \( S' \) such that \( \underline{\pi}_0(f)(e') = e \), and there exists a sequence \( \alpha \) of separating circles bounding \( e \) such that the preimage of the free homotopy class of every power of \( \alpha \) under the induced map \( \widehat{\underline{\pi}_1}(f) \colon \widehat{\underline{\pi}_1}(S', e') \to \widehat{\underline{\pi}_1}(S, e) \) is a singleton.
\end{enumerate}
If \( \ker \pi_1(f) \neq 0 \), then there exists a pairwise disjoint collection \( \{ h_i' \colon i \in \mathscr{A} \} \) of essential handles in \( S' \), with \( 1 \leq |\mathscr{A}| \leq \aleph_0 \), and a proper map \( g \colon S' \to S \) properly homotopic to \( f \) such that \( g(h_i') \) is a point for every \( i \in \mathscr{A} \). In fact, this collection is infinite, i.e. \( |\mathscr{A}| = \aleph_0 \), if and only if there exists an end \( e' \) of \( S' \) for which the induced map \( \widehat{\underline{\pi}_1}(f) \colon \widehat{\underline{\pi}_1}(S', e') \to \widehat{\underline{\pi}_1}(S, e) \) is not injective, where \( e = \underline{\pi}_0(f)(e') \).
\end{theorem}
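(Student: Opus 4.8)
The plan is to cut $S$ near its ends into a compact core together with a sequence of compact collars, pull this decomposition back through $f$, recognise each resulting piece as a degree-one map which is a homeomorphism on the boundary, apply the Nielsen--Edmonds classification \Cref{allow} piece by piece, and then reassemble; the whole point of the hypothesis $(\star)$ is to make the pulled-back decomposition as clean as this requires. Since $S$ and $S'$ have finitely many ends and the first clause of $(\star)$ makes $\underline\pi_0(f)$ a bijection, I would fix an end $e$ of $S$ (and suppress it from the notation), with given sequence $\alpha=(\alpha_k)$ of separating circles bounding $e$ and unique preimage end $e'$. After a proper homotopy one may take $f$ transverse to every $\operatorname{im}(\alpha_k)$ (possible since only finitely many of these circles meet a given compact set), and then, by the standard two-dimensional surgery on $f^{-1}(\operatorname{im}(\alpha_k))$ — discarding disk-bounding components and components on which $f$ is inessential, using the transversality lemma \cite[Lemma~2.2]{MR2619608} exactly as in the proof of \Cref{specialcase} together with the $\pi_1$-injectivity of each $\operatorname{im}(\alpha_k)$ in $S$ — one may assume that every component of $f^{-1}(\operatorname{im}(\alpha_k))$ is a non-trivial circle in $S'$ carried by $f$ via a covering onto $\operatorname{im}(\alpha_k)$, with $f$ fiber-preserving on a tubular neighbourhood. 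I would then invoke $(\star)$ to upgrade this: for each large $k$ pick a component $c_k$ of the frontier of the component $A'_k$ of $S'\setminus f^{-1}(\operatorname{im}(\alpha_k))$ whose end-closure contains $e'$, string the $c_k$ along a proper ray representing $e'$, and note that $\widehat{\underline\pi_1}(f)$ sends the resulting germ class to the class of a power of $\alpha$; were some $c_k$ a covering of degree $\ge 2$, or were the frontier of $A'_k$ not a single circle for all large $k$, one could build a second sequence of loops near $e'$ with the same image but a different germ homotopy class, contradicting the singleton hypothesis. Together with $\underline\pi_0(f)^{-1}(e)=\{e'\}$ and the finiteness of the end sets, this yields an integer $K$ such that for every end $e$ and every $k\ge K$ the preimage $f^{-1}(\operatorname{im}(\alpha_k))$ is a single non-trivial separating circle $\beta_k$ in $S'$ mapped homeomorphically onto $\operatorname{im}(\alpha_k)$, with the $\beta_k$ bounding $e'$.

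Next I would carry out the cut. Let $D\subseteq S$ be the compact $2$-submanifold obtained by deleting, past the circle $\operatorname{im}(\alpha_K)$ for each end, the corresponding collar end, so $\operatorname{fr}(D)$ is the union of these circles, and put $D':=f^{-1}(D)$, a compact $2$-submanifold of $S'$ whose frontier is the union of the corresponding circles $\beta_K$. For $k\ge K$ and each end set $R_k:=\operatorname{cl}(A_k\setminus A_{k+1})$ and $R'_k:=\operatorname{cl}(A'_k\setminus A'_{k+1})$, compact bordered surfaces with two boundary circles apiece. The single-circle conclusion shows $f^{-1}(R_k)=R'_k$ and $f^{-1}(D)=D'$, so $f$ restricts to proper maps $f|R'_k\colon(R'_k,\partial)\to(R_k,\partial)$ and $f|D'\colon(D',\partial D')\to(D,\partial D)$, each sending every boundary circle homeomorphically onto a boundary circle. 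Running the naturality-of-the-homology-long-exact-sequence argument used in \Cref{ex} and the proof of \Cref{deg} — the square $H_2(\,\cdot\,,\partial)\to H_1(\partial)$ has a $\pm1$-on-each-circle vertical map on the right, which forces the left vertical map, hence the degree, to be $\pm1$ — shows that every one of these restrictions has degree $\pm1$; fixing orientations, all of them have degree $1$, and in particular $\deg f=\pm1$.

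Then I would classify and reassemble. After arranging $f^{-1}(\partial)=\partial$ via \Cref{nearboundary}, apply \Cref{allow} to each piece: $f|D'$, and each $f|R'_k$, is homotopic rel boundary either to a homeomorphism or to the quotient collapsing an essential $S_{g,1}$, $g\ge1$, to a point; the latter is in turn homotopic rel boundary to the map collapsing $g$ pairwise disjoint essential handles with planar complement (these being the unique allowable degree-one map with that kernel, again by \Cref{allow}), so each non-homeomorphism piece contributes at least one disjoint essential handle of $S'$. The pieces meet only along the circles $\beta_k$, so the handles coming from all of them form a pairwise disjoint family $\{h'_i\colon i\in\mathscr A\}$, and splicing the boundary-relative homotopies gives a proper map $g\simeq f$ collapsing each $h'_i$; if $\mathscr A=\varnothing$ then $g$, hence $f$, is properly homotopic to a homeomorphism, so $\ker\pi_1(f)=0$, and therefore $\ker\pi_1(f)\ne 0$ forces $|\mathscr A|\ge 1$. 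For the last clause I would use \Cref{conjugacy} together with the free-homotopy lemmas \Cref{lemmapre} and \Cref{lemma}: if $\widehat{\underline\pi_1}(f)$ is injective at every end then, for each $e$, only finitely many collars $R'_k$ can be non-homeomorphism pieces — infinitely many of them would let one insert handle loops into the sequence $(\beta_k)$ and produce a second germ class with the same $\widehat{\underline\pi_1}(f)$-image — so $|\mathscr A|<\aleph_0$; whereas if $\widehat{\underline\pi_1}(f)$ fails to be injective at some $e'$ then infinitely many $R'_k$ at the end $e=\underline\pi_0(f)(e')$ must be non-homeomorphism pieces, since otherwise $f$ would be properly homotopic to a homeomorphism near $e'$, forcing $\widehat{\underline\pi_1}(f)$ there to be the bijection on conjugacy classes induced by an isomorphism of proper fundamental groups — and then $|\mathscr A|=\aleph_0$.

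The crux is the upgrade step in the first paragraph: everything downstream is the two-dimensional shadow of the Brown--Tucker/Edmonds circle of ideas already deployed in \Cref{specialcase} and \Cref{deg}, but extracting from the singleton condition in $(\star)$ the statement that, outside a compact set, each $\alpha_k$ pulls back to a single circle mapped homeomorphically — in particular ruling out both branched behaviour and extra parallel preimage circles — is where the hypothesis does its real work and where the auxiliary sequences of loops that would witness a violation must be constructed with care.
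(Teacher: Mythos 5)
Your overall architecture (cut along the circles $\alpha_k$, pull back, apply \Cref{allow} piecewise, reassemble) is the same as the paper's, but the step you yourself flag as the crux — upgrading the singleton condition in \ref{tc} to the statement that each $f^{-1}(\operatorname{im}(\alpha_k))$ is a \emph{single} circle mapped homeomorphically — contains a genuine error. The singleton hypothesis cannot rule out \emph{parallel} preimage components. If $f^{-1}(\operatorname{im}(\alpha_k))$ consists of two circles co-bounding an annulus in $S'$ (e.g.\ $f$ folds a collar of $\operatorname{im}(\alpha_k)$ so that the fibre over the circle is two parallel copies), then the two sequences of loops near $e'$ you would build from the two choices of component are freely homotopic near $e'$, hence represent the \emph{same} element of $\widehat{\underline{\pi}_1}(S',e')$, and no contradiction with \ref{tc} arises. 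What \ref{tc} actually yields — and all the paper's proof extracts from it — is that $f$ restricts to a homeomorphism on each component (via the root-extraction argument with \cite[Theorems~1.7 and~4.2]{MR214087}, which you do have) and that any two distinct components of $f^{-1}(\operatorname{im}(\alpha_k))$ must co-bound an annulus. Everything downstream in your proposal leans on the false single-circle claim: the identities $f^{-1}(R_k)=R'_k$ and $f^{-1}(D)=D'$, the claim that each piece maps boundary homeomorphically to boundary (hence has degree $\pm1$), and the ``$\mathscr A=\varnothing$ implies homeomorphism'' step.

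The paper's proof has to do real work precisely where you assumed the problem away: it introduces the \emph{compressible annuli} (pieces $D'$ of the end decomposition both of whose boundary circles map to the same $\operatorname{im}(\alpha_{i_0l})$), compresses them via homotopies of the type in \Cref{auxnext}, proves separately that each preimage circle separates $S'$ (\Cref{claim}) and that the relevant end $e'_{i_0}$ is non-planar (an Alexander-trick argument on punctured disks, which your sketch of the final biconditional also omits), and only then groups the surviving pieces into compact subsurfaces $L'_r$ each containing exactly one non-planar piece, so that \Cref{allow} applies with a homeomorphic boundary restriction. To repair your argument you would need to either prove that after a further proper homotopy the preimage really can be made a single circle (which is essentially the compression step you skipped), or rework the decomposition to tolerate parallel preimage circles as the paper does. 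A second, more minor, gap: for the existence of at least one handle the paper does not argue ``empty collection implies homeomorphism''; it locates the component $X'$ of $f^{-1}(C_{k_1})$ containing a kernel loop and must first verify, using the subsurfaces $A'_i$ and the no-coannular-boundary condition, that $f\vert\partial X'\to\partial C_{k_1}$ is a homeomorphism before \Cref{allow} can be invoked — this verification is not automatic from your setup either.
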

The following lemma will be applied several times, including in the proof of the above theorem. Although its hypothesis can be weakened without affecting the conclusion, we state it here in the restricted form sufficient for our purposes.
\begin{lemma}\label{boundaryhomeo}
   Let \( M \) and \( N \) be oriented, non-compact, bordered surfaces such that \( \partial M \) and \( \partial N \) are each homeomorphic to \( \mathbb{S}^1 \). Suppose \( f \colon (M, \partial M) \to (N, \partial N) \) is a proper map whose restriction \( f\vert\partial M \to \partial N \) is a homeomorphism. Then the degree of \( f \) is \( +1 \) (resp.\ \( -1 \)) if \( f\vert\partial M \) preserves (resp.\ reverses) the induced orientations on the boundaries.
\end{lemma}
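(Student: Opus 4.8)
The plan is to reduce the computation to the local degree of $f$ at a single point lying in a collar of the boundary, where a product structure makes the sign transparent. First I would fix collars $[-2,0]\times\partial M\subseteq M$ and $[-2,0]\times\partial N\subseteq N$ and apply \Cref{nearboundary} --- whose homotopy is proper, so that $\deg(f)$ is not changed --- to replace $f$, up to proper homotopy rel $\partial M$, by a map $g$ with $g^{-1}\big((-1,0]\times\partial N\big)=(-1,0]\times\partial M$ and with $g(t,y)=\big(t,\,f(y)\big)$ for all $(t,y)\in(-1,0]\times\partial M$; that is, on this sub-collar $g$ is the product $\operatorname{Id}_{(-1,0]}\times(f\vert\partial M)$, with image the corresponding sub-collar of $N$.

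Next I would compute $\deg(g)$ by counting preimages of a point. Pick $p=\big(-\tfrac12,\,y_0\big)$ in the open collar $(-1,0)\times\partial N\subseteq\iota N$. By the normalization of $g$, the fibre $g^{-1}(p)$ is the single point $x_0=\big(-\tfrac12,\,(f\vert\partial M)^{-1}(y_0)\big)$, and $g$ restricts to a homeomorphism of a neighbourhood of $x_0$ onto a neighbourhood of $p$. Hence, by the description of the degree of a proper map as a signed count of preimages over a value with finite, regular fibre (one of the properties of degree in \cite{MR192475}), $\deg(f)=\deg(g)=\deg_{x_0}(g)\in\{\pm1\}$, the sign being $+1$ precisely when $g$ is orientation-preserving near $x_0$. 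Since near $x_0$ the map $g$ equals $\operatorname{Id}\times(f\vert\partial M)$, it is orientation-preserving there if and only if $f\vert\partial M\colon\partial M\to\partial N$ carries the orientation of $\partial M$ which, preceded by the outward collar direction, recovers the orientation of $M$ to the analogous orientation of $\partial N$ --- that is, if and only if $f\vert\partial M$ preserves the boundary orientations induced by the orientations of $M$ and $N$. This yields the asserted value $\deg(f)=\pm1$.

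The only delicate point is orientation bookkeeping: one must check that the ``induced orientation on the boundary'' of the statement agrees with the orientation determined, via the outward-normal-first convention, by the collar coordinate used above, and that the preimage-counting formula for the degree is normalized so that an orientation-preserving local homeomorphism contributes $+1$ relative to the preferred generators $[M]\in H^2_c(M,\partial M;\mathbb{Z})$ and $[N]\in H^2_c(N,\partial N;\mathbb{Z})$ fixed in \Cref{S3}. Both facts are standard, and once the conventions are chosen consistently for $M$ and $N$ the remaining steps are routine. (Alternatively, one may cap $\partial M$ and $\partial N$ off with disks, extend $f\vert\partial M$ to a homeomorphism $F$ between the attached disks by the Alexander trick, and evaluate the degree of the resulting map of boundaryless surfaces at an interior point of the attached disk of $N$; the sign is then governed by whether $F$, equivalently $f\vert\partial M$, is orientation-preserving.)
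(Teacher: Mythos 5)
Your proposal is correct and follows essentially the same route as the paper: both normalize $f$ near the boundary via \Cref{nearboundary} (noting the homotopy there is proper), deduce $\deg(f)=\pm 1$ from the fact that $g$ restricts to a homeomorphism over a small disk in the collar of $\partial N$ using \cite[Lemma~2.1(b)]{MR192475}, and read off the sign from whether $f\vert\partial M$ preserves orientation. The only cosmetic difference is that the paper pins down the sign by applying naturality of the homology long exact sequence to the collar annuli, whereas you invoke the outward-normal-first convention directly; these amount to the same bookkeeping.
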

\begin{proof}
   Observe that the homotopy \( H \) constructed in the proof of \Cref{nearboundary} is relative to the complement of the compact subset \( f^{-1}([-2,0]\times \partial N) \cup ([-2,0]\times \partial M) \) of \( M \). Thus, \( H \) is proper. Moreover, \( H \) is relative to \( \partial M \), and the restriction \( g\vert g^{-1}([-1/2,0]\times \partial N)=[-1/2,0]\times \partial M \to [-1/2,0]\times \partial N \) is a homeomorphism. Thus, if $\mathfrak D$ is a disk in $(-1/2,0)\times \partial N$, then $g\vert g^{-1}(\mathfrak D)\to \mathfrak D$ is homeomorphism. It then follows from \cite[Lemma~2.1(b)]{MR192475} that \( \deg(g) = \pm 1 \). Hence, \( \deg(f) = \pm 1 \), since degree is invariant under proper homotopy relative to the boundary of a manifold. Consider the induced orientations on the annuli \( A' \coloneqq [-1/2,0] \times \partial M \) and \( A \coloneqq [-1/2,0] \times \partial N \) obtained from the orientations of \( M \) and \( N \), respectively. By the naturality of the homology long exact sequence, the homeomorphism \( f\vert\partial M = g\vert\partial M \) preserves (resp.\ reverses) the induced orientations on the boundaries if and only if the degree of the map \( g\vert A' \to A \) is \( +1 \) (resp.\ \( -1 \)). Moreover, the degree of \( g\vert A' \to A \) is \( +1 \) (resp.\ \( -1 \)) if and only if the restriction \( g\vert g^{-1}(\mathfrak{D}) \to \mathfrak{D} \) is orientation-preserving (resp.\ orientation-reversing).
\end{proof}

\begin{proof}[Proof of \Cref{infinitepinchproof}]
    Let $\{e'_1,\ldots,e'_n\}$ and $\{e_1,\ldots,e_n\}$ denote the sets of all ends of $S'$ and $S$, respectively, where $\underline{\pi}_0(f)(e'_i)=e_i$ for all $i=1,\ldots,n$. For each $i=1,\ldots,n$, fix a sequence $\alpha_i=(\alpha_{i0},\alpha_{i1},\alpha_{i2},\ldots)$ of separating non-trivial circles in $S$ bounding $e_i$, such that for every sequence of integers $\{n_0,n_1,n_2,\ldots\}$, the preimage under $\widehat{\underline{\pi}_1}(f)$ of $[(\alpha_{i0}^{n_0},\alpha_{i1}^{n_1},\alpha_{i2}^{n_2},\ldots)] \in \widehat{\underline{\pi}_1}(S,e_i)$ is a singleton subset of $\widehat{\underline{\pi}_1}(S',e'_i)$. There exists an exhausting sequence $\{C_k : k=0,1,2,\ldots\}$ for $S$ satisfying properties \ref{(i)}--\ref{(v)} of \Cref{exhaustion}, such that $\partial C_k=\bigcup_{i=1}^n \operatorname{im}(\alpha_{ik})$ for all sufficiently large $k$. Hence, for all sufficiently large $k$, the set $\operatorname{cl}(C_{k+1}\setminus C_k)$ has exactly $n$ components, say $D_{1k},\ldots,D_{nk}$, where $\partial D_{ik}=\operatorname{im}(\alpha_{ik}) \cup \operatorname{im}(\alpha_{i(k+1)})$ for $i=1,\ldots,n$. As in the proof of \Cref{specialcase}, after a proper homotopy we may assume that each component of $\bigcup_k f^{-1}(\partial C_k)$ is a non-trivial circle in $S'$, and that the restriction of $f$ to each component of $\bigcup_k f^{-1}(\partial C_k)$ is either constant or a finite-sheeted covering. Moreover, $\{f^{-1}(C_k)\}$ is an exhausting sequence for $S'$.

    Fix \(i\in\{1,\ldots,n\}\). Because \(\underline{\pi}_0(f)\) is surjective and \(\{\operatorname{im}(\alpha_{ik})\}\) is a sequence of separating non-trivial circles converging to \(e_i\), we have \(f^{-1}(\operatorname{im}(\alpha_{ik}))\neq\varnothing\) for all sufficiently large \(k\). Hence there exists a sequence \(\alpha_i'=(\alpha_{i0}',\alpha_{i1}',\alpha_{i2}',\ldots)\) of loops converging to \(e'_i\) with the property that, for all sufficiently large \(k\), \(\alpha_{ik}'\) is simple and \(\operatorname{im}(\alpha_{ik}')\) is a component of \(f^{-1}(\operatorname{im}(\alpha_{ik}))\). By the previous paragraph, there exists a sequence $\{n_{i0},n_{i1},n_{i2},\ldots\}$ of integers such that, for all sufficiently large $k$, possibly after re-parametrizing $\alpha_{ik}'$, we have $f\alpha_{ik}'=\alpha_{ik}^{\,n_{ik}}$. Using \ref{tc}, there exists a sequence of loops $\beta'_i=(\beta_{i0}',\beta_{i1}',\beta_{i2}',\ldots)$ converging to $e'_i$ such that $\widehat{\underline{\pi}_1}(f)([\beta'_i])=[\alpha_i]$. Then $\widehat{\underline{\pi}_1}(f)$ sends both $[(\beta_{i0}'^{\,n_{i0}},\beta_{i1}'^{\,n_{i1}},\beta_{i2}'^{\,n_{i2}},\ldots)]$ and $[\alpha'_i]$ to $[(\alpha_{i0}^{\,n_{i0}},\alpha_{i1}^{\,n_{i1}},\alpha_{i2}^{\,n_{i2}},\ldots)]$. Again, by \ref{tc}, $\beta_{ik}'^{\,n_{ik}}$ must be freely homotopic to $\alpha_{ik}'$ for all sufficiently large $k$. Using \Cref{lemmapre} and \Cref{lemma}, we see that for all sufficiently large $k$, the nontrivial element of $\pi_1(S',*)$ represented by $\alpha_{ik}'$ is the $n_{ik}$th power of some element of $\pi_1(S',*)$. By \cite[Theorems~1.7 and~4.2]{MR214087}, it follows that $n_{ik}=\pm 1$, and hence $f\vert\operatorname{im}(\alpha_{ik}')\to\operatorname{im}(\alpha_{ik})$ is a homeomorphism for all sufficiently large $k$. Thus, after re-parametrizing $\alpha_{ik}'$, we may assume $f\alpha_{ik}'=\alpha_{ik}$ for all sufficiently large $k$. Moreover, for all sufficiently large $k$, any two distinct components of $f^{-1}(\operatorname{im}(\alpha_{ik}))$ must co-bound an annulus in $S'$; otherwise, there would exist a sequence $\{\gamma_{im_j}':m_1<m_2<\cdots\}$ of simple loops in $S'$ such that for all $j$, $f\gamma_{im_j}'=\alpha_{im_j}$ but $\gamma'_{im_j}$ is not freely homotopic to $\alpha_{im_j}'$. This would imply that $\widehat{\underline{\pi}_1}(f)^{-1}([\alpha])$ contains at least two elements, contradicting \ref{tc}.

    We repeat the above process for all $i$. Since $S$ has finitely many ends, there exists a positive integer $k_0$ such that for any $i\in \{1,\ldots,n\}$ and any $k\geq k_0$, the following hold: $f^{-1}(\operatorname{im}(\alpha_{ik}))\neq \varnothing$, any two distinct components of $f^{-1}(\operatorname{im}(\alpha_{ik}))$ co-bound an annulus in $S'$, the restriction of $f$ to every component of $f^{-1}(\operatorname{im}(\alpha_{ik}))$ is a homeomorphism onto $\operatorname{im}(\alpha_{ik})$, and $\partial C_k=\bigcup_{i=1}^n\operatorname{im}(\alpha_{ik})$.

    \begin{claim}\label{claim}
        There exists \(k_0' \geq k_0\) such that, for every \(i \in \{1, \dots, n\}\) and every \(k \geq k_0'\), each component of \(f^{-1}(\operatorname{im}(\alpha_{ik}))\) separates \(S'\). Hence, without loss of generality, we henceforth assume that \(k_0 = k_0'\).
    \end{claim}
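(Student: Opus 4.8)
The plan is to first reduce the claim to a statement about a single preimage circle, and then rule out the non‑separating alternative by an intersection‑and‑ends argument. First I would observe that, by the fact just established, for $k\geq k_0$ any two components of $f^{-1}(\operatorname{im}(\alpha_{ik}))$ cobound an annulus in $S'$, so all components of $f^{-1}(\operatorname{im}(\alpha_{ik}))$ are pairwise freely homotopic; hence, for each fixed $i$ and each $k\geq k_0$, they all separate $S'$ or none of them does. It therefore suffices to produce, for each $i$, an integer $k_i\geq k_0$ such that for all $k\geq k_i$ one (hence every) component of $f^{-1}(\operatorname{im}(\alpha_{ik}))$ separates $S'$, and then set $k_0'\coloneqq\max_i k_i$, which exists since $S$ has only finitely many ends. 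Fix $i$ and suppose, for contradiction, that no such $k_i$ exists, so there are arbitrarily large $k$ for which a component $\gamma'$ of $f^{-1}(\operatorname{im}(\alpha_{ik}))$ — and hence, by the reduction, all of them — is non‑separating in $S'$.

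For such a $k$, a mod‑$2$ intersection count supplies the key structural information. Using that $f$ restricts to a homeomorphism on each component of $f^{-1}(\operatorname{im}(\alpha_{ik}))$ and (by the construction carried out in the proof of \Cref{specialcase}) to a fibre‑preserving product on a bicollar of each such component, together with the fact that $\operatorname{im}(\alpha_{ik})$ is separating in $S$, one sees that the mod‑$2$ intersection number of any loop in $S'$ with each component of $f^{-1}(\operatorname{im}(\alpha_{ik}))$ is the same (they are all isotopic) and that their sum must vanish; taking a loop crossing $\gamma'$ once, which exists as $\gamma'$ is non‑separating, this forces the number $m$ of components to be even, in particular $m\geq2$. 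These $m$ disjoint, mutually isotopic, non‑separating circles form a parallel family; cutting $S'$ along it yields $m-1$ bounded open annuli together with a single further region $W$, which — because the family's core is non‑separating — is connected, and which is the unique unbounded complementary region (it is not an annulus, since $S'$ is non‑compact and non‑planar). Hence $W$ contains a neighbourhood of every end of $S'$. On the other hand $W$ lies in one of $f^{-1}(A_{ik})$, $f^{-1}(\iota B_{ik})$, where $A_{ik}$ and $\iota B_{ik}$ are the two sides of $\operatorname{im}(\alpha_{ik})$ in $S$ with $A_{ik}$ the side containing the single end $e_i$; since $W$ meets a neighbourhood of $e_i'$ and $\underline{\pi}_0(f)(e_i')=e_i$, we get $W\subseteq f^{-1}(A_{ik})$. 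When $n\geq2$ this is already a contradiction: any other end $e_j'$ of $S'$ lies in $W$, so $\underline{\pi}_0(f)(e_j')=e_i\neq e_j$, contrary to \ref{tc}.

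It remains to treat the one‑ended case $n=1$, and this is where the argument is most delicate. Now $W$ is allowed to carry the unique end, but the same ``alternating sides'' bookkeeping along the parallel family (again using the bicollar product form of $f$, together with $m$ being even) shows that $f^{-1}(B_k)$ is a disjoint union of $m/2$ annuli, where $B_k$ is the compact side of $\operatorname{im}(\alpha_k)$. Since $S$ is one‑ended and not homeomorphic to $\mathbb{R}^2$ it is non‑planar, so $\operatorname{im}(\alpha_k)$, being a non‑trivial separating circle, bounds a subsurface $B_k\cong S_{g_k,1}$ with $g_k\geq1$, whence $\chi(B_k)<0$; by \Cref{tool} a nonzero‑degree map from an annulus to $B_k$ is impossible, so each annulus component of $f^{-1}(B_k)$ maps to $B_k$ with degree $0$, and therefore $\deg(f)=\deg\bigl(f\vert f^{-1}(B_k)\to B_k\bigr)=0$. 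One then derives a contradiction from $\deg(f)=0$ together with $\ker\pi_1(f)\neq0$ and \ref{tc}: by \cite[Lemma~2.1(b)]{MR192475} the map $f$ misses a disk in the interior of $B_k$ and hence factors through a proper subsurface of $S$, which must be reconciled with the singleton condition on the preimages of the powers of $\alpha$ — or, alternatively, one produces, exactly as in the earlier ``two non‑freely‑homotopic preimage sequences'' argument, a second element of $\widehat{\underline{\pi}_1}(f)^{-1}([\alpha])$. I expect this last point — making the singleton hypothesis \ref{tc} actually bite in the degenerate, possibly degree‑zero, one‑ended configuration, and verifying the ``alternating sides'' structure rigorously from the product form of $f$ — to be the main obstacle.
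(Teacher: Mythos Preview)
Your reduction and the $n\geq 2$ case are fine and essentially coincide with the paper's argument (the mod-$2$ intersection and parity count are unnecessary, but harmless).

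The genuine problem is the $n=1$ case. You arrive at $\deg(f)=0$ and then hope to extract a contradiction from \ref{tc}, but you cannot: at this point in the proof the hypothesis $\ker\pi_1(f)\neq 0$ has not yet been invoked, and nothing in \ref{tc} prevents $\deg(f)=0$ in any direct way (the degree is only pinned down later, in \Cref{optional}, \emph{using} the present claim). Your proposed endgame---factoring through a proper subsurface, or manufacturing a second preimage of $[\alpha]$---does not follow from $\deg(f)=0$ alone, and you correctly flag this as the obstacle; it is in fact a dead end.

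The paper's argument for $n=1$ avoids degree entirely and is much shorter. The point you are not using is that the sets $f^{-1}(\operatorname{im}(\alpha_{1k}))$ converge to the single end $e_1'$, while $f^{-1}(\operatorname{im}(\alpha_{1k_0}))$ is a fixed nonempty compact set. Let $X'_{1k}$ be the union of all annuli in $S'$ bounded by components of $f^{-1}(\operatorname{im}(\alpha_{1k}))$; this contains every such component. By convergence, there is $k_0'>k_0$ with $X'_{1k}\cap f^{-1}(\operatorname{im}(\alpha_{1k_0}))=\varnothing$ for all $k\geq k_0'$. If some component of $f^{-1}(\operatorname{im}(\alpha_{1k}))$ were non-separating, then $S'\setminus X'_{1k}$ would be connected; being disjoint from $f^{-1}(\operatorname{im}(\alpha_{1k}))$, it maps into one side of $\operatorname{im}(\alpha_{1k})$. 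But it contains $f^{-1}(\operatorname{im}(\alpha_{1k_0}))$, forcing that side to be the bounded one (since $\alpha_1$ bounds $e_1$, $\operatorname{im}(\alpha_{1k_0})$ lies on the bounded side of $\operatorname{im}(\alpha_{1k})$). Now $S'\setminus X'_{1k}$ is the complement of a compact set, hence unbounded, yet $f$ sends it into a compact subset of $S$, contradicting properness of $f$. This is the missing idea: anchor against a fixed earlier level rather than trying to compute a degree.
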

    \begin{proof}[Proof of \Cref{claim}]
        If \(n \geq 2\), then by taking \(k_0' = k_0\), the claim follows, since \(\underline{\pi}_0(f)\) is bijective and, for every \(i \in \{1, \dots, n\}\) and every \(k \geq k_0'\), \(\operatorname{im}(\alpha_{ik})\) separates \(S\) and any two distinct components of \(f^{-1}(\operatorname{im}(\alpha_{ik}))\) co-bound an annulus in \(S'\).

       Now assume \(n=1\). For any \(k \geq k_0\), let \(X'_{1k}\) be the union of all annuli in \(S'\) whose boundary components are components of \(f^{-1}(\operatorname{im}(\alpha_{1k}))\); in particular, \(X'_{1k}\) contains every component of \(f^{-1}(\operatorname{im}(\alpha_{1k}))\). Since \(\{f^{-1}(\operatorname{im}(\alpha_{1k}))\}\) converges to \(e_1'\), there exists \(k_0' > k_0\) such that, for every \(k \geq k_0'\), \(X'_{1k} \cap f^{-1}(\operatorname{im}(\alpha_{1k_0})) = \varnothing\). Hence, for every \(k \geq k_0'\), each component of \(f^{-1}(\operatorname{im}(\alpha_{1k}))\) separates \(S'\), since \(f^{-1}(\operatorname{im}(\alpha_{1k_0})) \neq \varnothing\) and the unbounded component of \(S \setminus \operatorname{im}(\alpha_{1k})\) does not contain \(\operatorname{im}(\alpha_{1k_0})\) (recall that \(\alpha_1\) bounds \(e_1\)).
\end{proof} 
       
       For \(i \in \{1, \dots, n\}\), let \(A'_i\) be a properly embedded essential bordered subsurface of \(S'\) such that \(\partial A'_i\) is a component of \(f^{-1}(\operatorname{im}(\alpha_{ik_0}))\) and \(\underline{\operatorname{int}(A'_i)} = \{e'_i\}\) (i.e., \(A'_i\) is either a punctured disk or a one-holed Loch Ness monster surface). Since, for each \(i \in \{1, \dots, n\}\), the sequence \(\{f^{-1}(\operatorname{im}(\alpha_{ik})) : k \geq k_0\}\) converges to \(e'_i\), by choosing a sufficiently large \(k_1 \geq k_0\) we may assume that \(\operatorname{int}(A'_i)\) contains \(\bigcup_{k \geq k_1} f^{-1}(\operatorname{im}(\alpha_{ik}))\) for all \(i\in \{1,...,n\}\). Moreover, since \(S'\) is non-planar, we may further assume that \(\partial A'_i\) does not co-bound an annulus in \(S'\) with \(\partial A'_j\) for \(i \neq j\).

       Suppose \(\ker \pi_1(f) \neq 0\). Choose a loop \(\ell\) in \(S'\) representing a non-trivial element of \(\ker \pi_1(f)\). Without loss of generality, we may assume \(\ell \subset f^{-1}(C_{k_1})\). Let \(X'\) be the component of \(f^{-1}(C_{k_1})\) containing \(\ell\). Then \(X'\) cannot be an annulus, since \(f\) maps each component of \(\partial X'\) homeomorphically onto a component of \(\partial C_{k_1}\) and \(\ell \subset X'\). Therefore, if \(\mu'\) and \(\nu'\) are two distinct components of \(\partial X'\), then \(\mu'\) and \(\nu'\) lie in \(A'_i\) and \(A'_j\), respectively, for some distinct \(i,j \in \{1, \dots, n\}\), and thus \(\mu'\) does not co-bound an annulus in \(S'\) with \(\nu'\); that is, \(f(\mu') \cap f(\nu') = \varnothing\). This follows from the properties of the \(A'_i\)s given above and the fact that each component of \(\partial X'\) separates \(S'\). Now, an argument similar to that in the fifth paragraph of the proof of \Cref{deg} shows that \(f\vert\partial X' \to \partial C_{k_1}\) is a homeomorphism and \(f\vert X' \to C_{k_1}\) is a map of degree \(\pm 1\). By \Cref{allow} and \Cref{nearboundary}, \(f\vert X'\to C_{k_1}\) can be homotoped rel \(\partial X'\) to send an essential handle to a point. Thus, \(f\) can be properly homotoped to pinch at least one essential handle to a point.

      We now prove the remaining part of the theorem. Suppose that, for some \(i_0 \in \{1, \dots, n\}\), the induced map \(\widehat{\underline \pi_1}(f) \colon \widehat{\underline \pi_1}(S', e'_{i_0}) \to \widehat{\underline \pi_1}(S, e_{i_0})\) is not injective. We first show that \(e'_{i_0}\) is a non-planar end of \(S'\). Assume, for contradiction, that \(e'_{i_0}\) is planar. Then, for some sufficiently large \(k \ge k_1\), the closure \(B'_*\) of the unbounded component of \(A'_{i_0} \setminus f^{-1}(\operatorname{im}(\alpha_{i_0 k}))\) is a punctured disk. Denote by \(B_*\) the closure of the unbounded component of \(S \setminus \operatorname{im}(\alpha_{i_0 k})\) corresponding to \(e_{i_0}\). Then \(f(B'_*) \subseteq B_*\) and \(f\vert \partial B'_* \to \partial B_*\) is a homeomorphism. Orient both \(B'_*\) and \(B_*\). By \Cref{boundaryhomeo}, \(\deg(f\vert B'_* \to B_*) = \pm 1\), so \(f\vert B'_* \to B_*\) is \(\pi_1\)-surjective \cite[Corollary 3.4]{MR192475}. Since \(\pi_1(B'_*) \cong \mathbb{Z}\), it follows that \(\pi_1(B_*) \cong \mathbb{Z}\), and hence \(B_*\) is also a punctured disk. Identifying \(B'_*\) and \(B_*\) with \(\{x \in \mathbb{R}^2 : 0 < |x| \le 1\}\), one can directly use the definition of the homotopy in Alexander's trick \cite[Proof of Lemma 2.1]{MR2850125} to construct a proper homotopy \(F \colon B'_* \times [0,1] \to B_*\) rel \(\partial B'_*\) from \(f\vert B'_*\to B_*\) to the map \(B_*'\ni x\mapsto |x| f(x/|x|)\in B_*\). In particular, \(f\) can be properly homotoped rel \(S' \setminus \operatorname{int}(B'_*)\) to send \(B'_*\) homeomorphically onto \(B_*\), which implies that \(\widehat{\underline \pi_1}(f) \colon \widehat{\underline \pi_1}(S', e'_{i_0}) \to \widehat{\underline \pi_1}(S, e_{i_0})\) is injective—a contradiction. Hence \(e'_{i_0}\) must be non-planar.

    We next show that there exists a pairwise disjoint collection $\{h_m':m\in \mathbb N\}$ of essential handles in $S'$ and a proper map $g\colon S'\to S$ such that this collection converges to $e_{i_0}'$, $f$ is properly homotopic to $g$, and $g(h_m')$ is a point for each $m\in \mathbb N$. 
    
    Let $\{D_p':p\in \mathbb N\}$ be the set of closures of all components of $A_{i_0}'\setminus \bigcup_{k\geq  k_1}f^{-1}(\operatorname{im}(\alpha_{i_0k}))$. Then each $D_p'$ is an essential bordered subsurface of $S'$ and  $\operatorname{int}(D_p')\cap \operatorname{int}(D_q')=\varnothing$ if $p\neq q$, since every component of $\bigcup_{k\geq  k_1}f^{-1}(\operatorname{im}(\alpha_{i_0k}))$ is a non-trivial separating circle on $S'$ (see \Cref{claim}). Choose $p_0\in \mathbb N$ so that $\partial D_p'\cap f^{-1}(\operatorname{im}(\alpha_{i_0k_1}))=\varnothing$ for all $p\geq p_0$.

    Let $D'\in \{D_p':p\geq p_0\}$. Then $\partial D'$ contains one of the components of $f^{-1}(\operatorname{im}(\alpha_{i_0l}))$ for some $l\geq k_1+1$. Hence, the connected subset $f(\operatorname{int}(D'))$ is contained in a component $G$ of $S\setminus \bigcup_{k\geq  k_1}\operatorname{im}(\alpha_{i_0k})$ such that one of the two components of $\partial G$ is $\operatorname{im}(\alpha_{i_0l})$; that is, following the notation of the first paragraph, $\operatorname{cl}(G)$ is either $D_{i_0l}$ or $D_{i_0(l-1)}$. So $f(D')\subseteq \operatorname{cl}(G)$. In particular, $D'$ is compact. Let $D'$ be homeomorphic to $S_{g,b}$ for some $g\geq 0$ and $b\geq 1$. Since $f$ sends each component of $\partial D'$ homeomorphically onto a component of $\partial G$, we have $b\geq 2$ (otherwise, $b$ would equal $1$, in which case the diagram $$\begin{tikzcd}
{H_2(D',\partial D')=\mathbb Z} \arrow[r, "1\mapsto 1"] \arrow[d, "H_2(f\vert D')"'] & \mathbb Z=H_1(\partial D') \arrow[d, "H_1(f\vert \partial D')\colon 1\mapsto \pm 1\oplus 0"] \\
{H_2(G,\partial G)=\mathbb Z} \arrow[r, "1\mapsto 1\oplus 1"]                        & \mathbb Z\oplus \mathbb Z=H_1(\partial G)                                               
\end{tikzcd}$$ would fail to commute). In fact, $b=2$, since for any $k\geq k_0$, any two distinct components of $f^{-1}(\operatorname{im}(\alpha_{i_0k}))$ co-bounds an annulus in $S'$. Now, two cases arise depending on whether $f\vert \partial D'\to \partial G$ is a homeomorphism or not.

Suppose $f\vert \partial D'\to \partial G$ is not a homeomorphism, that is, $f$ sends both the components of $\partial D'$ homeomorphically onto $\operatorname{im}(\alpha_{i_0l})$. In this case, $D'$ is an annulus co-bounded by two components of $f^{-1}(\operatorname{im}(\alpha_{i_0l}))$. By \cite[Theorem 2.1 (b)]{MR2619608}, there exists a homotopy $H_{D'}\colon D'\times [0,1]\to \operatorname{cl}(G)$ rel  $\partial D'$ from $f\vert D'$ to a map $D'\to\operatorname{im}(\alpha_{i_0l})$. In this situation, we call $D'$ a \emph{compressible annulus} and $H_{D'}$ a \emph{compression homotopy for $D'$}. Considering all compression homotopies, $f$ can be properly homotoped (relative to the complement in $S'$ of the union of the interiors of all the compressible annuli) to a map $f_1$ such that $f_1$ sends each compressible annulus onto a component of $\bigcup_{k\geq k_1+1} \operatorname{im}(\alpha_{i_0k})$.

Since $e_{i_0}'$ is a non-planar end, $D_p'$ must be non-planar for infinitely many $p$. Let $\{L_r':r\in \mathbb N\}$ be a set of compact bordered subsurfaces of $S'$ such that: each $L_r'$ is the union of a finitely many elements of $\{D_p':p\geq p_0\}$; each $L_r'$ contains exactly one non-planar element of $\{D_p':p\geq p_0\}$, denoted $D_{p_r}'$; and $\bigcup_r L_r'=\bigcup_{p\geq p_0} D_p'$. In particular, each $L_r'$ is homeomorphic to $S_{g,2}$ for some $g\geq 1$. Moreover, for any two distinct positive integers $r$ and $s$, $L_r' \cap L_s' = \partial L_r' \cap \partial L_s'$ is either empty or a single circle. In fact, we may further assume that $L_r'\cap L_s'\subset \partial D_{p_r}'\cup \partial D_{p_s}'$ for distinct $r$ and $s$.

Recall that for each $D'\in \{D_p':p\geq p_0\}$, there is $k\geq k_1$ such that $f_1(D')\subseteq f(D')\subseteq D_{i_0k}$. Hence, there exists a set $\{L_r:r\in \mathbb N\}$ of essential compact bordered subsurfaces of $S$ such that, for every $r\geq 1$, the following hold: $L_r$ is the union of finitely many elements of $\{D_{i_0k}:k\geq k_1\}$;  $f_1(L_r')\subseteq L_r$; and $f_1\vert \partial L_r'\to \partial L_r$ is a homeomorphism. Since any two components of $f^{-1}(\operatorname{im}(\alpha_{ik}))$, where $k\geq k_1$, co-bounds an annulus in $S'$, and since $L_r'\cap L_s'\subset \partial D_{p_r}'\cup \partial D_{p_s}'$ for $r\neq s$, we can say that $L_r\cap L_s=\partial L_r\cap \partial L_s$ is either empty or a single circle for $r\neq s$. 

By \Cref{allow} and \Cref{nearboundary}, for every $r\geq 1$, there exists a homotopy $H_r\colon L_r'\times [0,1]\to L_r$ rel $\partial L_r'$ from $f_1\vert L_r'\to L_r$ to a map $g_r\colon L_r'\to L_r$ such that $g_r$ is either a homeomorphism or a quotient map that collapses an essential handle in $L_r'$ to a point. This gives a proper homotopy $H\colon S'\times [0,1]\to S$ rel $S'\setminus \operatorname{int}(\bigcup_r L_r')$ from $f_1$ to $g$ such that $H\vert L_r'\times [0,1]=H_r$ for all $r\in \mathbb N$. If $g_r$ were a homeomorphism for all sufficiently large $r$, then $g$ would send a properly embedded essential one-holed Loch Ness monster surface corresponding to $e_{i_0}'$ homeomorphically onto its image, that is, $\widehat{\underline \pi_1}(g)=\widehat{\underline \pi_1}(f)\colon \widehat{\underline \pi_1}(S',e'_{i_0})\to \widehat{\underline \pi_1}(S,e_{i_0})$ would be injective. Hence, there exists a pairwise disjoint collection $\{h_m':m\in \mathbb N\}$ of essential handles in $S'$ converging to $e_{i_0}'$ such that $g(h_m')$ is a point for each $m\in \mathbb N$. 
\end{proof}

\begin{corollary}
    \label{optional}
    If the surfaces \( S' \) and \( S \) in \Cref{infinitepinchproof} are oriented, and the proper map \( f \colon S' \to S \) satisfies \ref{tc}, then \( \deg(f) = \pm 1 \).
\end{corollary}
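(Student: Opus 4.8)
The plan is to reuse the structure produced in the opening part of the proof of \Cref{infinitepinchproof} (which does not invoke the hypothesis \(\ker\pi_1(f)\neq 0\)) and then compute \(\deg(f)\) by a homological argument near infinity. Fix the exhausting sequence \(\{C_k\}\) of \(S\) with \(\partial C_k=\bigcup_{i=1}^n\operatorname{im}(\alpha_{ik})\). As in that proof, after a proper homotopy (which does not change \(\deg(f)\)) we may assume that for all \(k\ge k_0\) every component of \(f^{-1}(\operatorname{im}(\alpha_{ik}))\) is a non-trivial separating circle mapped homeomorphically onto \(\operatorname{im}(\alpha_{ik})\), and any two distinct such components co-bound an annulus in \(S'\); this is exactly what \ref{tc} (together with \Cref{claim}) yields there. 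Recall also that \(\underline{\pi}_0(f)\) is a bijection, so \(S'\) and \(S\) have the same finite number \(n\) of ends; fix \(\kappa\ge k_0\) large enough that \(S\setminus C_\kappa\) has exactly \(n\) components \(B_1,\dots,B_n\), with \(B_i\) a neighborhood of \(e_i\) and \(\partial B_i=\operatorname{im}(\alpha_{i\kappa})\).

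The key preliminary reduction is to further homotope \(f\) so that \(f^{-1}(\operatorname{im}(\alpha_{i\kappa}))\) is a \emph{single} circle \(\gamma'_i\) for each \(i\). Since the finitely many components of \(f^{-1}(\operatorname{im}(\alpha_{i\kappa}))\) are pairwise parallel, I would repeatedly pick two adjacent ones \(c,c'\) co-bounding an annulus \(A\) with \(\operatorname{int}(A)\cap f^{-1}(\operatorname{im}(\alpha_{i\kappa}))=\varnothing\); as both components of \(\partial A\) map homeomorphically onto \(\operatorname{im}(\alpha_{i\kappa})\), \cite[Theorem~2.1(b)]{MR2619608} permits a homotopy of \(f|A\) rel \(\partial A\) into \(\operatorname{im}(\alpha_{i\kappa})\) (a ``compression homotopy'', as in the proof of \Cref{infinitepinchproof}), after which a small push supported near \(A\cup c\) pushes \(A\cup c\) entirely off \(\operatorname{im}(\alpha_{i\kappa})\) without creating new preimages; iterating leaves a single component. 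These moves are supported in compact neighborhoods, so the total homotopy is proper. Set \(D\coloneqq C_\kappa\) and \(D'\coloneqq f^{-1}(C_\kappa)\), so that \(\operatorname{fr}(D')=\bigsqcup_{i=1}^n\gamma'_i\) and \(f\) maps each \(\gamma'_i\) homeomorphically onto \(\operatorname{im}(\alpha_{i\kappa})\).

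Next I would show \(D'\) is connected. Write \(S'\setminus D'=\bigsqcup_{i=1}^n f^{-1}(B_i)\); the closure of each component of \(f^{-1}(B_i)\) has frontier contained in the single circle \(\gamma'_i=f^{-1}(\operatorname{im}(\alpha_{i\kappa}))\). Since \(\gamma'_i\) separates \(S'\) and \(f\) is fiber-preserving on a product collar of \(\gamma'_i\) (by the transversality lemma \cite[Lemma~2.2]{MR2619608}), one side of \(\gamma'_i\)—the side away from \(e'_i\)—is locally disjoint from \(f^{-1}(B_i)\), which forces \(f^{-1}(B_i)\) to be connected with frontier exactly \(\gamma'_i\). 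Hence each \(\gamma'_i\) lies on the frontier of only one component of \(D'\), so if \(D'\) were disconnected no path in \(S'\) could join two of its components through \(S'\setminus D'=\bigsqcup_i f^{-1}(B_i)\), contradicting connectedness of \(S'\). Thus \(D'\) and \(D=C_\kappa\) are connected compact oriented surfaces with non-empty boundary, and \(f|\partial D'\to\partial D\) is a homeomorphism.

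Finally, \(\deg(f)=\deg(f|D'\to D)\): choosing a small disk \(\mathfrak D\subseteq\operatorname{int}(C_\kappa)\) whose preimage is a disjoint union of disks mapped homeomorphically onto \(\mathfrak D\), we have \(f^{-1}(\mathfrak D)\subseteq\operatorname{int}(D')\), so the signed sheet count of \cite[Lemma~2.1(b)]{MR192475} computing \(\deg(f)\) agrees with the one computing \(\deg(f|D')\). Now apply naturality of the homology long exact sequence of a pair to the square with rows \(H_2(D',\partial D')\to H_1(\partial D')\) and \(H_2(C_\kappa,\partial C_\kappa)\to H_1(\partial C_\kappa)\): the left vertical map is multiplication by \(\deg(f|D')\), the right vertical is an isomorphism induced by the boundary homeomorphism, and both horizontal maps are the injections \(\mathbb Z\ni 1\mapsto(1,\dots,1)\in\bigoplus_{i=1}^n\mathbb Z\). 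Chasing the fundamental class gives \(\deg(f|D')\cdot(1,\dots,1)=(\varepsilon_1,\dots,\varepsilon_n)\) with each \(\varepsilon_i=\pm1\), whence \(\deg(f)=\deg(f|D')=\pm1\). The one genuinely delicate point is the reduction to a single boundary circle per end—i.e.\ checking that the compression-and-push-off homotopies can be carried out properly (and harmlessly at all \(n\) ends at once); after that, the conclusion is pure bookkeeping with the long exact sequence.
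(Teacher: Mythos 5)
Your route is genuinely different from the paper's. The paper's proof never renormalizes the preimages of the level circles: it splits into cases according to whether \(\widehat{\underline{\pi}_1}(f)\) is injective at a given end, reuses the structures already produced in the proof of \Cref{infinitepinchproof} (the homeomorphism \(f\vert X'\to X\) onto an end-neighborhood in the injective case, the degree-\(\pm1\) maps \(g_r\colon L_r'\to L_r\) in the non-injective case), and then locates a single disk whose full preimage is one disk mapped homeomorphically, so that \cite[Lemma~2.1(b)]{MR192475} gives \(\deg(f)=\pm1\) directly. Your plan---collapse \(f^{-1}(\operatorname{im}(\alpha_{i\kappa}))\) to one circle per end, prove \(D'=f^{-1}(C_\kappa)\) is connected, and read off the degree from the naturality square for \((D',\partial D')\to(D,\partial D)\)---is closer in spirit to the degree computations in \Cref{deg} and \Cref{boundaryhomeo}; the connectivity argument for \(D'\) and the final long-exact-sequence chase are correct as written.

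Three steps need repair, though none is fatal. (1) The compression of the annulus \(A\) between adjacent components \(c,c'\): you invoke \cite[Theorem~2.1(b)]{MR2619608}, but that concerns maps into \emph{compact} bordered surfaces, whereas \(f(A)\) lies in \(\operatorname{cl}(W)\) for a noncompact component \(W\) of \(S\setminus\operatorname{im}(\alpha_{i\kappa})\), and \(\operatorname{int}(A)\) may meet preimages of many other level circles. The claim is still true: either pass to a compact essential subsurface \(Y\subseteq\operatorname{cl}(W)\) containing \(f(A)\) with \(\operatorname{im}(\alpha_{i\kappa})\) one of at least two boundary components of \(Y\) (so the covering alternative is excluded and the annulus case applies), or note that the rel-\(\partial A\) homotopy classes of such maps form a torsor over the centralizer of \([\operatorname{im}(\alpha_{i\kappa})]\), which equals \(\langle[\operatorname{im}(\alpha_{i\kappa})]\rangle\) by primitivity of a simple non-trivial circle in a free group, so every class is compressible. (2) The ``all \(n\) ends at once'' worry is resolvable by processing ends sequentially: each compression-and-push is supported in a compact annulus and can only delete, never create or corrupt, components of \(f^{-1}(\operatorname{im}(\alpha_{j\kappa}))\) for \(j\neq i\), and these preimages cannot become empty because the homotopies are proper and \(\underline{\pi}_0(f)\) is a bijection. (3) The evaluation disk \(\mathfrak D\subseteq\operatorname{int}(C_\kappa)\) with homeomorphic preimage disks does not exist for a general proper map without a further homotopy; first apply Hopf's realization \cite[Theorem~4.1]{MR192475} to \(f\vert(D',\partial D')\) rel \(\partial D'\), or argue with a fiber-preserving collar of \(\gamma_1'\) as in \Cref{boundaryhomeo}. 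In fact, once \(f^{-1}(\operatorname{im}(\alpha_{1\kappa}))\) is a single circle mapped homeomorphically with a product collar, the argument of \Cref{boundaryhomeo} already yields \(\deg(f)=\pm1\), so your connectivity and long-exact-sequence steps, while valid, could be bypassed.
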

\begin{proof}
    We follow the notations introduced in the proof of \Cref{infinitepinchproof}. Let $e'$ be an end of $S'$ and $e$ an end of $S$ such that $\underline \pi_0(f)(e')=e$. 
    
    First, suppose that $\widehat{\underline \pi_1}(f)\colon \widehat{\underline \pi_1}(S',e')\to \widehat{\underline \pi_1}(S,e)$ is injective. Then, the proof of \Cref{infinitepinchproof} provides properly embedded essential bordered subsurfaces $X'$ of $S'$ and $X$ of $S$ with $\underline{\operatorname{int}(X')}=\{e'\}$ and $\underline{\operatorname{int}(X)}=\{e\}$ such that, after a proper homotopy, we may assume that $f\vert X'\to X$ is a homeomorphism. Since $\underline \pi_0(f)^{-1}(e)=\{e'\}$, there exists a disk $\mathfrak D$ in $X$ for which $f\vert f^{-1}(\mathfrak D)\to \mathfrak D$ is a homeomorphism. Thus, by \cite[Lemma~2.1(b)]{MR192475}, $\deg(f)=\pm 1$.

    Now, assume that $\widehat{\underline \pi_1}(f)\colon \widehat{\underline \pi_1}(S',e')\to \widehat{\underline \pi_1}(S,e)$ is not injective. Then, the proof of \Cref{infinitepinchproof} shows that $e'$ must be non-planar. Moreover, the last paragraph of the proof of \Cref{infinitepinchproof} shows that, for all $r$, the map $g_r\colon L_r'\to L_r$ has degree $\pm1$, as it sends $\partial L_r'$ homeomorphically onto $\partial L_r$. Using Hopf's geometric realization of degree \cite[Theorem 4.1]{MR192475}, for all $r$, after a homotopy rel $\partial L_r'$, we may assume that $g_r\vert g_r^{-1}(\mathfrak D_r)\to \mathfrak D_r$ is a homeomorphism for some disk $\mathfrak D_r$ in $L_r\setminus \partial L_r$.  Since $L_r\cap L_s=\partial L_r\cap \partial L_s$ either empty or a single circle when $r\neq s$, and since $\underline \pi_0(g)^{-1}(e)=\{e'\}$, for all sufficiently large $r$ we have $g^{-1}(\mathfrak D_r)=g_r^{-1}(\mathfrak D_r)$. Therefore, $g\vert g^{-1}(\mathfrak D_r)\to \mathfrak D_r$ is a homeomorphism for all sufficiently large $r$. By \cite[Lemma~2.1(b)]{MR192475}, $\deg(g)=\pm 1$, and hence $\deg(f)=\pm1$.
\end{proof}

For the remainder of the paper, we focus on planar surfaces. In contrast to the non-planar case—where proper maps are typically taken to be end-allowable—this assumption in the planar setting forces the degree of the map to vanish. 
\begin{theorem}\label{randomproposition}
   Let \(S'\) and \(S\) be oriented planar surfaces, each with at least three ends. Suppose \(f\colon S'\to S\) is a proper map such that \(\underline{\pi}_0(f)\) is injective. Then the following holds:
\begin{enumerate}[(1)]
    \item\label{randomproposition1} If \(f\) has no geometric kernel, then \(f\) is properly homotopic to a finite-sheeted covering map. In particular, \(\deg(f) \neq 0\).
    \item\label{randomproposition2} If \(f\) has a geometric kernel, then \(\deg(f) = 0\).
\end{enumerate}
\end{theorem}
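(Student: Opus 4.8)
The plan is to reduce the statement to three implications and to prove them in turn. With $S',S,f$ as in the statement, it suffices to establish: (i) if $f$ has a geometric kernel then $\deg(f)=0$; (ii) if $\ker\pi_1(f)\neq 0$ then $f$ has a geometric kernel; and (iii) if $\ker\pi_1(f)=0$ then $f$ is properly homotopic to a finite-sheeted covering map. Implication (iii) is immediate from \cite{Das2024pi1injective}, since a planar surface with at least three ends is neither the plane nor the punctured plane; and since a finite-sheeted covering has nonzero degree and degree is a proper homotopy invariant \cite{MR192475}, combining (iii) with the contrapositive of (ii) yields \Cref{randomproposition1}, while \Cref{randomproposition2} is exactly (i). Thus the work is in (i) and (ii).

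For (i) I would argue with a cut-and-cap construction. Let $\gamma'$ be a non-trivial simple loop in $S'$ with $f(\gamma')$ null-homotopic. As $S'$ is planar, $\gamma'$ separates $S'=S_1'\cup_{\gamma'}S_2'$, and since $\gamma'$ is non-trivial neither $S_i'$ is a disk, so each $S_i'$ carries at least one end and $\underline{\pi}_0(S')=\underline{\pi}_0(S_1')\sqcup\underline{\pi}_0(S_2')$ with both summands nonempty. Capping each $S_i'$ along $\gamma'$ with a disk produces planar surfaces $X_1',X_2'$; choosing a singular disk $h\colon\mathbb{D}^2\to S$ bounded by $f(\gamma')$ and a regular value $p$ of $f$ lying in the (nonempty, open) set $S\setminus(\operatorname{im}(h)\cup f(\gamma'))$, one extends $f$ over the two caps by $h$ (which avoids $p$), obtaining proper maps $\bar f_i\colon X_i'\to S$ with $\bar f_i|_{S_i'}=f|_{S_i'}$ and, by counting preimages of $p$ (\cite[Lemma~2.1(b)]{MR192475}), $\deg(\bar f_1)+\deg(\bar f_2)=\deg(f)$. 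Now a proper map of nonzero degree is surjective on spaces of ends: if some end $e$ of the target had no preimage end, a basic neighborhood of $e$ would have relatively compact preimage, hence miss the closed image outside a compact set, forcing degree zero. So if $\deg(f)\neq 0$ then $\deg(\bar f_j)\neq 0$ for some $j$, say $j=1$, whence $\underline{\pi}_0(\bar f_1)\colon\underline{\pi}_0(S_1')\to\underline{\pi}_0(S)$ is a bijection (surjective by the previous remark, injective as a restriction of $\underline{\pi}_0(f)$); but then $\underline{\pi}_0(\bar f_2)(\underline{\pi}_0(S_2'))$ is a nonempty subset of $\underline{\pi}_0(S)=\underline{\pi}_0(\bar f_1)(\underline{\pi}_0(S_1'))$, contradicting injectivity of $\underline{\pi}_0(f)$. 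Hence $\deg(f)=0$.

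For (ii), let $1\neq w\in\ker\pi_1(f)$. I would fix a ``pants'' exhaustion $\{C_n\}$ of $S$ with $\chi(C_n)<0$ and all frontier circles non-trivial (available by \Cref{exhaustion}, after discarding the first annular terms), and use the first several paragraphs of the proof of \Cref{cut} to properly homotope $f$ so that every component of $f^{-1}(\bigcup_n\operatorname{fr}(C_n))$ is a non-trivial circle on which $f$ restricts to a constant or to a covering onto a frontier circle, with $\{D_n'\coloneqq f^{-1}(C_n)\}$ exhausting $S'$. If $f$ sends some such non-trivial circle to a point, that circle is a geometric kernel and we are done; otherwise all these restrictions are coverings. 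Picking $n$ so that a loop representing $w$ lies in one component $Z'$ of $D_n'$, \Cref{rhofunctor} makes $Z'\hookrightarrow S'$ and $C_n\hookrightarrow S$ $\pi_1$-injective, so $w\neq 1$ in $\pi_1(Z')$ while its image is trivial in $\pi_1(C_n)$; thus $g\coloneqq f|_{Z'}\colon(Z',\partial Z')\to(C_n,\partial C_n)$ is a non-$\pi_1$-injective map of compact surfaces with $\chi(C_n)<0$ carrying $\partial Z'$ into $\partial C_n$. I would then invoke the following sharpening of Tucker's lemma \cite{MR334218}: a non-$\pi_1$-injective map of compact surfaces sending boundary into boundary has a geometric kernel whenever the codomain has negative Euler characteristic. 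Granting this, a geometric kernel $\gamma'$ of $g$ is non-trivial in $Z'$, hence in $S'$, and $f(\gamma')=g(\gamma')$ is null-homotopic in $C_n$, hence in $S$, so $\gamma'$ is a geometric kernel of $f$.

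The main obstacle is precisely that last lemma — a corrected form of Tucker's original (false) claim, which fails exactly when the codomain is an annulus, as \Cref{ex} shows. I expect to prove it by doubling $g$ along the boundary to a non-$\pi_1$-injective map $Dg\colon DZ'\to DC_n$ of closed surfaces, applying Gabai's theorem \cite{MR806708} to obtain a geometric kernel of $Dg$, and then isotoping that curve — via an innermost-arc argument exploiting that $Dg$ is null-homotopic on it and restricts (up to reflection) to the identity on the doubling locus — so that it lies inside $Z'$, where it stays non-trivial; the innermost-arc step is the delicate point, while the rest of the argument is the degree-and-ends bookkeeping of (i) together with the standard normalization underlying \Cref{cut}.
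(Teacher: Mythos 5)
Your reduction to (i)--(iii) is structurally reasonable, and (i) and (iii) are essentially sound: (iii) is the content of \Cref{specialcaseremark}, and your cut-and-cap argument for (i) is, modulo the Hopf-realization step needed to make sense of ``regular value'' for a merely continuous proper map, the same as the paper's proof of part \ref{randomproposition2} (the paper avoids asserting degree additivity by homotoping $f$ off a disk via the Palais disk theorem in the case both capped degrees vanish, but your additivity claim is provable by the same device).

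The fatal problem is (ii). The ``sharpened Tucker lemma'' you propose --- a non-$\pi_1$-injective map of compact bordered surfaces sending boundary into boundary has a geometric kernel whenever the codomain has negative Euler characteristic --- is \emph{false}, and the paper's own \Cref{ex} refutes it together with your reduction. Take $M$ there to be a twice-punctured disk, so $S$ and $S'$ are planar with $3$ and $5$ ends respectively; the resulting branched double cover $f\colon S'\to S$ is proper, has $\ker\pi_1(f)\neq 0$, and has no geometric kernel. Running your step (ii) on this $f$: after the normalization, choose $C_n$ with $\chi(C_n)=2-k\leq -1$ and let $Z'$ be the component of $f^{-1}(C_n)$ containing the essential pair of pants $P$; then $f\vert Z'\to C_n$ is non-$\pi_1$-injective, carries $\partial Z'$ to $\partial C_n$ by coverings, and has codomain of negative Euler characteristic, so your lemma would yield a simple loop non-trivial in $Z'$ (hence in $S'$, by \Cref{rhofunctor}) whose image is null-homotopic in $C_n$ (hence in $S$) --- a geometric kernel for $f$, a contradiction. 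The diagnosis is that your argument for (ii) never uses the hypothesis that $\underline{\pi}_0(f)$ is injective, which is precisely what fails in \Cref{ex} and precisely what the theorem requires. The paper's proof of part \ref{randomproposition1} uses it crucially: injectivity of $\underline{\pi}_0(f)$ forces the components of $\partial Z'$ to map to pairwise distinct components of $\partial C_n$, whence (via the homology ladder and \Cref{tool}, using $2-k<0$) $\deg(f\vert Z')=\pm 1$; then $\pi_1(f\vert Z')$ is an epimorphism by degree $\pm 1$ and a monomorphism because finitely generated free groups are Hopfian. No existence theorem for geometric kernels of maps between compact bordered surfaces is invoked --- and, as \Cref{ex} and \Cref{exnext} show, none of the strength you need is available, so the doubling-plus-Gabai plan cannot be repaired.
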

    \begin{proof}
        Suppose that $f$ has no geometric kernel. We show that $f$ is $\pi_1$-injective. To prove this, let $\{C_n\}$ be an exhausting sequence for $S$ satisfying the properties given in \Cref{exhaustion}. Since $S$ has at least three ends, the first part of the proof of \Cref{exhaustion} allows us, without loss of generality, to assume that $\partial C_n$ has at least three components for every $n$. Next, as in the proof of \Cref{specialcase}, after a proper homotopy we may further assume that every component of $\bigcup_n f^{-1}(\partial C_n)$ is a nontrivial circle in $S'$, and that $f$ restricts to a finite-sheeted covering on every component of $\bigcup_n f^{-1}(\partial C_n)$ (since $f$ has no geometric kernel). Finally, observe that $\{f^{-1}(C_n)\}$ is an exhausting sequence for $S'$.
      
        Consider a non-trivial element $x$ of $\pi_1(S')$. We show $x\not \in \ker\pi_1(f)$. So, choose a loop $\ell'$ representing $x$. Fix a positive integer $m$ such that $\ell'\subset f^{-1}(C_m)$. Let $D'$ be the component of $f^{-1}(C_m)$ containing $\ell'$. Denote the components of $\partial D'$ by $\gamma_1'$,..., $\gamma_k'$. For each $i=1,...,k$, let $A_i'$ be the unique component of the disconnected set $S'\setminus \gamma_i'$ such that $A_i'\cap \operatorname{int}(D')=\varnothing$. If $i\neq j$, then $\underline \pi_0(f)(\underline{A_i'})$ and $\underline \pi_0(f)(\underline{A_j'})$ does not intersect (since $\underline \pi_0(f)$  is injective). Hence $f(\gamma_i')$ and $f(\gamma_j')$ must be two distinct components of $\partial C_m$. Recall that $f$ restricted to each component of $\partial D'$ is a covering map onto a component of $\partial C_m$. By naturality of the long exact sequence in homology, it follows that $\partial C_m$ has exactly $k$ components and that $\deg(f\vert (D',\partial D')\to (C_m,\partial C_m))\neq 0$. In particular, $k\geq 3$.  By \Cref{tool}, $(2-k)=\chi(D')\leq |\deg(f)|\cdot \chi(C_m)=|\deg(f)|\cdot (2-k)$, which implies $\deg(f)=\pm 1$. Thus, by \cite[Corollary 3.4]{MR192475}, $\pi_1(f\vert D')$ is an epimorphism. Since a free group of finite rank is Hopfian, $\pi_1(f\vert D')$ is also a monomorphism. By \Cref{rhofunctor}, $D'$ is an essential subsurface of $S'$. Therefore, $\pi_1(f)(x)\neq 0$. Since $x$ is an arbitrary non-trivial element of $\pi_1(S')$, we conclude that $f$ is $\pi_1$-injective. Finally, by \Cref{specialcaseremark}, $f$ can be properly homotoped to a finite sheeted covering map. In particular, $\deg(f)\neq 0$. This completes the proof of \ref{randomproposition1}.

        Now we prove \ref{randomproposition2}. Assume that $f$ has a geometric kernel. We show that $\deg(f)= 0$. Let $\gamma'$ be a non-trivial circle in $S'$ such that $f(\gamma')$ is null-homotopic loop. Choose a tubular neighborhood $\gamma'\times [1,2]$ with $\gamma'\times \{3/2\}\equiv \gamma'$ in $S'$, and let $M$ be the $2$-manifold obtained by removing $\gamma'\times(1,2)$ from $S'$ and gluing a disk $B_j$ along each $\gamma'\times\{j\}$ for $j=1,2$. Thus $M$ has exactly two components, say $M_1$ and $M_2$, where $\gamma'\times\{j\}=\partial B_j\subset B_j\subset M_j$ for $j=1,2$. Notice that both $M_1$ and $M_2$ are non-compact. Moreover, since $f(\gamma')$ is null-homotopic, the restriction $f\vert S'\setminus(\gamma'\times(1,2))$ extends to a proper map $\overline f\colon M\to S$. Denote by $\overline f_j$ the restriction $\overline f\vert M_j\to S$ for $j=1,2$. We consider two cases: either $\deg(\overline f_1)=\deg(\overline f_2)=0$, or at least one of them is nonzero.

        First, suppose $\deg(\overline f_1)=\deg(\overline f_2)=0$. Then, by Hopf's geometric realization of degree \cite[Theorem~4.1]{MR192475}, for $j=1,2$ there exists a disk $\mathfrak D_j \subset S$ and a proper homotopy $H_j \colon M_j \times [0,1] \to S$ from $\overline f_j$ to a proper map $g_j$ such that $g_j^{-1}(\mathfrak D_j)=\varnothing$. The Palais disk theorem \cite[Theorem~B]{MR117741} gives a diffeomorphism $\varphi \colon S \to S$ with $\varphi(\mathfrak D_1)=\mathfrak D_2$ such that $\varphi$ is homotopic through diffeomorphisms to $\operatorname{id}_S$. Moreover, it follows from \cite[Theorem~1.3]{MR181989} that a homotopy through self-homeomorphisms of a manifold is proper. Therefore, replacing $g_1$ with $\varphi g_1$, we may assume there is a disk $\mathfrak D \subset S$ such that $g_1^{-1}(\mathfrak D)=\varnothing=g_2^{-1}(\mathfrak D)$. Choose a compact bordered subsurface $C \subset S$ with $f(\gamma' \times [1,2]) \cup H_1(\gamma' \times \{1\} \times [0,1]) \cup H_2(\gamma' \times \{2\} \times [0,1]) \subseteq C.$ Using the Palais disk theorem, we may further assume that $\mathfrak D \cap C = \varnothing$. By the homotopy extension theorem, there exists a homotopy $G \colon \gamma' \times [1,2] \times [0,1] \to C$ from $f\vert\gamma' \times [1,2]\to C$ extending the homotopies $H_j \vert \gamma' \times \{j\} \times [0,1]\to C$ for $j=1,2$.  Finally, pasting $G$ with $H_j \vert(M_j \setminus \operatorname{int}(B_j)) \times [0,1]$ for $j=1,2$ yields a proper homotopy from $f$ to a proper map $g \colon S' \to S$ with $g^{-1}(\mathfrak D)=\varnothing$. Hence $\deg(f)=0$. This completes the first case.

       Now, assume that one of $\deg(\overline f_1)$ or $\deg(\overline f_2)$ is nonzero. Without loss of generality, suppose $\deg(\overline f_1)\neq 0$. Since a proper map between manifolds is closed \cite{MR254818}, we have $\operatorname{im}(\overline f_1)=S$. Hence $\underline \pi_0(\overline f_1)\colon \underline \pi_0(M_1)\to \underline \pi_0(S)$ is surjective. Because $M_2$ is non-compact, the image of $\underline \pi_0(\overline f_2)$ intersects the image of $\underline \pi_0(\overline f_1)$. This contradicts the injectivity of $\underline \pi_0(f)$, since for each $j=1,2$ we have $f\vert M_j\setminus \operatorname{int}(B_j)=f_j\vert M_j\setminus \operatorname{int}(B_j)$. Therefore, $\deg(f)=0$.
    \end{proof}
    Here is an example of a proper map that satisfies the hypothesis of \ref{randomproposition2} of \Cref{randomproposition}. 
\begin{example}
Let $E$ be a compact, totally disconnected subset of $[-1,1]\times \{0\}$. Define $f\colon\mathbb R^2\setminus E\to \mathbb R^2\setminus E$ by $f(x,y)\coloneqq (x,|y|)$. Then, $\deg(f)=0$, since $f$ is a non-surjective proper map. Moreover, $\underline \pi_0(f)$ is bijective and $f$ has a geometric kernel (for example, any simple loop whose image is $\{z\in \mathbb{R}^2 : |z|=2\}$ represents a nontrivial element of $\ker \pi_1(f)$).
\end{example}

The following theorem does not assert the existence of a geometric kernel, but instead describes the situation when the end-allowability assumption in \Cref{randomproposition} is omitted. %It can be interpreted as a step toward detecting a geometric kernel for a degree-one, non–end-allowable map between two oriented planar surfaces.

\begin{theorem}\label{geokerplanar}
    Let \( S' \) and \( S \) be oriented planar surfaces, each with at least three ends, and let \( f \colon S' \to S \) be a proper map of degree one. Suppose there exists an end \( e \) of \( S \) such that the preimage \( \underline{\pi}_0(f)^{-1}(e) \) is a finite set of cardinality at least two, and that for each \( e' \in \underline{\pi}_0(f)^{-1}(e) \), the induced map \(\widehat{\underline{\pi}_1}(f) \colon \widehat{\underline{\pi}_1}(S', e') \to \widehat{\underline{\pi}_1}(S, e)\) is an isomorphism. Then there exists a simple loop in \( S' \) representing a non-trivial element of \( \ker  H_1(f) \).
\end{theorem}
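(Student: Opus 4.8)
The plan is to find, near two suitably chosen ends $e_1',e_2'$ of $S'$ over $e$, two disjoint simple circles $C_1,C_2$ of $S'$ that $f$ carries homeomorphically onto one and the same separating circle of $S$; a simple separating circle $\gamma'$ of $S'$ with $[\gamma']=\epsilon_1[C_1]+\epsilon_2[C_2]$ (signs forced by the construction) will then lie in $\ker H_1(f)$ once the pair $\{e_1',e_2'\}$ is chosen so that the resulting combination of local degrees vanishes, while $[\gamma']\neq 0$ because the classes of loops around distinct ends are linearly independent in the homology of a planar surface with at least three ends. To set up, write $\underline{\pi}_0(f)^{-1}(e)=\{e_1',\dots,e_m'\}$, $m\geq 2$. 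Since the space of ends of $S$ is compact, totally disconnected and has more than one point, there is a decreasing sequence $U_0\supsetneq U_1\supsetneq\cdots$ of clopen neighbourhoods of $e$ with $\bigcap_k U_k=\{e\}$, and for each $k$ a nontrivial separating circle $\alpha_k$ of $S$ whose $e$-side $A_k$ satisfies $\underline{A_k}=U_k$; arrange that $\alpha=(\alpha_0,\alpha_1,\dots)$ bounds $e$ and $\bigcap_k A_k=\varnothing$. Note also that $\deg f=1$, together with closedness of proper maps between manifolds \cite{MR254818}, makes $f$ surjective.

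Next I would put $f$ in normal form near the preimage ends. Using that $\widehat{\underline{\pi}_1}(f)\colon\widehat{\underline{\pi}_1}(S',e_i')\to\widehat{\underline{\pi}_1}(S,e)$ is an isomorphism for every $i$ (so that the germ homotopy class of every power of $\alpha$ has a single preimage), I would adapt the arguments in the proofs of \Cref{specialcase} and \Cref{infinitepinchproof} to properly homotope $f$ so that $\{f^{-1}(A_k)\}$ exhausts $S'$, every component of $f^{-1}(\operatorname{im}(\alpha_k))$ is a nontrivial circle, and, for some fixed large $k$, the component $V_i$ of $f^{-1}(A_k)$ containing a tail of a fixed ray to $e_i'$ has connected boundary $C_i:=\partial V_i$ with $f\vert C_i\colon C_i\to\operatorname{im}(\alpha_k)$ a homeomorphism, while every other component of $f^{-1}(A_k)$ is either compact or has all of its ends lying over $U_k\setminus\{e\}$. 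Here the covering degree of $f$ on a component of $f^{-1}(\operatorname{im}(\alpha_k))$ is $\pm 1$ because such a component is a simple loop of $S'$ and hence not a proper power in $\pi_1(S')$ \cite[Theorems 1.7 and 4.2]{MR214087}, and the connectedness of $C_i$ (after compressing co-bounded annuli) uses the injectivity of $\widehat{\underline{\pi}_1}(f)$ at $e_i'$. This normal-form reduction is the step I expect to be the main obstacle: one must check that the local-at-$e_i'$ part of the \Cref{infinitepinchproof} argument survives replacing its hypothesis \ref{tc} by the present one (several preimage ends, but $\widehat{\underline{\pi}_1}(f)$ an isomorphism at each of them), and that a single proper homotopy realizes all of these normal forms simultaneously.

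Now the degree bookkeeping. For each $i$, the restriction $f\vert V_i\colon(V_i,\partial V_i)\to(A_k,\operatorname{im}(\alpha_k))$ is a proper map of oriented bordered surfaces whose boundary restriction is a homeomorphism of circles, so by \Cref{boundaryhomeo} its degree $\tau_i$ equals $\pm 1$, and $f_*[\partial V_i]=\tau_i[\operatorname{im}(\alpha_k)]$ in first homology for the induced boundary orientations. Each compact component of $f^{-1}(\iota A_k)$ is not surjective onto $A_k$, hence contributes $0$, and each of the other noncompact components has $f$-image missing a fixed neighbourhood of $e$ (no sequence of points in those components can have $f$-image converging to $e$, as it would force a subsequence converging to some $e_i'$, hence eventually lying in $V_i$); so, counting signed preimages of a regular value $p\in\iota A_k$ chosen deep enough near $e$ \cite[Lemma~2.1(b)]{MR192475}, we get $\sum_{i=1}^m\tau_i=\deg f=1$. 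Since each $\tau_i=\pm 1$, this already forces $m\geq 3$; moreover it gives a pair of indices with $\tau_i=-\tau_j$ (the $\tau_i$ are not all equal) and, by pigeonhole, also a pair with $\tau_i=\tau_j$.

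Finally I would build $\gamma'$ and conclude. Pick an embedded arc $\lambda$ joining $C_1$ to $C_2$ and meeting $C_1\cup C_2$ only in its endpoints, and let $P$ be a regular neighbourhood of $C_1\cup\lambda\cup C_2$; being planar and deformation retracting onto a wedge of two circles, $P$ is a pair of pants, with boundary circles $\delta_1,\delta_2,\delta_3$ where $\delta_i$ is parallel in $S'$ to $C_i$ for $i=1,2$. Put $\gamma':=\delta_3$, a simple loop of $S'$. From $[\delta_1]+[\delta_2]+[\delta_3]=0$ in $H_1(P;\mathbb{Z})$ one gets $[\gamma']=\epsilon_1[C_1]+\epsilon_2[C_2]$ in $H_1(S';\mathbb{Z})$ for fixed signs $\epsilon_i=\pm 1$ (with the boundary orientations of $C_i=\partial V_i$); choosing the pair $\{e_1',e_2'\}$ among the $m$ preimage ends so that $\epsilon_1\tau_1+\epsilon_2\tau_2=0$ — possible by the previous paragraph, since I can realize either $\tau_1=-\tau_2$ or $\tau_1=\tau_2$ according to the sign of $\epsilon_1\epsilon_2$ — I obtain $f_*[\gamma']=(\epsilon_1\tau_1+\epsilon_2\tau_2)[\operatorname{im}(\alpha_k)]=0$. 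On the other hand $[\gamma']\neq 0$: the circles $C_1,C_2$ bound disjoint unbounded subsurfaces $V_1,V_2$ of the planar surface $S'$ with $\underline{V_1}\cup\underline{V_2}\neq\underline{\pi}_0(S')$ (using that $S'$ has at least three ends and, for large $k$, $V_1,V_2$ are small), so by Alexander duality — equivalently, by the structure of $H_1$ of planar surfaces — the classes $[C_1]$ and $[C_2]$ are linearly independent in $H_1(S';\mathbb{Z})$. Hence $\gamma'$ is a simple loop of $S'$ representing a non-trivial element of $\ker H_1(f)$, as required.
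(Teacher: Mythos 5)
Your proposal is correct and follows the same overall route as the paper's proof of \Cref{geokerplanar}: normalize \(f\) near the ends of \(S'\) lying over \(e\) so that each such end \(e_i'\) is cut off by a single circle \(C_i\) mapped homeomorphically to a circle bounding \(e\) (using surjectivity and injectivity of \(\widehat{\underline{\pi}_1}(f)\) together with the no-proper-power fact \cite[Theorems 1.7 and 4.2]{MR214087}), assign each a local degree \(\tau_i=\pm1\) via \Cref{boundaryhomeo}, prove \(\sum_i\tau_i=\deg f=1\), extract a pair with \(\tau_r=-\tau_s\), and take the third boundary circle of a pair of pants cobounded by \(C_r\) and \(C_s\). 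The one step you prove genuinely differently is the identity \(\sum_i\tau_i=1\) (the paper's \Cref{claimsum}): the paper compactifies \(S'\) and \(S\) to punctured spheres via Kerékjártó's theorem \cite{MR143186} and uses Alexander's trick so that the complement of the punctured-disk neighbourhoods of the \(e_j'\) becomes compact and a disk near \(e\) can be chosen missing its image; you instead argue directly in \(S'\), via properness, that every preimage of a point sufficiently close to \(e\) already lies in \(\bigcup_i V_i\). Both arguments work; yours is more elementary, while the compactification makes the ``other components'' issue disappear automatically (and, like the paper, you still need Hopf's geometric realization of degree to convert \(\deg(f\vert V_i)\) into a signed point count). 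Two loose ends, neither fatal: with the boundary orientations induced from the \(V_i\) one always gets \([\gamma']=[C_1]+[C_2]\), so \(\epsilon_1=\epsilon_2\) and the pair you need is exactly one with \(\tau_r=-\tau_s\) — your hedging over the sign of \(\epsilon_1\epsilon_2\) is unnecessary; and for \([\gamma']\neq0\) the correct justification is not that \(V_1,V_2\) are ``small'' (their end sets need not shrink to singletons) but that \(m\geq3\), so a third preimage end of \(e\) sits on the far side of \(\gamma'\) — which is how the paper argues. The normal-form reduction you flag as the main obstacle is indeed where the paper spends the corresponding effort (distinct components of \(f^{-1}(\operatorname{im}(\alpha_k))\) near a fixed \(e_i'\) must cobound annuli by injectivity of \(\widehat{\underline{\pi}_1}(f)\), which is what yields a connected \(C_i\) after compression), and your outline names exactly the right ingredients from \Cref{specialcase} and \Cref{infinitepinchproof}.
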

\begin{proof}
      Consider an exhausting sequence $\{C_i : i = 0,1,2,\dots\}$ for $S$ satisfying the properties given in \Cref{exhaustion}. Any proper map that is properly homotopic to $f$ must be surjective. Indeed, a proper map between manifolds is closed \cite{MR254818}, and the non-zero integer $\deg(f)$ is preserved under proper homotopy. Therefore, as in the proof of \Cref{specialcase}, after a proper homotopy we may assume that for each $i \geq 0$, every component of the non-empty set $f^{-1}(\partial C_i)$ is a non-trivial circle in $S'$, and the restriction of $f$ to each component of $f^{-1}(\partial C_i)$ is either constant or a finite-sheeted covering map. Moreover, the collection $\{f^{-1}(C_i) : i = 0,1,2,\dots\}$ forms an exhausting sequence for $S'$.

      Recall that the induced map between the spaces of ends remains unchanged under proper homotopy. Let $e'$ be an end of $S'$ with $\underline{\pi}_0(f)(e') = e$. Consider a sequence of circles $\alpha = (\alpha_0, \alpha_1, \alpha_2, \dots)$ converging to $e$, where $\operatorname{im}(\alpha_i)$ is a component of $\partial C_i$ for each $i \geq 0$. Then there exists a sequence of circles $\alpha' = (\alpha_0', \alpha_1', \alpha_2', \dots)$ converging to $e'$ such that $\operatorname{im}(\alpha_i')$ is a component of $f^{-1}(\operatorname{im}(\alpha_i))$ for each $i \geq 0$. By the previous paragraph, there exists a sequence $\{n_0, n_1, n_2, \dots\}$ of integers such that, possibly after re-parametrizing each $\alpha_i'$, we have $f\alpha_i' = \alpha_i^{n_i}$ for each $i \geq 0$. Since $\widehat{\underline{\pi}_1}(f)$ is surjective, there exists a sequence of loops $\beta' = (\beta_0', \beta_1', \beta_2', \dots)$ converging to $e'$ such that $\widehat{\underline \pi_1}(f)$ sends $[\beta']\in \widehat{\underline{\pi}_1}(S', e')$ to $[\alpha]\in \widehat{\underline{\pi}_1}(S, e)$. Then $\widehat{\underline{\pi}_1}(f)$ sends both $[(\beta_0'^{n_0}, \beta_1'^{n_1}, \beta_2'^{n_2}, \dots)]$ and $[\alpha']$ to $[(\alpha_0^{n_0}, \alpha_1^{n_1}, \alpha_2^{n_2}, \dots)]$. Since $\widehat{\underline{\pi}_1}(f)$ is injective, it follows that $\beta_i'^{n_i}$ is freely homotopic to $\alpha_i'$ for all sufficiently large $i$. By \Cref{lemmapre} and \Cref{lemma}, for all sufficiently large $i$, the non-trivial element of $\pi_1(S',*)$ represented by the circle $\alpha_i'$ is the $n_i$th power of some element of $\pi_1(S',*)$. Now, \cite[Theorems~1.7 and~4.2]{MR214087} imply that for all sufficiently large $i$ we have $n_i = \pm 1$, and hence $f\vert\operatorname{im}(\alpha_i') \to \operatorname{im}(\alpha_i)$ is a homeomorphism. For each $i\geq 0$, let $A_i'$ (resp. $A_i$) denote the closure of the component of $S' \setminus \operatorname{im}(\alpha_i')$ (resp. $S \setminus \operatorname{im}(\alpha_i)$) that corresponds to a neighborhood of $e'$ (resp. $e$). Then, for all sufficiently large $i$, the restriction $f\vert A_i' \to A_i$ sends $\partial A_i'$ homeomorphically onto $\partial A_i$. Recall that $\partial A_i' = \operatorname{im}(\alpha_i')$ (resp. $\partial A_i = \operatorname{im}(\alpha_i)$), which is a component of $f^{-1}(\partial C_i)$ (resp. $\partial C_i$) for all $i \geq 0$.

      We repeat the above process for every $e' \in \{e_1', \dots, e_l'\} = \underline{\pi}_0(f)^{-1}(e)$, where $l \geq 2$. Thus, there exists a positive integer $k_0$ such that for some unbounded component $U$ of $S \setminus C_{k_0}$ with $e \in \underline{U}$ and some unbounded components $U_1', \dots, U_l'$ of $S' \setminus f^{-1}(C_{k_0})$ with $e_1' \in \underline{U_1'}, \dots, e_l' \in \underline{U_l'}$, the map $f_j \coloneqq f\vert \operatorname{cl}(U_j') \to \operatorname{cl}(U)$ sends the circle $\partial U_j'$ homeomorphically onto the circle $\partial U$ for each $j = 1, \dots, l$. Moreover, we may assume that the sets $\operatorname{cl}(U_1'), \dots, \operatorname{cl}(U_l')$ are pairwise disjoint. By \Cref{boundaryhomeo}, for each $j = 1, \dots, l$ we have $\varepsilon_j \coloneqq \deg(f_j) = \deg(f_j\vert \partial U_j' \to \partial U) = \pm 1$, where the orientations of $\operatorname{cl}(U_j')$ and $\operatorname{cl}(U)$ are induced from the orientations of $S'$ and $S$, respectively.

      \begin{claim}\label{claimsum}
          $1=\varepsilon_1+\cdots+\varepsilon_l$.
      \end{claim} 
      \begin{proof}[Proof of \Cref{claimsum}]
          By Kerékjártó's classification theorem \cite[Theorem~1]{MR143186}, $S'$ (resp. $S$) is homeomorphic to $\mathbb{S}^2 \setminus E'$ (resp. $\mathbb{S}^2 \setminus E$) for some compact, totally disconnected subset $E'$ (resp. $E$) of $\mathbb{S}^2$, with $E'$ (resp. $E$) homeomorphic to $\underline{\pi}_0(S')$ (resp. $\underline{\pi}_0(S)$). Thus, by a slight abuse of notation, we may identify $S' = \mathbb{S}^2 \setminus E'$, $E' = \underline{\pi}_0(S')$, $S = \mathbb{S}^2 \setminus E$, and $E = \underline{\pi}_0(S)$. Since $f$ is proper, it extends to a proper map $\widetilde{f} \colon \mathbb{S}^2 \setminus \{e_1', \dots, e_l'\} \to \mathbb{S}^2 \setminus \{e\}$.

          We first show that $\deg(\widetilde f)=1$, where the orientations of the domain and codomain of $\widetilde f$ are induced from those of $\mathbb S^2\setminus E'$ and $\mathbb S^2\setminus E$, respectively. By Hopf's geometric realization of degree \cite[Theorem~4.1]{MR192475}, there exists a proper homotopy $H\colon (\mathbb S^2\setminus E')\times [0,1]\to \mathbb S^2\setminus E$, relative to $(\mathbb S^2\setminus E')\setminus K'$ for some compact subset $K'\subset \mathbb S^2\setminus E'$, from $f$ to a proper map $g$ such that, for some disk $\mathfrak D\subset \mathbb S^2\setminus E$, the restriction $g\vert g^{-1}(\mathfrak D)\to \mathfrak D$ is an orientation-preserving homeomorphism. Consequently, $H$ extends to a proper homotopy $\widetilde H\colon (\mathbb S^2\setminus \{e_1',\dots,e_l'\})\times [0,1]\to \mathbb S^2\setminus \{e\}$, relative to $(\mathbb S^2\setminus \{e_1',...,e_l'\})\setminus K'$, from $\widetilde f$ to an extension $\widetilde g$ of $g$. Since $\widetilde g\vert \widetilde g^{-1}(\mathfrak D)\to \mathfrak D$ is an orientation-preserving homeomorphism, it follows from \cite[Lemma~2.1(b)]{MR192475} that $\deg(\widetilde g)=1$, and hence $\deg(\widetilde f)=1$.

          Next, define $V_j'\coloneqq U_j'\sqcup (E'\setminus \{e_j'\})$ for $j=1,\dots,l$. Then each $\operatorname{cl}(V_j')$ is a properly embedded punctured disk in $\mathbb S^2\setminus \{e_1',\dots,e_l'\}$ corresponding to the end $e_j'$. Similarly, let $V\coloneqq U\sqcup (E\setminus \{e\})$; then $\operatorname{cl}(V)$ is a properly embedded punctured disk in $\mathbb S^2\setminus \{e\}$ corresponding to the end $e$. Consider the orientation of $\operatorname{cl}(V_j')$ (resp. $\operatorname{cl}(V)$) induced from the orientation of the domain (resp. codomain) of $\widetilde f$. Then, for each $j=1,\dots,l$, by \Cref{boundaryhomeo}, $\deg(\widetilde f\vert\operatorname{cl}(V_j')\to \operatorname{cl}(V))= \deg(\widetilde f\vert\partial V_j'\to \partial V)= \deg(f\vert\partial U_j'\to \partial U)= \deg(f_j)= \varepsilon_j,$ where the second equality follows from the facts that the orientation of $\operatorname{cl}(V_j')$ (resp. $\operatorname{cl}(V)$) matches that of $\operatorname{cl}(U_j')$ (resp. $\operatorname{cl}(U)$), and that both $\widetilde f$ and $f$, when restricted to $\partial V_j'=\partial U_j'$, are the same homeomorphism onto $\partial V=\partial U$.

          The homotopy from Alexander's trick \cite[Proof of Lemma~2.1]{MR2850125} can be used directly to construct, for each $j=1,\dots,l$, a proper homotopy $h_j\colon \operatorname{cl}(V_j')\times [0,1]\to \operatorname{cl}(V)$ rel $\partial V_j'$ from $\widetilde f\vert\operatorname{cl}(V_j')\to \operatorname{cl}(V)$ to a homeomorphism. Since the sets $\operatorname{cl}(V_1'),\dots,\operatorname{cl}(V_l')$ are pairwise disjoint, the proper homotopies $h_1,\dots,h_l$, taken together, give a proper homotopy of $\widetilde f$, relative to the compact set $X'\coloneqq (\mathbb S^2\setminus \{e_1',\dots,e_l'\})\setminus \bigcup_{j=1}^l V_j'$, to a proper map $\widetilde f_{\mathrm{At}}$ such that $\widetilde f_{\mathrm{At}}\vert\operatorname{cl}(V_j')\to \operatorname{cl}(V)$ is a homeomorphism for each $j=1,\dots,l$. Since $\widetilde f_{\mathrm{At}}$ is properly homotopic to $\widetilde f$, we have $\deg(\widetilde f_{\mathrm{At}})=\deg(\widetilde f)=1$. Also, for each $j=1,\dots,l$, the degree of $\widetilde f_{\mathrm{At}}\vert\operatorname{cl}(V_j')\to \operatorname{cl}(V)$ is $\varepsilon_j$, since it is properly homotopic rel $\partial V_j'$ to $\widetilde f\vert\operatorname{cl}(V_j')\to \operatorname{cl}(V)$. Choose a disk $\mathfrak D\subset V$ with $\mathfrak D\cap \widetilde f_{\mathrm{At}}(X')=\varnothing$. Then, for each $j=1,\dots,l$, there exists a disk $\mathfrak D_j'\subset V_j'$ such that $\widetilde f_{\mathrm{At}}^{-1}(\mathfrak D)=\bigsqcup_{j=1}^l\mathfrak D_j'$. By \cite[Lemma~2.1(b)]{MR192475}, for each $j=1,\dots,l$, the homeomorphism $\widetilde f_{\mathrm{At}}\vert\mathfrak D_j'\to \mathfrak D$ is orientation-preserving or orientation-reversing according as $\varepsilon_j=+1$ or $\varepsilon_j=-1$. Hence $\deg(\widetilde f_{\mathrm{At}})=\varepsilon_1+\dots+\varepsilon_l$ \cite[Lemma~2.1(b)]{MR192475}. Thus, $1=\deg(\widetilde f)=\deg(\widetilde f_{\mathrm{At}})=\varepsilon_1+\dots+\varepsilon_l$.
      \end{proof}
    Since $l\geq 2$ and each $\varepsilon_j$ is either $+1$ or $-1$, by \Cref{claimsum} it follows that $l\geq 3$, and hence there exist $r,s\in \{1,\dots,l\}$ such that $\varepsilon_r\varepsilon_s=-1$. Fix an orientation of $\mathbb S^1$. Recall that for each $j=1,\dots,l$, the boundary $\partial U_j'$ of $\operatorname{cl}(U_j')$ inherits its orientation from $\operatorname{cl}(U_j')$, and similarly, the boundary $\partial U$ of $\operatorname{cl}(U)$ inherits its orientation from $\operatorname{cl}(U)$. Let $\gamma_r'\colon \mathbb S^1\to \partial U_r'$ and $\gamma_s'\colon \mathbb S^1\to \partial U_s'$ be orientation-preserving homeomorphisms. Since the degrees of the homeomorphisms $f\gamma_r'\colon \mathbb S^1\to \partial U$ and $f\gamma_s'\colon \mathbb S^1\to \partial U$ are $\varepsilon_r$ and $\varepsilon_s$, respectively, the map $f\gamma_r'$ is freely homotopic to $\overline{f\gamma_s'}$.

    Choose an essential pair of pants \(P'\subset S'\) whose boundary has \(\partial U_r'\) and \(\partial U_s'\) as two of its components. This is possible because \(\operatorname{cl}(U_r')\cap \operatorname{cl}(U_s')=\varnothing\) and $l\geq 3$. Let \(\eta'\colon \mathbb S^1\to \partial P'\setminus(\partial U_r'\cup \partial U_s')\) be a homeomorphism. Then, in \(H_1(P';\mathbb Z)\), we can write either \([\eta']=[\gamma_r']+[\gamma_s']\) or \(-[\eta']=[\overline{\eta'}]=[\gamma_r']+[\gamma_s']\). We show \(H_1(f)\colon H_1(S';\mathbb Z)\to H_1(S;\mathbb Z)\) sends $[\eta']$ to the trivial element. So, without loss of generality, assume \([\eta']=[\gamma_r']+[\gamma_s']\) in \(H_1(P';\mathbb Z)\). Applying the inclusion-induced homomorphism \(H_1(P';\mathbb Z)\to H_1(S';\mathbb Z)\), we obtain \([\eta']=[\gamma_r']+[\gamma_s']\) in \(H_1(S';\mathbb Z)\). But \(H_1(f)\) sends \([\gamma_r']+[\gamma_s']\) to \([\overline{f\gamma_s'}]+[f\gamma_s']=-[f\gamma_s']+[f\gamma_s']=0\). Therefore, to complete the proof, it is enough to show that \([\eta']\) is a non-trivial element of \(H_1(S';\mathbb Z)\). Since either component of \(S'\setminus\operatorname{im}(\eta')\) is unbounded, this follows if we use simplicial homology theory (alternatively, one may note that if \(j\in\{1,\dots,l\}\setminus\{r,s\}\), then \(\eta'\) corresponds to a generator of the infinite cyclic group \(H_1(\mathbb S^2\setminus\{e_r',e_j'\};\mathbb Z)\)).
\end{proof}
\section*{Acknowledgments}
The author thanks Prof. Siddhartha Gadgil for suggesting the question of whether Edmonds’ theory can be extended to non-compact surfaces. The author also acknowledges support from the Institute Postdoctoral Fellowship at IIT Bombay and thanks Prof. Rekha Santhanam for hosting the postdoctoral position.
\bibliography{bibliography}
\bibliographystyle{plain}
\end{document}